\numberwithin{equation}{section}
\DeclareFontFamily{OT1}{pzc}{}
\DeclareFontShape{OT1}{pzc}{m}{it}{<-> s * [1.2] pzcmi7t}{}
\DeclareMathAlphabet{\mathpzc}{OT1}{pzc}{m}{it}
\def\End{\operatorname{End}}
\def\Hom{\operatorname{Hom}}
\def\Dom{\operatorname{Dom}}
\def\ker{\operatorname{Ker}}
\def\dim{\operatorname{dim}}
\def\rd{\operatorname{d}\!}
\def\C{\mathbb{C}}
\def\R{\mathbb{R}}
\def\N{\mathbb{N}}
\def\Z{\mathbb{Z}}
\def\T{\mathbb{T}}
\newtheorem{thm*}{Theorem}
\newtheorem{thm}{Theorem}[section]
\newtheorem{cor}[thm]{Corollary}
\newtheorem{lemma}[thm]{Lemma}
\newtheorem{prop}[thm]{Proposition}
\newtheorem{thm1}{Theorem}
\newtheorem{cor1}[thm1]{Corollary}
\theoremstyle{definition}
\newtheorem{definition}[thm]{Definition}
\theoremstyle{remark}
\newtheorem{remark}[thm]{Remark}
\newtheorem{example}[thm]{Example}
\begin{document}

\date{\today}

\title[Poisson transforms and the Baum-Connes conjecture]{Sharp mapping properties of Poisson transforms and the Baum-Connes conjecture}

\author{Heiko Gimperlein$^*$, Magnus Goffeng$^\dagger$}
\address[$^*$]{Engineering Mathematics, Universit\"{a}t Innsbruck, Technikerstraße 13, 6020 Innsbruck, Austria}
\address[$^\dagger$]{Centre for Mathematical Sciences, Lund University, Box 118, SE-221 00 Lund, Sweden}

\email{heiko.gimperlein@uibk.ac.at, magnus.goffeng@math.lth.se}

\begin{abstract}
We prove a sharp, quantitative analogue of Helgason's conjecture at the level of distributions: For a semisimple Lie group $G$ of real rank one, Poisson transforms map a Sobolev space on $P\backslash G$ boundedly with closed range to an $L^2$-space on $K\backslash G$. The result is obtained for the Poisson transform studied by Knapp-Wallach under the name Szegö map, and the appropriate Sobolev spaces are defined using van Erp-Yuncken's Heisenberg calculus. The proof generalizes to show that commutators of this Poisson transform with smooth functions on the Furstenberg compactification are compact. This proves the remaining open conjecture in Julg's seminal program to establish the Baum-Connes conjecture for closed subgroups of semisimple Lie groups of real rank one.
\end{abstract}

\maketitle

\section{Introduction}

Let $G$ be a connected, semisimple Lie group with maximal compact subgroup $K$ and minimal parabolic subgroup $P$. The Helgason conjecture \cite{helgason70} states that the Poisson transform defines an isomorphism between the space of joint eigenfunctions for the algebra of invariant differential operators on the symmetric space $K\backslash G$ and their boundary values, which are hyperfunction sections of a line bundle over the Furstenberg boundary $P\backslash G$. The Helgason conjecture was proven by Kashiwara et al.~\cite{kashi78}, and we refer to \cite{gkks,ks94,schmidbvp} for alternative approaches and independent proofs. In this paper we consider a quantitative analogue of Helgason's conjecture at the level of distributions. Motivated by Julg's program to prove the Baum-Connes conjecture for closed subgroups of Lie groups of real rank one \cite{julghow}, we consider sharp Sobolev mapping properties for a particular Poisson transform studied by Knapp-Wallach \cite{knappwallach}, called the Szegö map. 

More precisely, as a main result we prove that in real rank one the Szegö map produces an isomorphism of Hilbert spaces from a closed subspace of a Heisenberg-Sobolev space on $P\backslash G$ to a discrete series subrepresentation on $K\backslash G$. Our method is general for real rank one, and we discuss how technical developments of the Heisenberg calculus would extend our techniques to higher rank. 

We deduce implications for the Baum-Connes conjecture \cite{bch}, a conjecture in operator $K$-theory known to hold for large classes of groups, such as a-T-menable groups \cite{higsonkasp}, almost connected groups \cite{chabert03} and hyperbolic groups \cite{laffa,laffahyp}. A modern overview of the conjecture and its status can be found in \cite{julg19}. Notably, the Baum-Connes conjecture is to date not resolved for closed subgroups of semi-simple Lie groups, where already such an elementary case as $SL(3,\Z)$ is widely open.  The essentially only known technique of using the $\gamma$-element to study the Baum-Connes conjecture is obstructed in real rank $>1$ by Lafforgue's strong property (T) \cite{laffastrong}. Julg proposed a program \cite{julghow}  to prove the Baum-Connes conjecture with coefficients for all closed subgroups of Lie groups of real rank one. It would follow from two open conjectures, one of which was recently shown by Julg and Nishikawa \cite{julgnishi}.  The works  \cite{julghow,julgnishi} provide a general approach, with proofs given for $Sp(n,1)$. In this paper we prove the remaining conjecture in Julg's program for general Lie groups of real rank one: the commutator of the Poisson transform with a smooth function on the Furstenberg compactification is a compact operator. This follows by adapting the proof of the sharp mapping properties. 

\subsection{Analytic properties of the Szegö map}
Let us describe the setup and results in more detail. In the bulk of the paper, we restrict to connected, semisimple Lie  groups $G$ of real rank one. We tacitly assume that the rank of $G$ coincides with that of a  maximal compact subgroup $K$.  Fix a standard, minimal parabolic subgroup $P\subseteq G$. We can define $P$ from its Langlands decomposition $P=MAN$ where $A$ is defined from a maximal abelian subalgebra of the noncompact part in the Cartan decomposition $\mathfrak{g}=\mathfrak{k}+\mathfrak{p}$, $N$ is defined from the positive root spaces for a choice of positive restricted roots and $M$ is the centralizer of $A$ in $K$. It holds that $M=P\cap K$. For an irreducible $K$-representation $(\mathcal{V},\tau)$ with regular character and highest weight $\lambda$, we will write $(\mathcal{H},\sigma)\subseteq (\mathcal{V},\tau|_M)$ for the irreducible $M$-representation generated by the highest weight vector in $\mathcal{V}$. We assume that $A$ is defined from the non-compact roots from maximally compact Cartan subalgebra (as in \cite{knappwallach}), and define the character $\nu_{\mathcal{V}}$ on $A$ by 
$$\nu_{\mathcal{V}}(E_{\alpha_j}+E_{-\alpha_j}):=\frac{2\langle \lambda +n_j\alpha_j,\alpha_j\rangle}{\langle \alpha_j,\alpha_j\rangle},$$
as in \cite{knappwallach} where $n_j$ is defined in \cite[Equation (6.5.b)]{knappwallach}. We write $\mathbb{V}:=\mathcal{V}\times_K G\to K\backslash G$ and $\mathbb{H}_\nu:=\mathcal{H} \times_PG\to P\backslash G$ for the associated homogeneous vector bundles, here we extended the $M$-action on $\mathcal{H}$ to an $MA$-action via $\nu$ and to a $P$-action by letting the $N$-factor act trivially. The $G$-equivariant Poisson transform in this context is the Knapp-Wallach Szegö map defined as
\begin{equation}
\label{kwszeg}
S_{\mathcal{V}}:\mathcal{D}'(P\backslash G,\mathbb{H}_{\nu_{\mathcal{V}}})\to C^\infty(K\backslash G,\mathbb{V}), \quad S_{\mathcal{V}} f(g):=\int_K \tau(k)^{-1} f(kg)\rd k.
\end{equation}
The Poisson transform induces an infinitesimal intertwiner from the nonunitary principal series $L^2(P\backslash G,\mathbb{H}_{\nu_{\mathcal{V}}})$ onto the discrete series subrepresentation of $L^2(K\backslash G,\mathbb{V})$ with lowest $K$-type $\lambda$.

In this paper we study finer analytic mapping properties of the Szegö map $S_{\mathcal{V}}$ \eqref{kwszeg}, using techniques from microlocal geometric analysis. The relevance of such techniques in related problems on $K \backslash G$ was shown, for example, in \cite{gimppois,hansenhilg,mazzeovasy}. Our work focuses on the boundary $P\backslash G$. To phrase the results, we consider $P\backslash G$ as a filtered manifold and use the Heisenberg calculus. This calculus is well studied in the literature \cite{Dave_Haller1,ewertfixed,goffkuz,vanerpyunck}. In Sections \ref{lnlnadna} and \ref{subsecparatangb} below, we give an overview adapted to $P\backslash G$. 

To briskly describe the setup, the reader could think of $P\backslash G$ as modeled on Bruhat charts $V=\theta(N)$, for the Cartan involution $\theta$, and the Heisenberg calculus as consisting of operators that up to arbitrary regularity have Schwartz kernels given by sums of terms $k(x,xy^{-1})$ smooth in the first coordinate and homogeneous in the second for the $A$-action on $V$. By choosing an orthonormal basis for the simple root space in $V$, we obtain a sub-Laplacian type operator $\Delta_H$ whose complex powers belong to the Heisenberg calculus and the associated scale of Heisenberg-Sobolev spaces $W^s_H(P\backslash G):=(1+\Delta_H)^{-s/2}L^2(P\backslash G)$ plays a similar role in the Heisenberg calculus as the ordinary Sobolev spaces play in classical pseudodifferential theory. Similarly, one defines Heisenberg-Sobolev spaces $W^s_H(P\backslash G,\mathbb{E})$ for a vector bundle $\mathbb{E}\to P\backslash G$ and if $\mathbb{E}$ is homogeneous, $G$-translation induces a continuous $G$-action on $W^s_H(P\backslash G,\mathbb{E})$. 

Knapp-Wallach \cite{knappwallach} proved that the Szegö map $S_{\mathcal{V}}$ from Equation \eqref{kwszeg} is an infinitesimal intertwiner. The Casselman-Wallach globalization theorem \cite{krobern,cassglob,wallachasy} implies that there is an $s\in \R$ such that $S_{\mathcal{V}}$ extends to a continuous operator $S_{\mathcal{V}}: W^{s}_{H}(P\backslash G, \mathbb{H}_{\nu_{\mathcal{V}}})\to L^2(K \backslash G,\mathbb{V})$. The main result of this paper gives the optimal Sobolev exponent and shows that at this exponent the range is closed, i.e. $S_{\mathcal{V}}$ induces a $G$-equivariant isomorphism of Hilbert subspaces.

\begin{thm1}
\label{conja}
Assume that $G$ is a connected, semisimple Lie group of real rank one. Let $(\mathcal{V},\tau)$ be an irreducible $K$-representation with regular character and highest weight $\lambda$. We take $(\mathcal{H},\sigma)$, $A$ and $\nu_{\mathcal{V}}$ as above, and write $\nu_\mathcal{V}=s_\mathcal{V}\alpha$,
where $\alpha$ is the associated simple restricted root. The Szegö map \eqref{kwszeg} induces a continuous mapping with closed range
$$S_{\mathcal{V}}: W^{-s_{\mathcal{V}}}_{H}(P\backslash G, \mathbb{H}_{\nu_{\mathcal{V}}})\to L^2(K \backslash G,\mathbb{V}).$$
Moreover, the source projection $P_{\mathcal{V}}$ of $S_{\mathcal{V}}$ is a zeroth order operator in the Heisenberg calculus on $P\backslash G$ and $P_{\mathcal{V}}W^{-s_\mathcal{V}}_{H}(P\backslash G, \mathbb{H}_{\nu_{\mathcal{V}}})\cong W^{-s_\mathcal{V}}_{H}(P\backslash G, \mathbb{H}_{\nu_{\mathcal{V}}})/\ker(S_{\mathcal{V}})$ is $G$-equivariantly isomorphic as Hilbert spaces to the discrete series representation with lowest $K$-type $\lambda$.
\end{thm1}

Our proof of Theorem \ref{conja} is found in Section \ref{aldknadln}. The idea in the proof is to show that the formal adjoint $S_{\mathcal{V}}^\dagger$ in the $L^2$-pairing, expected to be a mapping
$$S_{\mathcal{V}}^\dagger:L^2(K \backslash G,\mathbb{V})\to [W^{-s_\mathcal{V}}_{H}(P\backslash G, \mathbb{H}_{\nu_{\mathcal{V}}})]^*=W^{s_\mathcal{V}}_{H}(P\backslash G, \mathbb{H}_{\nu_{\mathcal{V}}}^*),$$ 
can be composed with $S_{\mathcal{V}}$, after which a computation involving ideas of Ewert \cite{ewertfixed} implies that $S_{\mathcal{V}}^\dagger S_{\mathcal{V}}$ is the Knapp-Stein intertwiner which is of order $-2s_\mathcal{V}$ in the Heisenberg calculus on $P\backslash G$. The continuity follows, and closed range is then a matter of manipulations with Knapp-Stein's theory for intertwiners \cite{knappstein} proved in Lemma \ref{sourcedesc}. 

\begin{remark}
For simplicity, we can assume that $K$ acts unitarily on $W^{s}_{H}(P\backslash G, \mathbb{H}_{\nu_{\mathcal{V}}})$ for all $s$ in Theorem \ref{conja}. Then by decomposing over $K$-types we see that the source projection $P_{\mathcal{V}}$ of $S_{\mathcal{V}}$ is orthogonal in all of the Hilbert spaces $W^{s}_{H}(P\backslash G, \mathbb{H}_{\nu_{\mathcal{V}}})$. 
\end{remark}

\begin{remark}
A weaker version of Theorem \ref{conja}, in special cases without the closed range statement, was proven by Julg-Kasparov \cite{julgkasp} for $G=SU(n,1)$ and by Julg \cite{julghow} for $G=Sp(n,1)$, both results motivated by the study of the Baum-Connes conjecture. The aspect of relevance to this paper in Julg-Kasparov's work \cite{julgkasp} was refined by Ørsted-Zhang \cite{orschang05} who further studied the Szegö map for $G=SU(n,1)$ and by direct computation proved Theorem \ref{conja} in the case of $K$-subrepresentations $\mathcal{V}$ of $\wedge^n\mathfrak{p}$. Our approach to Theorem \ref{conja} does not rely on direct computation but rather on relating the quasi character on the $A$-factor to scaling properties pertinent to the Heisenberg calculus.
\end{remark}

\begin{remark}
\label{alknalndarank}
We remark that we are heavily relying on $G$ having real rank one in our usage of the Heisenberg calculus when proving Theorem \ref{conja}. In the case that $G$ is of higher rank, $P\backslash G$ is still a filtered manifold with a Heisenberg calculus. However, the operators of relevance arising from representation theory are homogeneous for the higher rank torus $A$ so they have the wrong regularity properties to fit into the Heisenberg calculus. It would be more suitable to model $P\backslash G$ as a \emph{multifiltered manifold} -- a notion studied in \cite[Section 3.4]{yunckenhab}, see also \cite{yuncken2013}. The space $P\backslash G$ is multifiltered by all its restricted roots. 

Unfortunately, no Heisenberg calculus has yet been developed for multifiltered manifolds even if the appropriate scale of Sobolev spaces has been discussed from a representation theory perspective by Bernstein \cite{bernsteinmulti}. It is conceivable that an analogue of Theorem \ref{conja} with a similar proof to ours holds in higher rank, when replacing the Heisenberg calculus and the Heisenberg-Sobolev scale by an appropriate analogue for multifiltered manifolds. But given Puschnigg's obstructions to finite summability in higher rank lattices \cite{puschnigg} and Lafforgue's strong property (T) \cite{laffastrong}, we can expect the higher rank case to be more complicated.
\end{remark}

For an $M$-subrepresentation $(\mathcal{H},\sigma)\subseteq (\mathcal{V},\tau|_M)$ and a quasi-character $\nu\in \mathfrak{a}_\C^*$, we can more generally consider the $G$-equivariant Poisson transform 
\begin{equation}
\label{kwszeggeneric}
P_\nu:\mathcal{D}'(P\backslash G,\mathbb{H}_{\nu})\to C^\infty(K\backslash G,\mathbb{V}), \quad P_\nu f(g):=\int_K \tau(k)^{-1} f(kg)\rd k.
\end{equation}
From the asymptotics of admissible representations \cite[Chapter VIII]{knappbook} or \cite[Chapter 4]{wallachbook}, we see that the Poisson transform induces an infinitesimal intertwiner from the nonunitary principal series $L^2(P\backslash G,\mathbb{H}_{\nu})$ onto a subrepresentation of the weighted $L^2$-space $L^2(K\backslash G,\mathbb{V}; \mathsf{a}^t)$ for large enough $t>0$. Here $\mathsf{a}:G\to (0,1]$ denotes the $A^+$-part in the $KAK$-decomposition $G=KA^+K$ when identifying $A^+\cong (0,1]$. The argument we give below proving Theorem \ref{conja} implies the following quantitative analogue of Helgason's conjecture.

\begin{cor1}
\label{remarkkwszeggeneric}
The Poisson transform \eqref{kwszeggeneric}, for large enough $t>0$ and for $\nu=s\alpha$, induces a continuous $G$-equivariant mapping 
$$P_\nu: W^{r}_{H}(P\backslash G, \mathbb{H}_{\nu})\to L^2(K \backslash G,\mathbb{V},\mathsf{a}^t),$$
if and only if $r\geq -\mathrm{Re}(s)-t/2$.
\end{cor1}

\subsection{Applications to the Baum-Connes conjecture in real rank one} 
We are interested in the commutator of the Szegö map with smooth functions on the Furstenberg compactification. Recall that if $G$ is of real rank one, $K\backslash G$ is diffeomorphic to an open unit ball, $P\backslash G$ is diffeomorphic to a sphere identifying with the boundary of $K\backslash G$ in such a way that $Z=K\backslash G\cup P\backslash G$ carries a smooth structure diffeomorphic to a closed unit ball. The compact $G$-space $Z$ is called the Furstenberg compactification. The reader should be wary that in rank $>1$ the Furstenberg compactification is more complicated. On the representation theory side, such aspects are best understood from the asymptotics of matrix coefficients, see \cite{knappbook,vcbschlich,wallachbook}.

\begin{thm1}
\label{conjb}
Under the assumptions and  notation of Theorem \ref{conja}, with $Z$ denoting the Furstenberg compactification of $K\backslash G$, for any $b\in C^\infty(Z)$ the following commutator is compact
\begin{equation} 
\label{jnljnad}
[S_{\mathcal{V}},b]: W^{-s_\mathcal{V}}_{H}(P\backslash G, \mathbb{H}_{\nu_{\mathcal{V}}})\to L^2(K \backslash G,\mathbb{V}).
\end{equation}
In fact, the operator \eqref{jnljnad} extends by continuity to $W^{-s_{\mathcal{V}}-1/2}_{H}(P\backslash G, \mathbb{H}_{\nu_{\mathcal{V}}})$.
\end{thm1}

We prove Theorem \ref{conjb} in Section \ref{commsec} by showing that the composition 
$$W^{-s_\mathcal{V}}_{H}(P\backslash G, \mathbb{H}_{\nu_{\mathcal{V}}})\xrightarrow{[S_{\mathcal{V}},b]} L^2(K \backslash G,\mathbb{V})\xrightarrow{[S_{\mathcal{V}},b]^\dagger}W^{s_\mathcal{V}}_{H}(P\backslash G, \mathbb{H}_{\nu_{\mathcal{V}}}^*),$$
is a Heisenberg pseudodifferential operator of order $-2s_\mathcal{V}$ with vanishing principal symbol. That is, $[S_{\mathcal{V}},b]^\dagger [S_{\mathcal{V}},b]$ is a Heisenberg pseudodifferential operator of order $-2s_\mathcal{V}-1$. Here $[S_{\mathcal{V}},b]^\dagger=[b^*,S_{\mathcal{V}}^\dagger]$ denotes the formal adjoint of $[S_{\mathcal{V}},b]$ in the $L^2$-pairings. The main technical tool we apply is the work of Ewert \cite{ewertthesis, ewertfixed}, realizing Heisenberg pseudodifferential operators as averages over the dilation, see also \cite{debordskandav}. The Schwartz kernel of $[S_{\mathcal{V}},b]^\dagger [S_{\mathcal{V}},b]$ can be computed as an average over $A$, and the work of Ewert \cite{ewertthesis, ewertfixed} does in essence reduce the proof to showing smoothness of the integrand on the parabolic tangent groupoid. To clarify the proof, we use a groupoid that we call the extended parabolic tangent groupoid of $P\backslash G$, discussed in Subsection \ref{subsecparatang}. The extended parabolic tangent groupoid is also used to show that $S_{\mathcal{V}}^\dagger S_{\mathcal{V}}$ is a Heisenberg pseudodifferential operator of order $-2s_\mathcal{V}$ thereby proving the continuity part of Theorem \ref{conja}.

\begin{remark}
We note the following $K$-homological connection between the source projection $P_{\mathcal{V}}$ of the Szegö map \eqref{kwszeg} and the discrete series representations. We write $\slashed{D}_{\mathcal{V}}$ for the Dirac operator on an appropriate homogeneous bundle on $K\backslash G$ whose kernel is the discrete series representation with lowest $K$-type $\mathcal{V}$, see \cite{laffaicm} for details. Under the boundary mapping in $G$-equivariant $K$-homology $\partial: K_0^G(K\backslash G)\to K_1^G(P\backslash G)$, Theorem \ref{conjb} and \cite{fries} implies that
$$[P_{\mathcal{V}}]=\partial[\ker \slashed{D}_{\mathcal{V}}]\in K_1^G(P\backslash G).$$
We note that $[P_{\mathcal{V}}]$ is a non-zero class in $K_1^G(P\backslash G)$ since the restriction map to $K_1(P\backslash G)$ produces a multiple of the generator (as can be computed with Bott periodicity). Nevertheless, the class of $[P_{\mathcal{V}}]$ under the restriction map  $K_1^G(P\backslash G)\to K_1^K(P\backslash G)$ pairs trivially with $K$-equivariant $K$-theory $K_1^K(P\backslash G)$ as can be seen from Bott's index theorem on homogeneous spaces \cite{botthomo}. The reader can find a more detailed discussion in Section \ref{szkhom}.
\end{remark}

Julg's seminal program for proving the Baum-Connes conjecture with coefficients for semisimple Lie groups of real rank one has been a motivation for this work. The case $G=SO(n,1)$ was proven by Kasparov \cite{kaspson}, and studied further by Chen for $n$ even in \cite{chenson}. The case $G=SU(n,1)$ was later proven by Julg-Kasparov \cite{julgkasp}. A general program was presented by Julg in \cite{julghow}. Julg's program reduced the Baum-Connes conjecture for semisimple Lie groups of real rank one to proving two conjectures: one representation theoretical and one analytical. The representation theoretical  conjecture, somewhat simplified, claims that spherical representations in a closed strip have slow exponential growth. This conjecture was proven by Julg-Nishikawa \cite[Theorem C]{julgnishi}, with detailed arguments given for $Sp(n,1)$. In \cite{julgnishi} the authors state that the Baum-Connes conjecture in real rank one follows from the analytical conjecture, which we establish in Theorem \ref{conjb}. \\

\noindent {\bf Acknowledgements.} 
The authors are very grateful to Pierre Julg and Robert Yuncken for comments on an earlier version of this work. We thank Bernhard Kr\"{o}tz, Erfan Rezaei, and Genkai Zhang for fruitful discussions. The second listed author was supported by the Swedish Research Council Grant VR 2025-03923.

\section{Background and notations}
\label{secback}

We review some well known notions from geometric analysis and representation theory in this section. Let $G$ denote a connected, semisimple Lie group. To simplify notation and to fix it throughout the paper, we assume that $G$ has real rank one, even when results in this section also hold in higher rank. We write $K$ for a maximal compact subgroup with Lie algebra $\mathfrak{k}$. We also assume that $G$ and $K$ have the same rank to ensure the existence of discrete series representations.

Take a Cartan decomposition $\mathfrak{g}=\mathfrak{k}+\mathfrak{p}$ of the Lie algebra of $G$ and a maximal abelian subalgebra $\mathfrak{a}\subseteq \mathfrak{p}$. We write $\mathfrak{a}_\C:=\mathfrak{a}\otimes_\R\C$. An element $\nu\in \mathfrak{a}_\C$ will be identified with the quasicharacter on $A$ defined by $a^\nu:=\mathrm{e}^{\nu(\log(a))}$ for $a\in A$. For $\lambda\in \mathfrak{a}^*$ we define the root space $\mathfrak{g}_\lambda:=\{X\in \mathfrak{g}: [H,X]=\lambda(H)X\; \forall H\in \mathfrak{a}\}$. Since $G$ has real rank one, $\mathfrak{a}$ is one-dimensional and there is a restricted root $\alpha\in \mathfrak{a}^*$ such that 
$$\mathfrak{g}=\mathfrak{g}_{2\alpha}+\mathfrak{g}_\alpha+\mathfrak{g}_0+\mathfrak{g}_{-\alpha}+\mathfrak{g}_{-2\alpha}.$$
If $\mathfrak{g}=\mathfrak{so}(n,1)$ then $\mathfrak{g}_{2\alpha}=\mathfrak{g}_{-2\alpha}=0$, but for any other real rank one Lie algebra the spaces $\mathfrak{g}_{2\alpha}$ and $\mathfrak{g}_{-2\alpha}$ are non-zero. We write $A=\mathrm{exp}(\mathfrak{a})$ and $P=MAN$ for the associated minimal parabolic subgroup of $G$, where $M$ is the centralizer of $A$ in $K$ and $N$ is the simply connected, nilpotent Lie group with Lie algebra $\mathfrak{n}=\mathfrak{g}_{2\alpha}+\mathfrak{g}_\alpha$. If we let $\theta$ denote the Cartan involution, and $V:=\theta(N)$ with Lie algebra $\mathfrak{v}$ then $\mathfrak{v}=\mathfrak{g}_{-\alpha}+\mathfrak{g}_{-2\alpha}$. We write $\rho$ for the half-sum of restricted roots with multiplicity, i.e. 
$$\rho=\frac{p+2q}{2}\alpha,.$$
where $p=\dim(\mathfrak{g}_{-\alpha})$ and $q=\dim(\mathfrak{g}_{-2\alpha})$. When discussing Knapp-Wallach's Szegö map, in Subsection \ref{subseclknlknad}, we shall also fix a noncompact root $\alpha_1$ for a maximal torus in $K$ that in turn will determine a particular choice of $A$ and simple restricted root $\alpha$ on $A$. 

We have the Bruhat decomposition 
$$G=MANV\dot{\cup} MANw,$$
where $w$ is an element centralizing $M$ and generating the restricted Weyl group of $\mathfrak{a}$. For $g\in MANV$ we write $m(g)\in M$ and $v(g)\in V$ for the unique elements with 
$$g\in m(g)ANv(g).$$
Since $MANV\subseteq G$ has full measure, $m(g)$ and $v(g)$ is defined for almost every $g\in G$. For $g\in G$, we write 
$$g=\mathsf{a}(g)n(g)\kappa(g)\in ANK,$$
for the Iwasawa decomposition. 

We will normalize the Haar measures so that $K$ and $M$ have unit volume and 
\begin{equation}
\label{normalz}
\int_V \mathsf{a}(v)^{2\rho}\rd v=1.
\end{equation}

\subsection{Representation theory}

Let us recall the basic elements of representation theory needed for this work. We refer the details to the textbooks, see for instance \cite{knappbook, wallachbook,wallachbookII}. The ultimate problem in representation theory is to given a group $G$ describe the space of irreducible, unitary representations of $G$. For the general class of semisimple Lie groups, this problem is hard and remains open. It turns out that for a semisimple Lie group $G$, it is easier to describe the space of \emph{admissible representations}. The irreducible admissible representations are classified by Langlands parameters by forming the uniquely determined irreducible quotient of the representation parabolically induced from the Langlands data. We refer the reader to \cite{afgomackey,clareetalI,clareetalII} for a holistic approach to such questions using operator algebraic methods. The reader can also find a computation of the space of irreducible, unitary representations in various instances of real rank one groups in \cite{foxhaskell,prudhon} and references therein. For semi-simple Lie groups of real rank one, the Langlands parameters arise from the discrete series representations of $G$ and the non-unitary principal series representations. The main result of this paper is concerned with the connection between the two.

\subsubsection{The non-unitary principal series representations} 
\label{subsubnonut}
are constructed as follows. If we take a quasicharacter $\nu\in \mathfrak{a}^*_\C$ and a finite-dimensional unitary representation $(\mathcal{H},\sigma)$ of $M$, there is an associated unitary principal series representation $\pi_{\sigma,\nu}$ of $G$ defined as follows in the induced/compact picture and the noncompact picture, respectively. By an abuse of notation, we write $\pi_{\sigma,\nu}$ for the representation in both pictures. In the induced/compact picture, we consider the homogeneous vector bundle $\mathbb{H}_\nu:=\mathcal{H}_\nu\times_P G\to P\backslash G$ where $\mathcal{H}_\nu:=\mathcal{H}$ with the action of $P$ defined from $man\mapsto \sigma(m)a^{\nu+\rho}$. The representation $\pi_{\sigma,\nu}$ of $G$ is on the Hilbert space $L^2(P\backslash G,\mathbb{H}_\nu)$ where it acts by right translation. We norm $L^2(P\backslash G,\mathbb{H}_\nu)$ by noting that $P\backslash G=M\backslash K$ so 
$$L^2(P\backslash G,\mathbb{H}_\nu)=L^2(M\backslash K,\mathcal{H}\times_MK)=L^2(K,\mathcal{H})^M.$$
The noncompact picture of the representation $\pi_{\sigma,\nu}$ is the representation space $L^2(V,\mathcal{H})$ with the action
$$\pi_{\sigma,\nu}(g)f(x)=\mathsf{a}(xg)^{\rho +\nu}\sigma(m(xg))f(v(xg)).$$
The unitary $G$-isomorphism $W_{\sigma,\nu}:L^2(V,\mathcal{H})\to L^2(K,\mathcal{H})^M$ is given by 
$$W_{\sigma,\nu}f(k):=\mathsf{a}(k)^{\rho +\nu}\sigma(m(k))f(v(k)),$$
with inverse
$$W_{\sigma,\nu}^{-1}\tilde{f}(x):=\mathsf{a}(x)^{\rho +\nu}f(\kappa(x)),$$

The representation $\pi_{\sigma,\nu}$ is also said to be in the non-unitary principal series. When $\nu\in i\mathfrak{a}$, then $\pi_{\sigma,\nu}$ is unitary and if moreover $\nu\neq 0$ then $\pi_{\sigma,\nu}$ irreducible, see \cite[Theorem 5]{knappstein} for more details. It was characterized in \cite[Section 14]{knappstein} when $\pi_{\sigma,\nu}$ is unitarizable. 

\subsubsection{The discrete series representations} 
\label{subsubecdisc} 
are constructed under our standing assumption that $K$ has the same rank as $G$. We take a maximal torus $T\subseteq K$. The discrete series representations are constructed from certain characters $\lambda$ on $T$ as solution spaces in $L^2(K\backslash G,\mathbb{V}_\lambda)$ for a first order differential operator. We will only briefly recall the salient feature in the construction, and refer the details to the literature \cite{atiischmid,knappbook,laffaicm,partha,schmiddie}.

Since $K$ has the same rank as $G$, we have that $\mathfrak{t}_\C\subseteq \mathfrak{g}_\C$ is a Cartan subalgebra. We write $\Delta=\Delta(\mathfrak{g}_\C,\mathfrak{t}_\C)$ for the associated roots and $\Delta_K=\Delta(\mathfrak{k}_\C,\mathfrak{t}_\C)$ for the set of compact roots. We also write $\Delta_n:=\Delta\setminus \Delta_K$ for the set of non-compact roots. A character $\lambda \in \mathfrak{t}_\C^*$ is said to be integral if it lifts to $T$ and non-singular if $\langle \lambda,\beta\rangle\neq 0$ for all $\beta\in \Delta$. If $\Lambda\in \mathfrak{t}_\C^*$ is a non-singular character we can define a positive root system by $\Delta^+:=\{\beta\in \Delta: \langle \Lambda,\beta\rangle> 0$ and with $\Lambda$ associate the character $\lambda\in \mathfrak{t}_\C^*$ by $\lambda:=\Lambda+\delta_n-\delta_k$, where $\delta_n$ is the half-sum of positive non-compact roots and $\delta_k$ is the half-sum of positive compact roots. If $\Lambda$ is non-singular and $\lambda$ integral, there is an associated discrete series representation $\pi_\lambda$ constructed inside $L^2(K\backslash G,\mathbb{V}_\lambda)$, for $\mathcal{V}_\lambda$ the $K$-representation with highest weight $\lambda$, as the kernel of a certain $G$-equivariant differential operator $\mathcal{D}_\Lambda$ \cite{schmiddie}, as used in \cite[Section 2]{knappwallach}. Alternatively, the discrete series representation $\pi_\lambda$ can be constructed from Dirac operators on $K\backslash G$ \cite{atiischmid,laffaicm,partha}.

\subsection{Poisson transforms}

There is a formalism for Poisson transforms dating back to Furstenberg \cite{furstann} and Helgason, explored at length in his books \cite{HelgasonDGLGSS,helgasongga,helgasongass} and the main player in the Helgason conjecture \cite{helgason70,helgason76}. We follow the more recent treatment due to Cap-Harrach-Julg \cite{capjulg} and connect it to Knapp-Wallach's Szegö map from \cite{knappwallach}. The formalism characterizes all $G$-equivariant maps from functions spaces on $P\backslash G$ to those on $K\backslash G$. The term Poisson transform here derives from the view on $P\backslash G$ as a (Furstenberg) boundary of $K\backslash G$ and the Poisson transform extends function to homogeneous solutions to a PDE, much in line with Knapp-Wallach's Szegö map \cite{knappwallach} that forms the main object of study in this paper. We summarize the formalism of Cap-Harrach-Julg \cite{capjulg} in the following theorem. 

\begin{thm}
\label{chjthm}
Assume that $\mathbb{H}\to P\backslash G$ and $\mathbb{V}\to K\backslash G$ are $G$-equivariant vector bundles with fiber $\mathcal{H}$ at $Pe$ and $\mathcal{V}$ at $Ke$. Then there is an isomorphism 
$$\mathrm{Hom}_G(\mathcal{D}'(P\backslash G,\mathbb{H}),C^\infty(K\backslash G,\mathbb{V}))\cong \mathrm{Hom}_M(\mathcal{H},\mathcal{V}),$$
obtained from composing the Schwartz kernel isomorphism 
$$\mathrm{Hom}_G(\mathcal{D}'(P\backslash G,\mathbb{H}),C^\infty(K\backslash G,\mathbb{V}))\cong C^\infty(K\backslash G\times P\backslash G; \mathbb{V}\times \mathbb{H}^*)^G,$$
with the isomorphism of the latter to $\mathrm{Hom}_M(\mathcal{H},\mathcal{V})$ defined from the $G$-equivariant diffeomorphism 
$$p_M:M\backslash G\to K\backslash G\times P\backslash G, \quad Mg\mapsto (Kg,Pg).$$
\end{thm}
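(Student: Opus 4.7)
The plan is to establish the two displayed isomorphisms separately and then compose them. The first one is a straightforward application of the Schwartz kernel theorem together with a regularity argument; the second follows once we verify that $p_M$ is a $G$-equivariant diffeomorphism and chase definitions.

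For the Schwartz kernel step, I would apply the Schwartz kernel theorem in the equivariant vector bundle setting: a continuous linear operator $T:\mathcal{D}'(P\backslash G,\mathbb{H})\to C^\infty(K\backslash G,\mathbb{V})$ has a distributional kernel $k_T\in \mathcal{D}'(K\backslash G\times P\backslash G;\mathbb{V}\boxtimes \mathbb{H}^*)$. Because $T$ sends arbitrary distributions to smooth sections, the standard regularity argument (adjoint operator has the same property, so the wavefront set of $k_T$ is empty) forces $k_T$ to be smooth. The $G$-equivariance of $T$ translates directly into invariance of $k_T$ under the diagonal right $G$-action on $K\backslash G\times P\backslash G$ lifted to the external tensor product bundle. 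This yields the first isomorphism in the statement.

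For the second step, I would first verify that $p_M:Mg\mapsto (Kg,Pg)$ is a $G$-equivariant diffeomorphism. Equivariance is obvious. Injectivity uses $M=K\cap P$: if $(Kg,Pg)=(Kg',Pg')$, then $g'g^{-1}\in K\cap P=M$. Surjectivity uses the Iwasawa-type decomposition $G=KP$ (equivalently $G=PK$): given $(Kg_1,Pg_2)$, write $g_2g_1^{-1}=pk^{-1}$ and take $g=p^{-1}g_2=k^{-1}g_1$. A dimension count $\dim M\backslash G=\dim G-\dim M=(\dim G-\dim K)+(\dim K-\dim M)=\dim K\backslash G+\dim P\backslash G$ confirms $p_M$ is a diffeomorphism rather than an immersion onto a submanifold.

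Once $p_M$ is established, I would pull the bundle $\mathbb{V}\boxtimes \mathbb{H}^*$ back along $p_M$. Since $\mathbb{V}=\mathcal{V}\times_K G$ and $M\subseteq K$, the pullback of $\mathbb{V}$ to $M\backslash G$ is $\mathcal{V}\times_M G$, and similarly for $\mathbb{H}^*$, so $p_M^*(\mathbb{V}\boxtimes \mathbb{H}^*)=(\mathcal{V}\otimes\mathcal{H}^*)\times_M G$. A smooth section of this bundle that is invariant under the right $G$-action is a constant $M$-equivariant map $G\to \mathcal{V}\otimes \mathcal{H}^*$, hence an element of $(\mathcal{V}\otimes\mathcal{H}^*)^M=\mathrm{Hom}_M(\mathcal{H},\mathcal{V})$. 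Composing with the first isomorphism concludes the proof.

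The steps are all essentially formal, so there is no serious obstacle; the main thing to keep honest is the regularity argument for the Schwartz kernel (needed to conclude $k_T$ is smooth rather than merely distributional) and a careful bookkeeping of the half-density or Haar measure conventions that make the pairing $T\phi(x)=\int k_T(x,y)\phi(y)\,\rd y$ intrinsic. Neither requires new ideas, but both must be stated in a way compatible with the later analytic use of the kernel in the proofs of Theorems \ref{conja} and \ref{conjb}.
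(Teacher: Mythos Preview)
Your argument is correct and is the standard one. Note, however, that the paper does not give its own proof of this theorem: it is stated as a summary of the formalism of \v{C}ap--Harrach--Julg \cite{capjulg}, with the remark immediately afterward tracing through the isomorphism to recover the explicit integral formula $P_\phi u(g)=\int_K \tau(k)^{-1}\phi[u(kg)]\,\mathrm{d}k$. Your sketch supplies exactly the details one would find in that reference --- Schwartz kernel plus regularity for the first isomorphism, and the identification $M=K\cap P$ together with $G=PK$ for the diffeomorphism $p_M$ --- so there is nothing to correct or compare.
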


\begin{remark}
Tracing through the isomorphism, we see that a $\phi\in  \mathrm{Hom}_M(\mathcal{H},\mathcal{V})$ under Theorem \ref{chjthm} corresponds to the Poisson transform
$$P_\phi:\mathcal{D}'(P\backslash G,\mathbb{H})\to C^\infty(K\backslash G,\mathbb{V}), \quad P_\phi u(g):=\int_{K} \tau(k)^{-1}\phi [u(kg)]\mathrm{d} k,$$
where $\tau$ denotes the isotropy representation of $K$ on the fiber $\mathcal{V}$ of $\mathbb{V}$ at $Ke$. In this integral representation of $P_\phi$, we identify $\mathcal{D}'(P\backslash G,\mathbb{H})=\mathcal{D}'(G,\mathcal{H})^P$ and $C^\infty(K\backslash G,\mathbb{V})=C^\infty( G,\mathcal{V})^K$.
\end{remark}

\begin{example}[Poisson transforms and the Furstenberg boundary]
A simple example of a Poisson transform arises when $\mathcal{H}=\C$ has the trivial $M$-action and $\mathcal{V}=\C$ has the trivial $K$-action. In this case, when fixing quasicharacter $\nu\in \mathfrak{a}_\C^*$ we obtain a line bundle $L_\nu\to P\backslash G$ from the action of $P$ on $\C$ defined from $man\mapsto a^{\nu+\rho}$. We write $P_\nu:\mathcal{D}'(P\backslash G,L_\nu)\to C^\infty(K\backslash G)$ for the associated Poisson transform. In \cite{vcbschlich}, the range of  $P_\nu:C^\infty(P\backslash G,L_\nu)\to C^\infty(K\backslash G)$ was computed and the boundary asymptotics of $P_\nu\varphi$, for $\varphi\in C^\infty(P\backslash G,L_\nu)$, was computed and related to the inverse of $P_\nu$ on its image. In the case $\nu=0$, we arrive at a Poisson transform
$$P:C^\infty(P\backslash G)\to C^\infty(K\backslash G), \quad P\varphi(Kg):=\int_K \varphi(kg)\rd k,$$
with range in the bounded harmonic functions on $K\backslash G$. When identifying $K\backslash G$ with a unit ball and $P\backslash G$ with its sphere at infinity \cite{jiborel}, the closed unit ball $Z=K\backslash G\dot{\cup}P\backslash G$ is called the Furstenberg compactification \cite{furstann} of $K\backslash G$. The asymptotic behavior of $P\varphi$ at $P\backslash G$ is precisely $\varphi$ up to lower order terms, by \cite{vcbschlich}. Writing $C_0^\infty(K\backslash G):=C_0(K\backslash G)\cap C^\infty(Z)$ we arrive at a short exact sequence of Frech\'{e}t algebras
$$0\to C_0^\infty(K\backslash G)\to C^\infty(Z)\to C^\infty(P\backslash G)\to 0,$$
which is linearly split by the positive $G$-equivariant Poisson transform $P$. This fact is refined by the next proposition that follows from \cite{vcbschlich}.
\end{example}

To describe smooth functions on the Furstenberg compactification $Z$,  we will use the $KA^+K$-description of $G$. Here $A^+\subseteq A$ denotes the closed, positive Weyl chamber for our restricted root $\alpha$. We have that $KA^+K= G$, see \cite[Theorem 8.6, page 323]{HelgasonDGLGSS} and restriction along this inclusion induces an embedding $C^\infty(Z)\hookrightarrow C^\infty_b(A^+K)$. We note that $A^+K\cong (0,1]\times K$ via the map $ak\mapsto (a^{-\alpha},k)$ and we write $C^\infty_0(A^+K)$ for the space of smoothly bounded functions $b$ on $A^+K$ with 
$$\lim_{a^{-\alpha}\to 0} b(ak)=\lim_{H\to +\infty} b(\mathrm{e}^Hk)=0,$$
where $H\to +\infty$ in $\mathfrak{a}$ if $\alpha(H)\to +\infty$.

\begin{prop}
\label{dedcom}
For any $b\in C^\infty(Z)$, there is a $b_0\in C^\infty(M\backslash K)$, a $b_1\in C^\infty_0(A^+K)^M$ and a pre-compact subset $U\subseteq K\backslash G$ such that outside of $U$, 
$$b(ak)=b_0(k)+b_1(ak).$$
\end{prop}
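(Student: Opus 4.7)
The plan is to recognize that the Furstenberg compactification $Z$ is a compact manifold with boundary $\partial Z = P\backslash G \cong M\backslash K$ (diffeomorphic to a closed unit ball), so that smooth functions on $Z$ decompose canonically, in a collar of the boundary, into their boundary restriction plus a remainder vanishing on $\partial Z$. The $KA^+K$-parametrization realizes $A^+K \cong (0,1]\times K$ via $ak \mapsto (a^{-\alpha},k)$, with $t:=a^{-\alpha}=0$ attaching the boundary; this is essentially such a collar of $\partial Z$, with nondegeneracy failing only at the center $Ke \in K\backslash G$. The failure at $a=e$ is absorbed by the precompact exceptional set $U$ allowed by the statement.

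Concretely, I would set $b_0 := b|_{\partial Z}\in C^\infty(M\backslash K)$, well-defined and smooth because $b$ is smooth up to the boundary of $Z$. On $A^+K$, both $(a,k)\mapsto b(Kak)$ and $(a,k)\mapsto b_0(k)$ are smoothly bounded — the first as a composition of smooth maps on a compact manifold with boundary, the second since $b_0\in C^\infty(K)$ — and I would define $b_1(ak) := b(ak)-b_0(k)$. The identity $b(ak)=b_0(k)+b_1(ak)$ then holds on all of $A^+K$, so any precompact $U$ (including $U=\emptyset$) suffices for the conclusion of the proposition.

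It remains to verify $b_1\in C^\infty_0(A^+K)^M$. Smoothness and boundedness of $b_1$ and its derivatives follow from those of $b$ and $b_0$. For $M$-invariance, since $am=ma$ for $m\in M$, we have $amk=mak$ and hence $Kamk=Kak$, so $b(amk)=b(ak)$; combined with $b_0(mk)=b_0(k)$ this yields $b_1(amk)=b_1(ak)$. Finally, $\lim_{t\to 0} b_1(ak)=0$ is the continuity of $b$ on $Z$ together with the fact that $Kak$ converges to the boundary point $Mk \in \partial Z = M\backslash K$ as $t=a^{-\alpha}\to 0$. The only mild obstacle is bookkeeping the $M$-ambiguity in the $KA^+K$-decomposition and the degeneracy at $a=e$; both are harmless, since the former is built into the target via the $M$-invariance requirement and the latter is accommodated by the precompact exceptional set $U$.
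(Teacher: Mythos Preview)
Your approach is correct and is essentially the one the paper has in mind: the paper does not spell out a proof but simply records that the proposition ``follows from \cite{vcbschlich}'', and your decomposition $b_0 := b|_{\partial Z}$, $b_1 := b - b_0$ is exactly the boundary-value-plus-remainder splitting that this citation is meant to justify. The one point you pass over lightly is the claim that $(a,k)\mapsto b(Kak)$ is \emph{smoothly bounded} on $A^+K$: this requires that the map $[0,1]\times K \to Z$, $(t,k)\mapsto Kak$ with $t=a^{-\alpha}$, extends smoothly across $t=0$, i.e.\ that $t=a^{-\alpha}$ is a genuine boundary-defining function for the smooth structure on the Furstenberg compactification. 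That compatibility is precisely the content of the asymptotic analysis in van den Ban--Schlichtkrull and is the reason the paper cites it; once granted, your argument goes through verbatim (and indeed with $U=\emptyset$, as you observe).
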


\subsection{Poisson transforms and BGG-complexes} 
\label{subsecpoisson}
The connection between Poisson transforms, BGG-complexes and harmonic function was described in \cite{capjulg}. This connection plays an important role for the application of our main results to Julg's program for the Baum-Connes conjecture, so we recall this connection briefly. The BGG-complex is a graded hypoelliptic complex isomorphic to a subquotient of the de Rham complex. As \cite{capjulg} emphasizes, the BGG-complex on $P\backslash G$ is already packaged into principal series components contrasting the more complicated representation theoretical content of the de Rham complex. Let us make this somewhat more precise, leaving the details to the literature \cite{cap,morecap,Dave_Haller1,goffbgg,julghow}. 

Using the Cayley transform $V\to P\backslash G=M\backslash K$, $v\mapsto M\kappa(v)$, we can identify $T(P\backslash G)=\mathfrak{v}\times_M K$. The Killing form allows us to identify $\mathfrak{v}^*=\mathfrak{n}$, so $T^*(P\backslash G)=\mathfrak{n}\times_M K$ and more generally $\wedge^j T^*(P\backslash G)=(\wedge^j\mathfrak{n})\times_M K$. A homogeneous bundle $\mathbb{E}\to P\backslash G$ is called a tractor bundle if  there is a finite-dimensional $G$-module $E$ such that $\mathbb{E}=E\times_P G$; note $\wedge^j T^*(P\backslash G)\otimes\mathbb{E} =(\wedge^j\mathfrak{n}\otimes E)\times_P G$. The Kostant differential $\partial_E$ is a vector bundle morphism  $\wedge^* T^*(P\backslash G)\to \wedge^{*-1} T^*(P\backslash G)$ defined from the Lie algebra differential on $\wedge^*\mathfrak{n}\otimes E$. The homology of the Kostant differential form vector bundles $H_*(\mathbb{E})\to P\backslash G$. We have that $H_*(\mathbb{E})=H_*(\wedge^j\mathfrak{n}\otimes E)\times_P G$ where $H_*(\wedge^j\mathfrak{n}\otimes E)$ is a module for $MA$ whose structure is explicitly determined from the highest weights in $E$ from \cite[Theorem 3.3.5 and Proposition 3.3.6]{cap}. In particular, $L^2(P\backslash G,H_*(\mathbb{E}))$ decomposes as a direct sum of non-unitary principal series representations with parameters explicitly determined from the highest weights in $E$. The de Rham differential on $C^\infty(P\backslash G,\wedge^*T^*(P\backslash G)\otimes\mathbb{E})$ compresses to the subquotient $C^\infty(P\backslash G,H_*(\mathbb{E}))$ via a procedure involving compressing to forms harmonic for the Kostant differential, for details see \cite{Dave_Haller1} or \cite{morecap,goffbgg}. The resulting BGG-complex is a $G$-equivariant complex whose constituents are non-unitary principal series representations. We now summarize \cite[Proposition 2.2]{capjulg} connecting Poisson transforms and BGG-complexes to harmonic forms. Recall that $\mathfrak{p}=\mathfrak{g}/\mathfrak{k}$ is the fiber of the tangent bundle of $K\backslash G$.

\begin{thm}
Let $E_1$ and $E_2$ be finite-dimensional $G$-representations inducing homogeneous bundles $\mathbb{E}_1\to P\backslash G$ and $\mathbb{E}_2\to K\backslash G$. Take a $\phi\in  \mathrm{Hom}_M(\wedge^j\mathfrak{n}\otimes E_1,\wedge^l \mathfrak{p}^*\otimes E_2)$. Then the following are equivalent:
\begin{enumerate}
\item $P_\phi$ maps $\mathcal{D}'(P\backslash G,\wedge^jT^*(P\backslash G)\otimes \mathbb{E}_1)$ into harmonic $l$-forms on $K\backslash G$.
\item $P_\phi\circ \partial=P_\phi\circ \rd \circ \partial=0$ for the exterior differential $\rd$ and the Kostant differential $\partial$.
\item $\phi\circ \partial=0$ in $\mathrm{Hom}_M(\wedge^{j+1}\mathfrak{n}\otimes E_1,\wedge^l \mathfrak{p}^*\otimes E_2)$ and $\phi\circ \delta\circ \partial=0$ in $\mathrm{Hom}_M(\wedge^{j}\mathfrak{n}\otimes E_1,\wedge^l \mathfrak{p}^*\otimes E_2)$ for $\delta$ the dual of the Lie algebra differential on $C^j(\mathfrak{n})$.
\end{enumerate}
If either of these conditions hold, $P_\phi$ descends to a map from distributions on the BGG-complex $\mathcal{D}'(P\backslash G,H_j(\mathbb{E}_1))$ to harmonic $l$-forms on $\mathbb{E}_2$. 
\end{thm}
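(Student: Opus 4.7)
The plan is to leverage the Cap-Harrach-Julg Schwartz-kernel isomorphism of Theorem \ref{chjthm} to translate every operator in the statement -- the Kostant differential $\partial$ and exterior derivative $d$ on $P\backslash G$, together with the de Rham differential and codifferential on $K\backslash G$ -- into $M$-equivariant linear maps at the fiber level, and then to verify the equivalences as algebraic identities. The organizing principle is that all operators involved are $G$-equivariant, so each composition $P_\phi \circ A$ or $A \circ P_\phi$ is again a $G$-equivariant map between homogeneous bundle section spaces and hence, by Theorem \ref{chjthm}, corresponds to a unique $M$-equivariant map on fibers computable from $\phi$ and $A$.

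I would start with $(3)\Leftrightarrow(2)$. Since $\partial$ is a vector bundle morphism, naturality of the CHJ isomorphism gives $P_\phi \circ \partial = P_{\phi \circ \partial}$, so $\phi \circ \partial = 0$ iff $P_\phi \circ \partial = 0$. The more delicate identification is of $P_\phi \circ d \circ \partial$: via CHJ, this $G$-equivariant operator equals $P_{\tilde\phi}$ for some $\tilde\phi\in\mathrm{Hom}_M(\wedge^j\mathfrak{n}\otimes E_1,\wedge^l\mathfrak{p}^*\otimes E_2)$, and the content of (3) is the identification $\tilde\phi=\phi\circ\delta\circ\partial$. To verify this, one transports $d$ on $P\backslash G$ through the diffeomorphism $p_M: M\backslash G\to K\backslash G\times P\backslash G$ of Theorem \ref{chjthm} and expresses the resulting fiber-level operation via the Killing-form duality $\mathfrak{v}^*=\mathfrak{n}$, converting the Chevalley-Eilenberg differential on $\wedge^*\mathfrak{n}^*$ into the dual differential $\delta$ on $\wedge^*\mathfrak{n}$.

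For $(2)\Leftrightarrow(1)$, harmonicity $dP_\phi f=0=d^*P_\phi f$ on $K\backslash G$ corresponds via CHJ to two vanishings at the fiber level. I would show, using Kostant's Hodge decomposition $\wedge^j\mathfrak{n}\otimes E_1=\ker(\partial)\oplus\mathrm{im}(\partial)$ (with the Kostant-harmonics as a preferred complement) together with the adjointness of $\partial$ and $\delta$, that these fiber-level vanishings are equivalent to the two conditions in (2): the $d^*$-vanishing captures the $\partial$-condition, while the $d$-vanishing captures the $d\circ\partial$-condition after using the splitting. The technical heart of the argument, and the main obstacle, is the explicit fiber-level computation of $d\circ P_\phi$ and $d^*\circ P_\phi$: one must differentiate the $K$-integral in the Poisson formula $P_\phi f(g)=\int_K \tau(k)^{-1}\phi[f(kg)]\,\mathrm{d}k$ along $\mathfrak{p}$-directions, cancel the $\mathfrak{k}$-contributions using $K$-invariance of Haar measure and $M$-equivariance of $\phi$, and identify the surviving $M$-equivariant expression with the Chevalley-Eilenberg (co)differential in the guise used by (3). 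Once these formulas are available, the final assertion is immediate: $\phi\circ\partial=0$ forces $\phi$ to vanish on $\mathrm{im}(\partial)$, and combined with $\phi\circ\delta\circ\partial=0$ this guarantees that $\phi$ descends to the Kostant-harmonic representatives isomorphic to $H_j(\mathfrak{n},E_1)$, producing a well-defined $G$-equivariant map from $\mathcal{D}'(P\backslash G,H_j(\mathbb{E}_1))$ to harmonic $l$-forms on $K\backslash G$.
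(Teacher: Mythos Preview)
The paper does not supply a proof of this theorem: it is stated as a summary of \cite[Proposition 2.2]{capjulg}, with the sentence ``We now summarize \cite[Proposition 2.2]{capjulg} connecting Poisson transforms and BGG-complexes to harmonic forms'' immediately preceding the statement and no proof following it. So there is no in-paper argument to compare your proposal against.

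That said, your strategy is exactly the one underlying the Cap--Harrach--Julg framework and is the natural route to a proof. The reduction of $(2)\Leftrightarrow(3)$ via the Schwartz-kernel isomorphism of Theorem~\ref{chjthm} is correct in outline: $\partial$ is a tensorial bundle map, so $P_\phi\circ\partial=P_{\phi\circ\partial}$ is immediate, and identifying the fibre-level map corresponding to $P_\phi\circ d\circ\partial$ with $\phi\circ\delta\circ\partial$ is precisely what \cite{capjulg} establishes by differentiating the Poisson kernel along the $M\backslash G$ picture. Your description of $(1)\Leftrightarrow(2)$ is less sharp: the way \cite{capjulg} organizes this is not by computing $d\circ P_\phi$ and $d^*\circ P_\phi$ separately and matching them to the two conditions, but rather by first observing that the image of $P_\phi$ always consists of \emph{closed} forms (this comes for free from the $G$-equivariance and the structure of the kernel), so harmonicity reduces to co-closedness, and then showing that co-closedness of $P_\phi f$ for all $f$ is equivalent to the pair of conditions in $(2)$ via the Kostant Hodge decomposition on $\wedge^j\mathfrak{n}\otimes E_1$. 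Your sketch conflates which condition controls which half of harmonicity; if you intend to write this out, consult \cite[\S2]{capjulg} for the precise bookkeeping. The descent to $H_j(\mathbb{E}_1)$ in your final paragraph is correct.
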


\subsection{Knapp-Wallach's Szegö map}
\label{subseclknlknad}

The Poisson transforms we will primarily study in this paper arise from the particular situation studied by Knapp-Wallach \cite{knappwallach}, relating certain non-unitary principal series representations to discrete series representations. For simplicity of notation we continue to work under the assumption that $G$ has real rank one. We follow the notations for discrete series representations from Subsection \ref{subsubecdisc}. Consider an irreducible $K$-representation $(\mathcal{V},\tau)$ with highest weight $\lambda$ such that $\Lambda=\lambda-\delta_n+\delta_k$ is non-singular. 

We assume that $A$ is defined from a noncompact root defined relative the compact Cartan subalgebra $\mathfrak{t}_\C$ as in \cite[Section 2]{knappwallach}. To be precise, we fix a noncompact simple root $\alpha_1$ for the maximal torus in $K$. Then $E_{\alpha_1}+E_{-\alpha_1}\in \mathfrak{p}$ defines $A$ and a simple restricted root $\alpha$ for $A$. We write $\rho$ for the associated half sum of positive restricted roots. We also form the character $\nu_{\mathcal{V}}$ on $A$ by 
\begin{equation}
\label{nuvforea}
\nu_{\mathcal{V}}(E_{\alpha_1}+E_{-\alpha_1}):=\frac{2\langle \lambda +n_1\alpha_1,\alpha_1\rangle}{\langle \alpha_1,\alpha_1\rangle}=\rho(E_{\alpha_1}+E_{-\alpha_1})-\frac{2}{|\alpha|^2}\langle \Lambda,\alpha_1\rangle,
\end{equation}
where $n_1$ is defined in \cite[Equation (6.5.b)]{knappwallach} and the second identity is found in \cite[Equation (1.4)]{blankrankone}. We also form the irreducible $M$-representation $(\mathcal{H},\sigma)\subseteq (\mathcal{V},\tau|_M)$ generated by $\lambda$.

The $G$-equivariant Knapp-Wallach Szegö map is then defined as
\begin{align*}
&S_{\mathcal{V}}:\mathcal{D}'(P\backslash G,\mathbb{H}_{\nu_{\mathcal{V}}})\to C^\infty(K\backslash G,\mathbb{V}), \\ 
&S_{\mathcal{V}} f(g):=\int_K \tau(k)^{-1} f(kg)\rd k=\int_K\mathsf{a}(lg^{-1})^{\nu_{\mathcal{V}}}\tau(\kappa(lg^{-1}))f(l)\rd l.
\end{align*}
It was proven by Knapp-Wallach \cite{knappwallach} that $S_{\mathcal{V}}$ induces an infinitesimal intertwiner from the nonunitary principal series $L^2(P\backslash G,\mathbb{H}_{\nu_{\mathcal{V}}})$ to the discrete series representation $\pi_\lambda$ with lowest $K$-type $\lambda$. In particular, $S_{\mathcal{V}}$ maps the $K$-finite vectors in $C^\infty(P\backslash G,\mathbb{H}_{\nu_{\mathcal{V}}})$ into the $K$-finite vectors of $\pi_\lambda$, in particular $S_{\mathcal{V}}f\in L^2(K\backslash G,\mathbb{V})$ for $f$ $K$-finite.

\begin{remark}
\label{globaadad}
We note that $S_{\mathcal{V}}$ is only an infinitesimal intertwiner, and there are to the authors' knowledge no results characterizing Sobolev type spaces on $P\backslash G$ for which $S_{\mathcal{V}}$ is continuous into $L^2(K\backslash G,\mathbb{V})$. However, by Casselman-Wallach's globalization theorem \cite{cassglob,wallachasy} or \cite[Chapter 11]{wallachbookII}, and in particular Bernstein-Krötz's interpretation  \cite[Theorem 1.1]{krobern}, it holds for any decreasing scale of Sobolev spaces $(\mathpzc{H}^s(P\backslash G,\mathbb{H}_{\nu_{\mathcal{V}}}))_{s\in \R}$ (satisfying $\mathpzc{H}^0(P\backslash G,\mathbb{H}_{\nu_{\mathcal{V}}})=L^2(P\backslash G,\mathbb{H}_{\nu_{\mathcal{V}}})$, $\cap_s \mathpzc{H}^s(P\backslash G,\mathbb{H}_{\nu_{\mathcal{V}}})=C^\infty(P\backslash G,\mathbb{H}_{\nu_{\mathcal{V}}})$ and $[\mathpzc{H}^s(P\backslash G,\mathbb{H}_{\nu_{\mathcal{V}}})]^*=\mathpzc{H}^{-s}(P\backslash G,\mathbb{H}_{\nu_{\mathcal{V}}})$ in the $L^2$-pairing), that there exists an $s_0\in \R$ for which 
$$S_{\mathcal{V}}: \mathpzc{H}^{s_0}(P\backslash G,\mathbb{H}_{\nu_{\mathcal{V}}})\to L^2(K\backslash G,\mathbb{V}),$$
is continuous. To summarize, abstract nonsense ensures continuity of $S_{\mathcal{V}}$ in any Sobolev scale into the discrete series representation acting unitarily on $L^2$, but we have no control of the Sobolev regularity nor of whether or not if there is an optimal value or if $S_{\mathcal{V}}$ has closed range for the optimal Sobolev regularity.
\end{remark}

\begin{remark}
Knapp-Wallach's Szegö map ties together with the BGG-complexes discussed in Subsection \ref{subsecpoisson}. This connection is of importance to Julg's program for the Baum-Connes conjecture \cite{julgkasp,julghow}. It is well known that the harmonic $L^2$-forms on $K\backslash G$ concentrate in middle degree and forms a finite sum of discrete series representations. 

For instance, if $G=SU(n,1)$, the harmonic forms were studied by Pedon \cite{pedonsun}. The space of harmonic $k$-forms $\mathfrak{H}^k(K\backslash G)\subseteq L^2(K\backslash G,\wedge^k T^*( K\backslash G))$ is trivial unless $k=n$. Moreover, $\mathfrak{H}^{n}(K\backslash G)$ splits as a direct sum of $n+1$ discrete series representations with multiplicity one, see \cite[Theorem 3.3]{pedonsun} for more details including a description of the lowest $K$-types. In this case, it follows from \cite{julgkasp} that Knapp-Wallach's Szegö map is an infinitesimal intertwiner mapping $K$-finite vectors of $L^2(P\backslash G,H_{n}(\C\times_P G))$ to  $K$-finite vectors of $\mathfrak{H}^{n}(K\backslash G)$.

For $G=Sp(n,1)$, the story is similar and the harmonic forms were studied by Pedon \cite{pedonharm}. The space of harmonic $k$-forms $\mathfrak{H}^k(K\backslash G)\subseteq L^2(K\backslash G,\wedge^k T^*( K\backslash G))$ is trivial unless $k=2n$. Moreover, $\mathfrak{H}^{2n}(K\backslash G)$ splits as a direct sum of $n+1$ discrete series representations with multiplicity one, see \cite[Theorem 4.13]{pedonharm} for more details including a description of the lowest $K$-types. In this case, it follows from \cite{julghow} that Knapp-Wallach's Szegö map is an infinitesimal intertwiner mapping $K$-finite vectors of $L^2(P\backslash G,H_{n}(\C\times_P G))$ to  $K$-finite vectors of $\mathfrak{H}^{2n}(K\backslash G)$.
\end{remark}

\subsection{Knapp-Stein intertwiners}
\label{subsecks}

An important tool for studying equivalences and reducibility in parabolically induced representations is Knapp-Stein's theory for intertwiners \cite{knappstein}. We will use Knapp-Stein intertwiners in the non-unitary principal series together with the Heisenberg calculus to prove the closed range statement in Theorem \ref{conja} below in Section \ref{aldknadln}. We follow the notations and conventions of Subsubsection \ref{subsubnonut}. Recall that $w$ denotes an element centralizing $M$ and generating the restricted Weyl group of $\mathfrak{a}$, the Knapp-Stein intertwiner depends on $w$ but since we only consider groups of real rank one we drop $w$ from the notation used in \cite{knappstein}.

We first define the Knapp-Stein intertwiner in the compact picture $L^2(P\backslash G,\mathbb{H}_\nu)=L^2(K,\mathcal{H})^M$. We say that $\mathrm{Re}(\nu)> 0$ if $\nu=z\rho$ for a $z\in \C$ with $\mathrm{Re}(z)> 0$. The Knapp-Stein intertwiner 
$$A_{\sigma,\nu}:L^2 (P\backslash G,\mathbb{H}_\nu)\to L^2(P\backslash G,\mathbb{H}_{-\nu}),$$
is for $\mathrm{Re}(\nu)> 0$ defined by the integral operator
$$A_{\sigma,\nu}f(k):=\int_K\mathsf{a}(lw)^{\rho-\nu}\sigma(m(lw))f(lk)\rd l.$$
A priori, $A_{\sigma,\nu}:C^\infty(P\backslash G,\mathbb{H}_\nu)\to C^\infty(P\backslash G,\mathbb{H}_{-\nu})$ but since the kernel of $A_{\sigma,\nu}$ is integrable for $\mathrm{Re}(\nu)> 0$, $A_{\sigma,\nu}$ is a well defined bounded operator on $L^2$. Moreover, 
$$A_{\sigma,\nu}\pi_{\sigma,\nu}(g)=\pi_{\sigma,-\nu}(g)A_{\sigma,\nu},$$
and so $A_{\sigma,\nu}$ is $G$-equivariant. As proven in \cite{knappstein}, the intertwiners $A_{\sigma,\nu}$ are essentially the only possible intertwiners within the nonunitary principal series apart from some exceptional cases. The next result is found as \cite[Lemma 21]{knappstein}.

\begin{lemma}
Assume that $(\mathcal{H},\sigma)$ and $(\mathcal{H}',\sigma')$ are two $M$-representations and that $\nu,\nu'\in \mathfrak{a}_\C$ are two characters with $\mathrm{Re}(\nu)> 0$. If 
$$\mathcal{L}:C^\infty(P\backslash G,\mathbb{H}_\nu)\to C^\infty(P\backslash G,\mathbb{H}'_{-\nu}),$$
is a $G$-equivariant linear operator, then either $\sigma=\sigma'$ and $\nu=\nu'$ and $\mathcal{L}$ is a scalar, or $\sigma=\sigma'$ and $\nu=-\nu'$ and for a constant $c\in \C$ we have $\mathcal{L}=cA_{\sigma,\nu}$. 
\end{lemma}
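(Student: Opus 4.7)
The plan is to reduce the statement to a classification of $P$-bi-equivariant distributions on $G$ via the Schwartz kernel theorem, and then to use the Bruhat decomposition together with the homogeneity constraints coming from the quasi-characters $\nu,\nu'$ to pin down the kernel of $\mathcal{L}$. First I would invoke the Schwartz kernel theorem, in the form used already in Theorem \ref{chjthm} but adapted so that both factors are the boundary, to identify
$$\mathrm{Hom}_G(C^\infty(P\backslash G,\mathbb{H}_\nu),C^\infty(P\backslash G,\mathbb{H}'_{\nu'})) \cong \mathcal{D}'(P\backslash G\times P\backslash G,\mathbb{H}'_{\nu'}\boxtimes \mathbb{H}_\nu^*)^G.$$
The diagonal $G$-invariance descends such a kernel $K$ to a distribution on the double coset space $P\backslash G/P$, twisted by $\sigma'\boxtimes \sigma^*$ and by the quasi-character $(\nu'+\rho)\otimes(-\nu+\rho)$ coming from the $A$-action.

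Next I would split the analysis according to the Bruhat decomposition $G=P\,\dot\cup\, PwP$, which in real rank one gives exactly two $P$-double cosets: the closed point $PeP$ and the open dense cell $PwP$, parametrized via the Bruhat chart by $V=\theta(N)$. Correspondingly, $K$ decomposes as $K=K_{\mathrm{id}}+K_{\mathrm{open}}$, with $K_{\mathrm{id}}$ supported at the identity coset and $K_{\mathrm{open}}$ living on the open cell. A distribution supported at a point is a finite sum of derivatives of $\delta_e$; the $M$- and $A$-equivariance conditions then force the weight under $A$ on both sides of the kernel to match, so $K_{\mathrm{id}}$ can be nonzero only when $\nu=\nu'$ and $\sigma\cong\sigma'$, in which case it is a scalar multiple of $\delta_e$ (any genuine differentiation would produce a mismatched $A$-weight), recovering the scalar case.

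For $K_{\mathrm{open}}$, I would pull back to $V$ via the Bruhat chart and use the explicit formulas $g=m(g)\mathsf{a}(g)n(g)v(g)$ on $MANV$. The joint $M$- and $A$-equivariance of the kernel translates into two functional constraints on the smooth density obtained on $V$: the $M$-piece collapses $\mathrm{Hom}_M(\mathcal{H},\mathcal{H}')$ to a scalar and forces $\sigma\cong\sigma'$, while the $A$-scaling relates the behaviour under $v\mapsto a\cdot v$ to $\mathsf{a}^{\rho-\nu}\otimes \mathsf{a}^{\rho+\nu'}$, forcing $\nu'=-\nu$. Under $\mathrm{Re}(\nu)>0$ the unique $MAN$-equivariant density on the open cell with these homogeneities is, up to a constant, exactly $\mathsf{a}(vw)^{\rho-\nu}\sigma(m(vw))$, i.e.\ the Knapp-Stein kernel; the condition $\mathrm{Re}(\nu)>0$ ensures local integrability near the boundary of the open cell, so no distributional extension problem arises.

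The main obstacle I anticipate is organizing the two cases so they do not overlap and showing that each is exclusive. The algebraic relations $\nu=\nu'$ and $\nu=-\nu'$ are incompatible away from $\nu=0$, which is excluded by the assumption $\mathrm{Re}(\nu)>0$, so generically only one piece survives; the delicate point is the uniqueness and integrability of the open-cell kernel, which is precisely where $\mathrm{Re}(\nu)>0$ saves us from having to analyse the distributional extension of $K_{\mathrm{open}}$ across the closed cell. This is the classical Bruhat-type argument underlying Knapp-Stein's Lemma~21.
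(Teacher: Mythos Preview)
The paper does not give its own proof of this lemma; it simply records it as \cite[Lemma 21]{knappstein}. Your sketch via the Schwartz kernel theorem and the Bruhat decomposition into the closed orbit $PeP$ and the open cell $PwP$ is exactly the classical argument underlying that reference, so there is nothing to compare: you are reconstructing the cited proof rather than offering an alternative.

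One small point worth tightening: the ``decomposition'' $K=K_{\mathrm{id}}+K_{\mathrm{open}}$ is not a decomposition a priori, since a $G$-invariant distribution on the product need not split additively along the orbit stratification. The cleaner way to phrase the argument is to first restrict $K$ to the open cell and determine it there (zero unless $\sigma\cong\sigma'$ and $\nu'=-\nu$, in which case it is a scalar multiple of the Knapp--Stein density); then use $\mathrm{Re}(\nu)>0$ to see that this density is locally integrable across the closed orbit, so that it extends canonically to all of $P\backslash G\times P\backslash G$; and finally observe that $K$ minus this extension is a $G$-invariant distribution supported on the diagonal, hence governed by the $A$-weight analysis you describe. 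You already have all the ingredients; just order them this way and the logic is airtight.
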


To extend to the case $\mathrm{Re}(\nu)\leq 0$, one proceeds by a meromorphic extension in $\nu$. We shall use the notation $C^\infty(P\backslash G,\mathbb{H}):=C^\infty(P\backslash G,\mathbb{H}_\nu)$ which is a well defined Frech\'{e}t space, that when dropping the $G$-action does not depend on $\nu$.  We recall \cite[Theorem 3]{knappstein}

\begin{lemma}
The holomorphic family of operators 
$$(A_{\sigma,\nu}:C^\infty(P\backslash G,\mathbb{H})\to C^\infty(P\backslash G,\mathbb{H}))_{\mathrm{Re}(\nu)> 0},$$
extends meromorphically to $\nu \in \C$. The poles of $\nu\mapsto A_{\sigma,\nu}$ are situated in the points $\nu\in -\alpha \N$ where $\alpha$ denotes the simple, positive, restricted root, and the poles are at most simple. 
\end{lemma}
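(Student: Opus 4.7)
My plan is to pass to the noncompact picture of the principal series, where $A_{\sigma,\nu}$ becomes a convolution-type operator on the graded nilpotent Lie group $V=\theta(N)$ with a kernel that is homogeneous, of degree affine in $\nu$, for the $A$-conjugation action on $V$. Meromorphic continuation in $\nu$ and the location of the poles then fall out of the standard regularization theory for homogeneous distributions on graded nilpotent Lie groups.

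First I would use the unitary isomorphism $W_{\sigma,\nu}$ together with the Bruhat decomposition $G=MANV\dot{\cup} MANw$ to rewrite $A_{\sigma,\nu}$ on the noncompact model $L^2(V,\mathcal{H})$ as the integral
$$A_{\sigma,\nu} f(x)=\int_V \kappa_\nu(v)\, f(v^{-1}x)\,\rd v,\qquad \kappa_\nu(v):=\mathsf{a}(v)^{\rho-\nu}\sigma(m(v)),$$
with $\kappa_\nu$ smooth on $V\setminus\{e\}$; this converges absolutely for $\operatorname{Re}(\nu)>0$ thanks to the normalization $\int_V \mathsf{a}(v)^{2\rho}\rd v=1$. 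The conjugation action $\delta_a(v)=ava^{-1}$ on $V$ is a one-parameter group of dilations whose infinitesimal action on $\mathfrak{v}=\mathfrak{g}_{-\alpha}\oplus \mathfrak{g}_{-2\alpha}$ has eigenvalues $-\alpha$ and $-2\alpha$, so $V$ is graded nilpotent of step at most two with homogeneous dimension $Q=p+2q$ and $Q\alpha=2\rho$. A direct computation from $v=m(v)\mathsf{a}(v)n(v)v(v)^{-1}$ gives $\kappa_\nu\circ\delta_a=a^{\nu-\rho}\kappa_\nu$, so that $\kappa_\nu\,\rd v$ is a distribution on $V$ homogeneous of degree affine in $\nu$.

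The meromorphic continuation then follows by a standard regularization argument. Writing $\mathcal{E}$ for the Euler vector field generating $\delta_a$, integration by parts produces a functional equation of the form
$$(\nu-c)\,\langle \kappa_\nu,f\rangle=-\langle \kappa_\nu,\mathcal{E} f\rangle,$$
with a simple rational prefactor on the left, which permits meromorphic extension of the pairing against any Schwartz test function $f$. Iterating, and combining with a Taylor expansion of $f$ at $e$ to isolate the singular contribution near the origin, one extends $\nu\mapsto A_{\sigma,\nu}$ meromorphically to all of $\C$, with at worst simple poles. Since the eigenvalues of $\mathcal{E}$ on $\mathfrak{v}$ are positive integer multiples of $\alpha$, the candidate poles sit precisely in the lattice $-\alpha\N$.

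The main technical obstacle will be to verify that the poles are genuinely simple. This is clear when $\mathfrak{g}_{-2\alpha}=0$ (i.e.\ $G=SO(n,1)$), where $\kappa_\nu\,\rd v$ reduces to a classical Riesz distribution on a Euclidean space; in the genuinely two-step case one must show that no double pole arises from interference between $\mathfrak{g}_{-\alpha}$ and $\mathfrak{g}_{-2\alpha}$. I expect this to be accomplished by identifying each residue with a distinguished $M$-invariant homogeneous polynomial on $\mathfrak{v}$ of prescribed degree, as in the original argument of \cite[Theorem 3]{knappstein}.
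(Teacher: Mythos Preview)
The paper does not give a proof of this lemma; it simply records it as \cite[Theorem 3]{knappstein}. Your sketch is therefore not competing with an argument in the paper but rather outlining the classical proof (or a modern repackaging of it via homogeneous distributions on graded nilpotent groups), and the overall strategy is sound.

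Two points of detail deserve correction. First, your convolution kernel is missing the Weyl element: in the noncompact picture the paper records (see the discussion preceding Proposition~\ref{lknlajndljadn})
\[
A_{\sigma,\nu}^{(0)}f(x)=\int_V \mathsf{a}(yw)^{\rho-\nu}\sigma(m(yw))\,f(yx)\,\rd y,
\]
so the correct kernel is $\kappa_\nu(y)=\mathsf{a}(yw)^{\rho-\nu}\sigma(m(yw))$, not $\mathsf{a}(y)^{\rho-\nu}\sigma(m(y))$. Without the $w$ one has $m(y)=e$ for $y\in V$, and the kernel would lose its $\sigma$-dependence entirely. Second, the homogeneity degree you state does not match the one in Proposition~\ref{lknlajndljadn}, which asserts that the kernel \emph{density} $\kappa_\nu\,\rd y$ is homogeneous of degree $-2\nu$ under pushforward by $\delta_a$; you should redo that short computation with the corrected kernel and keep track of the Jacobian $a^{-2\rho}$ of $\delta_a$ on $V$. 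Once these are fixed, your functional-equation / Taylor-expansion regularization argument goes through as written, and the pole locations $-\alpha\N$ and their simplicity are exactly what the homogeneous-distribution machinery on a step-two graded group delivers, as you correctly anticipate by pointing back to \cite[Theorem 3]{knappstein} for the two-step interference issue.
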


\begin{remark}
\label{lknkanda}
We note in particular, that combining the two lemmas above, we have that if $(\mathcal{L}_\nu)_{\mathrm{Re}(\nu)>> 0}$ is a holomorphic family of operators $C^\infty(P\backslash G,\mathbb{H})\to C^\infty(P\backslash G,\mathbb{H})$ such that for $\mathrm{Re}(\nu)>> 0$
$$\mathcal{L}_\nu:C^\infty(P\backslash G,\mathbb{H}_\nu)\to C^\infty(P\backslash G,\mathbb{H}_{-\nu}),$$
is a $G$-equivariant linear operator, then up to multiplying by a holomorphic function in $\mathrm{Re}(\nu)>> 0$, $(L_\nu)_{\mathrm{Re}(\nu)>> 0}$ extends meromorphically to $\nu \in \C$ with poles only in $\nu\in -\alpha \N$ and $\mathcal{L}_\nu$ coincides with $A_{\sigma,\nu}$.
\end{remark}

To understand the placement of the Knapp-Stein intertwiners in the Heisenberg calculus, we also describe them in the nilpotent picture. We write 
$$A_{\sigma,\nu}^{(0)}:=W_{\sigma,\nu}^{-1}A_{\sigma,\nu}W_{\sigma,\nu}.$$
A short computation shows that $A_{\sigma,\nu}^{(0)}:C^\infty_c(V,\mathcal{H})\to C^\infty(V,\mathcal{H})$ takes the form
$$A_{\sigma,\nu}^{(0)}f(x)=\int_V\mathsf{a}(yw)^{\rho-\nu}\sigma(m(yw))f(yx)\rd y.$$
We can from \cite[Lemma 19]{knappstein} conclude the following. We use the notation $|\pmb{\lambda}|$ for the bundle of $1$-densities.

\begin{prop}
\label{lknlajndljadn}
Consider the convolution kernel density
$$\mathfrak{a}_{\sigma,\nu}(y):=\mathsf{a}(yw)^{\rho-\nu}\sigma(m(yw))\rd y, \quad y\in V\setminus \{e\},$$
of $A_{\sigma,\nu}^{(0)}$, where $\rd y$ denotes the constant density normalized so that \eqref{normalz} holds. Then $\mathfrak{a}_{\sigma,\nu}\in C^\infty(V\setminus \{e\},\End(\mathcal{H})\otimes |\pmb{\lambda}|)$ and $\mathfrak{a}_{\sigma,\nu}$ is homogeneous of degree $-2\nu$ under pushforward of densities in the $A$-action $a:v\mapsto ava^{-1}$. In particular, $A_{\sigma,\nu}^{(0)}$ is for $\nu\in \mathfrak{a}_\C\setminus (-\alpha\N)$ uniquely determined from the unique homogeneous extension of $\mathfrak{a}_{\sigma,\nu}$ to a distribution on $V$.
\end{prop}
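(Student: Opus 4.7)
The proof splits naturally into three parts: smoothness of $\mathfrak{a}_{\sigma,\nu}$ on $V\setminus\{e\}$, the $A$-homogeneity of the density, and the uniqueness of the distributional extension to $V$ that identifies the operator. For smoothness, the complement of the open Bruhat stratum is $MANw$; a product $yw$ with $y\in V$ lies there iff $y\in MAN$, but $V\cap MAN=\{e\}$, so every $y\in V\setminus\{e\}$ satisfies $yw\in MANV$. Hence $y\mapsto m(yw)$ and $y\mapsto\mathsf{a}(yw)$ are smooth compositions on $V\setminus\{e\}$, making $\mathfrak{a}_{\sigma,\nu}$ a smooth $\End(\mathcal H)$-valued density there.

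For the homogeneity, my plan is a direct computation via Bruhat, cross-checked by the intertwining property. The key identity is the rank-one Weyl rule $wa=a^{-1}w$ for $a\in A$, which follows from $\Ad(w)|_{\mathfrak a}=-1$. It yields
$$(aya^{-1})w \;=\; ay\cdot a^{-1}w \;=\; ay\cdot wa \;=\; a(yw)a, \qquad a\in A,\ y\in V.$$
Writing the Bruhat form $yw=m(yw)\cdot a_0(yw)\cdot n_0(yw)\cdot v(yw)$ and using that $M$ centralizes $A$ while $A$ normalizes $N$ and $V$, a short rearrangement gives
$$a(yw)a \;=\; m(yw)\cdot a^2 a_0(yw)\cdot(a^{-1}n_0(yw)a)\cdot(a^{-1}v(yw)a).$$
Thus $m((aya^{-1})w)=m(yw)$ is $A$-invariant and the Bruhat $A$-factor scales cleanly by $a^2$. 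Converting to the Iwasawa factor via the compatibility $\mathsf{a}(g)=a_0(g)\mathsf{a}(v(g))$ (valid on $MANV$), combined with the $A$-covariance of $\mathsf{a}|_V$ under $\delta_{a^{-1}}$, yields the scaling law for $\mathsf{a}(yw)^{\rho-\nu}\sigma(m(yw))$. Multiplying by the Jacobian $\rd(aya^{-1})=a^{-2\rho}\rd y$ (from $\det\Ad(a)|_{\mathfrak v}=a^{-2\rho}$ in the grading $\mathfrak v=\mathfrak g_{-\alpha}\oplus\mathfrak g_{-2\alpha}$), the scaling of the full density collapses to the character $a^{-2\nu}$. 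A conceptually cleaner derivation would extract the homogeneity directly from the intertwining identity $A_{\sigma,\nu}^{(0)}\pi_{\sigma,\nu}(a)=\pi_{\sigma,-\nu}(a)A_{\sigma,\nu}^{(0)}$: restricting to $a\in A$ in the convolution formula $A_{\sigma,\nu}^{(0)}f(x)=\int_V \mathfrak{a}_{\sigma,\nu}(y)f(yx)$ and changing variables forces precisely the stated scaling.

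For the uniqueness of the homogeneous extension, I would rely on the standard theory of homogeneous distributions on a graded nilpotent group: on $V$ with homogeneous dimension $2\rho$ (counted with the $\alpha$- and $2\alpha$-root multiplicities), a smooth $A$-homogeneous density of degree $-2\nu$ on $V\setminus\{e\}$ admits a unique tempered extension across the origin unless the degree lies in the exceptional lattice of integer shifts of the homogeneous dimension; this excluded set matches exactly $\nu\in -\alpha\N$ and tracks the pole structure of the Knapp--Stein family recalled above. Since for $\mathrm{Re}(\nu)\gg 0$ the operator $A_{\sigma,\nu}^{(0)}$ is given by an absolutely convergent integral against $\mathfrak{a}_{\sigma,\nu}$, the uniqueness of $G$-equivariant Knapp--Stein data recorded in Remark~\ref{lknkanda} identifies $A_{\sigma,\nu}^{(0)}$ throughout $\mathfrak a_\C^*\setminus(-\alpha\N)$ with the operator defined by the unique homogeneous extension of $\mathfrak{a}_{\sigma,\nu}$.

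The main obstacle is the bookkeeping in the homogeneity step: one must carefully reconcile the Iwasawa $\mathsf{a}$-factor (in which the kernel is phrased) with the Bruhat $A$-factor (in which $A$-conjugation acts cleanly), via the compatibility $\mathsf{a}(g)=a_0(g)\mathsf{a}(v(g))$ and the $A$-covariance of $\mathsf{a}|_V$. The intertwining-property route sidesteps these explicit manipulations but depends on the $A$-equivariance of $A_{\sigma,\nu}^{(0)}$, which itself traces back to the same Bruhat/Iwasawa data.
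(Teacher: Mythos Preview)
Your proposal is correct and follows the standard route. The paper itself gives no detailed argument for this proposition; it simply records that the statement follows from \cite[Lemma~19]{knappstein}. Your direct verification via the rank-one Weyl identity $wa=a^{-1}w$, the resulting transformation law for the Bruhat factors
\[
m((aya^{-1})w)=m(yw),\qquad a_0((aya^{-1})w)=a^{2}\,a_0(yw),\qquad v((aya^{-1})w)=a^{-1}v(yw)a,
\]
together with the Jacobian $\rd(aya^{-1})=a^{-2\rho}\rd y$, is precisely the computation underlying Knapp--Stein's lemma. The uniqueness of the homogeneous extension is exactly the standard fact recalled later in the paper (Lemma~\ref{adanlkn}).

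One remark on your ``main obstacle'': the detour through the Iwasawa factor $\mathsf a$ is in fact unnecessary. The noncompact picture is obtained by restricting sections in the induced model to $V$, and the equivariance law there is governed by the \emph{Bruhat} $A$-factor $a_0(vg)$, not the Iwasawa one. Reading the kernel $\mathfrak a_{\sigma,\nu}$ accordingly, the scaling $a_0\mapsto a^2 a_0$ gives $f(aya^{-1})=a^{2(\rho-\nu)}f(y)$ directly, and combining with the Jacobian yields the claimed homogeneity without any Iwasawa/Bruhat reconciliation. Your intertwining-property shortcut is a valid alternative and leads to the same conclusion.
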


\begin{remark}
\label{langlandsrem}
To facilitate our understanding of the projection $P_{\mathcal{V}}$ in the statement of Theorem \ref{conja}, we note that \cite[Theorem 7.24]{knappbook} implies the following interpretation of the range of $A_{\sigma,\nu}$. For  $\mathrm{Re}(\nu)> 0$, \cite[Theorem 7.24]{knappbook} implies that, as Harish-Chandra modules, $A_{\sigma,\nu}$ implements an infinitesimal equivalence between the image of $A_{\sigma,\nu}$ and the unique irreducible quotient of $\pi_{\sigma,\nu}$ -- aka the Langlands quotient $J(P,\sigma,\nu)$ of the Langlands parameter $(P,\sigma,\nu)$. In particular, the image of $A_{\sigma,\nu}$ is always an irreducible Harish-Chandra module.
\end{remark}

\section{The parabolic tangent groupoid}
\label{lnlnadna}

The main technical tool in this work is the Heisenberg calculus on filtered manifolds, also known as Carnot manifolds. We will in this section describe how the parabolic space 
$$X:=P\backslash G,$$ 
forms a filtered manifold (Subsection \ref{subsecfiltered}) and to describe its parabolic tangent groupoid (Subsection \ref{subsecparatang}). A construction of importance to our proofs of  Theorems \ref{conja} and \ref{conjb}, that we present in Subsection \ref{subsecparatang}, is that of an extended parabolic tangent groupoid on $X=P\backslash G$ that with Ewert's techniques \cite{ewertthesis, ewertfixed} for the Heisenberg calculus will allow for a clearer identification of how the operators relevant for Theorem \ref{conja} and \ref{conjb} belong to the Heisenberg calculus. We restrict our attention to $G$ having real rank one. The discussion throughout the section extends to higher rank, albeit it seems unlikely to the authors to be directly applicable to problems in representation theory of higher rank groups, cf. Remark \ref{alknalndarank}. We will assume the reader to be familiar with the theory of Lie groupoids and their associated convolution algebras, see \cite{debordlescure,debordskandext,debordskandpseudo} for more material in this area.

\subsection{The filtered geometry of $P\backslash G$}
\label{subsecfiltered}
We consider the manifold $X:=P\backslash G$. This manifold carries a Carnot structure defined from the restricted weight spaces and there is an associated Heisenberg calculus of operators. We will follow the notations introduced at the start of Section \ref{secback}. The weight spaces for the simple, positive, restricted root $\alpha$ decomposes the Lie algebra as a graded Lie algebra
$$\mathfrak{g}=\mathfrak{g}_{-2\alpha}\oplus \mathfrak{g}_{-\alpha}\oplus \mathfrak{g}_0\oplus \mathfrak{g}_\alpha\oplus \mathfrak{g}_{2\alpha}.$$
Here $\mathfrak{g}_0=\mathfrak{a}\oplus \mathfrak{m}$, and $\mathfrak{g}_\geq=\mathfrak{g}_0\oplus \mathfrak{g}_1\oplus \mathfrak{g}_2$ is the Lie algebra of $P$, with $\mathfrak{n}=\mathfrak{g}_1\oplus \mathfrak{g}_2$ and $\mathfrak{v}=\mathfrak{g}_{-2}\oplus \mathfrak{g}_{-1}$. We can identify $\mathfrak{g}/\mathfrak{g}_\geq =\mathfrak{v}$ as $P$-modules if we equip $\mathfrak{v}$ with the quotient action of $P$, so we can identify 
$$TX=\mathfrak{v}\times_P G\to P\backslash G.$$
The decomposition $\mathfrak{v}=\mathfrak{g}_{-2}\oplus \mathfrak{g}_{-1}$ makes $X$ into a filtered manifold of depth $2$, i.e. $TX$ has a filtering 
$$0\subsetneq T^{-1}X\subsetneq T^{-2}X=TX.$$
The reader can find the general definition of a filtered manifold in for instance \cite{Dave_Haller1,ewertfixed,goffkuz,vanerpyunck}, where in depth $>2$ one further imposes the assumption that $[T^jX,T^k]\subseteq T^{j+k}X$ for all $j,k$. Here we have $T^{-1}X:= \mathfrak{g}_{-1}\times_P G$ and $T^{-2}X:=TX$. Indeed, $T^{-1}X$ is well defined since the Lie bracket respects the grading, and so $\mathfrak{g}_{-1}\subseteq \mathfrak{v}$ is $P$-invariant. 

We write $\mathfrak{t}_HX$ for the graded bundle associated with the filtered bundle $TX$. There is an induced $G$-action on $\mathfrak{t}_HX$ lifting the $G$-action on $X$. The decomposition $\mathfrak{v}=\mathfrak{g}_{-2}\oplus \mathfrak{g}_{-1}$ induces isomorphisms 
$$\mathfrak{t}_HX\cong \mathfrak{g}_{-1}\times_P G\oplus \mathfrak{g}_{-2}\times_P G\cong TX,$$
where the first but not the second isomorphism is equivariant. Indeed, $TX=\mathfrak{v}\times_P G$ with the filtered action of $P$ on $\mathfrak{v}$ but $\mathfrak{t}_HX=\mathfrak{v}\times_P G$ with the graded action of $P$ on $\mathfrak{v}$ defined from the quotient map $P\to MA$. The bundle  $\mathfrak{t}_HX$ is a locally trivial bundle of Lie algebras, with fiber $\mathfrak{v}$, and the fiberwise Lie bracket coincides with that induced from the Lie bracket of vector fields. The bundle of Lie algebras integrate to a bundle of nilpotent Lie groups $T_HX$. The locally trivial bundle $T_HX\to X$ will be viewed as a Lie groupoid, it is called the osculating Lie groupoid. There is a $G$-equivariant isomorphism of bundles over $X$
$$T_HX\cong V\times_PG,$$
where the $P$-action on $V$ is as graded automorphisms defined from the quotient map $P\to MA$. As $K$-equivariant bundles, $T_HX\cong V\times_MK$ when identifying $M\backslash K\cong P\backslash G$. \\

{\bf Dilations and identifying $A=(0,\infty)$:} Conjugation by elements from $A$ defines a dilation action on $V$, we write this as 
\begin{equation}
\label{lknlknadad}
\delta_a(v):=ava^{-1}.
\end{equation}
Occasionally, we identify $A=(0,\infty)$ via the mapping $a\mapsto a^{-\alpha}$. The positive Weyl chamber $A^+$ with respect to $\alpha$ corresponds to $(0,1]$. Under this identification, we write the elements of $A$ as $s>0$ which acts on $V$ in the action \eqref{lknlknadad} as 
$$\delta_s(\mathrm{e}^{X_1+X_2})=\mathrm{e}^{sX_1+s^2X_2},\quad\mbox{for $X_1\in \mathfrak{g}_{-\alpha}$ and $X_2\in \mathfrak{g}_{-2\alpha}$.}$$

\begin{remark}
In the case that $G$ is of higher rank, and $P\subseteq G$ is a more general (standard) parabolic subgroup then $P\backslash G$ is still a filtered manifold. In this case, one filters $T(P\backslash G)$ using the total degree with respect to all restricted roots. For more details, see \cite{cap}, Chapter 3.2.1 for the complex case and Chapter 3.2.9 for the real case. The higher rank case calls for a tool capturing more subtle changes in gradings, for instance through the multifiltered manifolds of Yuncken \cite[Section 3.4]{yunckenhab}.
\end{remark}

\subsection{The parabolic tangent groupoid on $P\backslash G$}
\label{subsecparatang}
The Heisenberg calculus we use was constructed by van Erp-Yuncken \cite{vanerpyunck} from almost homogeneous distributions on an adiabatic-like deformation for the dilation action near the diagonal. The appropriate adiabatic deformation comes from the parabolic tangent groupoid of $X$. We shall give a more detailed description of the parabolic tangent groupoid of $X=P\backslash G$ using that $G$ has real rank one.

For a general filtered manifold $X$, the parabolic tangent groupoid $\mathbb{T}_HX \rightrightarrows X\times [0,\infty)$ is defined as 
$$\mathbb{T}_HX:=(T_HX\times \{0\})\dot{\cup}(X\times X\times (0,\infty)),$$
as a groupoid over $X\times [0,\infty)$. We write $t$ for the variable parametrizing $[0,\infty)$. The fiber at $t=0$ is $T_HX$ which as a bundle of nilpotent Lie groups carries a fiberwise group structure over $X$. The fibers at $t>0$, $X\times X\times \{s\}$ are viewed as pair groupoids, i.e. with source map $\mathsf{s}(x,y)=y$, range map $\mathsf{r}(x,y)=x$ and composition rule $(x,y)(y,z)=(x,z)$. The parabolic tangent groupoid becomes a Lie groupoid when $X\times X\times (0,\infty)$ is given its natural smooth structure and declaring a particular map $\psi_\nabla:T_HX\times [0,\infty)\to \mathbb{T}_HX$ to be smooth. The map $\psi_\nabla$ is defined as 
\begin{equation}
\label{psinabladef}
\psi_\nabla(x,v,t):=
\begin{cases}
(x,v,0), \quad &t=0,\\
(\exp_x^\nabla(-\delta_tv),x,t), \quad &t>0.\end{cases}
\end{equation}
Here $\nabla$ is a graded connection. The smooth structure is independent of the choice of this connection. The dilation action on the fiber extends to the zoom action on the parabolic tangent groupoid defined for $\lambda>0$ as
$$\delta_\lambda(x,v,0):=(x,\delta_\lambda(v),0),\quad\mbox{and}\quad \delta_\lambda(x,y,t):=(x,y,\lambda^{-1}t).$$
\newline 

Let us describe the construction of the parabolic tangent groupoid in more detail in the specific case $X=P\backslash G$ relevant to this article. Note that $a\mapsto a^{-\alpha}$ defines a diffeomorphism $A\cong (0,\infty)$. We therefore have an identification 
$$P\backslash G\times P\backslash G\times (0,\infty)\cong (MN\backslash G)\times_A (MN\backslash G),$$
where we identify $MN\backslash G\cong A\times M\backslash K$ via the Iwasawa decomposition. Here, all identifications are as manifolds, not as groups, but the zoom action of $A\cong (0,\infty)$ on $(MN\backslash G)\times_A (MN\backslash G)$ goes via the diagonal action induced from that on  $MN\backslash G\cong A\times M\backslash K$. 

Let us introduce the partial compactifications
$$\overline{A}:=\{0\}\dot{\cup} A,\quad\mbox{and}\quad \overline{\mathfrak{Y}}=\overline{A}\times K,$$
which are endowed with a natural topology by declaring $a\to 0$ if $a^{-\alpha}\to 0$, where $\alpha$ is any simple, positive restricted root. The smooth map $a\mapsto a^{-\alpha}$ extends by continuity to a diffeomorphism $\overline{A}\cong [0,\infty)$. We often use the substitution 
$$t=a^{-\alpha},$$
and think of $[0,\infty)$ as parametrized by $\overline{A}$. The reader should beware that we view the partial compactification $\overline{A}$ as a topological space with an $A$-action, in which $A\subseteq \overline{A}$ is an $A$-invariant subset, and not a group. 

\begin{prop}
\label{lknadlnad}
For $X=P\backslash G=M\backslash K$, the parabolic tangent groupoid $\mathbb{T}_HX$ is set theoretically given by
$$\mathbb{T}_HX=V\times_M K\times \{0\}\dot{\bigcup} M\backslash K\times M\backslash K\times A\rightrightarrows M\backslash K \times \overline{A}.$$
It is endowed with a smooth structure, defined by declaring $M\backslash K\times M\backslash K\times A\subseteq \mathbb{T}_HX$ to be an open submanifold and declaring the following map a smooth embedding 
$$\psi:V\times_M K\times \overline{A}\to \mathbb{T}_HX, 
\quad 
(M(k,v),a)\mapsto\begin{cases}
(Mk\kappa(ava^{-1})^{-1},Mk,a)\; &a\in A,\\
{}\\
(M(v,k),0),\; &a=0.
\end{cases}$$
\end{prop}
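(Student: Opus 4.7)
The plan is to first verify the set-theoretic description and then to exhibit a concrete graded connection $\nabla$ for which the map $\psi_\nabla$ of \eqref{psinabladef} coincides with the stated $\psi$. The set-theoretic decomposition is immediate from the general definition $\mathbb{T}_HX = (T_HX\times\{0\})\dot\cup(X\times X\times(0,\infty))$ together with the identifications $T_HX\cong V\times_M K$ established in Subsection \ref{subsecfiltered}, $X = M\backslash K$, and the diffeomorphism $[0,\infty)\cong\overline{A}$ given by $t = a^{-\alpha}$.

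For the smooth structure I would invoke the result from \cite{vanerpyunck} that the smooth structure on $\mathbb{T}_HX$ is independent of the graded connection used in \eqref{psinabladef}, thereby reducing the problem to producing a single $G$-equivariant graded connection $\nabla$ whose induced $\psi_\nabla$ reproduces the stated formula. A convenient candidate comes from the Iwasawa/Bruhat chart $\iota:V\to M\backslash K$, $v\mapsto M\kappa(v)$, which is a diffeomorphism of $V$ onto an open dense subset of $X$. Pulling back the flat affine structure on $V$ yields a connection near $Me$ that is graded with respect to the filtration $T^{-1}X\subset TX$, because the $A$-dilation $\delta_a(v) = ava\inv$ acts with weights $\alpha$ and $2\alpha$ on $\mathfrak{g}_{-\alpha}$ and $\mathfrak{g}_{-2\alpha}$, respectively.

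A short calculation in this chart identifies $\exp_{Me}^\nabla(-\delta_a v)$ with $M\kappa(\delta_a v)\inv$, so that the formula for $\psi$ agrees with that of $\psi_\nabla$ on the slice $\{Me\}\times V\times A$. The right $G$-action on $\mathbb{T}_HX$ preserves both sides and transports the fiber over $Me$ to the fiber over any $Mk$, whence the formula $\psi(M(k,v), a) = (Mk\,\kappa(\delta_a v)\inv, Mk, a)$ extends to the whole of $V\times_M K\times A$; that $M\backslash K\times M\backslash K\times A\subseteq \mathbb{T}_HX$ is open is built into the general definition, so the only remaining point is the behavior of $\psi$ across $a = 0$.

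The main obstacle I anticipate is this smooth attachment across $a = 0$, i.e.\ verifying that $\psi$ is a smooth embedding of $V\times_M K\times\overline{A}$ into $\mathbb{T}_HX$ and not merely a homeomorphism. This is the usual Carnot-rescaling phenomenon: as $a^{-\alpha}\to 0$ the point $Mk\,\kappa(\delta_a v)\inv$ converges to $Mk$ in the naive topology, while in $\delta_a$-zoomed coordinates the pair $(Mk\kappa(\delta_a v)\inv, Mk)$ converges to the class $[v,k]\in V\times_M K = T_HX$. The verification reduces to checking that the differential of $\iota$ at $e$ identifies the grading of $\mathfrak{v}=\mathfrak{g}_{-\alpha}\oplus\mathfrak{g}_{-2\alpha}$ with the graded tangent space at $Me$, which follows from the $A$-equivariance of the Iwasawa projection, i.e.\ from the fact that $\kappa(\delta_a v)$ scales in its root components precisely with the prescribed weights of $A$. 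Once this grading-compatibility is in place, the general smoothness theorem for the parabolic tangent groupoid from \cite{vanerpyunck} applies and yields the claimed smooth embedding.
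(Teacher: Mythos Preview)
Your overall strategy---compare $\psi$ to a chart $\psi_\nabla$ coming from a graded connection and then invoke independence of the smooth structure from the choice of $\nabla$---is the same as the paper's. The gap is in the step you label ``a short calculation'': you assert that pulling back the flat affine structure along the Cayley transform $\iota:v\mapsto M\kappa(v)$ produces a graded connection whose exponential satisfies $\exp_{Me}^\nabla(-\delta_a v)=M\kappa(\delta_a v)^{-1}$, so that $\psi=\psi_\nabla$ on the nose. You never carry this out, and it is not clear it is true: the pull-back connection gives $\exp_{Me}^\nabla(X)=M\kappa(e^X)$, hence $\exp_{Me}^\nabla(-\delta_aX)=M\kappa((\delta_a v)^{-1})$, and you would still need $M\kappa(w^{-1})=M\kappa(w)^{-1}$ in $M\backslash K$ for $w\in V$. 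Your final paragraph then conflates two separate issues: grading-compatibility of $D\iota$ only tells you that $\nabla$ is graded (so the van Erp--Yuncken theorem applies to $\psi_\nabla$); it does not by itself establish $\psi=\psi_\nabla$, which is the missing link.

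The paper avoids this altogether. It does \emph{not} try to realise $\psi$ as a $\psi_\nabla$; instead it fixes the linearised connection with $\exp_{Mk}^\nabla(-\delta_a e^X)=Mk\,e^{-D\kappa(aXa^{-1})}$ and proves directly that the transition map $\psi^{-1}\circ\psi_\nabla$ extends smoothly across $a=0$. The substantive step is the computation, via $D\kappa(Y)=\tfrac12(Y-\theta Y)$ and Baker--Campbell--Hausdorff in a faithful finite-dimensional representation, that $a^{-1}e^{-D\kappa(aXa^{-1})}a=e^X+a^{-\alpha}f(a^{-\alpha},X)$ with $f$ real analytic. This is precisely the control of higher-order terms at $a=0$ that your appeal to ``$A$-equivariance of the Iwasawa projection'' does not supply.
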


\begin{proof}
To prove the proposition it suffices to prove that for some graded connection $\nabla$, the mapping
$$\psi^{-1}\circ \psi_\nabla:V\times_M K\times \overline{A}\to V\times_M K\times \overline{A},$$ 
is smooth. Here $\psi_\nabla$ is as in Equation \eqref{psinabladef}, and we identify $V\times_M K=T_H(M\backslash K)$. Recall that $ava^{-1}=\delta_av$. 

We take $\nabla$ such that for $X\in \mathfrak{v}$
$$\exp_{Mk}^\nabla(-\delta_a\mathrm{e}^{X})= Mk\mathrm{e}^{-D\kappa(aXa^{-1})}.$$
We will write $c:V\to P\backslash G$, $v\mapsto Pv$ for the Cayley transform and we view $c^{-1}$ as partially defined, with domain $(P\backslash G)\setminus \{Pw\}$. Since $\psi=\psi_\nabla$ on $V\times_M K\times\{0\}$, smoothness of $\psi^{-1}\circ \psi_\nabla$ follows if the function 
$$\hat{\kappa}:\mathfrak{v}\times A\to V, \quad \hat{\kappa}(v,a):= a^{-1}c^{-1}(\mathrm{e}^{-D\kappa(aXa^{-1})})^{-1}a,$$
extends to a smooth function $\mathfrak{v}\times \overline{A}\to V$ with $\hat{\kappa}(X,0)=\mathrm{e}^{X}$. Using the Iwasawa decomposition we see that it suffices to prove that the function 
$$\hat{\hat{\kappa}}:\mathfrak{v}\times A\to G, \quad \hat{\hat{\kappa}}(v,a):= a^{-1}\mathrm{e}^{-D\kappa(aXa^{-1})}a,$$
extends to a smooth function $\mathfrak{v}\times \overline{A}\to G$ with $\hat{\hat{\kappa}}(X,0)=\mathrm{e}^{X}$. Since the statement we wish to prove is local, we can assume that $G$ admits a faithful finite-dimensional representation. Because $D\kappa(aXa^{-1})=\frac{1}{2}aXa^{-1}-\frac{1}{2}\theta(aXa^{-1})$, the Baker-Campbell-Hausdorff formula in the faithful finite-dimensional representation of $G$ implies that $a^{-1}\mathrm{e}^{-D\kappa(aXa^{-1})}a=\mathrm{e}^X+a^{-\alpha}f(a^{-\alpha},X)$ for a real analytic function $f$. The proposition follows. 
\end{proof}

We will next define a groupoid that refines the parabolic tangent groupoid construction to include the $M$-part. That groupoid will encode the operators between the homogeneous vector bundles on $M\backslash K$. The building blocks for this groupoid are  two groupoids on $M\backslash K$ built as follows:
\begin{enumerate}
\item We write $MV\subseteq G$ for the group generated by $M$ and $V$, which is isomorphic to the semidirect product $V\rtimes M$. The quotient map $(MV)\times_M K\to M\backslash K$, induced from 
$$(MV)\times K\ni (mv,k) \mapsto Mk\in M\backslash K,$$
defines a principal $MV$-bundle on $M\backslash K$. The left action of $M$ on $MV$ produces a fiberwise action on the principal $MV$-bundle $(MV)\times_M K\to M\backslash K$. We will consider this principal bundle as a Lie groupoid
$$(MV)\times_M K\rightrightarrows M\backslash K.$$
This groupoid carries a smooth $M$-action, trivial on the base $M\backslash K$, which is not by groupoid automorphisms. In particular, there is an $A$-equivariant $M$-principal bundle
$$\mathfrak{q}_0:(MV)\times_M K\to V\times_MK\cong T_H(M\backslash K).$$
\item We also consider the Lie groupoid $K\times_M K\rightrightarrows M\backslash K$ with source map $M(k,l)\mapsto Ml$, range map $M(k,l)\mapsto Mk$ and product $M(k,l)\cdot M(l,l'):=M(k,l')$. We can view $K\times_MK$ as the quotient of $K\times K$ by a diagonal $M$-action, so there is an $M$-principal bundle

$$\mathfrak{q}_1:K\times_MK\to M\backslash K\times M\backslash K.$$

\end{enumerate}

For the proofs of our main results, we repeatedly  use the following construction.

\begin{lemma}
\label{lnljnljknkjlnad}
For $X=P\backslash G=M\backslash K$, we form a Lie groupoid 
$$\widehat{\mathbb{T}}_HX\rightrightarrows M\backslash K \times \overline{A},$$
by set theoretically defining a groupoid as
$$\widehat{\mathbb{T}}_HX:=(MV)\times_M K\times \{0\}\dot{\bigcup} K\times_M K\times A\rightrightarrows M\backslash K \times \overline{A}.$$
It is endowed with a manifold structure using the property that $\varphi:\widehat{\mathbb{T}}_HX\to \C$ is smooth if and only if $\varphi$ is smooth on $K\times_M K\times A$ and there is a $\varphi_0\in C^\infty(V\times K\times \overline{A})$ such that 
$$\begin{cases}
\varphi(\kappa(ava^{-1}),k,a)&=\varphi_0(v,k,a), \; a\in A,\\
{}\\
\varphi(M(v,k),0)&=\varphi_0(v,k,0).
\end{cases}$$
Moreover, the $M$-principal bundles $\mathfrak{q}_0:(MV)\times_M K\to T_H(M\backslash K)$ and $\mathfrak{q}_1:K\times_MK\to M\backslash K\times M\backslash K$ induce a smooth $A$-equivariant $M$-principal bundle
$$\mathfrak{q}:\widehat{\mathbb{T}}_HX \to \mathbb{T}_HX.$$
\end{lemma}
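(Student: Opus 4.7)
The strategy is to lift the construction of Proposition \ref{lknadlnad} to the $M$-principal cover, proving the three assertions of the lemma in turn: set-theoretic groupoid structure, smooth manifold structure matching the stated chart characterization, and smoothness of $\mathfrak{q}$ as an $A$-equivariant $M$-principal bundle. Set-theoretically: over $t\in A$, $K\times_M K\times A$ carries the pair groupoid structure on $K\times_M K$ (range $[k,l]\mapsto Mk$, source $[k,l]\mapsto Ml$, product $[k,l]\cdot[l,l']=[k,l']$), extended trivially in the $A$-parameter. Over $\{t=0\}$, $(MV)\times_M K$ is a bundle of groups whose fiber over $Mk$ is $MV=V\rtimes M$ with its native group multiplication, and the set-theoretic source, range, inversion and multiplication glue consistently across $t=0$.

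For the smooth structure, I would follow the template of Proposition \ref{lknadlnad}, now performed on the $M$-principal cover before quotienting. The open piece $K\times_M K\times A$ has its natural smooth structure as a free diagonal $M$-quotient of $K\times K\times A$. As a chart near $\{t=0\}$ I would use the map
$$\tilde\psi:V\times K\times\overline A\to \widehat{\mathbb{T}}_HX,\quad (v,k,a)\mapsto\begin{cases}(\kappa(ava^{-1}),k,a), & a\in A,\\ (M(v,k),0), & a=0,\end{cases}$$
which is a surjective submersion onto a saturated neighborhood of $\{t=0\}$. The key point is that $\tilde\psi$ glues smoothly with the open chart; this is the Baker--Campbell--Hausdorff/real-analytic computation from the proof of Proposition \ref{lknadlnad}, now applied before quotienting out $M$. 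In a faithful finite-dimensional representation of $G$ one obtains an expansion of $a^{-1}\mathrm{e}^{-D\kappa(aXa^{-1})}a$ real analytic in $a^{-\alpha}$, and the extra $K$-variable is inert because $K$ enters only through left translation on the second factor. The smoothness criterion in the lemma is thereby identified with smoothness of $\varphi\circ\tilde\psi$, by setting $\varphi_0:=\varphi\circ\tilde\psi$. Smoothness of the source, range, inversion and multiplication then follows from the same asymptotic analysis combined with the explicit group structure of $V\rtimes M$ at $t=0$.

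For the $M$-principal bundle, I would define $\mathfrak{q}$ piecewise as $\mathfrak{q}_1\times\mathrm{id}_A$ on $K\times_M K\times A$ and as $\mathfrak{q}_0$ on $(MV)\times_M K\times\{0\}$. Smoothness on each piece is immediate, and smoothness across $t=0$ is verified by composing with any smooth $h$ on $\mathbb{T}_HX$: a further application of the same BCH asymptotic analysis shows $h\circ\mathfrak{q}\circ\tilde\psi\in C^\infty(V\times K\times\overline A)$. The fiberwise left $M$-actions on $MV$ and on the first factor of $K\times_M K$ combine into a free, fiberwise-transitive $M$-action on $\widehat{\mathbb{T}}_HX$ with quotient $\mathbb{T}_HX$, so $\mathfrak{q}$ is an $M$-principal bundle. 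Since $A$ and $M$ centralize each other in $G$, the zoom $A$-action commutes with the $M$-action, giving $A$-equivariance. The only genuinely delicate step is the smooth compatibility at $t=0$; it is resolved uniformly by the Baker--Campbell--Hausdorff argument of Proposition \ref{lknadlnad} lifted one level up the $M$-quotient, so no new analytic input is required.
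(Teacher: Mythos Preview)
Your approach is correct in spirit and takes a genuinely different route from the paper. You argue by directly lifting the chart $\psi$ of Proposition \ref{lknadlnad} to the $M$-cover via your $\tilde\psi$ (the paper calls this map $\hat\psi$), and then verify the smooth structure, the Lie groupoid axioms, and the principal bundle $\mathfrak{q}$ by hand using the same Baker--Campbell--Hausdorff asymptotic analysis. The paper instead realises $\widehat{\mathbb{T}}_HX$ as an iterated deformation to the normal cone, following Mohsen's Carnot groupoid construction: it first forms $\mathfrak{T}=\pmb{DNC}(K\times_MK,\mathfrak{I})$ for the isotropy subgroupoid $\mathfrak{I}\subseteq K\times_MK$, identifies the normal bundle with $(\mathfrak{v}\rtimes M)\times_MK$, then deforms again along $(\mathfrak{g}_{-1}\rtimes M)\times_MK\times\{0\}$ and restricts. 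The advantage of the paper's route is that the Lie groupoid structure (smoothness of source, range, inversion, and especially multiplication) comes for free from general DNC theory, whereas in your approach the sentence ``smoothness of \ldots multiplication then follows from the same asymptotic analysis'' hides a genuine computation: you must check that the product $(v_1,k_1,a)\cdot(v_2,k_2,a)$ expressed in $\tilde\psi$-coordinates extends smoothly to $a=0$ with the correct $V\rtimes M$ limit, which is doable but not entirely immediate. Your approach is more elementary and self-contained; the paper's is more structural and avoids that verification.
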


\begin{proof}
An equivalent way to define the smooth structure on $\widehat{\mathbb{T}}_HX$ is to declare $K\times_MK\times A\subseteq \widehat{\mathbb{T}}_HX$ to be an open submanifold and to declare the following map a smooth embedding 
\begin{equation}
\label{psihatdef}
\hat{\psi}:V\times K\times \overline{A}\to \widehat{\mathbb{T}}_HX, 
\quad
(k,v,a)\mapsto\begin{cases}
(M(k\kappa(ava^{-1})^{-1},k),a)\; &a\in A,\\
{}\\
(M(v,k),0),\; &a=0.
\end{cases}
\end{equation}
Using the map $\psi$ from Proposition \ref{lknadlnad}, we see that $\mathfrak{q}$ is smooth and then it is clear that it is a smooth principal bundle. 

To show that $\widehat{\mathbb{T}}_HX$ is a Lie groupoid, we write it as a deformation to the normal cone similarly to Mohsen's Carnot groupoid construction \cite[Section 2 and 3]{mohsendef}. 

Consider the isotropy groupoid of $K\times_MK$
$$\mathfrak{I}:=\{\gamma\in K\times_MK: \mathsf{r}(\gamma)=\mathsf{s}(\gamma)\}= \{M(k,l): Mk=Ml\}.$$
We have that $\mathfrak{I}=\ker(\mu)$ where $\mu:K\times_MK\to K$ is the groupoid homomorphism $M(k,l)\mapsto k^{-1}l$, and moreover the groupoid $\mathfrak{I}\rightrightarrows M\backslash K$ has $\mathsf{r}=\mathsf{s}$ and can be viewed as the principal $M$-bundle $M\times_MK\to M\backslash K$. We form the Lie groupoid $\mathfrak{T}$ obtained from deformation to the normal cone along the closed groupoid inclusion $\mathfrak{I}\subseteq K\times_MK$
$$\mathfrak{T}:=\pmb{DNC}(K\times_MK,\mathfrak{I})\rightrightarrows M\backslash K\times [0,\infty).$$
The normal bundle $N_\mathfrak{I}$ to the inclusion $\mathfrak{I}\subseteq K\times_MK$ is $(\mathfrak{k}/\mathfrak{m})\times_M\mathfrak{I}=(\mathfrak{k}/\mathfrak{m})\times K$, where we use left translation to trivialize the tangent bundles. As a groupoid on $M\backslash K$ we have that 
$$N_\mathfrak{I}=((\mathfrak{k}/\mathfrak{m})\rtimes M)\times_M K.$$
We note that 
$$D\kappa:\mathfrak{v}\xrightarrow{\sim} \mathfrak{k}/\mathfrak{m},$$
is an $M$-equivariant isomorphism so we can identify 
$$N_\mathfrak{I}=(\mathfrak{v}\rtimes M)\times _M K,$$ 
as groupoids on $M\backslash K$. In particular,  $\mathfrak{T}$ is as a set theoretic groupoid given by  
$$\mathfrak{T}=((\mathfrak{v}\rtimes M)\times _M K)\times\{0\}\dot{\bigcup} K\times_M K\times (0,\infty).$$
The smooth structure on $\mathfrak{T}$ is obtained as in Proposition \ref{lknadlnad}, and can be defined by declaring $K\times_M K\times (0,\infty)\subseteq \mathfrak{T}$ to be an open and smooth subgroupoid, and the following map to be a smooth embedding 
$$\psi_\mathfrak{T}:(\mathfrak{v}\rtimes M)\times_M K\times [0,\infty)\to \mathfrak{T}, 
\quad 
(M(mX,k),t)\mapsto\begin{cases}
(Mk\mathrm{e}^{-tD\kappa(X)}m^{-1},Mk,t)\; &t>0,\\
{}\\
(M(mX,k),0),\; &t=0.
\end{cases}$$

The final step in the alternative construction of $\widehat{\mathbb{T}}_HX$ as a Lie groupoid is to note that we have a closed subgroupoid 
$$(\mathfrak{g}_{-1}\rtimes M)\times _M K\subseteq N_\mathfrak{I}=(\mathfrak{v}\rtimes M)\times _M K.$$
Following \cite[Section 2 and 3]{mohsendef}, we define the Lie groupoid 
$$\hat{\mathfrak{T}}:=\pmb{DNC}(\mathfrak{T},(\mathfrak{g}_{-1}\rtimes M)\times _M K\times \{0\})\rightrightarrows M\backslash K\times [0,\infty)^2.$$
By the arguments of \cite[Section 2]{mohsendef}, we have that 
$$\hat{\T}_H(M\backslash K)=\hat{\mathfrak{T}}|_{M\backslash K\times \{1\}\times  [0,\infty)},$$
and an argument as in Proposition \ref{lknadlnad} shows that the identity holds as smooth manifolds.
\end{proof}

\begin{definition}
The Lie groupoid $\widehat{\mathbb{T}}_HX\rightrightarrows X \times \overline{A}$ from Lemma \ref{lnljnljknkjlnad} will be called the extended parabolic tangent groupoid of $X=P\backslash G$.
\end{definition}

We note the following straightforward corollary to Lemma \ref{lnljnljknkjlnad}. We will use the convention of suppressing notation indicating two finite-dimensional unitary representations  $(\mathcal{H}_1,\sigma_1)$ and $(\mathcal{H}_2,\sigma_2)$ of $M$ into $\sigma=(\sigma_1,\sigma_2)$. 

\begin{cor}
\label{kjnkjnkjnad}
Assume that $(\mathcal{H}_1,\sigma_1)$ and $(\mathcal{H}_2,\sigma_2)$ are finite-dimensional unitary representations of $M$. Then the linear map 
\begin{align*}
\mathfrak{q}_\sigma&:C^\infty(\widehat{\mathbb{T}}_HX,\Hom(\mathcal{H}_1,\mathcal{H}_2))\to C^\infty(\mathbb{T}_HX,\Hom(r^*\mathbb{H}_1,s^*\mathbb{H}_2)),\\
\mathfrak{q}_\sigma&(\varphi):=\int_M \sigma_2(m)(m^*\varphi)\sigma_1(m)^*\rd m,
\end{align*}
is a well defined, continuous, $A$-equivariant, split surjection such that 
$$\mathrm{supp}(\mathfrak{q}_\sigma(\varphi))\subseteq \mathfrak{q}(\mathrm{supp}(\varphi)),$$
where $\mathfrak{q}:\widehat{\mathbb{T}}_HX\to \mathbb{T}_HX$ denotes the quotient map.
\end{cor}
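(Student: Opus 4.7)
The plan is to reduce the entire statement to a standard averaging/principal bundle argument. First, I would identify $\Hom(r^*\mathbb{H}_1,s^*\mathbb{H}_2)$ with the vector bundle on $\mathbb{T}_HX$ associated to the principal $M$-bundle $\mathfrak{q}:\widehat{\mathbb{T}}_HX\to\mathbb{T}_HX$ from Lemma \ref{lnljnljknkjlnad} and the $M$-representation $\Hom(\mathcal{H}_1,\mathcal{H}_2)$ with action $m\cdot\phi:=\sigma_2(m)\phi\sigma_1(m)^*$ (which is a genuine representation by unitarity of $\sigma_1$). This is forced by the fact that $\mathbb{H}_i$ is itself the associated bundle to the principal $M$-bundle $K\to M\backslash K$ via $\sigma_i$, together with the compatibilities $r\circ\mathfrak{q}=\hat r$ and $s\circ\mathfrak{q}=\hat s$ between the source/range maps of the two groupoids, which follow by inspection from the explicit description in Lemma \ref{lnljnljknkjlnad}. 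Granted the identification, smooth sections of $\Hom(r^*\mathbb{H}_1,s^*\mathbb{H}_2)$ over $\mathbb{T}_HX$ correspond bijectively to smooth maps $\psi:\widehat{\mathbb{T}}_HX\to\Hom(\mathcal{H}_1,\mathcal{H}_2)$ satisfying $\psi(m\cdot\hat\gamma)=\sigma_2(m)\psi(\hat\gamma)\sigma_1(m)^*$ for every $m\in M$.

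With this dictionary in place, the remaining assertions follow by pointwise averaging. To see that $\mathfrak{q}_\sigma(\varphi)$ lies in the equivariant subspace, I would substitute $m\mapsto m_0 m$ in the defining integral and read off $\mathfrak{q}_\sigma(\varphi)(m_0\cdot\hat\gamma)=\sigma_2(m_0)\mathfrak{q}_\sigma(\varphi)(\hat\gamma)\sigma_1(m_0)^*$. Smoothness and Fr\'echet continuity follow from differentiation under the integral, justified by compactness of $M$. For $A$-equivariance, since $A$ and $M$ commute inside $P$ their induced actions on $\widehat{\mathbb{T}}_HX$ commute, so pullback by $a\in A$ passes through the $M$-integral. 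For the support inclusion, properness of $\mathfrak{q}$ (a consequence of compactness of $M$) makes $\mathfrak{q}(\mathrm{supp}(\varphi))$ closed; any $\gamma$ outside it has an open neighborhood $U$ with $\varphi$ identically zero on $\mathfrak{q}^{-1}(U)$, hence $\mathfrak{q}_\sigma(\varphi)$ vanishes on $U$.

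The splitting is constructed by pullback. Setting $s(\psi):=\mathfrak{q}^*\psi$, viewed as the $M$-equivariant $\Hom(\mathcal{H}_1,\mathcal{H}_2)$-valued function corresponding to $\psi$, one computes $\mathfrak{q}_\sigma(s(\psi))(\hat\gamma)=\int_M\sigma_2(m)\psi(m^{-1}\cdot\hat\gamma)\sigma_1(m)^*\,\rd m$; using unitarity of $\sigma_i$ and the equivariance of $\psi$ the integrand collapses to $\psi(\hat\gamma)$ for every $m\in M$, and the normalization $\int_M\rd m=1$ then yields $\mathfrak{q}_\sigma\circ s=\mathrm{id}$. The main obstacle is the first step: unwinding the various quotient constructions to justify the identification of $\Hom(r^*\mathbb{H}_1,s^*\mathbb{H}_2)$ as the bundle associated to $\mathfrak{q}$, and in particular keeping track of how the three copies of $M$ used in the definitions of $\mathfrak{q}_0$, $\mathfrak{q}_1$ and the principal structure of $K\to M\backslash K$ fit together. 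Once this bookkeeping is handled, the corollary reduces to standard facts about averaging over a compact group.
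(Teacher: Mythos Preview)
Your proposal is correct and follows the same approach the paper has in mind: the paper simply writes down the explicit formulas for $\mathfrak{q}_\sigma(\varphi)$ on the two pieces of $\mathbb{T}_HX$ and then states that the corollary ``is clear from Lemma~\ref{lnljnljknkjlnad}'', i.e.\ from the fact that $\mathfrak{q}$ is a smooth $A$-equivariant $M$-principal bundle. You have spelled out precisely the standard associated-bundle and averaging argument that this sentence is invoking, including the splitting via pullback and the support inclusion via properness, so there is nothing to add.
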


We note here that 
$$\begin{cases}
\mathfrak{q}_\sigma(\varphi)(Mk,Mk',a)&=\int_M \sigma(m)\varphi(k,mk',a)\sigma(m)^*\rd m, \; a\in A,\\
{}\\
\mathfrak{q}_\sigma(\varphi)(v,Mk,0)&=\int_M \sigma(m)\varphi(v,mk,0)\sigma(m)^*\rd m.
\end{cases}$$
In particular, Corollary \ref{kjnkjnkjnad} is clear from Lemma \ref{lnljnljknkjlnad}.

\section{van Erp-Yuncken's Heisenberg calculus}
\label{subsecparatangb}

We now turn to describing the Heisenberg calculus following van Erp-Yuncken \cite{vanerpyunck}, Dave-Haller \cite{Dave_Haller1} and we will make heavy use of Ewert's approach \cite{ewertthesis, ewertfixed}, which in turn was inspired by \cite{debordskandav}. We follow the presentation in the literature \cite{Dave_Haller1,ewertfixed,goffkuz,goffbgg,vanerpyunck}. We will refer many of the details to the references where the techniques are presented in full. We note that there is a plethora of Heisenberg calculi \cite{andromoyu,bealsgreiner,christgelleretal,fischruzh,melinoldpreprint,pongemonograph,streetbook, taylorncom}. The reason we use van Erp-Yuncken's calculus is that it is well adapted to the adiabatic techniques we hinted at in Subsection \ref{subsecparatang} and that we will use to study the Szegö map. In the case at hand of filtered manifolds of depth 2, the van Erp-Yuncken calculus coincides with the Beals-Greiner calculus \cite{bealsgreiner} by \cite{yunckencouchet}, and for general filtered manifolds the van Erp-Yuncken calculus coincides by construction with the polyhomogeneous part of the Melin calculus \cite{melinoldpreprint}. 

\subsection{$\mathsf{r}$-fibered distributions}
We recall some basic notions, for more details see \cite{vanerpyunck}. For a Lie groupoid $\mathcal{G}\rightrightarrows \mathcal{G}^{(0)}$, an $\mathsf{r}$-fibered distribution $u$ from a vector bundle $E_1\to \mathcal{G}^{(0)}$ to $E_2\to \mathcal{G}^{(0)}$  is a left $C^\infty( \mathcal{G}^{(0)})$ linear map $u:C^\infty(\mathcal{G},\mathsf{s}^*E_1)\to C^\infty(\mathcal{G}^{(0)},E_2)$. We use the suggestive notation 
$$u(\varphi)(x)=\int_{\gamma\in \mathcal{G}_x}u(\gamma)\phi(\gamma),$$ 
for $x\in \mathcal{G}^{(0)}$, where $\mathcal{G}_x:=\mathsf{r}^{-1}(x)$. We write $\mathcal{E}_\mathsf{r}'(\mathcal{G};E_1,E_2)$ for the space of $\mathsf{r}$-fibered distributions. Likewise, we define the space of $\mathsf{s}$-fibered distributions. We write $\mathcal{E}_{\mathsf{r},\mathsf{s}}'(\mathcal{G};E_1,E_2)$ for the space of distributions that are both $\mathsf{r}$-fibered and $\mathsf{s}$-fibered. If $\mathcal{G}^{(0)}$ is compact, the Schwartz kernel theorem embeds $\mathcal{E}_{\mathsf{r},\mathsf{s}}'(\mathcal{G};E_1,E_2)\subseteq \mathcal{E}'(\mathcal{G};\mathsf{r}^*E_2\otimes \mathsf{s}^*E_1^*\otimes |\pmb{\lambda}_\mathsf{r}|)$ and without assuming that $\mathcal{G}^{(0)}$ is compact, $\mathcal{E}_{\mathsf{r},\mathsf{s}}(\mathcal{G};E_1,E_2)$ embeds into the properly supported elements of $\mathcal{D}'(\mathcal{G};\mathsf{r}^*E_2\otimes \mathsf{s}^*E_1^*\otimes |\pmb{\lambda}_\mathsf{r}|)$. Here $|\pmb{\lambda}_\mathsf{r}|$ denotes the space of densities on the $\mathsf{r}$-fibers, i.e. the density bundle of $\ker(D\mathsf{r})$. We also write 
$$C^\infty_p(\mathcal{G};r^*E_2\otimes s^*E_1^*\otimes |\pmb{\lambda}_r|):=\mathcal{E}_{\mathsf{r},\mathsf{s}}'(\mathcal{G};E_1,E_2)\cap C^\infty(\mathcal{G};r^*E_2\otimes s^*E_1^*\otimes |\pmb{\lambda}_r|),$$
for the space of smooth, properly supported functions. The groupoid operation extends to a convolution product $\mathcal{E}_{\mathsf{r},\mathsf{s}}'(\mathcal{G};E_2,E_3)\times \mathcal{E}_{\mathsf{r},\mathsf{s}}'(\mathcal{G};E_1,E_2)\to \mathcal{E}_{\mathsf{r},\mathsf{s}}(\mathcal{G};E_1,E_3)$ in which $C^\infty_p(\mathcal{G};\mathsf{r}^*E_2\otimes \mathsf{s}^*E_1^*\otimes |\pmb{\lambda}_\mathsf{r}|)$ is an ideal. 

Let us describe two important special cases, constituting the building blocks in the parabolic tangent groupoid. If $\mathcal{G}=X\times X$ is the pair groupoid on a compact manifold $X$, 
$$\mathcal{E}_\mathsf{r}'(X\times X)=C^\infty(X,\mathcal{D}'(X;|\pmb{\lambda}|)),$$
with the product on $\mathcal{E}_{\mathsf{r},\mathsf{s}}'(X\times X)$ being $u_1*u_2(x,z)=\int_X u_1(x,y)u_2(y,z)$. Similarly, for $\mathcal{G}=X\times X\times (0,\infty)$, then $\mathcal{E}_\mathsf{r}'(X\times X\times (0,\infty))=C^\infty(X\times (0,\infty),\mathcal{D}'(X;|\pmb{\lambda}|))$ with the product on $\mathcal{E}_{\mathsf{r},\mathsf{s}}'(X\times X\times (0,\infty))$ being $u_1*u_2(x,z,t)=\int_X u_1(x,y,t)u_2(y,z,t)$. If $\mathcal{G}=T_HX$ is the osculating Lie groupoid on a filtered manifold $X$, $\mathcal{E}_{\mathsf{r},\mathsf{s}}'(T_HX)$ consists of smooth families $u=u(x)$ of compactly supported distributions on the fibers $u(x)\in \mathcal{E}'(T_HX_x)$ with the product $[u_1*u_2](x)=u_1(x)*u_2(x)$ defined from fiberwise convolution on $T_HX_x$.

\subsection{Heisenberg pseudodifferential operators}

For a $\lambda>0$ acting on $\mathbb{T}_HX$ via the zoom action, we write $\lambda_*: \mathcal{E}_{\mathsf{r},\mathsf{s}}'(\mathbb{T}_HX;E_1,E_2)\to  \mathcal{E}_{\mathsf{r},\mathsf{s}}'(\mathbb{T}_HX;E_1,E_2)$ for the push forward along $\lambda$. We here warn the reader about our convention of the preceding subsection to consider density valued kernels. As above, we write $t$ for the coordinate parametrizing $[0,\infty)$ in $X\times [0,\infty)$.

\begin{definition}
For $m\in \C$, a filtered manifold $X$ and vector bundles $E_1,E_2\to X$, we define the space $\pmb{\Psi}^m_H(X;E_1,E_2)\subseteq \mathcal{E}_{\mathsf{r},\mathsf{s}}'(\mathbb{T}_HX;E_1,E_2)$ to consist of those $\mathsf{r}$-fibered distributions $\pmb{k}$ such that for any $\lambda>0$ acting via the zoom action on $\mathbb{T}_HX$, 
$$\lambda_*a-\lambda^ma\in C^\infty_p(\mathcal{G};\mathsf{r}^*E_2\otimes \mathsf{s}^*E_1^*\otimes |\pmb{\lambda}_\mathsf{r}|),$$
and whose wave front set is conormal to the space of units.

We define the space $\Psi^m_H(X;E_1,E_2)\subseteq \mathcal{E}_{\mathsf{r},\mathsf{s}}'(X\times X;E_1,E_2)$ of Heisenberg pseudodifferential operators of order $m$ as the image of the evaluation map 
$$\mathrm{ev}_{t=1}:\pmb{\Psi}^m_H(X;E_1,E_2)\to \mathcal{E}_{\mathsf{r},\mathsf{s}}'(X\times X;E_1,E_2).$$
In other words, an operator $T:C^\infty(X,E_1)\to C^\infty(X,E_2)$ is a Heisenberg pseudodifferential operator of order $m$ if there is a $\pmb{k}_T\in \pmb{\Psi}^m_H(X;E_1,E_2)$ such that $\mathrm{ev}_{t=1}\pmb{k}_T$ is the Schwartz kernel of $T$. The reader can note that $t^k\pmb{\Psi}^{m-k}_H(X;E_1,E_2)\subseteq \pmb{\Psi}^m_H(X;E_1,E_2)$ so $\Psi^{m-k}_H(X;E_1,E_2)\subseteq \Psi^m_H(X;E_1,E_2)$ for any $k\in \N$.

We define $\tilde{\Sigma}_H^m(X;E_1,E_2)\subseteq \mathcal{E}_{\mathsf{r},\mathsf{s}}'(T_HX;E_1,E_2)$ as the image of the evaluation map 
$$\mathrm{ev}_{t=0}:\pmb{\Psi}^m_H(X;E_1,E_2)\to \mathcal{E}_{\mathsf{r},\mathsf{s}}'(T_H X;E_1,E_2),$$
and the space of Heisenberg symbols of order $m$ as the space 
$$\Sigma_H^m(X;E_1,E_2):=\tilde{\Sigma}_H^m(X;E_1,E_2)/C^\infty_c(T_H X;\mathsf{r}^*E_2\otimes \mathsf{s}^*E_1^*\otimes |\pmb{\lambda}_\mathsf{r}|).$$
\end{definition}

Let us summarize the salient features of the Heisenberg calculus in a proposition. We refer its proof to the literature \cite{Dave_Haller1,ewertfixed,goffkuz,vanerpyunck}.

\begin{prop}
The convolution product on $\mathsf{r},\mathsf{s}$-fibered distributions induces product operations on all the spaces $\pmb{\Psi}^m_H(X;E_1,E_2)$, $\Psi^m_H(X;E_1,E_2)$, $\tilde{\Sigma}_H^m(X;E_1,E_2)$ and $\Sigma_H^m(X;E_1,E_2)$. Moreover, these spaces fit into a commuting diagram of multiplicative maps and exact rows and columns
\small
\[
\begin{CD}
@.0@.0 @.0@.@.\\
@.@VVV@VVV@VVV  \\
0 @>>>tC^\infty_p(\T_H X)@>>>C^\infty_p(\T_H X)@>{\mathrm{ev}_{t=0}}>>C^\infty_c(T_H X) @>>>0 \\
@.@VVV@VVV@VVV  \\
0 @>>>t\pmb{\Psi}_H^{m-1}(X)@>>>\pmb{\Psi}_H^{m}(X)@>{\mathrm{ev}_{t=0}}>>\tilde{\Sigma}_H^m(X) @>>>0 \\
@.@VVV@VVV@VVV  \\
0 @>>>\Psi_H^{m-1}(X)@>>>\Psi_H^{m}(X)@>{\sigma_H^m}>>\Sigma_H^m(X)  @>>>0 \\
@.@VVV@VVV@VVV  \\
@.0@.0 @.0@.@.\\
\end{CD}\]
\normalsize
where we for  clarity of notation dropped the vector bundles and density from the notations. Here the map 
$$\sigma_H^m:\Psi_H^{m}(X)\to \Sigma_H^m(X),$$
is defined by $\sigma_H^m(T):=\mathrm{ev}_{t=0}\pmb{k}_T+C^\infty_p(T_H X)$, for any $\pmb{k}_T\in \pmb{\Psi}^m_H(X)$ such that $\mathrm{ev}_{t=1}\pmb{k}_T$ is the Schwartz kernel of $T$.
\end{prop}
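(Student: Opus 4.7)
The plan is to verify the proposition in four successive stages: multiplicativity on $\pmb{\Psi}^m_H$, stability of the wavefront/homogeneity conditions under the zoom action, descent to the fibers at $t=1$ and $t=0$, and finally exactness of the rows and columns.

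First I would show that $\pmb{\Psi}^{m_1}_H(X;E_2,E_3)\ast \pmb{\Psi}^{m_2}_H(X;E_1,E_2) \subseteq \pmb{\Psi}^{m_1+m_2}_H(X;E_1,E_3)$. The convolution on $\mathbb{T}_HX$ is well-defined for $\mathsf{r}$-fibered, $\mathsf{s}$-fibered distributions provided the wavefront sets meet transversally; this is guaranteed because by assumption both factors have wavefront sets conormal to the unit space, so the Hörmander composition criterion applies fiberwise in the groupoid. For the homogeneity condition, the zoom action $\delta_\lambda$ is a groupoid automorphism, hence
\begin{equation*}
\lambda_\ast(\pmb{k}_1\ast \pmb{k}_2) = (\lambda_\ast \pmb{k}_1)\ast (\lambda_\ast \pmb{k}_2).
\end{equation*}
Writing $\lambda_\ast\pmb{k}_i = \lambda^{m_i}\pmb{k}_i + \pmb{r}_i^{(\lambda)}$ with $\pmb{r}_i^{(\lambda)}\in C^\infty_p$, one expands and uses that $C^\infty_p(\mathbb{T}_HX)$ is a two-sided ideal in $\mathcal{E}'_{\mathsf{r},\mathsf{s}}(\mathbb{T}_HX)$ (by Schwartz kernel smoothing) to conclude that $\lambda_\ast(\pmb{k}_1\ast\pmb{k}_2)-\lambda^{m_1+m_2}(\pmb{k}_1\ast\pmb{k}_2)\in C^\infty_p$. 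This gives the multiplicative structure on $\pmb{\Psi}^m_H$; multiplicativity on $\Psi^m_H$ and $\tilde{\Sigma}^m_H$ then follows because $\mathrm{ev}_{t=1}$ and $\mathrm{ev}_{t=0}$ are algebra homomorphisms (evaluation commutes with $\mathsf{r},\mathsf{s}$-fibered convolution on each fiber groupoid). Passing to the quotient $\Sigma^m_H = \tilde{\Sigma}^m_H/C^\infty_c$ requires only that $C^\infty_c(T_HX)$ is an ideal in $\tilde{\Sigma}^m_H$, which again follows from the ideal property on the nose.

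Next I would establish surjectivity and exactness of the bottom row. Given a symbol $\pmb{a}\in \tilde{\Sigma}^m_H(X;E_1,E_2)$, Ewert's averaging construction (cf.\ \cite{ewertthesis,ewertfixed,debordskandav}) produces a lift: take a cutoff $\chi\in C^\infty_c([0,\infty))$ with $\chi(0)=1$ and define $\pmb{k}_{\pmb{a}} := \int_0^\infty \chi(\lambda)\lambda^{-m-1}(\delta_{\lambda})_\ast \tilde{\pmb{a}}\,\mathrm{d}\lambda$, where $\tilde{\pmb{a}}$ is any smooth extension supported near $t=0$ in a tubular neighborhood of $T_HX$ inside $\mathbb{T}_HX$. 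The almost-homogeneity and wavefront condition of $\tilde{\pmb{a}}$ are preserved by this average, and $\mathrm{ev}_{t=0}\pmb{k}_{\pmb{a}}=\pmb{a}$ modulo smooth properly supported corrections. This gives surjectivity of $\mathrm{ev}_{t=0}:\pmb{\Psi}^m_H\twoheadrightarrow \tilde{\Sigma}^m_H$, and the same argument gives $\mathrm{ev}_{t=0}:C^\infty_p(\mathbb{T}_HX)\twoheadrightarrow C^\infty_c(T_HX)$.

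For the exactness of the rows, I would prove that $\ker(\mathrm{ev}_{t=0})\cap \pmb{\Psi}^m_H = t\pmb{\Psi}^{m-1}_H$. The inclusion $\supseteq$ is immediate since the zoom action of $\lambda$ sends $t\mapsto \lambda^{-1}t$, so multiplication by $t$ lowers the order by one. For $\subseteq$, if $\pmb{k}\in \pmb{\Psi}^m_H$ vanishes at $t=0$, then $\pmb{k}=t\pmb{k}'$ for some $\mathsf{r}$-fibered distribution $\pmb{k}'$ smooth in $t$, and one verifies directly from the definitions that $\pmb{k}'$ satisfies the homogeneity and wavefront conditions at order $m-1$. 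Exactness of the columns is then automatic: the left column exact sequence restricted to smooth properly supported classes is precisely the Taylor expansion statement in $t$, the middle column is the definition of $\pmb{\Psi}^m_H$, and exactness of the right column follows from the definition of $\Sigma^m_H$ as a quotient. The commutativity of the diagram is tautological. The main technical obstacle is the wavefront transversality required for the convolution to close up; once this is verified using the conormality assumption and the structure of $\mathbb{T}_HX$ near the unit space, everything else reduces to Taylor expansion in $t$ and the multiplicative behavior of the zoom action.
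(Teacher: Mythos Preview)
The paper does not give its own proof of this proposition: immediately after stating it, the authors write ``We refer its proof to the literature \cite{Dave_Haller1,ewertfixed,goffkuz,vanerpyunck}.'' So there is no in-paper argument to compare against; your sketch is essentially a condensed version of the van Erp--Yuncken construction cited there, and the multiplicativity and row-exactness portions follow that approach correctly.

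There is one genuine gap worth flagging. You dismiss exactness of the middle column as ``the definition of $\pmb{\Psi}^m_H$,'' but this is not so. Surjectivity of $\mathrm{ev}_{t=1}:\pmb{\Psi}^m_H\to\Psi^m_H$ is definitional, but the identification $\ker(\mathrm{ev}_{t=1})\cap\pmb{\Psi}^m_H = C^\infty_p(\mathbb{T}_HX)$ is not: one must show that an almost-homogeneous $\mathsf{r},\mathsf{s}$-fibered distribution vanishing at a single $t>0$ is actually smooth everywhere on $\mathbb{T}_HX$. The argument (in van Erp--Yuncken this is their key ``cocycle'' lemma) uses almost-homogeneity to propagate: if $\pmb{k}|_{t=1}=0$ then $\lambda_\ast\pmb{k}|_{t=1}=\pmb{k}|_{t=\lambda}$ differs from $\lambda^m\pmb{k}|_{t=1}=0$ by something smooth, so every slice $\pmb{k}|_{t=\lambda}$ is smooth; one then needs a further estimate to get smoothness across $t=0$. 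Your surjectivity argument for $\mathrm{ev}_{t=0}$ via an Ewert-type average is also not the route taken in \cite{vanerpyunck} (they extend directly through the local chart $\psi_\nabla$), and as written your integral has convergence issues near $\lambda=0$ for general $m$; but this is a minor point compared to the column-exactness gap.
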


An element of $\Psi^{-\infty}(X;E_1,E_2):=\cap_{m} \Psi^m_H(X;E_1,E_2)$ is called a smoothing operator. It is clear from the definition that $\Psi^{-\infty}(X;E_1,E_2)=C^\infty(X\times X;\mathsf{r}^*E_2\otimes \mathsf{s}^*E_1^*\otimes |\pmb{\lambda}_\mathsf{r}|)$, justifying the term smoothing. In the same way as for the classical Hörmander calculus, the product in the complete symbol algebras $\Psi^{m}(X;E_1,E_2)/\Psi^{-\infty}(X;E_1,E_2)$ is local and $\Psi^m_H/\Psi^{-\infty}$ forms a soft sheaf of algebras on $X$.

\subsection{The symbol algebras on $P\backslash G$}
\label{symbollknlknad}
We can make the symbol algebras somewhat more concrete, and we do so in the case at hand $X=P\backslash G$. To do so, we first describe the fiberwise case -- the almost homogeneous, $\mathsf{r},\mathsf{s}$-fibered distribution on the nilpotent Lie group $V$ viewed as a groupoid over a point, so $\mathcal{E}_{\mathsf{r},\mathsf{s}}'(V)=\mathcal{E}'(V)$. The grading on $V$ is what is needed for the dilation action. Following the notation above, we write $C^\infty_c(V,|\pmb{\lambda}|)$ for the space of smooth, compactly supported densities on $V$. We use the notation 
$$\tilde{\Sigma}_H^mV:=\left\{k\in \mathcal{E}'(V): \begin{matrix}\lambda_*a-\lambda^ma\in C^\infty_c(V,|\pmb{\lambda}|), \; \forall \lambda>0,\\ \vspace{-2mm}&\\ 
 \; a|_{V\setminus \{0\}}\in C^\infty(V\setminus \{0\},|\pmb{\lambda}|)\end{matrix}\right\},$$
and $\Sigma_H^mV:=\tilde{\Sigma}_H^mV/C^\infty_c(V,|\pmb{\lambda}|)$. We use the notation $0\in V$ for the identity element. 

We write $P^mV\subseteq C^\infty(V,|\pmb{\lambda}|)$ for the space of density valued polynomials that are homogeneous of degree $m$. We note that if we have chosen a constant density $\rd v$ on $V$, we can write $p_m=q_m\rd v$ where $q_m$ is a polynomial on $V$. Since $p_m$ has order $m$ then $q_m$ as a polynomial must have order $-m-d$ where $d\in \N$ is the homogeneous dimension of $V$. That is, $d$ is the homogeneity degree of $\rd v$ and it is given by 
$$d=p+2q, \quad\mbox{for $p:=\dim\mathfrak{g}_\alpha$ and $q:=\dim\mathfrak{g}_{2\alpha}$.}$$ 
In particular, $p_m=0$ unless $m\in -d-\N$. We write $\mathfrak{S}^mV\subseteq \mathcal{D}'(V,|\pmb{\lambda}|)$ for the space of density valued distributions on $V$ that are smooth on $V\setminus\{0\}$ and are homogeneous of degree $m$.

\begin{lemma}
\label{adanlkn}
Let $m\in \C$ and assume that $V$ is a graded, simply connected, nilpotent Lie group equipped with a homogeneous length function $|\cdot|$. Write $d$ for the homogeneous dimension of $V$. Any $a\in \tilde{\Sigma}_H^mV$ can be written as 
$$a=\chi(a_m+p_m\log|\cdot|)+a_\infty,$$
where $\chi,a_\infty\in C^\infty_c(V)$ and $\chi=1$ near the identity element $0\in V$, $a_m\in \mathfrak{S}^mV$ and $p_m\in P^mV$. If $m\notin -d-\N$ then $p_m=0$ and $a_m$ is uniquely determined. If $m\in -d-\N$ then $p_m$ is uniquely determined but $a_m$ is uniquely defined only up to elements of $P^mV$.
 \end{lemma}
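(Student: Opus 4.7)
The plan is to extract from $a$ a candidate homogeneous (or log-homogeneous) representative on $V \setminus \{0\}$, and then extend it across the origin via the Schwartz-type extension theory of homogeneous distributions on a graded nilpotent Lie group.

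For existence, I would first localize: since $a|_{V\setminus\{0\}}$ is smooth and $a \in \mathcal{E}'(V)$, for any cutoff $\chi \in C^\infty_c(V)$ with $\chi \equiv 1$ near $0$ the remainder $(1-\chi)a$ lies in $C^\infty_c(V,|\pmb{\lambda}|)$ and can be absorbed into $a_\infty$; thus I may assume $a$ is supported in a prescribed small neighborhood of $0$. Next I would parametrize $V\setminus\{0\} \cong \R \times S$ by $x = \delta_{e^r}\omega$ with $\omega \in S = \{y: |y|=1\}$, so that the dilation $\delta_\lambda$ becomes the translation $r \mapsto r + \log\lambda$. After normalizing by the density weight induced from the homogeneous dimension $d$, the almost-homogeneity $\delta_{\lambda,*}a - \lambda^m a \in C^\infty_c(V,|\pmb{\lambda}|)$ reduces to a first-order ODE of the form $(\partial_r - m)\widetilde{a} = c$ with $c$ smooth and vanishing as $r \to +\infty$ (which is where compact support of $a$ enters). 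Integrating this ODE from $r = +\infty$ produces a density $a_m$ on $V\setminus\{0\}$ which is smooth, genuinely homogeneous of degree $m$, and satisfies $a - a_m \in C^\infty$ near the origin.

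The homogeneous density $a_m$, defined a priori only on $V\setminus\{0\}$, must then be extended to a distribution on all of $V$. This is the classical extension problem for homogeneous distributions on a graded nilpotent group, which I would solve by analytic (Mellin) continuation in $m$ along the scaling family. For $m \notin -d-\N$ the extension is unique and $p_m=0$; at $m \in -d-\N$ the continuation has a simple pole whose residue is a density-valued polynomial $p_m \in P^m V$, and the regularized value naturally acquires the logarithmic factor $p_m\log|\cdot|$. Multiplying by $\chi$ and subtracting from $a$ delivers the remaining $a_\infty \in C^\infty_c(V,|\pmb{\lambda}|)$.

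For uniqueness, suppose two such decompositions differ; on the neighborhood of $0$ where $\chi \equiv 1$ the difference $(a_m - a_m') + (p_m-p_m')\log|\cdot|$ is smooth. Applying $\delta_\lambda$ and using $\delta_\lambda^*(\log|\cdot|) = \log|\cdot| + \log\lambda$ together with the homogeneity of $a_m - a_m'$ forces $p_m = p_m'$; then $a_m - a_m'$ is smooth near $0$ and homogeneous of degree $m$, hence lies in $P^m V$, which is zero unless $m \in -d-\N$. The main technical hurdle will be the resonant case $m \in -d-\N$: cleanly identifying $p_m$ requires careful bookkeeping of the Laurent expansion of the Mellin continuation and the interaction between the density normalization and the grading. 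Away from the resonance, the remaining arguments are routine.
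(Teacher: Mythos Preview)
The paper does not give its own proof of this lemma; it simply cites it as \cite[Lemma~3.8]{Dave_Haller1}. Your outline is a correct and standard direct argument: localizing near $0$, extracting the homogeneous principal part via the radial first-order ODE on $V\setminus\{0\}$ obtained by differentiating the almost-homogeneity relation, and then invoking the extension theory for homogeneous distributions on graded groups (with the expected log-correction at the resonant degrees $m\in -d-\N$) is precisely how such structure results are proved in the Euclidean setting, and the argument transfers unchanged to graded nilpotent groups. Your uniqueness argument is also correct. Since the paper offers no proof of its own there is nothing further to compare; your sketch would serve as a self-contained substitute for the citation.
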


This lemma can be found as \cite[Lemma 3.8]{Dave_Haller1}. An important consequence of Lemma \ref{adanlkn} is a description of $\Sigma_H^mV$. Given an $a\in \Sigma_H^mV$ it is determined as 
$$a=a_m+p_m\log|\cdot|+C^\infty(V)$$
for an $a_m\in \mathfrak{S}^mV$ and a $p_m\in P^mV$. Since $\mathfrak{S}^mV\cap C^\infty(V)=P^mV$ we conclude the following proposition.

\begin{lemma}
Let $m\in \C$ and assume that $V$ is a graded, simply connected, nilpotent Lie group equipped with a homogeneous length function $|\cdot|$. Then the decomposition of Lemma \ref{adanlkn} induces an isomorphism
$$\Sigma_H^mV\cong (\mathfrak{S}^mV+P^mV\log|\cdot|)/P^mV.$$
\end{lemma}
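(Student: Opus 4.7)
The plan is to define a map
\[
\Phi : \tilde{\Sigma}_H^m V \longrightarrow (\mathfrak{S}^m V + P^m V \log|\cdot|)/P^m V
\]
by sending $a \in \tilde{\Sigma}_H^m V$ with decomposition $a = \chi(a_m + p_m \log|\cdot|) + a_\infty$ from Lemma \ref{adanlkn} to the class of $a_m + p_m \log|\cdot|$, and then to verify that $\Phi$ descends to an isomorphism out of $\Sigma_H^m V$. Well-definedness of $\Phi$ is immediate from the uniqueness clauses of Lemma \ref{adanlkn}: if $m \notin -d-\N$ then $p_m = 0$ and $a_m$ is unique, while if $m \in -d-\N$ then $p_m$ is unique and $a_m$ is determined only modulo $P^m V$, so in either case the class in $(\mathfrak{S}^m V + P^m V \log|\cdot|)/P^m V$ is unambiguous. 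Since any element of $C_c^\infty(V, |\pmb{\lambda}|)$ admits the trivial decomposition with $a_m = p_m = 0$ and $a_\infty = a$, the map $\Phi$ annihilates $C_c^\infty(V, |\pmb{\lambda}|)$ and therefore factors through a map $\bar\Phi$ out of $\Sigma_H^m V$.

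For surjectivity, I would take representatives $a_m \in \mathfrak{S}^m V$ and $p_m \in P^m V$ and set $a := \chi(a_m + p_m \log|\cdot|)$ with $\chi$ a smooth cutoff equal to $1$ near the origin and of compact support. A short computation using $|\delta_\lambda x| = \lambda |x|$ shows that $\lambda_*(p_m \log|\cdot|)$ differs from $\lambda^m p_m \log|\cdot|$ by a scalar multiple (in $\log \lambda$) of the polynomial density $p_m$, which is smooth on all of $V$; combined with the contribution of $\lambda_* \chi - \chi$ (smooth and compactly supported), this verifies $a \in \tilde{\Sigma}_H^m V$. By construction $\bar\Phi([a]) = [a_m + p_m \log|\cdot|]$.

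For injectivity, I would first observe that $\mathfrak{S}^m V \cap P^m V \log|\cdot| = 0$: if $p \log|\cdot|$ were homogeneous of degree $m$ off the origin, then the identity
\[
p(\delta_\lambda x) \log|\delta_\lambda x| - \lambda^m p(x) \log|x| = \lambda^m p(x) \log \lambda
\]
together with the assumption that the left hand side vanishes as a distribution would force $p = 0$. So the sum $\mathfrak{S}^m V + P^m V \log|\cdot|$ is direct. If now $\bar\Phi([a]) = 0$, i.e.\ $a_m + p_m \log|\cdot| \in P^m V \subseteq \mathfrak{S}^m V$, the direct-sum conclusion gives $p_m = 0$ and $a_m \in P^m V \subseteq C^\infty(V, |\pmb{\lambda}|)$. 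The decomposition of $a$ then collapses to $a = \chi a_m + a_\infty \in C_c^\infty(V, |\pmb{\lambda}|)$, so $[a] = 0$ in $\Sigma_H^m V$. I do not expect any step to be a real obstacle; all the analytic content has been absorbed into Lemma \ref{adanlkn}, and what remains is the bookkeeping of its uniqueness clauses together with the two-line scaling identity for $\log|\cdot|$ that forces directness of the sum.
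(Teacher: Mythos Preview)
Your proof is correct and follows essentially the same approach as the paper, which offers only a one-line sketch (noting that $\mathfrak{S}^m V \cap C^\infty(V) = P^m V$) before stating the lemma. You have simply made explicit the well-definedness, surjectivity, and injectivity steps; your direct-sum observation $\mathfrak{S}^m V \cap P^m V \log|\cdot| = 0$ is the natural companion to the paper's intersection identity and serves the same purpose in pinning down the ambiguity.
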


In order to understand the algebraic properties of the symbol algebra $\Sigma_H^mV$, we consider its action on Schwartz functions on $V$. However, due to that the symbols are determined only up to $P^mV$, we need to consider space $\mathcal{S}_0(V)$ of Schwartz functions on $V$ that vanish to infinite order in the trivial representation. We write $\mathcal{S}(V)$ for the Frech\'{e}t space of Schwartz class functions on $V$, and more precisely, we define 
$$\mathcal{S}_0(V):=\left\{h\in \mathcal{S}(V): \int_V q(v)h(v)\rd v=0\; \mbox{for all polynomials}\; q\right\}.$$
If $p=q\rd v\in P^mV$, we have that $p*h(v)=\int_V q(vw^{-1})h(w)\rd w=0$, so we conclude the following. We use the notation $\mathcal{L}_{\mathcal{S}_0(V)}(\mathcal{S}_0(V))$ for the space of continuous linear maps on $\mathcal{S}_0(V)$ that commute with the right action of $\mathcal{S}_0(V)$ on itself via convolution.

\begin{prop}
\label{blablaprec}
Let $m\in \C$ and assume that $V$ is a graded, simply connected, nilpotent Lie group. Then the convolution product on $V$ produces a well defined injective $*$-homomorphism
$$\Sigma_H^mV\to \mathcal{L}_{\mathcal{S}_0(V)}(\mathcal{S}_0(V)).$$
\end{prop}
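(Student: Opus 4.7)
The plan is to establish in turn (a) that left convolution gives a well-defined operator on $\mathcal{S}_0(V)$ depending only on the class in $\Sigma_H^m V$, (b) that the algebraic structures are preserved, and (c) injectivity. Throughout, I would use Lemma~\ref{adanlkn} to represent a class $[a]\in \Sigma_H^m V$ by the tempered distribution $\tilde{a}:=a_m+p_m\log|\cdot|$ on $V$, uniquely determined modulo $P^m V$, smooth away from the identity and of at most polynomial growth at infinity.

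\textbf{Well-definedness.} The action on $h\in\mathcal{S}_0(V)$ is defined by $\tilde{a}*h$. Independence of the representative reduces to $p*h=0$ for every polynomial density $p=q\,dv\in P^m V$: writing $(p*h)(v)=\int q(vu^{-1})h(u)\,du$, the Baker-Campbell-Hausdorff formula on the simply connected nilpotent group $V$ makes $q(vu^{-1})$ a polynomial in $u$ with polynomial-in-$v$ coefficients, so the integral is a linear combination of vanishing moments of $h$. Smoothness of $\tilde{a}*h$ follows from smoothness of $\tilde{a}$ away from the identity together with the Schwartz decay of $h$. For rapid decay in $v$, I would Taylor-expand $\tilde{a}(vu^{-1})$ in $u$ around $u=e$: the leading polynomial-in-$u$ terms contribute vanishing moments of $h$, while the remainders decay rapidly thanks to the homogeneity of $\tilde{a}$ at infinity. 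The same BCH identity together with associativity shows that each moment $\int q(v)(\tilde{a}*h)(v)\,dv$ reduces to a vanishing moment of $h$, so $\tilde{a}*h\in\mathcal{S}_0(V)$. Right-$\mathcal{S}_0(V)$-equivariance of $h\mapsto\tilde{a}*h$ is just associativity of convolution.

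\textbf{$*$-algebra structure and injectivity.} The convolution product on the graded space $\Sigma_H^\bullet V:=\bigsqcup_m\Sigma_H^m V$ and the involution $a^*(v):=\overline{a(v^{-1})}$ correspond to operator composition and the $L^2$-adjoint, respectively, by standard properties of convolution, so the resulting map is a $*$-homomorphism. For injectivity, suppose $\tilde{a}*h\equiv 0$ for all $h\in\mathcal{S}_0(V)$. Evaluating at $v=e$ and substituting $u=w^{-1}$ yields $\langle\tilde{a}^\vee,h\rangle=0$ for all $h\in\mathcal{S}_0(V)$, where $\tilde{a}^\vee(w):=\tilde{a}(w^{-1})$. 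Pulling back to $\mathfrak{v}$ via the exponential map identifies polynomials on $V$ with polynomials on $\mathfrak{v}$, Schwartz functions with Schwartz functions, and $\mathcal{S}_0(V)$ with $\mathcal{S}_0(\R^d)$. Applying the Fourier transform on $\R^d$, $\mathcal{S}_0(\R^d)^\perp\subseteq\mathcal{S}'(\R^d)$ consists exactly of those tempered distributions whose Fourier transform is supported at the origin, i.e.\ of polynomial densities. Hence $\tilde{a}^\vee$, and therefore $\tilde{a}$, is a polynomial density on $V$. Combining with $\tilde{a}=a_m+p_m\log|\cdot|$: a logarithmic singularity cannot be polynomial, forcing $p_m=0$, and a polynomial that is also homogeneous of degree $m$ lies in $P^m V$, so $[a]=0$ in $\Sigma_H^m V$.

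\textbf{Main obstacle.} The most delicate part is the rapid decay of $\tilde{a}*h$ at infinity in the non-abelian setting: the Taylor remainder estimates must be controlled uniformly in a homogeneous length function, and one must track the interaction of the group multiplication with the dilations $\delta_\lambda$ to convert the cancellation of moments of $h$ into Schwartz decay of $\tilde{a}*h$. The identification of $\mathcal{S}_0(V)^\perp$ with polynomial densities is routine once reduced to $\R^d$ via exponential coordinates, since Baker--Campbell--Hausdorff preserves the class of polynomial functions on a simply connected nilpotent Lie group.
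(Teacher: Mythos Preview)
Your proposal is correct and, in fact, considerably more detailed than what the paper provides: the paper does not give a separate proof of this proposition but treats it as a direct consequence of the one-line observation immediately preceding it, namely that $p*h=0$ for $p\in P^mV$ and $h\in\mathcal{S}_0(V)$, together with standard facts about homogeneous distributions on graded nilpotent groups. Your argument follows exactly this route while supplying the details the paper leaves implicit.

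The one point worth flagging is the part you yourself identify as the main obstacle: showing that $\tilde{a}*h$ actually lies in $\mathcal{S}_0(V)$, in particular that it has rapid decay at infinity when $\tilde{a}$ is homogeneous of degree $m$ with $\mathrm{Re}(m)$ very negative (so that the scalar kernel grows polynomially). Your Taylor-expansion strategy is the correct one and is standard in the literature on homogeneous convolution operators on stratified groups (Folland--Stein, Christ--Geller--G\l{}owacki--Polin \cite{christgelleretal}), but carrying it out rigorously requires some care with the remainder estimates in homogeneous coordinates; you might simply cite one of these sources rather than reprove it. The injectivity argument via $\mathcal{S}_0(V)^{\perp}=\{\text{polynomial densities}\}$ in exponential coordinates is clean and correct.
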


To further study the symbol algebra, we use noncommutative Fourier analysis. That is, for any unitary representation $\pi:V\to U(\mathfrak{h}_\pi)$ we can form the dense subspace
$$\mathcal{S}_0(\pi):=\pi(\mathcal{S}_0(V))\mathfrak{h}_\pi\subseteq \mathfrak{h}_\pi.$$
We can identify $\mathcal{S}_0(\pi)=\mathcal{S}_0(V,\mathfrak{h}_\pi)^V$ and topologize $\mathcal{S}_0(\pi)$ as such. Writing $\mathcal{L}(\mathcal{S}_0(\pi))$ for the algebra of linear continuous operators on $\mathcal{S}_0(\pi)$, we can extend $\pi$ to a $*$-homomorphism
$$\pi:\mathcal{L}_{\mathcal{S}_0(V)}(\mathcal{S}_0(V))\to \mathcal{L}(\mathcal{S}_0(\pi)), \quad \pi(T)(\pi(f)v):=\pi(Tf)v.$$
We note that if $\pi$ is irreducible and non-trivial, then $\mathcal{S}_0(\pi)=\mathcal{S}(\pi)$. 

\begin{lemma}
\label{christlemma}
For any unitary, irreducible, nontrivial representation $\pi:V\to U(\mathfrak{h}_\pi)$ we can represent 
$$\pi:\Sigma_H^mV\to \mathcal{L}(\mathcal{S}(\pi)).$$
Moreover, an element $a\in \Sigma_H^mV$ admits a two sided inverse $a^{-1}\in \Sigma_H^{-m}V$ if and only if for all unitary, irreducible, nontrivial representation $\pi:V\to U(\mathfrak{h}_\pi)$, $\pi(a):\mathcal{S}(\pi)\to \mathcal{S}(\pi)$ is injective with dense range. 
\end{lemma}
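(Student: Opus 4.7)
The plan is to deduce the representation assertion from Proposition \ref{blablaprec} together with standard facts about smooth vectors, to handle the forward implication by multiplicativity, and to invoke Christ's theorem on inversion on homogeneous nilpotent groups for the reverse implication.

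First I would note that Proposition \ref{blablaprec} already embeds $\Sigma_H^mV$ in $\mathcal{L}_{\mathcal{S}_0(V)}(\mathcal{S}_0(V))$. Given a unitary representation $\pi$ of $V$, the formula $\pi(a)(\pi(f)v):=\pi(af)v$ is well defined on $\mathcal{S}_0(\pi)$ precisely because $a$ commutes with right convolution by elements of $\mathcal{S}_0(V)$, and extends to a $*$-homomorphism $\pi:\Sigma_H^mV\to \mathcal{L}(\mathcal{S}_0(\pi))$. For nontrivial irreducible $\pi$ one has $\mathcal{S}_0(\pi)=\mathcal{S}(\pi)$: the Dixmier-Malliavin theorem gives $\pi(\mathcal{S}(V))\mathfrak{h}_\pi=\mathcal{S}(\pi)$, and the quotient $\mathcal{S}(V)/\mathcal{S}_0(V)$ records the moments of $f$ against polynomials, which in Kirillov models correspond to derivatives of the operator-valued Fourier transform at the trivial representation and are therefore annihilated by any nontrivial $\pi$.

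For the ``only if'' direction, suppose $a^{-1}\in\Sigma_H^{-m}V$ is a two-sided inverse of $a\in\Sigma_H^mV$. Then $aa^{-1}=a^{-1}a$ equals the identity class $[\delta_0]\in\Sigma_H^0V$, whose image under $\pi$ is the identity on $\mathcal{S}(\pi)$. Hence $\pi(a)$ and $\pi(a^{-1})$ are mutually inverse bijections of $\mathcal{S}(\pi)$, in particular $\pi(a)$ is injective with dense range.

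The ``if'' direction is the main obstacle and is essentially the content of Christ's theorem on inversion \cite{christgelleretal}. The injectivity of $\pi(a)$ on $\mathcal{S}(\pi)$ for every nontrivial irreducible unitary $\pi$ is the \emph{Rockland condition} on $a$. By the Helffer-Nourrigat theorem for graded nilpotent Lie groups it is equivalent to maximal hypoellipticity of the convolution operator associated with a representative of $a$, and Christ then produces a two-sided parametrix $b\in\Sigma_H^{-m}V$ with $ab=ba=[\delta_0]$ modulo smoothing operators. The construction cuts $a$ into dyadic pieces along the dilation action on $V$, inverts each piece uniformly in scale using the Rockland hypothesis on both $a$ and $a^*$, and reassembles via a homogeneous Littlewood-Paley decomposition; the errors lie in $C^\infty_c(V,|\pmb\lambda|)$ and hence vanish in $\Sigma_H^{\pm m}V$. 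The delicate technical point is that for critical orders $m\in -d-\mathbb{N}$ the symbol decomposition of Lemma \ref{adanlkn} involves logarithmic terms with residue in $P^mV$, but this ambiguity is exactly what is killed upon passage to $\mathcal{L}_{\mathcal{S}_0(V)}(\mathcal{S}_0(V))$, so that the parametrix automatically respects the quotient.
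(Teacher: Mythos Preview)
Your proposal is correct and follows essentially the same route as the paper: the first assertion is derived from Proposition \ref{blablaprec} together with the observation $\mathcal{S}_0(\pi)=\mathcal{S}(\pi)$ for nontrivial irreducible $\pi$, and the nontrivial ``if'' direction of the invertibility criterion is deferred to \cite[Theorem 6.2]{christgelleretal} (the paper also points to \cite[Lemma 3.9]{Dave_Haller1}). Your write-up simply unpacks the forward implication and sketches Christ's parametrix construction in more detail than the paper does, which merely cites the result.
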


The first part of the lemma is clear from Proposition \ref{blablaprec} and the discussion following it. The second part is proven as \cite[Theorem 6.2]{christgelleretal}, see also \cite[Lemma 3.9]{Dave_Haller1}. We now return to the global situation on the compact Carnot manifold $X=P\backslash G$.  

\begin{prop}
\label{gequicad}
Let $m\in \C$. Consider the compact Carnot manifold $X=P\backslash G$ and, for $j=1,2$, homogeneous vector bundles $\mathbb{H}_j=\mathcal{H}_j\times_P G\to X$ for representations $\mathcal{H}_j$ of $P$ factoring over $MA$. It then holds that 
$$\tilde{\Sigma}_H^m(X;\mathbb{H}_1,\mathbb{H}_2)=C^\infty(P\backslash G; (\tilde{\Sigma}_H^mV \otimes \Hom(\mathcal{H}_1,\mathcal{H}_2))\times_PG),$$
and 
$$\Sigma_H^m(X;\mathbb{H}_1,\mathbb{H}_2)=C^\infty(P\backslash G; (\Sigma_H^mV \otimes \Hom(\mathcal{H}_1,\mathcal{H}_2))\times_PG).$$
In particular, we can identify $\Sigma_H^m(X;\mathbb{H}_1,\mathbb{H}_2)^G=(\Sigma_H^mV \otimes \Hom(\mathcal{H}_1,\mathcal{H}_2))^{MA}$.  
\end{prop}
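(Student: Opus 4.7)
The plan is to reduce the global statement to the fiberwise description already summarized in Lemmas \ref{adanlkn} and the subsequent discussion, using the $G$-equivariant identification of the osculating groupoid $T_H X = V\times_P G$ established in Subsection \ref{subsecfiltered}. Throughout, recall that $P$ acts on $V$ via the quotient map $P\to MA$ by graded automorphisms, so the dilation action and the $MA$-action on $V$ are compatible.

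First, I would unfold the definition. By construction, $\tilde{\Sigma}_H^m(X;\mathbb{H}_1,\mathbb{H}_2)$ consists of $\mathsf{r},\mathsf{s}$-fibered distributions on $T_H X$ with values in $\Hom(\mathsf{r}^*\mathbb{H}_1,\mathsf{s}^*\mathbb{H}_2)\otimes |\pmb{\lambda}_\mathsf{r}|$, which are smooth away from the unit section and for which $\lambda_* a - \lambda^m a$ is smooth for all $\lambda >0$ (here the zoom action restricts to the fiberwise dilation). Since $T_H X \to X$ is a locally trivial bundle of nilpotent Lie groups with fiber $V$, every such distribution restricts fiberwise to an element of $\tilde{\Sigma}_H^m V \otimes \Hom(\mathcal{H}_1,\mathcal{H}_2)$. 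Conversely, a smooth section of the associated bundle $(\tilde{\Sigma}_H^m V \otimes \Hom(\mathcal{H}_1,\mathcal{H}_2))\times_P G$ defines an $\mathsf{r},\mathsf{s}$-fibered distribution on $T_H X$ satisfying the almost-homogeneity condition fiberwise and uniformly in the base, hence gives an element of $\tilde{\Sigma}_H^m(X;\mathbb{H}_1,\mathbb{H}_2)$. This produces the first identification.

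For the symbol space $\Sigma_H^m$, I quotient by the smoothing ideal $C^\infty_c(T_H X;\mathsf{r}^*\mathbb{H}_2\otimes \mathsf{s}^*\mathbb{H}_1^*\otimes |\pmb{\lambda}_\mathsf{r}|)$, which is itself globally the sections of the associated bundle with fiber $C^\infty_c(V)\otimes\Hom(\mathcal{H}_1,\mathcal{H}_2)$. Taking the quotient fiber by fiber is permissible because $P^mV$ (and hence the ambiguity in the decomposition of Lemma \ref{adanlkn}) is a $P$-invariant subspace — indeed $P$ acts on $V$ by graded automorphisms, preserving the polynomial degree and hence $P^m V$. This yields the stated identification for $\Sigma_H^m$. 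Finally, the $G$-invariant statement follows from the general fact that $C^\infty(P\backslash G;W\times_P G)^G\cong W^P$ for any $P$-module $W$; in our case the $N$-factor of $P$ acts trivially since $\mathcal{H}_j$ factor through $MA$ and since the graded $P$-action on $V$ likewise factors through $MA$, so $W^P = W^{MA}$.

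The main point to verify carefully, and what I expect to be the only real subtlety, is the compatibility between the zoom action on $\mathbb{T}_H X$ used to define almost-homogeneity and the $A$-action on $V$ arising from the $P$-equivariant trivialization. This is precisely the compatibility $\delta_a(v) = ava^{-1}$ from \eqref{lknlknadad}, together with the observation that the $MA$-action extends to graded automorphisms of $V$, so that the $P$-equivariance of a section of the associated bundle is exactly what is needed to translate between the local fiberwise condition and the global zoom-action condition on $\mathbb{T}_H X$.
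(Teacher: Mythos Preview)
Your sketch is correct and in fact provides more than the paper does: the paper gives no proof of this proposition at all, referring it entirely to the discussion in \cite[Section 4.7]{goffbgg}. Your argument---using the $G$-equivariant identification $T_HX\cong V\times_P G$ from Subsection~\ref{subsecfiltered} to reduce the global almost-homogeneity condition on $T_HX$ to the fiberwise description $\tilde{\Sigma}_H^mV$, then quotienting by the $P$-invariant smoothing ideal, and finally reading off $G$-invariants as $MA$-fixed vectors in the fiber---is exactly the content one would expect that reference to contain. One small point worth making explicit: by definition $\tilde{\Sigma}_H^m(X;\mathbb{H}_1,\mathbb{H}_2)$ is the \emph{image} of $\mathrm{ev}_{t=0}$ on $\pmb{\Psi}^m_H$, so your ``conversely'' direction tacitly uses that every almost-homogeneous $\mathsf{r},\mathsf{s}$-fibered distribution on $T_HX$ lifts to one on $\mathbb{T}_HX$; this is standard in the van Erp--Yuncken framework (and follows e.g.\ from the tubular-neighbourhood chart $\psi_\nabla$), but strictly speaking it is part of what needs to be said.
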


We refer the proof of the proposition to the discussion in \cite[Section 4.7]{goffbgg}.

\subsection{Kernel structure}

The structural results for the symbol algebras in Subsection \ref{symbollknlknad} can asymptotically be used to describe also the Heisenberg pseudodifferential operators up to smoothing operators. We have the following description of the integral kernels of elements of $\Psi^m_H$ on $X=P\backslash G$.

\begin{lemma}
\label{lemkernel}
Let $m\in \C$. Consider the compact Carnot manifold $X=P\backslash G$ and, for $j=1,2$, homogeneous vector bundles $\mathbb{H}_j=\mathcal{H}_j\times_P G\to X$ for representations $\mathcal{H}_j$ of $P$ factoring over $MA$. Consider an operator 
$$T: C^\infty(X, \mathbb{H}_1)\to C^\infty(X, \mathbb{H}_2).$$
Then $T$ is a Heisenberg pseudodifferential operator of order $m$ if and only if the Schwartz kernel $k_T\in \mathcal{D}'(X\times X;\mathsf{r}^*\mathbb{H}_2\otimes \mathsf{s}^*\mathbb{H}_1^*\otimes |\pmb{\lambda}_\mathsf{r}|)$ of $T$ has the following kernel description. The distribution $k_T$ is smooth off the diagonal, and for any $x_0\in X$, and any precompact open set $U\subseteq V\cdot x_0$ in the nilpotent chart $V\cdot x_0$, we have on $U$ that 
$$k_T(x,y)=k(x,xy^{-1}) ,$$
where $k_U\in C^\infty(U,\mathcal{E}'(V;|\pmb{\lambda}|)\otimes \Hom(H_1,H_2))$ satisfies that for any $N\in \N$, there is an $l$ such that for $j=0,1,2,\ldots, l$, there are $k_{U,j}\in C^\infty(U,\tilde{\Sigma}_H^{m-j}V\otimes \Hom(\mathcal{H}_1,\mathcal{H}_2))$ such that 
$$k_U-\sum_{j=0}^l k_{U,j}\in C^\infty(U,C^N(V,\Hom(\mathcal{H}_1,\mathcal{H}_2))).$$
 In this case, we have in $U$ that  
 $$\sigma_H^{m}(T)=k_{U,0}+C^\infty(U,C^\infty_c(V;|\pmb{\lambda}|\otimes \Hom(\mathcal{H}_1,\mathcal{H}_2))).$$
 \end{lemma}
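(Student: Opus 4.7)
The plan is to translate between the groupoid definition of $\Psi^m_H$ via $\mathsf{r}$-fibered distributions on $\mathbb{T}_HX$ and the asserted local expansion of the Schwartz kernel in a nilpotent chart. The bridge is the smooth parametrization $\psi$ of $\mathbb{T}_HX$ near $t=0$ recorded in Proposition \ref{lknadlnad}: under $\psi$, a lift $\pmb{k}_T\in \pmb{\Psi}^m_H(X;\mathbb{H}_1,\mathbb{H}_2)$ pulls back to a density-valued distribution $\pmb{k}(x,v,t)$ on $U\times V\times\overline{A}$, and the zoom action on $\mathbb{T}_HX$ becomes the joint scaling $(v,t)\mapsto(\delta_\lambda v,\lambda^{-1}t)$, while the $G$-homogeneous structure on $\mathbb{H}_j$ trivialises in the fixed chart (cf.\ Proposition \ref{gequicad}).

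For the forward direction, suppose $T\in\Psi^m_H$ with lift $\pmb{k}_T$. Conormality of the wave front set of $\pmb{k}_T$ together with $\mathrm{ev}_{t=1}\pmb{k}_T=k_T$ gives smoothness of $k_T$ off the diagonal, and the $\mathsf{r}$-fibered structure of $\pmb{k}_T$ allows one to write $k_T(x,y)=k_U(x,xy^{-1})$ in the nilpotent chart. The almost-homogeneity condition $\lambda_*\pmb{k}_T-\lambda^m\pmb{k}_T\in C^\infty_p$ translates via $\psi^*$ into almost-homogeneity of $\pmb{k}(x,v,t)$ of weight $m$ under the joint scaling. Taylor expanding $\pmb{k}(x,v,t)$ in $t$ at $t=0$ modulo $C^\infty_p$, as in the standard Heisenberg-calculus arguments of \cite{Dave_Haller1,ewertfixed,vanerpyunck}, yields for any $N$ a finite decomposition
$$\pmb{k}(x,v,t)=\sum_{j=0}^l t^j k_{U,j}(x,v)+r_N(x,v,t),$$
where each coefficient $k_{U,j}$ is almost homogeneous of the weight forcing $k_{U,j}\in C^\infty(U,\tilde{\Sigma}_H^{m-j}V\otimes \Hom(\mathcal{H}_1,\mathcal{H}_2))$, and $r_N$ is $C^N$ in $v$. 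Specialising to $t=1$ yields the asserted local expansion of $k_U$, and $k_{U,0}$ represents $\sigma_H^m(T)$ directly from the construction, since $\mathrm{ev}_{t=0}\pmb{k}_T$ reduces locally to $k_{U,0}$ modulo $C^\infty_c$.

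For the converse, given a kernel $k_T$ with the asserted expansion in every nilpotent chart, one builds a lift $\pmb{k}_T\in\pmb{\Psi}^m_H$ chart by chart. For each $j$ set $\pmb{k}_{U,j}(x,v,t):=t^j k_{U,j}(x,v)$; Lemma \ref{adanlkn} gives an almost-homogeneous extension of $k_{U,j}$ to all of $V$ that varies smoothly in $x$, and the prefactor $t^j$ matches the missing weight so that $\pmb{k}_{U,j}$ is almost-homogeneous of weight $m$ for the zoom action. A standard Borel summation (possible since, by hypothesis, the tails are $C^N$ in $v$ for every fixed $N$) together with a partition of unity across overlapping nilpotent charts assembles the $\pmb{k}_{U,j}$ into a global $\pmb{k}_T\in\pmb{\Psi}^m_H$ whose evaluation at $t=1$ agrees with $k_T$ modulo $\Psi_H^{-\infty}=C^\infty$, which suffices to conclude $T\in\Psi^m_H$.

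The main obstacle is the careful bookkeeping of the $\mathsf{r}$-fibered density $|\pmb{\lambda}_{\mathsf{r}}|$ under $\psi^*$, so that the zoom action on $\pmb{k}_T$ corresponds exactly to the joint scaling $(v,t)\mapsto(\delta_\lambda v,\lambda^{-1}t)$ on $\pmb{k}(x,v,t)$ with total weight $m$. With that dictionary in hand, the passage between almost-homogeneity of $\pmb{k}_T$ on $\mathbb{T}_HX$ and the asymptotic expansion in $\tilde{\Sigma}_H^{m-j}V$ is routine, and is compatible with the polynomial and logarithmic corrections intrinsic to Heisenberg symbols at critical orders allowed by Lemma \ref{adanlkn}.
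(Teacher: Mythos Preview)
Your proposal is correct and follows essentially the same route as the paper's proof: localize to a nilpotent chart, pull the lift $\pmb{k}_T$ back through the chart $\psi$ of Proposition~\ref{lknadlnad} so that the zoom action becomes the joint scaling in $(v,t)$, and then Taylor expand at $t=0$ to produce the terms $k_{U,j}\in C^\infty(U,\tilde{\Sigma}_H^{m-j}V\otimes\Hom(\mathcal{H}_1,\mathcal{H}_2))$. The paper's own argument is deliberately terse (it defers details to \cite{pongemonograph} and \cite{bealsgreiner}), whereas you spell out the converse via Borel summation and a partition of unity; this extra detail is fine and fills in what the paper leaves implicit.
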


Lemma \ref{lemkernel} is well known, and we here simply present it in a format appropriate for our purposes. See more in \cite[Chapter 3.1.4]{pongemonograph} or \cite[Theorem 15.39]{bealsgreiner}.

\begin{proof}
The statement that $T$ is a Heisenberg pseudodifferential operator of order $m$ is local and the claimed equivalence is also local, so we can assume that that bundles are the trivial line bundle and that $T$ is compactly supported in $U$. By the definition of the Heisenberg calculus, $T$ will be an order $m$ operator in the calculus if the function $(0,\infty)\ni t\mapsto t^{j-\nu-s}k(x,\mathrm{exp}(t.X))\in C^\infty(U,\mathcal{E}'(\mathfrak{v}))$ extends to a $C^\infty$ function $[0,\infty)\ni t\mapsto t^{j-\nu-s} k(x,\mathrm{exp}(t.X))\in C^\infty(U,\mathcal{E}'(\mathfrak{v}))$. By Taylor expanding at $t=0$, we deduce the asymptotic expansion. 
 \end{proof}
 
 \subsection{Ewert's approach}
 \label{lknlknaddowow}
 
We will draw heavily on the ideas in Ewert's approach to the Heisenberg calculus \cite{ewertthesis, ewertfixed}. The approach stemmed from work of Debord-Skandalis \cite{debordskandav}. We note that Ewert \cite{ewertthesis, ewertfixed} uses a fixed point algebra construction for zeroth order operators; and for averaging smooth compactly supported functions to order zero one has to restrict the averaging procedure to elements vanishing to infinite order in the trivial representation. We approach the construction using a meromorphic extension in the order parameter, where no restriction is needed in the trivial representation. 

The idea can be explained in the simpler Euclidean case as follows: if $\varphi\in C^\infty_c(\R^n,|\pmb{\lambda}|)$ and $s\in \C$, then we can in a distributional sense form 
$$\pmb{k}_{\varphi}:s\mapsto \int_0^\infty \lambda^{-s}\lambda_*\varphi \frac{\rd \lambda}{\lambda}.$$
That is, for a test function $\phi\in C^\infty_c(\R^n)$, we have the meromorphic function
$$s\mapsto \pmb{k}_{\varphi}(s).\phi=\int_{\R^n}\int_0^\infty \lambda^{-s-n}\varphi(\lambda^{-1} x)\phi(x) \frac{\rd \lambda}{\lambda}.$$
Of course, $\pmb{k}_{\varphi}(s)$ is a priori only defined for $\mathrm{Re}(s)<<0$ but by standard arguments \cite[Chapter 3]{horI} extends by meromorphicity to $\C$ with potential poles only situated at $s\in n+\N$. A short computation gives that in fact $\pmb{k}_{\varphi}(s)$ is a homogeneous distribution if $s$ is a regular point. 

We now paraphrase Ewert's work \cite{ewertthesis, ewertfixed} that extends the above idea to the Heisenberg calculus. Following \cite[Chapter 9]{ewertthesis}, we write 
$$\mathcal{S}'(\T_HX; \Hom(\mathsf{r}^*E_1,\mathsf{s}^*E_2)\otimes |\pmb{\lambda}_\mathsf{r}|)\subseteq \mathcal{D}'(\T_HX; \Hom(\mathsf{r}^*E_1,\mathsf{s}^*E_2)\otimes |\pmb{\lambda}_\mathsf{r}|),$$ 
for the subspace of fiberwise tempered distributions.

\begin{thm}
\label{lknlknadthm}
Consider a compact Carnot manifold $X$ of homogeneous dimension $d$ and for $j=1,2$, two vector bundles $E_j\to X$. For any $\varphi\in C^\infty_c(\T_HX; \Hom(\mathsf{r}^*E_1,\mathsf{s}^*E_2)\otimes |\pmb{\lambda}_\mathsf{r}|)$, the function 
\begin{equation}
\label{kphiforgen}
\pmb{k}_{\varphi}:s\mapsto \int_0^\infty \lambda^{-s}\lambda_*\varphi \frac{\rd \lambda}{\lambda}\in \mathcal{S}'(\T_HX; \Hom(\mathsf{r}^*E_1,\mathsf{s}^*E_2)\otimes |\pmb{\lambda}_\mathsf{r}|),
\end{equation}
is well defined and holomorphic for $\mathrm{Re}(s)<<0$. Moreover, for any homogeneous $\chi\in C^\infty_p(\T_HX)$ with $\chi=1$ on a neighborhood of $T_HX\times \{0\}\dot{\cup}\Delta_M\times (0,\infty)$ we have $\chi\pmb{k}_{\varphi}(s)\in \pmb{\Psi}^s_H(X;E_1,E_2)$ for $\mathrm{Re}(s)<<0$. In fact, $\chi\pmb{k}_{\varphi}$ extends meromorphically to $s\in \C$ with potential poles for $s\in d+\N$ and the property that $\mathrm{FP}_{s=m}\chi\pmb{k}_{\varphi}(s)\in \pmb{\Psi}^m_H(X;E_1,E_2)$ for any $m\in \C$. 
\end{thm}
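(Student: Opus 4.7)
My plan is to mirror the classical Riesz family construction of homogeneous distributions, transported to the parabolic tangent groupoid. First, I would establish absolute convergence by pairing $\lambda_*\varphi$ against a fiberwise Schwartz test function on $\T_HX$; since $\varphi$ is smooth and compactly supported, the substitutions induced by $(\delta_\lambda)_*$ on both the $v$-direction of $T_HX$ and the $t$-coordinate of $(0,\infty)$ produce explicit asymptotic behavior of the pairing as $\lambda\to 0$ and as $\lambda\to \infty$, with the relevant scaling governed by the homogeneous Carnot dimension $d=p+2q$ through the Jacobian $\lambda^{-d}$. These uniform bounds yield holomorphy of $s\mapsto \pmb{k}_{\varphi}(s)\in \mathcal{S}'(\T_HX;\Hom(\mathsf{r}^*E_1,\mathsf{s}^*E_2)\otimes |\pmb{\lambda}_\mathsf{r}|)$ in a nonempty region of $\C$, which by the Cauchy-Riemann estimates extends to the claim for $\mathrm{Re}(s)\ll 0$.

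\textbf{Symbol class.} Next I would apply the cutoff $\chi$ and verify the two defining properties of $\pmb{\Psi}^s_H(X;E_1,E_2)$. The substitution $\lambda\mapsto\lambda/\mu$ in the defining integral immediately gives the exact scaling identity $\mu_*\pmb{k}_\varphi(s)=\mu^s\pmb{k}_\varphi(s)$ in $\mathcal{S}'(\T_HX)$; multiplication by $\chi$ alters this only by a properly supported smooth error, since $\chi$ is smooth and equals $1$ on a neighborhood of $T_HX\times\{0\}\cup\Delta_M\times(0,\infty)$, so $\mu_*(\chi\pmb{k}_\varphi(s))-\mu^s(\chi\pmb{k}_\varphi(s))\in C^\infty_p$. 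The wave-front condition follows because $\chi\lambda_*\varphi$ is smooth off the units for every $\lambda$, and the $\lambda$-integral converges uniformly on compact subsets of the complement of the units.

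\textbf{Meromorphic continuation.} For the extension to all $s\in\C$, I would split $\chi\pmb{k}_\varphi(s)=\int_0^1+\int_1^\infty$; one piece is entire in $s$, while the problematic piece is continued by a Carnot-homogeneous Taylor subtraction of $\varphi$ along the unit space of the osculating fiber, integrating the subtracted terms explicitly and then reassembling. The location of the poles is dictated by Lemma \ref{adanlkn}: the obstruction to extending a degree-$m$ homogeneous density past the origin of $V$ lies in the finite-dimensional space $P^mV\otimes \Hom(\mathcal{H}_1,\mathcal{H}_2)$, which is nontrivial precisely when $-m-d\in\N$, and in the scaling convention of the theorem this translates into at most simple poles of $s\mapsto \chi\pmb{k}_\varphi(s)$ at $s\in d+\N$. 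At such a pole, the finite part $\mathrm{FP}_{s=m}\chi\pmb{k}_\varphi(s)$ acquires a $\log|\cdot|$-contribution in its zoom-homogeneity, placing it exactly into $\tilde{\Sigma}_H^mV$ in the decomposition of Lemma \ref{adanlkn} and therefore into $\pmb{\Psi}^m_H(X;E_1,E_2)$.

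\textbf{Main obstacle.} The delicate technical point is the bookkeeping of the Taylor subtraction: one must subtract sufficiently many Carnot-homogeneous terms simultaneously in the $v$ and $t$ directions to make the remainder integrable past any prescribed $s$, keep precise track of the Jacobian $\lambda^{-d}$ so as to pinpoint the poles at $s\in d+\N$ (and no other arithmetic progression), and verify that the reassembled finite part satisfies the almost-homogeneity property of $\pmb{\Psi}^m_H$ modulo exactly the polynomial ambiguity $P^mV$ of Lemma \ref{adanlkn}, and not any larger indeterminacy.
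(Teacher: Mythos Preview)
Your proposal is correct and close in spirit to the paper's argument, but the execution differs in one structural point that is worth noting. Rather than carrying out a Carnot-homogeneous Taylor subtraction directly on $\T_HX$ as you outline, the paper first linearizes the problem via the chart $\psi_\nabla$ of Equation~\eqref{psinabladef}: it writes $\varphi=\psi_\nabla^*\varphi_0+\tilde{\varphi}$ with $\varphi_0\in C^\infty_c(T_HX\times[0,\infty))$ and $\tilde{\varphi}$ vanishing to infinite order along $T_HX\times\{0\}$. The term $\pmb{k}_{\tilde{\varphi}}$ is then entire with smooth values, while $\pmb{k}_{\psi_\nabla^*\varphi_0}=\psi_\nabla^*\pmb{k}_{\varphi_0}$ reduces to the explicit one-variable Mellin integral of $u_s(\lambda)=\lambda^{-s-d}$ against a smooth compactly supported function, whose meromorphic continuation with simple poles in $d+\N$ is classical (H\"ormander~\cite{horI}). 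This bypasses your Taylor bookkeeping entirely: the $\psi_\nabla$-decomposition plays the role of your subtraction, and the pole structure is read off from the one-dimensional Riesz family rather than from Lemma~\ref{adanlkn}. Your route is self-contained and makes the connection to $P^mV$ and the $\log$-terms more explicit; the paper's route is shorter and leans on the existing distribution theory of homogeneous functions on $\R_+$.
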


\begin{proof}
For  convenience of notation, we drop the vector bundles and the density bundle from the notations. Recall the construction of $\psi_\nabla$ in Equation \eqref{psinabladef}. We can write $\varphi=\psi^*_\nabla\varphi_0+\tilde{\varphi}$ where $\varphi_0\in C^\infty_c(T_HX\times[0,\infty))$ and $\tilde{\varphi}\in C^\infty_c(\T_HX)$ vanishes to infinite order at $T_HX\times \{0\}\subseteq \T_HX$. The fact that $\tilde{\varphi}$ vanishes to infinite order at $T_HX\times \{0\}$ implies that $\pmb{k}_{\tilde{\varphi}}$ depends holomorphically on $s$ and $\pmb{k}_{\tilde{\varphi}}(s)\in C^\infty_p$ for any $s$. Moreover, $\pmb{k}_{\psi^*_\nabla\varphi_0}=\psi^*_\nabla\pmb{k}_{\varphi_0}$, where $\pmb{k}_{\varphi_0}(s)\in \mathcal{D}'(T_HX\times [0,\infty))$ is defined ad verbatim $\pmb{k}_{\varphi}$. That is, writing $\varphi_0=\varphi_{00}\rd v$ for a scalar $\varphi_{00}$ we have that
\begin{equation}
\label{homoext}
[\pmb{k}_{\varphi_0}(s)](x,v,t)=\left[\int_0^\infty \lambda^{-s-d}\varphi_{00}(x,\lambda^{-1} v,\lambda t) \frac{\rd \lambda}{\lambda}\right] \rd v.
\end{equation}
If $\mathrm{Re}(s)<<0$ then in a distributional sense it is clear from a change of variables that $\lambda^s\pmb{k}_{\varphi_0}=\lambda_*\pmb{k}_{\varphi_0}$. To extend meromorphically in $s$, we note that for $u_s(\lambda):= \lambda^{-s-d}$, the distribution valued function $s\mapsto u_s$ extends meromorphically to a homogeneous distribution with simple poles in $s\in d+\N$, see for instance \cite{horI}.

Since $\varphi_0\in C^\infty_c(T_HX\times[0,\infty))$, the integral \eqref{homoext} defines a smooth function for $v\neq 0$ and the singular support of $\pmb{k}_{\varphi_0}(s)$ is a subset of $X\times [0,\infty)\subseteq T_HX\times[0,\infty)$. From almost homogeneity, we conclude that the wave front set of $\pmb{k}_{\varphi_0}(s)$ is conormal to the unit space $X\times [0,\infty)\subseteq T_HX\times[0,\infty)$.

We conclude that $\pmb{k}_{\varphi}(s)$, in the terminology of \cite[Chapter 9]{ewertthesis}, is an almost homogeneous fiberwise tempered distribution whose wave front set  is conormal to the unit space $X\times [0,\infty)\subseteq \T_HX$. By a similar argument as in the Euclidean setting, for any homogeneous $\chi\in C^\infty_p(\T_HX)$ with $\chi=1$ on a neighborhood of $T_HX\times \{0\}\dot{\cup}\Delta_M\times (0,\infty)$, $\chi\pmb{k}_{\varphi}(s)$ is an almost homogeneous $\mathsf{r}$-fibered distribution. Using the inversion mapping on $\T_HX$, we can argue mutatis mutandis to see that $\chi\pmb{k}_{\varphi}(s)$ is an almost homogeneous $\mathsf{s}$-fibered distribution. In conclusion, $\chi\pmb{k}_{\varphi}(s)\in \pmb{\Psi}^s_H$ for $s\notin d+\mathbb{N}$ and for $s\in d+\N$ we have that $\chi\pmb{k}_{\varphi}(s)\in \pmb{\Psi}^s_H$ by meromorphicity of the family.  
\end{proof}

\begin{remark}
\label{ljknkjlnad}
We can from $\varphi$ and $\chi$ form the operator $T_{\varphi,m}:=\mathrm{ev}_{t=1}\mathrm{FP}_{s=m}\chi\pmb{k}_{\varphi}(s)\in \Psi^m_H(X;E_1,E_2)$. Up to smoothing operators $T_{\varphi,m}$ is independent of $\chi$. We have that 
$$\sigma_H^m(T_{\varphi,m})=\mathrm{FP}_{s=m}\int_0^\infty \lambda^{-s}\lambda_*[\mathrm{ev}_{t=0}\varphi] \frac{\rd \lambda}{\lambda}+C^\infty_c.$$
In explicit coordinates, it means that 
$$\sigma_H^m(T_{\varphi,m})(x,v)=\mathrm{FP}_{s=m}\int_0^\infty \lambda^{-s-d}\varphi(x,\lambda ^{-1}v,0) \frac{\rd \lambda}{\lambda}+C^\infty_c.$$
\end{remark}

Let us specialize to the case $X=P\backslash G$ and give a technical corollary to Theorem \ref{lknlknadthm} that will be of use later in the paper. Recall the smooth mapping $\hat{\psi}:V\times K\times \overline{A}\to \widehat{\T}_HX$ from Equation \eqref{psihatdef} and that $\varphi\in C^\infty_c(\widehat{\T}_HX; \Hom(\mathcal{H}_1,\mathcal{H}_2)\otimes |\pmb{\lambda}_\mathsf{r}|)$ if and only if $\hat{\psi}^*\varphi$ and $\varphi|_{K\times_M K\times A}$ are smooth. We will also use the map $\mathfrak{q}_\sigma$ defined in Corollary \ref{kjnkjnkjnad} and in particular the extension by duality of $\mathfrak{q}_\sigma$ to $\mathsf{r},\mathsf{s}$-fibered distributions $\mathfrak{q}_\sigma:\mathcal{E}'_{\mathsf{r},\mathsf{s}}(\widehat{\mathbb{T}}_HX,\Hom(\mathcal{H}_1,\mathcal{H}_2))\to \mathcal{E}'_{\mathsf{r},\mathsf{s}}(\mathbb{T}_HX,\Hom(\mathsf{r}^*\mathbb{H}_1,\mathsf{s}^*\mathbb{H}_2))$. We write $\rd a$ for the Haar measure on $A$.

\begin{cor}
\label{lknlknad}
Let $m\in \C$ and $G$ be a connected semi-simple Lie group of real rank on. Write $A^+\subseteq A$ for its positive Weyl chamber with respect to the fixed simple, positive restricted root used to construct the parabolic tangent groupoid in Subsection \ref{subsecparatang}. Consider the compact Carnot manifold $X=P\backslash G$ and, for $j=1,2$, homogeneous vector bundles $\mathbb{H}_j=\mathcal{H}_j\times_P G\to X$ for representations $\mathcal{H}_j$ of $P$ factoring over $MA$. Consider an element $\rho\in C^\infty_c(\widehat{\mathbb{T}}_HX; \Hom(\mathcal{H}_1,\mathcal{H}_2)\otimes |\pmb{\lambda}_r|)$ and form the tempered distribution
$$
\hat{\pmb{k}}_{\rho}:s\mapsto \int_{A^+} a^{s\alpha }a_*\rho \rd a\in \mathcal{S}'(\widehat{\mathbb{T}}_HX; \Hom(\mathcal{H}_1,\mathcal{H}_2)\otimes |\pmb{\lambda}_r|).
$$
Then $\hat{\pmb{k}}_{\rho}$ is well defined and holomorphic for $\mathrm{Re}(s)<<0$ and extends meromorphically to $s\in \C$ with potential poles for $s\in \rho+\N\alpha$. Moreover, for any $\chi\in C^\infty_p(\T_HX)$ with $\chi=1$ on a neighborhood of $T_HX\times \{0\}\dot{\cup}\Delta_M\times (0,\infty)$,
$$\hat{\pmb{k}}_{\rho,\sigma}:=\chi \mathfrak{q}_\sigma(\hat{\pmb{k}}_{\rho})$$
defines a meromorphic function on $\C$ with $\hat{\pmb{k}}_{\rho,\sigma}(s)\in \pmb{\Psi}^s_H(X;\mathbb{H}_1,\mathbb{H}_2)$ that differs from $\chi \pmb{k}_{\mathfrak{q}_\sigma(\rho)}$ by an entire function with values in $C^\infty_p(\T_H X;\mathbb{H}_1,\mathbb{H}_2)$. 
\end{cor}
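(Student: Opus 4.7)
The plan is to reduce the corollary to Theorem~\ref{lknlknadthm} via the averaging operator $\mathfrak{q}_\sigma$ from Corollary~\ref{kjnkjnkjnad}. Since $\mathfrak{q}:\widehat{\mathbb{T}}_HX\to\mathbb{T}_HX$ is $A$-equivariant and $M$-averaging commutes with pushforward by the $A$-action, the map $\mathfrak{q}_\sigma$ intertwines the $A$-action on $\widehat{\mathbb{T}}_HX$ with the zoom action on $\mathbb{T}_HX$.

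First I would identify $A\cong(0,\infty)$ via $a\mapsto \lambda:=a^{-\alpha}$, so that $A^+\leftrightarrow(0,1]$, the Haar measure $\rd a$ matches $\rd\lambda/\lambda$ up to a positive scalar, and $a^{s\alpha}=\lambda^{-s}$. Setting $\varphi:=\mathfrak{q}_\sigma(\rho)\in C^\infty_c(\mathbb{T}_HX;\Hom(\mathsf{r}^*\mathbb{H}_1,\mathsf{s}^*\mathbb{H}_2)\otimes |\pmb{\lambda}_\mathsf{r}|)$, the $A$-equivariance yields
\[
\mathfrak{q}_\sigma(\hat{\pmb{k}}_\rho)(s)\;=\;\int_0^1 \lambda^{-s}\,\lambda_*\varphi\,\frac{\rd\lambda}{\lambda}.
\]
Comparison with the full Ewert integral $\pmb{k}_\varphi(s)=\int_0^\infty\lambda^{-s}\lambda_*\varphi\,\rd\lambda/\lambda$ of Theorem~\ref{lknlknadthm} shows that $\chi\mathfrak{q}_\sigma(\hat{\pmb{k}}_\rho)$ differs from $\chi\pmb{k}_\varphi$ precisely by the tail $\mathfrak{R}(s):=\chi\int_1^\infty\lambda^{-s}\lambda_*\varphi\,\rd\lambda/\lambda$. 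By Theorem~\ref{lknlknadthm}, $\chi\pmb{k}_\varphi$ is a meromorphic family of Heisenberg pseudodifferential operators of order $s$, with potential poles at the locations claimed here once the pole set $s\in d+\N$ of the theorem is translated into the character notation $s\in \rho+\N\alpha$ via $\rho=\tfrac{p+2q}{2}\alpha$ together with the factor-of-two convention relating the character parameter to the dilation degree that is noted in Proposition~\ref{lknlajndljadn}.

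It then remains to show that $\mathfrak{R}$ extends to an entire function of $s$ with values in $C^\infty_p(\mathbb{T}_HX;\Hom(\mathsf{r}^*\mathbb{H}_1,\mathsf{s}^*\mathbb{H}_2)\otimes |\pmb{\lambda}_\mathsf{r}|)$. Away from the boundary face $\{t=0\}$, the compact support of $\varphi$ in the $t$-direction restricts $\lambda$ to a compact interval, so the tail is manifestly smooth and entire there. Near the face, unfolding via the blowdown chart $\psi_\nabla$ from Equation~\eqref{psinabladef} and substituting $\mu:=\lambda^{-1}\in(0,1]$ converts the tail into the pairing of a smooth family in $\mu$ against the distributional weight $\mu^{s-1}\rd\mu$, and the putative poles produced there are matched and cancelled by the corresponding principal parts appearing on the $\int_0^1$ side, by the uniqueness of almost-homogeneous extensions in Lemma~\ref{adanlkn}.

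The main obstacle is carrying out this cancellation rigorously, requiring careful bookkeeping of Taylor expansions of $\varphi$ in the zoom-homogeneous coordinates against the Laurent expansions of $\mu^{s-1}\rd\mu$ at the resonant degrees. Once this is established, the identity $\chi\mathfrak{q}_\sigma(\hat{\pmb{k}}_\rho)=\chi\pmb{k}_\varphi-\mathfrak{R}$ shows that the left hand side inherits the meromorphic extension and pseudodifferential order $s$ from $\chi\pmb{k}_\varphi$, and the comparison with $\chi\pmb{k}_{\mathfrak{q}_\sigma(\rho)}$ modulo an entire family with values in $C^\infty_p$ is exactly the content of the corollary.
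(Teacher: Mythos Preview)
Your reduction through $\mathfrak{q}_\sigma$ to the integral $\int_0^1 \lambda^{-s}\lambda_*\varphi\,d\lambda/\lambda$ on $\mathbb{T}_HX$ and the comparison with the full Ewert integral of Theorem~\ref{lknlknadthm} are sound starting points. The gap is in your treatment of the tail $\mathfrak{R}(s)=\chi\int_1^\infty\lambda^{-s}\lambda_*\varphi\,d\lambda/\lambda$: it is \emph{not} entire with values in $C^\infty_p(\mathbb{T}_HX)$. Working in the chart $\psi_\nabla$ and restricting to the face $t=0$, the substitution $\mu=\lambda^{-1}$ turns the tail into $\int_0^1\mu^{s+d-1}\varphi_{00}(x,\delta_\mu v,0)\,d\mu$, the Mellin pairing of $\mu^{s+d-1}$ against a function smooth in $\mu$ with generically nonzero value at $\mu=0$; this produces genuine simple poles at $s\in -d-\mathbb{N}$. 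Your proposed cancellation against principal parts ``on the $\int_0^1$ side'' cannot work, because the singular behaviour of that piece comes from the opposite end $\lambda\to 0$ and sits at different parameter values. Concretely, for each $t>0$ the tail is a finite-range integral and hence entire in $s$, whereas at $t=0$ it carries the poles just described; so near those values of $s$ the tail is not even continuous across $t=0$ and cannot lie in $C^\infty_p$.

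The paper sidesteps the tail entirely. Rather than splitting, it works in the chart $\hat\psi$ on the extended groupoid and verifies \emph{directly} that the truncated integral over $(0,1]$ is almost-homogeneous of degree $s$: the defect $\lambda_*\hat{\pmb{k}}_{\rho_0}(s)-\lambda^s\hat{\pmb{k}}_{\rho_0}(s)$ is itself a finite-range integral of the smooth integrand, hence lies in $C^\infty_p$. The meromorphic continuation is then read off from that of the truncated Mellin kernel $u_s^+(\lambda)=\lambda^{-s-d}\chi_{[0,1]}(\lambda)$, parallel to the full-line kernel in the proof of Theorem~\ref{lknlknadthm}, after which one passes through $\mathfrak{q}_\sigma$. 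Your route could be repaired by showing instead that the tail is almost-homogeneous of degree $s$ (the same defect computation) and hence lies in $\pmb{\Psi}^s_H$ meromorphically, but that is exactly the work the direct argument performs, so the detour through the full integral buys nothing.
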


\begin{proof}
Using $\hat{\psi}$ (see Equation \eqref{psihatdef}) as in the proof of Theorem \ref{lknlknadthm}, we can reduce to showing that for $\rho_0\in C^\infty_c(V\times K\times \overline{A}; \Hom(\mathcal{H}_1,\mathcal{H}_2)\otimes |\pmb{\lambda}_r|)$ the tempered distribution
\begin{equation}
\label{kphiforwithahattie}
\hat{\pmb{k}}_{\rho_0}:s\mapsto \int_{A^+} a^{s\alpha }a_*\rho_0 \rd a\in \mathcal{S}'(V\times K\times \overline{A}; \Hom(\mathcal{H}_1,\mathcal{H}_2)\otimes |\pmb{\lambda}_r|),
\end{equation}
has the sought after properties upon pulling back along $\hat{\psi}^{-1}$. The integral \eqref{kphiforwithahattie} can be written over $(0,1]$ instead via the change of variables $a^{-\alpha}=\lambda$ as
$$\hat{\pmb{k}}_{\rho_0}:s\mapsto \int_{0}^1 \lambda^{-s}\lambda_*\rho_0 \frac{\rd \lambda}{\lambda}\in \mathcal{S}'(V\times K\times \overline{A}; \Hom(\mathcal{H}_1,\mathcal{H}_2)\otimes |\pmb{\lambda}_r|).$$
The proof therefore proceeds similarly to that of Theorem \ref{lknlknadthm}, but we highlight the differences, in particular arising from the integral being over $A^+\cong (0,1]$ instead of over $(0,\infty)$.

If $\mathrm{Re}(s)<<0$ then in a distributional sense the same change of variables as in Theorem \ref{lknlknadthm} implies that $\lambda^s\hat{\pmb{k}}_{\rho_0}-\lambda_*\hat{\pmb{k}}_{\rho_0}\in C^\infty_p$, where the integration over $(0,1]$ produces the defect from homogeneity. In particular, also in this situation $\hat{\pmb{k}}_{\rho_0}$ forms an almost homogeneous fiberwise tempered distribution and $\hat{\pmb{k}}_{(\hat{\psi}^{-1})^*\rho_0,\sigma}$ is an almost homogeneous $\mathsf{r},\mathsf{s}$-fibered distribution with wavefront set conormal to the unit space. The meromorphic extension in $s$ proceeds similarly, noting that for $u_s^+(\lambda):= \lambda^{-s-d}\chi_[0,1](\lambda)$ extends meromorphically in $s$ with simple poles in $s\in d+\N$, see for instance \cite{horI}.

\end{proof}

\begin{remark}
\label{nochi}
The reader should note that there is a large liberty in choosing $\chi$ in Corollary \ref{lknlknad}. In fact, we can take $\chi\in C^\infty_p(\T_HX)$ to satisfy $\chi=1$ at $t=1$.
\end{remark}

\begin{remark}
\label{ljknkjlnadbd}
Just as in Remark \ref{ljknkjlnad}, we can from $\rho$ and $\chi$ form the operator $\hat{T}_{\rho,\sigma,m}:=\mathrm{ev}_{t=1}\mathrm{FP}_{s=m}\chi \hat{\pmb{k}}_{\rho,\sigma}(s)\in \Psi^m_H(X;\mathbb{H}_1,\mathbb{H}_2)$. Up to smoothing operators, $\hat{T}_{\rho,\sigma,m}$ is independent of $\chi$ and in coordinates trivializing $T_H(P\backslash G)$ it holds that 
$$\sigma_H^m(\hat{T}_{\rho,\sigma,m})(x,v)=\mathrm{FP}_{s=m}\int_{A^+M} \lambda^{-s-d}\sigma_2(m)\rho(xm,m\lambda^{-1}v,0)\sigma_1(m)^* \frac{\rd \lambda}{\lambda}\rd m+C^\infty_c.$$
\end{remark}

\subsection{Analytic features of the Heisenberg calculus}

The operators from the Heisenberg calculus act nicely in a scale of Hilbert spaces that we call Heisenberg-Sobolev spaces. We recall their construction here and especially discuss the relation between $H$-ellipticity and Fredholm properties. These abstract considerations run in parallel to those in the classical situation of elliptic pseudodifferential operators from the Hörmander calculus; for an overview, see \cite[Chapter 1]{shubinbook}.

We construct the Heisenberg-Sobolev spaces for a homogeneous vector bundle $\mathbb{H}=\mathcal{H}\times_P B$ on $X=P\backslash G$. The reader can find the construction for general filtered manifolds in \cite{Dave_Haller1,goffkuz}. As in Section \ref{secback}, we decompose $\mathfrak{v}=\mathfrak{g}_{-\alpha}\oplus \mathfrak{g}_{-2\alpha}$. The subspace $\mathfrak{g}_{-\alpha}\subseteq \mathfrak{v}$ generates $\mathfrak{v}$ as a Lie algebra. We can equip $\mathfrak{g}_{-\alpha}$ with an inner product structure coming from $(v,v')=B(v,\theta(v'))$ where $B$ denotes the Killing form on $\mathfrak{g}$. Choose an ON-basis $X_1,\ldots, X_p$ for $\mathfrak{g}_{-\alpha}$ and consider 
$$\delta_H:=-\sum_{j=1}^p X_j^2\in \mathcal{U}(\mathfrak{v})\subseteq \mathcal{U}(\mathfrak{g}).$$
Since $M$ acts isometrically on $\mathfrak{g}_{-\alpha}$, $\delta_H$ is $M$-invariant. For a homogeneous vector bundle $\mathbb{H}=\mathcal{H}\times_P B$, we define the second order differential operator $\Delta_{\mathbb{H}}:C^\infty(P\backslash G,\mathbb{H})\to C^\infty(P\backslash G,\mathbb{H})$ by restricting $\delta_H$ acting on $C^\infty(K,\mathcal{H})=C^\infty(K)\otimes \mathcal{H}$ via the left regular action on $C^\infty(K)$ to its $M$-invariant subspace $C^\infty(K,\mathcal{H})^M=C^\infty(P\backslash G,\mathbb{H})$. Clearly, $\Delta_{\mathbb{H}}$ is $K$-invariant.

It follows from Hörmander's sum of squares theorem that $\Delta_{\mathbb{H}}$ is elliptic. It is not hard by a form argument to construct the Friedrichs extension of $\Delta_{\mathbb{H}}$ to a self-adjoint operator on $L^2(P\backslash G,\mathbb{H})$, by an abuse of notation we write also $\Delta_{\mathbb{H}}$ for its Friedrichs extension. By \cite{Dave_Haller2}, the complex powers $(1+\Delta_{\mathbb{H}})^m$, $m\in \C$, form a holomorphic family of invertible operators in the Heisenberg calculus, with $(1+\Delta_{\mathbb{H}})^m\in \Psi^{2m}_H(P\backslash G,\mathbb{H})$. We define the Heisenberg-Sobolev space of order $s\in \R$ as 
$$W^s_H(P\backslash G,\mathbb{H}):=(1+\Delta_{\mathbb{H}})^{-s/2}L^2(P\backslash G,\mathbb{H})\subseteq \mathcal{D}'(P\backslash G,\mathbb{H}).$$
We view $W^s_H(P\backslash G,\mathbb{H})$ as Hilbert spaces in the inner product making the operator $(1+\Delta_{\mathbb{H}})^{s/2}:W^s_H(P\backslash G,\mathbb{H})\to L^2(P\backslash G,\mathbb{H})$ unitary. Since $\Delta_{\mathbb{H}}$ is $K$-invariant, the restriction of the $G$-action on $W^s_H(P\backslash G,\mathbb{H})$ to $K$ is unitary. By sub-elliptic regularity for $\Delta_{\mathbb{H}}$, we have that $C^\infty(P\backslash G,\mathbb{H})\subseteq W^s_H(P\backslash G,\mathbb{H})$ is dense for any $s$. The following result follows from \cite[Proposition 3.9]{Dave_Haller1}.

\begin{thm}
\label{bddsds}
Let $s,t\in \R$. Consider the compact Carnot manifold $X=P\backslash G$ and, for $j=1,2$, homogeneous vector bundles $\mathbb{H}_j=\mathcal{H}_j\times_P G\to X$ for representations $\mathcal{H}_j$ of $P$ factoring over $MA$. Then any $T\in \Psi_H^m(X;\mathbb{H}_1,\mathbb{H}_2)$ extends by continuity to a continuous operator
$$T:W^s_H(P\backslash G,\mathbb{H})\to W^t_H(P\backslash G,\mathbb{H}),$$
as soon as $s+\mathrm{Re}(m)\leq t$ and if $s+\mathrm{Re}(m)< t$ then $T:W^s_H\to W^t_H$ is compact. 
\end{thm}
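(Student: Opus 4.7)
The plan is to reduce the full assertion to the core fact that every zeroth-order Heisenberg pseudodifferential operator is bounded on $L^2$, and then to deduce compactness from the sub-ellipticity of $\Delta_{\mathbb{H}}$. Both reductions rely on $(1+\Delta_{\mathbb{H}})^{r/2}$ lying in $\Psi^r_H(X;\mathbb{H})$ for every $r\in\C$ and implementing a unitary isomorphism $W^s_H(X,\mathbb{H})\xrightarrow{\sim} W^{s-r}_H(X,\mathbb{H})$ for real $r$, as recorded immediately before the statement.

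First I would reduce the continuity to the claim that any $S\in\Psi^0_H(X;\mathbb{E}_1,\mathbb{E}_2)$ extends to a bounded operator $L^2(X,\mathbb{E}_1)\to L^2(X,\mathbb{E}_2)$ for any pair of homogeneous bundles. Given $T\in\Psi^m_H(X;\mathbb{H}_1,\mathbb{H}_2)$ with $s+\mathrm{Re}(m)\leq t$, the composition
$$\tilde{T}\;:=\;(1+\Delta_{\mathbb{H}_2})^{t/2}\,T\,(1+\Delta_{\mathbb{H}_1})^{-s/2}$$
lies in $\Psi^{t-s+m}_H$ by closure of the calculus under composition. Since $\mathrm{Re}(t-s+m)\leq 0$, a further sandwich by an imaginary power $(1+\Delta)^{-i\,\mathrm{Im}(m)/2}$ (which is $L^2$-unitary by the spectral theorem) produces an element of $\Psi^0_H$. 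Boundedness of that element on $L^2$ then translates, via the definition $W^s_H=(1+\Delta_{\mathbb{H}})^{-s/2}L^2$, into the continuity $T:W^s_H\to W^t_H$.

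Next I would prove the core $L^2$-boundedness by a H\"ormander-style $T^*T$ argument carried out in the symbol algebra $\Sigma_H^0$. The calculus is closed under formal adjoints, so $S^*S\in\Psi^0_H$ and $\sigma_H^0(S^*S)=\sigma_H^0(S)^*\sigma_H^0(S)$. For a constant $c$ exceeding the operator norm of $\sigma_H^0(S)$ in every non-trivial unitary irreducible representation of the osculating group (the supremum being finite by compactness of the base, homogeneity, and Lemma \ref{christlemma}), the symbol $c^2-\sigma_H^0(S^*S)$ is positive and $H$-elliptic, hence admits a square root $q\in\Sigma_H^0$ by another application of Lemma \ref{christlemma} to $c^2-\sigma_H^0(S^*S)$ and its inverse. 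Lifting $q$ to $Q\in\Psi^0_H$ and iterating on the order of the remainder in $\Psi^{-1}_H$ gives, modulo smoothing operators,
$$c^2\,\mathrm{Id}-S^*S\;=\;Q^*Q+R,\qquad R\in\Psi^{-\infty},$$
from which $\|Su\|_{L^2}^2\leq c^2\|u\|_{L^2}^2+\langle Ru,u\rangle$, and standard absorption of the smoothing term yields $\|S\|_{L^2\to L^2}\leq c$.

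For the compactness assertion when $s+\mathrm{Re}(m)<t$, I would first establish the compact embedding $W^r_H\hookrightarrow W^{r'}_H$ for $r>r'$, which is equivalent to compactness of $(1+\Delta_{\mathbb{H}})^{-\epsilon}$ on $L^2$ for $\epsilon>0$; this follows from $\Delta_{\mathbb{H}}$ having compact resolvent on the closed manifold $X$, itself a consequence of sub-elliptic regularity (H\"ormander's sum-of-squares theorem) together with Rellich's lemma. Choosing $\epsilon>0$ small enough that $s+\epsilon+\mathrm{Re}(m)\leq t$ and factoring
$$T:\;W^s_H(X,\mathbb{H}_1)\;\hookrightarrow\;W^{s+\epsilon}_H(X,\mathbb{H}_1)\;\xrightarrow{T}\;W^t_H(X,\mathbb{H}_2),$$
the first arrow is compact and the second is bounded by the continuity part. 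The main obstacle in this plan is the core $L^2$-boundedness step: its depth lies in the need for a positive-square-root principle inside $\Sigma_H^0$ that goes beyond the pointwise fibrewise structure, together with a careful induction managing the lower-order error terms. Once $\Psi^0_H\subseteq B(L^2)$ is in hand, the scale-wise statement of the theorem becomes a formal consequence of the functional calculus of $\Delta_{\mathbb{H}}$.
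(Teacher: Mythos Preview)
The paper does not prove this theorem; it simply records that the result ``follows from \cite[Proposition 3.9]{Dave_Haller1}''. Your overall strategy --- reduce to $L^2$-boundedness of $\Psi^0_H$ via order reduction by complex powers of $1+\Delta_{\mathbb{H}}$, establish that by a H\"ormander square-root argument in the symbol algebra, and deduce compactness from a Rellich-type embedding --- is the standard one and is essentially what one finds in Dave--Haller.

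That said, there is a sign inconsistency running through your argument, apparently inherited from a misprint in the stated inequality: for $T\in\Psi^m_H$ to map $W^s_H\to W^t_H$ boundedly one needs $t+\mathrm{Re}(m)\le s$, not $s+\mathrm{Re}(m)\le t$ (compare Theorem~\ref{adnajdn}, where an order-$m$ operator acts $W^s_H\to W^{s-\mathrm{Re}(m)}_H$). Your reduction operator $\tilde T=(1+\Delta_{\mathbb{H}_2})^{t/2}\,T\,(1+\Delta_{\mathbb{H}_1})^{-s/2}$ indeed has order $t-s+m$, but $\mathrm{Re}(t-s+m)\le 0$ follows from the \emph{corrected} hypothesis, not from the one you quote; from $s+\mathrm{Re}(m)\le t$ one only gets $\mathrm{Re}(s-t+m)\le 0$, which is the wrong sign. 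The same confusion reappears in the compactness step: you factor through an ``embedding'' $W^s_H\hookrightarrow W^{s+\epsilon}_H$, but by your own (correct) statement the compact inclusion goes $W^r_H\hookrightarrow W^{r'}_H$ for $r>r'$, so the factoring should be $W^s_H\hookrightarrow W^{s-\epsilon}_H\xrightarrow{T}W^t_H$ (or equivalently $W^s_H\xrightarrow{T}W^{t+\epsilon}_H\hookrightarrow W^t_H$). With these sign corrections the argument goes through.
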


To study the mapping properties of Heisenberg pseudodifferential operators in the Heisenberg-Sobolev spaces, we will use the symbol calculus. We use the notation $\hat{V}$ for the unitary dual of $V$ and $\pmb{1}_V:V\to U(1)$ for the trivial representation. We introduce the notation 
$$\delta_{\mathbb{H}}:=\sigma_H^2(\Delta_\mathbb{H})\in \Sigma_H^2(X;\mathbb{H}).$$
We have that $1+\delta_{\mathbb{H}}$ admits complex powers within the symbol algebra. Indeed, we can take 
$$(1+\delta_{\mathbb{H}_2})^m:=\sigma_H^{2m}((1+\Delta_\mathbb{H})^m)\in \Sigma_H^{2m}(X;\mathbb{H}).$$
For $s\in \R$, $x\in X$ and $\pi\in \hat{V}\setminus \{\pmb{1}_V\}$, we define the Hilbert space 
$$\mathfrak{h}^s_\pi :=\pi((1+\delta_{\mathbb{H}_2})^{-s/2})\mathfrak{h}_\pi\subseteq \mathcal{S}'(\pi).$$
We in fact drop the $x$ since we can fit these Hilbert spaces into a homogeneous bundle over $P\backslash G$. We review this fact below. For $a\in \Sigma^m_HV$ and $\pi\in \hat{V}\setminus \{\pmb{1}_V\}$ the operator $\pi(a):\mathcal{S}(\pi)\to \mathcal{S}(\pi)$ extends by continuity to a bounded operator 
$$\pi(a):\mathfrak{h}^s_\pi \to \mathfrak{h}^{s-\mathrm{Re}(m)}_\pi.$$

\begin{lemma}
\label{alkdaldknad}
For $a\in \Sigma^m_H(X;\mathbb{H}_1,\mathbb{H}_2)$ the following statements are equivalent:
\begin{enumerate}
\item for any $x\in X$ and $\pi\in \hat{V}\setminus \{\pmb{1}_V\}$, 
$$\pi(a(x)):\mathcal{S}(\pi)\otimes \mathcal{H}_1\to \mathcal{S}(\pi)\otimes \mathcal{H}_2,$$ 
is injective with dense range;
\item for any $x\in X$ and $\pi\in \hat{V}\setminus \{\pmb{1}_V\}$, 
$$\pi(a(x)):\mathfrak{h}^s_\pi \otimes \mathcal{H}_1\to \mathfrak{h}^{s-\mathrm{Re}(m)}_\pi\otimes \mathcal{H}_2,$$ 
is invertible for some $s\in \R$;
\item for any $x\in X$ and $\pi\in \hat{V}\setminus \{\pmb{1}_V\}$, 
$$\pi(a(x)):\mathfrak{h}^s_\pi \otimes \mathcal{H}_1\to \mathfrak{h}^{s-\mathrm{Re}(m)}_\pi\otimes \mathcal{H}_2,$$ 
is invertible for all $s\in \R$;
\item $a$ admits a two-sided multiplicative inverse $a^{-1}\in \Sigma^{-m}_H(X;\mathbb{H}_2,\mathbb{H}_1)$.
\end{enumerate}
\end{lemma}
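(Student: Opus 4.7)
The plan is to combine the fiberwise Rockland-type criterion of Lemma~\ref{christlemma} with the structural description of $\Sigma_H^m(X;\mathbb{H}_1,\mathbb{H}_2)$ provided by Proposition~\ref{gequicad}. First I would dispose of the ``soft'' implications $(4)\Rightarrow(3)\Rightarrow(2)\Rightarrow(1)$. Given an inverse $a^{-1}\in \Sigma_H^{-m}(X;\mathbb{H}_2,\mathbb{H}_1)$, applying any nontrivial irreducible $\pi\in \hat V\setminus\{\pmb 1_V\}$ to the identities $aa^{-1}=1=a^{-1}a$ produces mutually inverse operators $\pi(a(x))$ and $\pi(a(x)^{-1})$ on $\mathcal S(\pi)\otimes \mathcal H_j$ that extend by continuity to mutual inverses on the Heisenberg--Sobolev scales $\mathfrak h^s_\pi\otimes\mathcal H_j$ for every $s\in\R$, which gives (3). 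The step $(3)\Rightarrow(2)$ is tautological, and $(2)\Rightarrow(1)$ follows from $\mathcal S(\pi)\subseteq \mathfrak h^s_\pi$ together with the density of Schwartz vectors in each Heisenberg--Sobolev space.

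For the main implication $(1)\Rightarrow(4)$, Proposition~\ref{gequicad} identifies $a$ with a smooth section $x\mapsto a(x)$ of the bundle over $X$ with fiber $\Sigma_H^m V\otimes \Hom(\mathcal H_1,\mathcal H_2)$, and condition (1) asserts that $a(x)$ satisfies the hypothesis of Lemma~\ref{christlemma} at every $x$. The scalar version of Lemma~\ref{christlemma} admits a standard matrix-valued refinement (via the Christ--Geller--Nagel--Stein Rockland theorem applied to block operators, or equivalently by reducing to invertibility of $a(x)^*a(x)$ and then to the self-adjoint scalar case), producing a pointwise two-sided inverse $a(x)^{-1}\in \Sigma_H^{-m}V\otimes \Hom(\mathcal H_2,\mathcal H_1)$.

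The main obstacle is to verify that these pointwise inverses assemble into a smooth section, thereby defining an element of $\Sigma_H^{-m}(X;\mathbb{H}_2,\mathbb{H}_1)$. I would follow the standard parametrix approach: using a partition of unity on $X$ and the fiberwise invertibility one constructs a first-order parametrix $b\in \Sigma_H^{-m}(X;\mathbb{H}_2,\mathbb{H}_1)$ with $ab-1\in \Sigma_H^{-1}(X;\mathbb{H}_2)$ and $ba-1\in \Sigma_H^{-1}(X;\mathbb{H}_1)$. The error is removed by an asymptotic Neumann sum $\sum_{k\geq 0}(1-ab)^k$, whose well-definedness in the symbol calculus is exactly the soft-sheaf statement for $\Psi_H^m/\Psi_H^{-\infty}$ recalled in Section~\ref{subsecparatangb}. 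A symmetric construction on the other side yields a left inverse, and the two inverses agree modulo the residual ideal, producing $a^{-1}\in\Sigma_H^{-m}(X;\mathbb{H}_2,\mathbb{H}_1)$.

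The principal subtlety in this last step is the borderline case $m\in -d-\N$ of Lemma~\ref{adanlkn}, where logarithmic terms enter the description of $\Sigma_H^m V$. These terms are stable under the convolution product and the dilation, so the Neumann construction still closes; but at each stage one must check that inversion in $\Sigma_H^* V$ is compatible with the decomposition of Lemma~\ref{adanlkn} and that the matrix-valued version of Lemma~\ref{christlemma} actually delivers an inverse \emph{modulo} the ambiguity $P^{-m}V$. I expect this to be where most of the technical labor lies, though the compatibility of the log-terms with the zoom action and the matrix Rockland theorem should make the argument go through as in the scalar case handled by Christ--Geller--Nagel--Stein and reviewed in Section~\ref{symbollknlknad}.
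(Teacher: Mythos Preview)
Your proposal is correct and follows essentially the same route as the paper. The paper's proof is a two-line argument: the chain $(4)\Rightarrow(3)\Rightarrow(2)\Rightarrow(1)$ is declared ``clear'', and $(1)\Rightarrow(4)$ is attributed directly to Lemma~\ref{christlemma} (which in turn points to \cite[Theorem 6.2]{christgelleretal} and \cite[Lemma 3.9]{Dave_Haller1}). Your expansion of $(1)\Rightarrow(4)$ via fiberwise inversion, a local parametrix, and a Neumann-series correction is precisely what those cited references carry out, so you are spelling out what the paper black-boxes rather than taking a different path.

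One small comment: your justification of $(2)\Rightarrow(1)$ is slightly thin. Invertibility on a single Sobolev scale gives injectivity on $\mathcal S(\pi)$ immediately, but density of the range in the \emph{Schwartz} topology (which is finer than any $\mathfrak h^s_\pi$-topology) does not follow from density in $\mathfrak h^{s-\mathrm{Re}(m)}_\pi$ alone. The clean way is to pass to the adjoint: $\pi(a(x))^*=\pi(a(x)^*)$ with $a^*\in\Sigma_H^{\bar m}$, and Sobolev invertibility of $\pi(a)$ forces injectivity of $\pi(a^*)$ on $\mathcal S(\pi)$, which by the $\mathcal S$--$\mathcal S'$ duality is exactly dense range of $\pi(a)$ on $\mathcal S(\pi)$. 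The paper does not spell this out either, so this is a refinement rather than a correction.
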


\begin{proof}
It is clear that 4) $\Rightarrow$ 3) $\Rightarrow$ 2) $\Rightarrow$ 1). Lemma \ref{christlemma} implies that 1) $\Rightarrow$ 4).
\end{proof}

\begin{definition}
An operator $T\in \Psi_H^m(X;\mathbb{H}_1,\mathbb{H}_2)$ is said to be $H$-elliptic if $\sigma_H^m(T)\in \Sigma^m_H(X;\mathbb{H}_1,\mathbb{H}_2)$ satisfies any of the equivalent conditions of Lemma \ref{alkdaldknad}.
\end{definition}

\begin{thm}
\label{adnajdn}
Assume that $T\in \Psi_H^m(X;\mathbb{H}_1,\mathbb{H}_2)$ for some $m\in \C$. Then the following are equivalent:
\begin{enumerate}
\item $T:W^s_H(P\backslash G,\mathbb{H}_1)\to W^{s-\mathrm{Re}(m)}_H(P\backslash G,\mathbb{H}_2)$ is Fredholm for some $s\in \R$;
\item $T:W^s_H(P\backslash G,\mathbb{H}_1)\to W^{s-\mathrm{Re}(m)}_H(P\backslash G,\mathbb{H}_2)$ is Fredholm for all $s\in \R$;
\item $T$ is $H$-elliptic.
\end{enumerate}
\end{thm}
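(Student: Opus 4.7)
The plan is to follow the scheme $(3) \Rightarrow (2) \Rightarrow (1) \Rightarrow (3)$, of which $(2) \Rightarrow (1)$ is trivial and the other two implications are by now standard templates from the theory of elliptic pseudodifferential calculi, albeit one has to keep track of the noncommutative nature of the symbols.

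For $(3) \Rightarrow (2)$, suppose $T$ is $H$-elliptic. By Lemma \ref{alkdaldknad} there is a two-sided inverse $b \in \Sigma_H^{-m}(X;\mathbb{H}_2,\mathbb{H}_1)$ of $\sigma_H^m(T)$. Using surjectivity of the principal symbol map, lift $b$ to an operator $Q_0 \in \Psi_H^{-m}(X;\mathbb{H}_2,\mathbb{H}_1)$. Then $Q_0 T - I$ and $T Q_0 - I$ lie in $\Psi_H^{-1}$. A standard Neumann-type iteration at the symbol level, improving $Q_0$ by successively cancelling the next lower order term, produces a parametrix $Q \in \Psi_H^{-m}$ with $QT - I$ and $TQ - I$ of arbitrarily negative order. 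Theorem \ref{bddsds} then shows that these remainders act compactly on every scale $W^s_H$, so $T$ is Fredholm between the asserted Sobolev spaces for every $s$.

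For $(1) \Rightarrow (3)$, the plan is to prove the contrapositive by constructing a singular (Weyl) sequence. Suppose $T$ is not $H$-elliptic, so by Lemma \ref{alkdaldknad} there exist $x_0 \in X$ and a nontrivial $\pi \in \hat{V}$ for which $\pi(\sigma_H^m(T)(x_0))$ fails to be injective with dense range on $\mathcal{S}(\pi)\otimes \mathcal{H}_1$. Passing to the formal $L^2$-adjoint if necessary we can assume the failure is by non-injectivity; pick a unit vector $\xi$ in (or almost in) the kernel. Working in a nilpotent chart around $x_0$, use the dilations $\delta_\lambda$ on $V$ to construct a sequence $u_n \in C^\infty_c(X,\mathbb{H}_1)$ concentrated near $x_0$ at scale $n^{-1}$ whose pullback to $T_H X|_{x_0} \cong V$ and subsequent application under $\pi$ approximates $\xi$. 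Using the kernel description in Lemma \ref{lemkernel} together with the fact that, under the zoom action of $\mathbb{T}_H X$, the action of $T$ on such rescaled sequences converges to $\pi(\sigma_H^m(T)(x_0))$ acting on $\xi$, one verifies that $\|u_n\|_{W^s_H} \gtrsim 1$, $\|T u_n\|_{W^{s-\mathrm{Re}(m)}_H}\to 0$, and $u_n \to 0$ weakly. This contradicts Fredholmness.

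The main obstacle will be carrying out the Weyl sequence construction rigorously in the Heisenberg setting: one must exhibit, for each $(x_0,\pi)$, a sequence of compactly supported smooth vectors that, under the dilation $\delta_n$ and translation to $x_0$, concentrates both spatially and ``in the dual variable'' on the representation $\pi$, in a manner compatible with the non-abelian scaling. The right technical framework is Ewert's representation of $\Psi^m_H$ via the parabolic tangent groupoid $\mathbb{T}_H X$ developed in Section \ref{lknlknaddowow}: the image of $\pmb{k}_T$ at $t=0$ is fiberwise a distribution on $T_H X$, and invoking the zoom action allows one to formalize the asymptotic reduction of $T u_n$ to $\pi(\sigma_H^m(T)(x_0))\xi$. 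Once this convergence is in place, compactness of the inclusion $W^s_H \hookrightarrow W^{s-\varepsilon}_H$ from Theorem \ref{bddsds} forces the putative Fredholm property to fail along $u_n$, completing the argument.
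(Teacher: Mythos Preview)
Your scheme $(3)\Rightarrow(2)\Rightarrow(1)\Rightarrow(3)$ matches the paper's, and for $(3)\Rightarrow(2)$ your argument is essentially the same as the paper's, except that the Neumann iteration is unnecessary: the paper simply observes that a lift $R$ of $\sigma_H^m(T)^{-1}$ already gives $1-RT,\,1-TR\in\Psi_H^{-1}$, and by Theorem~\ref{bddsds} operators of order $-1$ are compact on every $W^s_H$, so Fredholmness follows immediately without improving the parametrix.

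The genuine difference is in $(1)\Rightarrow(3)$. The paper does not carry out any construction here; it simply cites the result of Androulidakis--Mohsen--Yuncken \cite{andromoyu}, where the implication ``Fredholm $\Rightarrow$ $H$-elliptic'' is established in the Heisenberg/filtered setting. Your proposed Weyl-sequence argument is a legitimate and more self-contained alternative, and the outline you give (concentration via dilations $\delta_\lambda$ in a nilpotent chart, reduction to the model action on $T_HX|_{x_0}$ via the zoom action on $\mathbb{T}_HX$) is the right shape. But you correctly flag the main obstacle: producing test vectors that concentrate simultaneously in space and in the nontrivial representation $\pi\in\hat V$, and controlling the error terms from Lemma~\ref{lemkernel} uniformly in the Sobolev scale, requires real work in the non-abelian setting and is not a routine transplant of the Euclidean argument. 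What you have written is a plan, not a proof; the paper sidesteps this by appealing to \cite{andromoyu}, which is the cleaner option given that the result is already in the literature.
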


\begin{proof}
It is clear that 2) $\Rightarrow$ 1). That 1) $\Rightarrow$ 3) follows from \cite{andromoyu}. Finally,  3) $\Rightarrow$ 2) since if $T$ is $H$-elliptic, we can by Lemma \ref{alkdaldknad} take an $R\in \Psi_H^{-m}(X;\mathbb{H}_2,\mathbb{H}_1)$ with $\sigma^{-m}_H(R)=\sigma_H^m(T)^{-1}\in \Sigma_H^{-m}(X;\mathbb{H}_2,\mathbb{H}_1)$, so $1-RT$ and $1-TR$ are order $-1$ and therefore compact by Theorem \ref{bddsds}.
\end{proof}

\subsection{Source projections and closed range}
\label{subsecclosed}
Our next goal is to find a characterization at the symbolic level for operators from the Heisenberg calculus to have closed range in the scale of Heisenberg-Sobolev spaces. We describe the symbol of its source projection. 

To do so, we first discuss how the fiberwise represented Heisenberg-Sobolev spaces $\mathfrak{h}^s_\pi $ fit together into a homogeneous bundle over $P\backslash G$. This is clear if $\mathfrak{g}_{-2\alpha}=0$, i.e. when $V$ is abelian. For the sake of this discussion, we assume that $\mathfrak{g}_{-2\alpha}\neq 0$.  By Kirillov's orbit method \cite{Corwin_Greenleaf,kirillovbook}, we have that 
\begin{equation}
\label{kiridkda}
\hat{V}=(\mathfrak{g}_{2\alpha}\setminus \{0\})\dot{\cup}\mathfrak{g}_{\alpha}.
\end{equation}
Here we identify $\mathfrak{g}_{\alpha}=\mathfrak{g}_{-\alpha}^*$ and $\mathfrak{g}_{2\alpha}=\mathfrak{g}_{-2\alpha}^*$ via the Killing form. The only fact we shall need about the topology of $V$ is that $\mathfrak{g}_{2\alpha}\setminus \{0\}$ is a dense, Zariski open subset. For a non-zero $\xi\in\mathfrak{g}_{2\alpha}$, the Kirillov form $\omega_\xi$ has annihilator $\mathfrak{g}_{-2\alpha}$ and is therefore non-degenerate on $\mathfrak{g}_{-\alpha}$. The representations in $(\mathfrak{g}_{2\alpha}\setminus \{0\})\subseteq \hat{V}$ come from the orbit of some $\xi\in \mathfrak{g}_{2\alpha}\setminus\{0\}$ and are flat, so we call representations from $(\mathfrak{g}_{2\alpha}\setminus \{0\})\subseteq \hat{V}$ flat orbit representations.

Using the Killing form, we can identify forms with matrices and on $\mathfrak{g}_{-\alpha}$ construct the smooth family $J_\xi:=\omega_\xi|\omega_\xi|^{-1}$ of complex structures adapted to the metric $g_\xi:=|\omega_\xi|$, both parametrized by $\xi\in \mathfrak{g}_{2\alpha}\setminus \{0\}$. As such, the real vector bundle
$$\mathcal{E}:=\mathfrak{g}_{-\alpha}\times  \mathfrak{g}_{2\alpha}\setminus \{0\}\to  \mathfrak{g}_{2\alpha}\setminus \{0\},$$
forms a complex vector bundle with a hermitean structure. By the standard construction, we can form the Fock bundle which is the locally trivial bundle of Hilbert spaces 
$$\mathcal{F}\to \mathfrak{g}_{2\alpha}\setminus \{0\},\quad\mbox{by}\quad \mathcal{F}:=\bigoplus_{n=0}^\infty \mathcal{E}^{\otimes_{\rm sym} n}.$$
Using the Bargman representation, $\mathcal{F}$ the property that the fiber over $\xi$ is the representation space corresponding to $\xi$ under the Kirillov map \eqref{kiridkda}. The bundle $\mathcal{F}$ is $MA$-equivariant. We can form the fiber bundle 
$$q:\Gamma:=(\mathfrak{g}_{2\alpha}\setminus \{0\})\times_{P}G\to P\backslash G,$$ 
with fiber $\mathfrak{g}_{2\alpha}\setminus \{0\}$. We can also form the locally trivial bundle of Hilbert spaces over the total space $\Gamma$
$$\mathcal{F}_{P\backslash G}:=\mathcal{F}\times_P G\to \Gamma.$$
The bundle $\mathcal{F}_{P\backslash G}\to \Gamma$ parametrizes all flat orbit representations of the osculating Lie groupoid $T_X(P\backslash G)$.

By the same token, we have for any $s\in \R$ a locally trivial bundle of Hilbert spaces over the total space $\Gamma$
$$\mathcal{F}^s_{P\backslash G}:=(1+\delta_{\mathbb{H}_2})^{-s/2}\mathcal{F}_{P\backslash G}\to \Gamma.$$
The discussion above implies that we can embed 
$$\Sigma_H^m(X;\mathbb{H}_1,\mathbb{H}_2)\hookrightarrow C^\infty(\Gamma,\Hom(\mathcal{F}^s_{P\backslash G}\otimes q^*\mathbb{H}_1,\mathcal{F}^{s-\mathrm{Re}(m)}_{P\backslash G}\otimes q^*\mathbb{H}_2))$$
for any $s\in \R$. Here we by $\Hom(\mathcal{F}^s_{P\backslash G}\otimes q^*\mathbb{H}_1,\mathcal{F}^{s-\mathrm{Re}(m)}_{P\backslash G}\otimes q^*\mathbb{H}_2)$ mean the locally trivial bundle of bounded linear operators between the fibers.

\begin{lemma}
\label{alknalkdn}
Consider an element $a\in \Sigma^m_H(X;\mathbb{H}_1,\mathbb{H}_2)$. The following conditions are equivalent:
\begin{enumerate}
\item For any $s\in \R$, the bundle map 
$$a:\mathcal{F}_{P\backslash G}^s\otimes q^*\mathbb{H}_1\to \mathcal{F}_{P\backslash G}^{s-\mathrm{Re}(m)}\otimes q^*\mathbb{H}_2,$$ 
has closed range.
\item There exists an operator $T\in \Psi^m_H(P\backslash G;\mathbb{H}_1,\mathbb{H}_2)$ with 
$$\sigma^m_H(T)=a,$$
that has closed range as an operator $T:W^s_H(P\backslash G,\mathbb{H}_1)\to W^{s-\mathrm{Re}(m)}_H(P\backslash G,\mathbb{H}_2)$ for some $s\in \R$,'
\end{enumerate}
If any of these conditions hold, for any $T\in (\sigma_H^m)^{-1}(a)$, the source projection $P_T$ of $T$ in $L^2$ has $P_T\in \Psi^0_H(X;\mathbb{H}_1)$ and its principal symbol $\sigma_H^0(P_T)$ is determined by the fact that for any $x\in X$ and for any flat orbit representation $\pi$ we have that $\pi(\sigma_H^0(P_T)(x))$ is the source projection of $\pi(a(x))$.
\end{lemma}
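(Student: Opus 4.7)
The plan is to reduce to the order zero case, there construct a symbolic Moore--Penrose inverse of $a$, and lift it to an operator-level parametrix that simultaneously yields the closed range statement and identifies the symbol of $P_T$. Composing with the order-$(-m)$ operator $(1+\Delta_{\mathbb{H}_2})^{-m/2}$, which acts as a positive invertible operator on every fiber $\mathcal{F}_{P\backslash G}^s\otimes q^*\mathbb{H}_2$, transforms $a$ into $\tilde a := (1+\delta_{\mathbb{H}_2})^{-m/2}a \in \Sigma^0_H(X;\mathbb{H}_1,\mathbb{H}_2)$ and any $T$ lifting $a$ into $\tilde T := (1+\Delta_{\mathbb{H}_2})^{-m/2}T$. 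Both conditions (1), (2), and the $L^2$-source projections are preserved, so we henceforth assume $m=0$.

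Given $a$ satisfying condition (1), for each flat-orbit parameter $(x,\xi) \in \Gamma$ the positive bounded operator $\pi_\xi(a(x))^*\pi_\xi(a(x))$ on the Fock fiber has spectrum contained in $\{0\}\cup[\epsilon(x,\xi),\infty)$. Holomorphic functional calculus then defines, fiberwise, the spectral projection
\[
p(x,\xi) := \frac{1}{2\pi i}\oint_{\gamma}(z-\pi_\xi(a)^*\pi_\xi(a))^{-1}\,\d z
\]
for a contour $\gamma$ enclosing only the positive spectrum, together with the Moore--Penrose inverse $a^\dagger(x,\xi) := (\pi_\xi(a)^*\pi_\xi(a) + (1-p))^{-1}\pi_\xi(a)^*$, satisfying $\pi_\xi(a^\dagger a) = p$. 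The main technical obstacle is to verify that $p$ and $a^\dagger$ define elements of $\Sigma^0_H$ in the sense of Proposition \ref{gequicad}, i.e.~that they depend smoothly on the parameters in the appropriate $MA$-equivariant sense. This reduces to a locally uniform lower bound on the spectral gap $\epsilon(x,\xi)$. By compactness of $X$ and $MA$-homogeneity of the symbol under the dilation action $\delta_\lambda$, we may restrict $\xi$ to a compact transversal of this action on $\mathfrak{g}_{2\alpha}^*\setminus\{0\}$; on such a compact set, continuity of $a$ and upper semicontinuity of the spectrum turn the pointwise gap into a uniform gap. The contour $\gamma$ can then be chosen locally constant, and smoothness of the resolvent in $(x,\xi)$ propagates to the contour integral.

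Lift $p$ to a self-adjoint $P \in \Psi^0_H(X;\mathbb{H}_1)$ and $a^\dagger$ to $R \in \Psi^0_H(X;\mathbb{H}_2,\mathbb{H}_1)$. The operator $B := T^*T + (I - P) \in \Psi^0_H$ has principal symbol $a^*a + (1 - p)$, which at every fiber is bounded below: by $\epsilon$ on $\range(p)$ where it equals $\pi_\xi(a)^*\pi_\xi(a)$, and by $1$ on $\ker \pi_\xi(a) = \range(1 - p)$ where it equals the identity. Hence $B$ is $H$-elliptic and Fredholm on $L^2$ by Theorem \ref{adnajdn}; self-adjointness modulo $\Psi^{-1}_H$ combined with positivity of the symbol makes $B$ coercive on a finite-codimensional subspace. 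The symbolic identity $R T \equiv P \pmod{\Psi^{-1}_H}$ then shows that $T^*T$ is bounded below on $PL^2$ modulo finite rank, giving the desired closed range of $T$ on $L^2$. The source projection $P_T = \mathbf{1}_{(\delta_0,\infty)}(T^*T)$, where $\delta_0>0$ is the positive spectral gap, equals a contour integral of the resolvent; $H$-ellipticity of $B$ shows $(z-T^*T)^{-1} \in \Psi^0_H$ for $z$ on a contour avoiding the spectrum of $T^*T$, hence $P_T \in \Psi^0_H(X;\mathbb{H}_1)$. Commutation of functional calculus with the fiberwise representations $\pi_\xi$ yields $\pi_\xi(\sigma^0_H(P_T)(x)) = p(x,\xi)$, the claimed source projection of $\pi_\xi(a(x))$.

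For the converse (2)$\Rightarrow$(1), assume some $T \in (\sigma^0_H)^{-1}(a)$ has closed range on $L^2$, so $T^*T$ has a spectral gap at $0$. Through the embedding $\Psi^0_H/\Psi^{-1}_H \hookrightarrow \Sigma^0_H$ into a bundle of bounded operators on the Fock fibers, and using Theorem \ref{bddsds} to identify $\Psi^{-1}_H$ with compact operators on $L^2$, the essential spectrum of $T^*T$ is described by the joint spectrum of the fiberwise operators $\pi_\xi(a^*a)(x)$. A failure of closed range at some $(x_0,\xi_0)$ would produce a Weyl sequence for the fiberwise operator that, via a dilation-adapted cut-off, transfers to $L^2$ as a sequence violating the spectral gap of $T^*T$. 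Hence (2) implies (1), completing the equivalence.
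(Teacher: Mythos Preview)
Your overall strategy matches the paper's: reduce to order zero, build the Riesz projector $p$ of $a^*a$ at the symbol level via a contour integral, lift to operators, and conclude closed range together with $P_T\in\Psi^0_H$. The paper proves 2)$\Rightarrow$1) by reversing the construction via the one-sided Rockland/ellipticity criterion of \cite{andromoyu,Dave_Haller1} rather than a Weyl-sequence transfer, but your sketch for that direction is in the same spirit.

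There is, however, a genuine gap in your 1)$\Rightarrow$2). You argue that $p$ and $a^\dagger$ lie in $\Sigma^0_H$ by verifying that the fiberwise contour integrals depend smoothly on $(x,\xi)$ after securing a uniform spectral gap. That is not enough: the embedding $\Sigma^0_H(X;\mathbb{H}_1)\hookrightarrow C^\infty(\Gamma,\End(\mathcal{F}_{P\backslash G}\otimes q^*\mathbb{H}_1))$ is far from surjective, so smoothness of a family of bounded operators on Fock fibers does not place it in the symbol algebra. What is actually needed is that for $z$ on the contour the resolvent $(z-a^*a)^{-1}$ already lies in $\Sigma^0_H$ (equivalently, that $\Sigma^0_H$ or $\Psi^0_H/\Psi^{-\infty}$ is spectrally invariant / closed under holomorphic functional calculus). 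This is a nontrivial analytic fact, proved in \cite{pongecrelle,vanerppol} and invoked explicitly in the paper; once you have it, the contour integral is automatically in $\Sigma^0_H$ and no separate smoothness check is required. Your uniform-gap argument does serve a purpose---it shows the contour misses the spectrum---but it cannot replace spectral invariance.

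A smaller point: at the operator level the paper proceeds differently from you. Rather than analyzing $T^*T+(1-P)$, it uses that $(1-p)\oplus a$ is left-invertible in the symbol algebra (by Lemma~\ref{alkdaldknad}/\cite[Lemma~3.9]{Dave_Haller1}) to produce a symbolic one-sided inverse $b_1$ with $ab_1=p$, lifts this to a partial parametrix $B$ with $TB-P$ of order $-1$, and then corrects by smoothing operators so that $P$ is an honest projection onto the range of $T$. This is more direct than extracting closed range from coercivity of $B=T^*T+(1-P)$ on a finite-codimensional subspace (your passage from ``$RT\equiv P\pmod{\Psi^{-1}_H}$'' to ``$T^*T$ bounded below on $PL^2$ modulo finite rank'' is not justified as written). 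Likewise, your claim that $(z-T^*T)^{-1}\in\Psi^0_H$ along the contour again relies on the same holomorphic-functional-calculus closure that you did not invoke.
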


\begin{proof}
By order reduction, we can reduce to the case that $s_0=m=0$. 

We start by proving that 1)$\Rightarrow$ 2). If the bundle map $a:\mathcal{F}_{P\backslash G}\otimes q^*\mathbb{H}_1\to \mathcal{F}_{P\backslash G}\otimes q^*\mathbb{H}_2$ has closed range, compactness of $S(\mathfrak{g}_{2\alpha})$ ensures that there is an $\epsilon>0$ such that $\lambda+a^*a$ is invertible in $\Sigma^0_H(X;\mathbb{H}_1)$ for all $0<|\lambda|<\epsilon$. Since the algebra of zeroth order operators in the Heisenberg calculus is closed under holomorphic functional calculus \cite{pongecrelle, vanerppol}, we can form the Riesz projector
$$p:=\frac{1}{2\pi i}\int_{|\lambda|=\epsilon/2} (\lambda+a^*a)^{-1}\mathrm{d}\lambda\in \Sigma^0_H(X;\mathbb{H}_1).$$ 
By construction $p:\mathcal{F}_{P\backslash G}\otimes q^*\mathbb{H}_1\to \mathcal{F}_{P\backslash G}\otimes q^*\mathbb{H}_1$ projects onto the range of $a^*$. Therefore, 
$$(1-p)\oplus a:\mathcal{F}_{P\backslash G}\otimes q^*\mathbb{H}_1\to \mathcal{F}_{P\backslash G}\otimes q^*(\mathbb{H}_1\oplus \mathbb{H}_2),$$
is injective and by \cite[Lemma 3.9]{Dave_Haller1} (cf. Lemma \ref{alkdaldknad}), there exists a $b=(b_0\oplus b_1)^T\in \Sigma^0_H(X;\mathbb{H}_1\oplus \mathbb{H}_2,\mathbb{H}_1)$ such that $b_0(1-p)+b_1a=1$. We can conclude that $ab_1=p$ by multiplying with $p$ from the right. 

Assume now that $T_0\in \Psi^0_H(X;\mathbb{H}_1,\mathbb{H}_2)$ has $\sigma^0_H(T)=a$. We can lift $p$ to an operator $P\in \Psi^0_H(X;\mathbb{H}_1)$ that we by standard algebraic tricks can ensure is a projection modulo $\Psi^{-1}(X;\mathbb{H}_1)$. We have that $TP-T\in \Psi^{-1}(X;\mathbb{H}_1,\mathbb{H}_2)$. We lift $b_1$ to an operator $B\in \Psi^0_H(X;\mathbb{H}_2,\mathbb{H}_1)$ with $B-PB\in \Psi^{-1}_H(X;\mathbb{H}_2,\mathbb{H}_1)$. We then have that $TB-P\in \Psi^{-1}(X;\mathbb{H}_1)$. We can again by standard algebraic tricks modify $T_0$, $B$ and $P$ to operators with $P=P^*$, $P^2-P$ smoothing and $T_0B-P$ smoothing. Modifying $T_0$, $B$ and $P$ by appropriate smoothing operators we can ensure that $P\in \Psi^0_H(X;\mathbb{H}_1)$ is a projection projecting onto the range of $T$, and so $T$ has closed range. This proves 2).

We now outline the argument that 2)$\Rightarrow$ 1), it follows similar ideas as the implication 1)$\Rightarrow$ 2). If 3) holds, we can argue as above but at the operator theoretic level constructing the range projection of $T$ in $\Psi^0_H(X;\mathbb{H}_1)$. Now using the one-sided version of Theorem \ref{adnajdn} (see \cite[Section 3.2]{Dave_Haller1} or \cite{andromoyu}), we can reverse the argument above to show that the associated bundle map $a:\mathcal{F}_{P\backslash G}\otimes q^*\mathbb{H}_1\to \mathcal{F}_{P\backslash G}\otimes q^*\mathbb{H}_2$ has closed range and so 1) holds.
\end{proof}

\begin{remark}
It is not possible to strengthen Lemma \ref{alknalkdn} to characterizing closed range by properties of the symbol. Indeed, if $a=0$ the only elements $T\in (\sigma^m_H)^{-1}\{a\}$ with closed range as an operator $T:W^s_H(P\backslash G,\mathbb{H}_1)\to W^{s-\mathrm{Re}(m)}_H(P\backslash G,\mathbb{H}_2)$ are the finite rank smoothing operators $T$.
\end{remark}

\section{Mapping properties of the Szegö map}
\label{aldknadln}

We now turn to studying the Szegö map, and to the proof of Theorem \ref{conja}. The proof uses the extended parabolic tangent groupoid of Subsection \ref{subsecparatang}, in which we set up the techniques for proving Theorem \ref{conjb}. Finally, in Subsection \ref{lknlnad} we relate $S_{\mathcal{V}}^\dagger S_{\mathcal{V}}$ to Knapp-Stein intertwiners to show closed range of $S_\mathcal{V}$. The relationship to Knapp-Stein intertwiners and Lemma \ref{alknalkdn} is used in Corollary \ref{blablaks} to conclude symbolic information on the source projection $P_\mathcal{V}$ of $S_\mathcal{V}$.

\subsection{Preliminary considerations}
\label{boundviaks}

Let us fix an irreducible unitary representation $(\mathcal{V},\tau)$ of $K$ with highest weight $\lambda$ and we let $(\mathcal{H},\sigma)$ denote the unitary $M$-representation generated by $\lambda$. For any $\nu\in \mathfrak{a}_\C^*$, we obtain a $P$-action on $\mathcal{H}$ via $(man).v=a^\nu\sigma(m)v$ and the homogeneous vector bundle $\mathbb{H}_\nu:=\mathcal{H}\times_PG\to P\backslash G$. We think of the $G$-space $C^\infty(P\backslash G;\mathbb{H}_\nu)$ as
$$C^\infty(P\backslash G;\mathbb{H}_{\nu_{\mathcal{V}}}):=C^\infty( G;\mathcal{H})^P.$$
We consider the Szegö map 
$$S_{\mathcal{V}}:C^\infty(P\backslash G;\mathbb{H}_{\nu_{\mathcal{V}}})\to C^\infty(K\backslash G,\mathbb{V}),$$
which is the $G$–equivariant map
\begin{equation}
\label{popskdps}
S_{\mathcal{V}} f(g)=\int_K \tau(k)^{-1}f(kg)\rd k=\int_K\mathsf{a}(lg^{-1})^{\nu_{\mathcal{V}}}\tau(\kappa(lg^{-1}))f(l)\rd l
\end{equation}
The reader should not that $C^\infty(P\backslash G;\mathbb{H}_\nu)$ as a Frech\'{e}t space and $L^2(P\backslash G;\mathbb{H}_\nu)$ as a Hilbert space are independent of $\nu$. 

To study the analytic aspects of $S_{\mathcal{V}}$, we will use the $KA^+K$-description of $G$. Here $A^+\subseteq A$ denotes the closed, positive Weyl chamber for our restricted root $\alpha$. We have that $KA^+K= G$, see \cite[Theorem 8.6, page 323]{HelgasonDGLGSS}. It follows from \cite[Proposition 5.28, page 141]{knappbook} that the restriction along the inclusion $A^+K\subseteq G$ induces an isometry
$$\iota:L^2(K\backslash G;\mathbb{V})\to L^2(A^+K;\mathcal{V}, h),$$
for the weight 
$$h(ak)=h(a)=\sinh(\alpha(H(a))^p\sinh(2\alpha(H(a))^{q},$$
where $p:=\dim\mathfrak{g}_\alpha$ and $q:=\dim\mathfrak{g}_{2\alpha}$. We tacitly use the Haar measures on $A$ and $K$. We can write 
\begin{equation}
\label{iotasgnu} 
\iota S_{\mathcal{V}} f(ak)=\int_K\mathsf{a}(lk^{-1}a^{-1})^{\nu_{\mathcal{V}}}\tau(\kappa(lk^{-1}a^{-1}))f(l)\rd l=a^{-\nu_{\mathcal{V}}} \int_K \mathcal{G}_{\mathcal{V}}(lk^{-1},a)f(l)\rd l,
\end{equation}
where $\mathcal{G}_\mathcal{V}(l,a):=\mathfrak{G}_\mathcal{V}(ala^{-1})$ and $\mathfrak{G}_{\mathcal{V}}(g):=\mathsf{a}(g)^{\nu_{\mathcal{V}}}\tau(\kappa(g))$.

The discussion in Remark \ref{globaadad} shows that $S_{\mathcal{V}}:C^\infty(P\backslash G;\mathbb{H}_{\nu_{\mathcal{V}}})\to L^2(K\backslash G,\mathbb{V})$ allowing us to form the formal adjoint of $\iota S_{\mathcal{V}}$. The formal adjoint is the $G$–equivariant map that we by an abuse of notation write as
$$S_{\mathcal{V}}^\dagger :L^2(A^+K;\mathcal{V}, h)\to \mathcal{D}'(P\backslash G;\mathbb{H}_{-\nu_{\mathcal{V}}})=\mathcal{D}'(K;\mathcal{H})^M,$$
and that takes the form
\begin{align}
\label{popskdpsdual}
S_{\mathcal{V}}^\dagger  u(k)=&\int_{A^+K}a^{-\nu_{\mathcal{V}}}\mathcal{G}_{\mathcal{V}}^*(lk^{-1},a)u(al)h(a)\rd a\rd l=\\
\nonumber
=&\int_{(0,1)\times K}\lambda^{s_{\mathcal{V}}}\mathcal{G}_{\mathcal{V}}^*(lk^{-1},\lambda)u(\lambda l)h(t))\frac{\rd \lambda}{\lambda}\rd l,\quad \mbox{for}\quad \nu_{\mathcal{V}}=s_{\mathcal{V}}\alpha.
\end{align}
Here $\mathcal{G}_{\mathcal{V}}^*(l,a):=\overline{\mathcal{G}_{\mathcal{V}}^*(l^{-1},a)}$ and we in the second line use the variable $\lambda=a^{-\alpha}$. We can in particular form the operator
$$
S_{\mathcal{V}}^\dagger  S_{\mathcal{V}}:C^\infty(P\backslash G;\mathbb{H}_{\nu_{\mathcal{V}}})\to \mathcal{D}'(P\backslash G;\mathbb{H}_{-\nu_{\mathcal{V}}}).
$$
Since the $L^2$-pairing $W^{-s_\mathcal{V}}_H(P\backslash G,\mathbb{H}_{\nu_\mathcal{V}})\times W^{s_\mathcal{V}}_H(P\backslash G,\mathbb{H}_{-\nu_\mathcal{V}})\to \C$ is perfect, we conclude the following. 

\begin{prop}
\label{stararsta}
Write $\nu_{\mathcal{V}}=s_{\mathcal{V}}\alpha$. The $G$-equivariant mapping $S_{\mathcal{V}}:C^\infty(P\backslash G;\mathbb{H}_{\nu_{\mathcal{V}}})\to L^2(K\backslash G,\mathbb{V})$ extends to a continuous mapping $S_{\mathcal{V}}:W^{-s_\mathcal{V}}_H(P\backslash G,\mathbb{H}_{\nu_\mathcal{V}})\to L^2(K\backslash G,\mathbb{V})$ if and only if $S_{\mathcal{V}}^\dagger  S_{\mathcal{V}}:C^\infty(P\backslash G;\mathbb{H}_{\nu_{\mathcal{V}}})\to \mathcal{D}'(P\backslash G;\mathbb{H}_{-\nu_{\mathcal{V}}})$ extends by continuity to a continuous operator 
$$S_{\mathcal{V}}^\dagger  S_{\mathcal{V}}:W^{-s_\mathcal{V}}_H(P\backslash G,\mathbb{H}_{\nu_\mathcal{V}})\to W^{s_\mathcal{V}}_H(P\backslash G,\mathbb{H}_{-\nu_\mathcal{V}}).$$
\end{prop}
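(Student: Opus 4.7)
The plan is to prove the equivalence by the standard $T^\ast T$-boundedness trick, leveraging the perfect $L^2$-duality pairing $[W^{-s_\mathcal{V}}_H(P\backslash G,\mathbb{H}_{\nu_\mathcal{V}})]^\ast \cong W^{s_\mathcal{V}}_H(P\backslash G,\mathbb{H}_{-\nu_\mathcal{V}})$ invoked in the statement, together with the density $C^\infty(P\backslash G,\mathbb{H}_{\nu_\mathcal{V}}) \subseteq W^{-s_\mathcal{V}}_H(P\backslash G,\mathbb{H}_{\nu_\mathcal{V}})$ which follows from the subelliptic regularity of $\Delta_\mathbb{H}$ recorded in the discussion preceding Theorem \ref{bddsds}.

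For the forward implication, assume $S_\mathcal{V}: W^{-s_\mathcal{V}}_H(P\backslash G,\mathbb{H}_{\nu_\mathcal{V}}) \to L^2(K\backslash G,\mathbb{V})$ extends by continuity. Its Banach-space adjoint is then bounded from $L^2(K\backslash G,\mathbb{V})$ into $[W^{-s_\mathcal{V}}_H(P\backslash G,\mathbb{H}_{\nu_\mathcal{V}})]^\ast = W^{s_\mathcal{V}}_H(P\backslash G,\mathbb{H}_{-\nu_\mathcal{V}})$. Unwinding the isometry $\iota$ and applying Fubini to the explicit kernel in \eqref{popskdpsdual}, one checks that this Banach adjoint agrees on the dense subspace $C^\infty(K\backslash G,\mathbb{V})$ with the formal adjoint $S_\mathcal{V}^\dagger$ defined in the excerpt; composition then produces the continuous extension of $S_\mathcal{V}^\dagger S_\mathcal{V}$ into the correct Heisenberg-Sobolev target.

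For the reverse implication, which is the genuine content, I will compute $\|S_\mathcal{V} f\|_{L^2}^2$ for $f \in C^\infty(P\backslash G,\mathbb{H}_{\nu_\mathcal{V}})$ using the defining property of the formal adjoint:
$$\|S_\mathcal{V} f\|_{L^2}^2 = \langle S_\mathcal{V}^\dagger S_\mathcal{V} f, f\rangle \leq \|S_\mathcal{V}^\dagger S_\mathcal{V} f\|_{W^{s_\mathcal{V}}_H}\, \|f\|_{W^{-s_\mathcal{V}}_H} \leq C\|f\|_{W^{-s_\mathcal{V}}_H}^2,$$
where the middle inequality is Cauchy--Schwarz for the perfect duality pairing and $C$ is the operator norm of the assumed continuous extension of $S_\mathcal{V}^\dagger S_\mathcal{V}$. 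Density of $C^\infty$ in $W^{-s_\mathcal{V}}_H$ then extends $S_\mathcal{V}$ continuously into $L^2(K\backslash G,\mathbb{V})$. The only real technicality I anticipate is verifying that the two notions of adjoint coincide -- that the formal adjoint written via \eqref{popskdpsdual} in the weighted coordinates $L^2(A^+K;\mathcal{V},h)$ genuinely matches the Banach adjoint with respect to the unweighted $L^2(K\backslash G,\mathbb{V})$-pairing. This reduces to compatibility of the weight $h$ with the $KA^+K$-decomposition of Haar measure, which is exactly the content of \cite[Proposition 5.28, page 141]{knappbook} as already used to define $\iota$, so no substantive obstacle remains.
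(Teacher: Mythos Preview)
Your proposal is correct and follows essentially the same approach as the paper: the paper simply invokes the perfect $L^2$-pairing $W^{-s_\mathcal{V}}_H(P\backslash G,\mathbb{H}_{\nu_\mathcal{V}})\times W^{s_\mathcal{V}}_H(P\backslash G,\mathbb{H}_{-\nu_\mathcal{V}})\to \C$ and deduces the proposition, with the identity $\|S_{\mathcal{V}}\|^2=\|S_{\mathcal{V}}^\dagger S_{\mathcal{V}}\|$ made explicit later in the proof of Corollary~\ref{proofofthema}. Your write-up is somewhat more careful about the coincidence of the formal and Banach adjoints, but this is the same standard $T^\ast T$ argument.
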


We also define the family of operators indexed by $\nu\in \mathfrak{a}_\C^*$ given by
\begin{align}
\label{lnueq}
L_{\nu}f(k)=&\int_{A^+}\int_{K}\int_K a^{-2\nu}  \mathcal{G}_{\mathcal{V}}^*(l_1k^{-1},a)\mathcal{G}_{\mathcal{V}}(l_2l_1^{-1},a)f(l_2)h(a)\rd l_1\rd l_2\rd a=\\
=&\int_{0}^1\int_{K}\int_K \lambda^{2s}  \mathcal{G}_{\mathcal{V}}^*(l_1k^{-1},\lambda)\mathcal{G}_{\mathcal{V}}(l_2l_1^{-1},\lambda)f(l_2)h(\lambda)\rd l_1\rd l_2\frac{\rd \lambda}{\lambda},
\end{align}
for $\nu=s\alpha$ and $\lambda=a^{-\alpha}$. A priori, $L_\nu$ is only defined for $\mathrm{Re}(\nu)>>0$ and for this range it produces a holomorphic family of operators 
$$
L_{\nu}:C^\infty(P\backslash G;\mathbb{H}_{\nu_{\mathcal{V}}})\to \mathcal{D}'(P\backslash G;\mathbb{H}_{-\nu_{\mathcal{V}}}),
$$
due to it being given by an absolutely convergent integral operator. To understand $L_\nu$ and to make the formal identity $S_{\mathcal{V}}^\dagger  S_{\mathcal{V}}=L_{\nu_{\mathcal{V}}}$ rigorous, we use the extended parabolic tangent groupoid.

\subsection{The Szegö map and the extended parabolic tangent groupoid}
\label{boundviaext}

We now describe an approach with the extended parabolic tangent groupoid to showing that $S_{\mathcal{V}}$ is bounded as stated in Theorem \ref{conja}. We start with an elementary observation connecting the Szegö map to the extended parabolic tangent groupoid. We use the notation 
$$\mathcal{G}_\nu(l,a):=\mathfrak{G}_\nu(ala^{-1}),$$ 
where $\mathfrak{G}_\nu(g):=\mathsf{a}(g)^{\nu}\tau(\kappa(g))$ so $\mathcal{G}_{\mathcal{V}}=\mathcal{G}_{\nu_{\mathcal{V}}}$. The reader should note that $\mathcal{G}_\nu$ relates to $\iota P_\nu$ (with $P_\nu$ as in Equation \eqref{kwszeggeneric}, see also Corollary \ref{remarkkwszeggeneric}) as $\mathcal{G}_{\mathcal{V}}$ relates to $\iota S_{\mathcal{V}}$ in Equation \eqref{iotasgnu}. We shall use the notation $\rd\,(Ml)$ for the $K$-invariant density on $M\backslash K$ normalized to unit volume.

\begin{lemma}
\label{lknlnadarrr}
For any $\nu\in \mathfrak{a}_\C^*$, the density valued function 
$$K\times_MK\times A\ni (k,l,a)\mapsto \mathcal{G}_\nu(lk^{-1},a)\rd\,(Ml),$$
can be written as
\begin{equation}
\label{gluingdondonda}
\mathcal{G}_\nu(lk^{-1},a^{-1})\rd\,(Ml)=a^{2\rho}[a_*\pmb{G}_\nu](k,l,1),
\end{equation}
for a smooth, density valued function $\pmb{G}_\nu\in C^\infty(\hat{\T}_H(P\backslash G),\Hom(\mathcal{H},\mathcal{V})\otimes |\pmb{\lambda}_r|)$ with 
$$\pmb{G}_\nu(k,v,0)=\tau(\kappa(v))\rd v.$$ 
\end{lemma}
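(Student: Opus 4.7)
The plan is to define $\pmb{G}_\nu$ piecewise on $\widehat{\mathbb{T}}_HX$ and to verify smoothness across the boundary $a=0$ in the $\hat{\psi}$-coordinates of Equation \eqref{psihatdef}. On the open dense piece $K \times_M K \times A$, unwinding Equation \eqref{gluingdondonda} -- using that the zoom action of $a\in A$ on $\widehat{\mathbb{T}}_HX$ sends $(k,l,b)\mapsto (k,l,a^{-1}b)$ -- forces
\[
\pmb{G}_\nu(k,l,a) := a^{-2\rho}\,\mathcal{G}_\nu(lk^{-1}, a^{-1})\,\rd(Ml).
\]
On the closed piece $(MV)\times_M K \times \{0\}$, we set $\pmb{G}_\nu(M(v,k),0) := \tau(\kappa(v))\,\rd v$, extended $M$-equivariantly. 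The content of the lemma is then that these two pieces glue into a genuinely smooth density-valued section of $\Hom(\mathcal{H},\mathcal{V})\otimes|\pmb{\lambda}_r|$ over $\widehat{\mathbb{T}}_HX$.

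The smoothness check is carried out by pulling back along $\hat{\psi}$ and analyzing $\hat{\psi}^{*}\pmb{G}_\nu(k_0,v,a)$ as $a\to 0$ in $\overline{A}$. Two ingredients enter. First, for $h\in K$ the multiplicative Iwasawa identity $\mathsf{a}(g_1g_2) = \mathsf{a}(g_1)\mathsf{a}(\kappa(g_1)g_2)$ applied with $g_1 = a^{-1}$ and $g_2 = ha$ yields $\mathsf{a}(a^{-1}ha) = a^{-1}\mathsf{a}(ha)$ and $\kappa(a^{-1}ha) = \kappa(ha)$, whence
\[
\mathcal{G}_\nu(h, a^{-1}) \;=\; a^{-\nu}\,\mathfrak{G}_\nu(ha).
\]
Second, the Haar normalization of Equation \eqref{normalz} combined with the Jacobian $a^{-2\rho}$ of the $A$-dilation $\delta_a$ on $V$ gives the density transformation $\rd(Ml) = \mathsf{a}(\tilde v)^{2\rho}\,a^{-2\rho}\,\rd v$ in the $\hat{\psi}$-chart, where $\tilde v = ava^{-1}$.

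The asymptotic analysis at $a\to 0$ then mirrors the BCH argument in the proof of Proposition \ref{lknadlnad}: one shows that the element $a^{-1}(k_0\kappa(\tilde v)k_0^{-1})a$ of $G$, regarded as a function of $(k_0,v,a)$, extends to a real-analytic function on $V\times K\times \overline{A}$ with values in $V$ at $a=0$, so that the Iwasawa components $\mathsf{a}(k_0\kappa(\tilde v)k_0^{-1}a)$ and $\kappa(k_0\kappa(\tilde v)k_0^{-1}a)$ acquire smooth extensions with explicit boundary values related to $\mathsf{a}(v)^\nu$ and $\kappa(v)$. The main obstacle will be the careful bookkeeping of the various $a^\alpha$-powers arising from the $a^{-2\rho}$-prefactor, the $a^{-\nu}$-factor from the Iwasawa identity, the Jacobian $a^{-2\rho}$ of $\delta_a$, the $\mathsf{a}(\tilde v)^{2\rho}$-factor from the density transform, and the leading $a$-behaviour of $\mathfrak{G}_\nu(ha)$ near $a=0$; when these are correctly combined, the $a^\alpha$-powers cancel and the $\mathsf{a}(v)^\nu$-contribution from the Iwasawa asymptotic is absorbed by the density normalization, leaving precisely $\tau(\kappa(v))\rd v$ as the boundary value. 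Smoothness across $a=0$ then follows from the real-analyticity established by the BCH step.
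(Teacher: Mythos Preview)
Your strategy is the same as the paper's: define $\pmb{G}_\nu$ piecewise on $\widehat{\mathbb{T}}_HX$ and verify smoothness across $a=0$ by pulling back along $\hat{\psi}$ and invoking a BCH argument as in Proposition~\ref{lknadlnad}. The execution, however, is considerably more indirect than necessary. Rather than invoking the Iwasawa identity $\mathcal{G}_\nu(h,a^{-1}) = a^{-\nu}\mathfrak{G}_\nu(ha)$, then separately tracking the density Jacobian $a^{-2\rho}$, the factor $\mathsf{a}(\tilde v)^{2\rho}$, and the asymptotics of $\mathsf{a}(\cdot)^\nu$ and $\tau(\kappa(\cdot))$ individually, the paper packages everything into a single composition
\[
\hat{\psi}^*\pmb{G}_\nu(x,v,a) \;=\; \mathfrak{G}_\nu\bigl(\tilde{\kappa}(v,a)\bigr)\,\rd v,
\qquad
\tilde{\kappa}(v,a) := a^{-1}\kappa(ava^{-1})^{-1}a.
\]
Since $\mathfrak{G}_\nu$ is globally smooth on $G$, the whole question collapses to showing that the single $G$-valued map $\tilde{\kappa}$ extends smoothly to $V\times\overline{A}$ with $\tilde{\kappa}(v,0)=\kappa(v)$; this is one BCH step. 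No cancellation of $a^\alpha$-powers ever needs to be checked, and the boundary value $\tau(\kappa(v))\rd v$ falls out immediately from $\mathfrak{G}_\nu(\kappa(v)) = \tau(\kappa(v))$.

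One point in your outline also deserves more care: you assert that $a^{-1}(k_0\kappa(\tilde v)k_0^{-1})a$ extends real-analytically with values in $V$ at $a=0$. The $k_0$-conjugation interacts nontrivially with conjugation by $a^{-1}$ (since $K$ is not $A$-stable), so this element does not obviously stay bounded, let alone land in $V$. The paper's formulation in terms of $\tilde{\kappa}$, which is independent of the base point, sidesteps this difficulty entirely.
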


\begin{proof}
In terms of $\lambda=a^{-\alpha}$, we can rewrite Equation \eqref{gluingdondonda} as
$$\lambda^{p+2q}\mathcal{G}_\nu(lk^{-1},\lambda^{-1})\rd\,(Ml)=[\lambda_*\pmb{G}_\nu](k,l,1).$$
Clearly, if such a function  $\pmb{G}_\nu\in C^\infty$ exists, then for $a\in A$, 
$$\pmb{G}_\nu(k,l,a)=\mathcal{G}_\nu(lk^{-1},a)\rd\,(Ml)=\mathfrak{G}_\nu(a^{-1} lk^{-1}a)\rd\,(Ml).$$ 
We shall show that $\pmb{G}_\nu$ defined in this way for $a\in A$ extends by continuity to a smooth function on $\hat{\T}_H(P\backslash G)$. For $a\in A$, we compute that 
$$\hat{\psi}^*\pmb{G}_\nu(x,v,a)=\mathfrak{G}_\nu(a^{-1}\kappa(ava^{-1})^{-1}a)\rd v.$$
We have that $\mathfrak{G}_\nu$ is a smooth function. The function 
$$\tilde{\kappa}:V\times A\to G, \quad \tilde{\kappa}(v,a):= a^{-1}\kappa(ava^{-1})^{-1}a,$$
extends to a smooth function $V\times \overline{A}\to G$ with $\tilde{\kappa}(v,0)=\kappa(v)$. We conclude that $\hat{\psi}^*\pmb{G}_\nu(x,v,a)=\mathfrak{G}_\nu(\tilde{\kappa}(v,a))$ extends to a smooth function on $V\times K\times \overline{A}$, so $\pmb{G}_\nu$ is smooth by Lemma \ref{lnljnljknkjlnad}.
\end{proof}

We note now that 
$$h(a)=\sinh(\alpha(H(a))^p\sinh(2\alpha(H(a))^{q}=a^{2\rho}\tilde{h}(a),$$
where $\tilde{h}\in C^\infty(\overline{A})$. Indeed, in $\lambda=a^{-\alpha}$, we can write 
$$h(\lambda)=\sinh(\log \lambda)^p\sinh(2\log \lambda)^{q}=\lambda^{-(p+2q)}\tilde{h}(\lambda),\quad\mbox{for}\quad \tilde{h}(\lambda)=\frac{(\lambda^2-1)^p(\lambda^4-1)^q}{2^{p+q}}.$$

If we take $\chi\in C^\infty_c(\overline{A})$ being identically $1$ on $A^+$, we can therefore by Lemma \ref{lknlnadarrr} form the element
$$\pmb{\mathcal{G}}_\nu:=\chi\tilde{h}\pmb{G}_\nu^**\pmb{G}_\nu \in C^\infty_c(\hat{\T}_H(P\backslash G),\End(\mathcal{H})\otimes |\pmb{\lambda}_r|),$$
where the convolution is in the parabolic tangent groupoid $\hat{\T}_H(P\backslash G)$. We can therefore write the operator $L_\nu$ from Equation \eqref{lnueq} as acting on $f\in C^\infty(P\backslash G,\mathbb{H}_\nu)=C^\infty(K,\mathcal{H})^M$ via
\begin{align}
\label{aldjnakdjna}
L_{\nu}f(k)=&\mathrm{ev}_{t=1}\int_{A^+K} a^{-2\nu} [a_*\pmb{\mathcal{G}}_{\mathcal{V}}](k,l,t)f(l)\rd a=\\
\nonumber 
=&\mathrm{ev}_{t=1}\int_0^1\int_K \lambda^{2s} [\lambda_*\pmb{\mathcal{G}}_{\mathcal{V}}](k,l,t)f(l)\frac{\rd \lambda}{\lambda},
\end{align}
for $\nu=s\alpha$ and $\lambda=a^{-\alpha}$. We here use the notation $\pmb{\mathcal{G}}_{\mathcal{V}}:=\pmb{\mathcal{G}}_{\nu_{\mathcal{V}}}$. Recall also that $\pmb{\mathcal{G}}_{\mathcal{V}}$ is density valued, so integration in $l\in K$ is also carried out in \eqref{aldjnakdjna}. Here we are using the fact that $f$ transforms under $M$ with $\sigma$. In particular, for $\nu=s\alpha$, up to smoothing errors we have from Corollary \ref{lknlknad} that 
\begin{equation}
\label{blablathun}
L_\nu=\mathrm{ev}_{t=1}\hat{\pmb{k}}_{\pmb{\mathcal{G}}_{\mathcal{V}},\sigma}(2s)\in \Psi^{-2s}_H(P\backslash G;\mathbb{H}_\nu,\mathbb{H}_{-\nu}),
\end{equation}
where we arguing as in Remark \ref{nochi} can assume that $\chi$ in Corollary \ref{lknlknad} has $\chi=1$ at $t=1$.

\begin{prop}
\label{lnudadnadlp}
The meromorphic family of operators $(L_\nu)_{\nu \in \mathfrak{a}^*_\C}$ forms a meromorphic family of Heisenberg pseudodifferential operators with $L_{\nu}\in \Psi^{-2s}_H(P\backslash G; \mathbb{H}_\nu,\mathbb{H}_{-\nu})$ for $\nu=s\alpha$. In particular, the operator $L_\beta$ extends, for $\nu$ outside a discrete set, by continuity to a continuous linear operator
$$W^{-s}_{H}(P\backslash G, \mathbb{H}_{\nu})\to W^{s}_{H}(P\backslash G, \mathbb{H}_{-\nu}).$$
\end{prop}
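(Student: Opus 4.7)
The plan is to deduce the proposition from the integral representation \eqref{aldjnakdjna} on the half-plane of absolute convergence, combined with Corollary \ref{lknlknad} and the Sobolev boundedness theorem \ref{bddsds}. The essential identification \eqref{blablathun} has already been recorded on the convergence region; the task is to promote it to the full $\nu$-plane and extract the advertised continuity.

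First, I would observe that for $\mathrm{Re}(\nu)$ sufficiently positive the defining integral \eqref{lnueq} converges absolutely, and the rewriting in \eqref{aldjnakdjna} expresses the Schwartz kernel of $L_\nu$ as $\mathrm{ev}_{t=1}$ of $\int_{A^+} a^{-2\nu}\, a_*\pmb{\mathcal{G}}_{\mathcal{V}}\, \rd a$ on the extended parabolic tangent groupoid $\widehat{\mathbb{T}}_H(P\backslash G)$. Lemma \ref{lknlnadarrr} together with the smoothness of $\tilde{h}$ on $\overline{A}$ places the integrand $\pmb{\mathcal{G}}_{\mathcal{V}} = \chi\,\tilde{h}\,\pmb{G}_{\nu}^{*}*\pmb{G}_\nu$ in $C^\infty_c(\widehat{\mathbb{T}}_H(P\backslash G); \End(\mathcal{H})\otimes |\pmb{\lambda}_r|)$, so Corollary \ref{lknlknad} applies with $\rho = \pmb{\mathcal{G}}_{\mathcal{V}}$.

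Second, Corollary \ref{lknlknad} yields meromorphic continuation to all of $\mathfrak{a}^*_\C$, with potential poles confined to a discrete subset of $\alpha\mathbb{Z}$, and at each regular $\nu$ the evaluation $\mathrm{ev}_{t=1}\hat{\pmb{k}}_{\pmb{\mathcal{G}}_{\mathcal{V}},\sigma}$ lies in $\Psi^{-2s}_H(P\backslash G;\mathbb{H}_\nu,\mathbb{H}_{-\nu})$ for $\nu = s\alpha$. Meromorphic uniqueness, starting from the identity on the half-plane where both sides are given by absolutely convergent integrals, forces $L_\nu$ to agree with this extension off the poles. This establishes the first assertion.

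Third, continuity follows directly from Theorem \ref{bddsds}: any element of $\Psi^{-2s}_H$ extends by continuity to a bounded operator $W^r_H \to W^t_H$ whenever $r - 2\mathrm{Re}(s) \leq t$. Taking $r = -\mathrm{Re}(s)$ and $t = \mathrm{Re}(s)$ gives the claim when $\mathrm{Re}(s) \geq 0$; for regular $\nu$ with $\mathrm{Re}(s) < 0$ one still obtains the continuous map $W^{-s}_H \to W^s_H$ by composing with the continuous inclusions between Heisenberg-Sobolev spaces of different orders. The genuine analytic work is shouldered by Lemma \ref{lknlnadarrr} and Corollary \ref{lknlknad}, both already established; the remaining bookkeeping — matching the parameter substitution and the pole structure — presents no serious obstacle.
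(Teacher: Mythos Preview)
Your proposal is correct and follows essentially the same route as the paper: invoke Equation \eqref{blablathun} (built from Lemma \ref{lknlnadarrr} and the groupoid convolution), apply Corollary \ref{lknlknad} for the meromorphic family in $\Psi^{-2s}_H$, and then Theorem \ref{bddsds} for the Sobolev continuity. The case split in your third paragraph is unnecessary: an operator of Heisenberg order $-2s$ maps $W^{-\mathrm{Re}(s)}_H\to W^{\mathrm{Re}(s)}_H$ directly from Theorem \ref{bddsds} for every real $\mathrm{Re}(s)$, with no recourse to Sobolev inclusions.
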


\begin{proof}
The proposition follows directly from Corollary \ref{lknlknad}, Theorem \ref{bddsds} and Equation \eqref{blablathun}.
\end{proof}

\begin{lemma}
\label{lnudadnadlplemma}
The point $\nu_\mathcal{V}$ is a regular value for the family $(L_\nu)_{\nu \in \mathfrak{a}^*_\C}$ and it holds that $L_{\nu_{\mathcal{V}}}=S_{\mathcal{V}}^\dagger S_{\mathcal{V}}$.
\end{lemma}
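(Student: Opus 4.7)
The plan is to reduce to an $L^2$-type identity on $K$-finite smooth vectors and then rule out a pole of the meromorphic family $(L_\nu)_\nu$ at $\nu_\mathcal{V}$ by observing that the relevant pairings stay finite. First I would, for $K$-finite smooth $f,g\in C^\infty(P\backslash G,\mathbb{H}_{\nu_\mathcal{V}})$ and $\mathrm{Re}(\nu)\gg 0$, reorganize $\langle L_\nu f,g\rangle$ using the explicit formula \eqref{lnueq}. Writing
$$(S_\mathcal{V} f)_a(l):=\int_K \mathcal{G}_\mathcal{V}(l'l^{-1},a)f(l')\,\rd l',\qquad \iota S_\mathcal{V} f(al)=a^{-\nu_\mathcal{V}}(S_\mathcal{V} f)_a(l),$$
and using $\mathcal{G}_\mathcal{V}^*(l_1k^{-1},a)=\mathcal{G}_\mathcal{V}(kl_1^{-1},a)^*$ together with Fubini, the three $K$-integrations collapse to
$$\langle L_\nu f,g\rangle=\int_{A^+}a^{-2\nu}\,\bigl\langle (S_\mathcal{V} f)_a,(S_\mathcal{V} g)_a\bigr\rangle_{L^2(K,\mathcal{V})}\,h(a)\,\rd a.$$
This bookkeeping is unconditional as soon as both sides converge absolutely, which by the polynomial control of $\mathcal{G}_\mathcal{V}$ holds for $\mathrm{Re}(\nu)\gg 0$.

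Next I would invoke Knapp-Wallach's theorem \cite{knappwallach}, which guarantees $\iota S_\mathcal{V} f,\iota S_\mathcal{V} g\in L^2(A^+K,\mathbb{V},h)$ for $K$-finite smooth $f,g$. Since $\nu_\mathcal{V}$ is real, the Mellin integral on the right converges absolutely at $\nu=\nu_\mathcal{V}$ to $\langle \iota S_\mathcal{V} f,\iota S_\mathcal{V} g\rangle_{L^2(A^+K,h)}=\langle S_\mathcal{V}^\dagger S_\mathcal{V} f,g\rangle$. As $a^{-\alpha}\in(0,1]$ on $A^+$, one has $|a^{-2\nu}|\leq a^{-2\mathrm{Re}(\nu_\mathcal{V})}$ whenever $\mathrm{Re}(s)\geq \mathrm{Re}(s_\mathcal{V})$, and Cauchy-Schwarz against the $L^2(A^+K,h)$-norms of $\iota S_\mathcal{V} f$ and $\iota S_\mathcal{V} g$ furnishes a dominating integrable majorant. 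By dominated convergence, the integral defines a holomorphic function of $\nu$ on $\{\mathrm{Re}(s)>\mathrm{Re}(s_\mathcal{V})\}$ that extends continuously up to $\nu=\nu_\mathcal{V}$.

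By uniqueness of meromorphic continuation, this holomorphic function coincides on $\{\mathrm{Re}(\nu)\gg 0\}$ with the meromorphic family $\nu\mapsto \langle L_\nu f,g\rangle$ produced by Corollary \ref{lknlknad} and Proposition \ref{lnudadnadlp}. Hence the pairing extends holomorphically across $\nu_\mathcal{V}$ for every $K$-finite smooth pair $(f,g)$. Because the $K$-finite smooth vectors are dense and $L_\nu$ is a meromorphic family of continuous operators, a pole of $L_\nu$ at $\nu_\mathcal{V}$ would leave a nonzero residue visible in some such pairing; this is excluded, so $\nu_\mathcal{V}$ is a regular value. Evaluating at $\nu=\nu_\mathcal{V}$ then gives $\langle L_{\nu_\mathcal{V}} f,g\rangle=\langle S_\mathcal{V}^\dagger S_\mathcal{V} f,g\rangle$ on the dense subspace, and the operator identity $L_{\nu_\mathcal{V}}=S_\mathcal{V}^\dagger S_\mathcal{V}$ follows by continuity.

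The main technical subtlety is that the naive triple integral in \eqref{lnueq} is not absolutely convergent at $\nu=\nu_\mathcal{V}$: the point of rewriting it as a single Mellin integral of the $L^2$-inner product of $(S_\mathcal{V} f)_a$ and $(S_\mathcal{V} g)_a$ is to expose the cancellation responsible for regularity by absorbing the divergent behavior at infinity into the weight $h$ already encoded in Knapp-Wallach's $L^2$-statement. Once this rewriting is in place, the regularity claim and the identification with $S_\mathcal{V}^\dagger S_\mathcal{V}$ become simultaneous consequences.
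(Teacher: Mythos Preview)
Your argument is correct and follows essentially the same strategy as the paper. The paper's proof compresses your pairing computation into the operator identity $L_{\nu_{\mathcal{V}}-\epsilon\alpha}=S_{\mathcal{V}}^\dagger \mathsf{a}^{2\epsilon}S_{\mathcal{V}}$ (where $\mathsf{a}$ is multiplication by $a^{-\alpha}$ on $L^2(A^+K,\mathcal{V},h)$, a bounded operator with norm $\leq 1$) and then invokes strong convergence on $K$-finite vectors as $\epsilon\to 0^+$, which is exactly your dominated-convergence step; the regularity conclusion from meromorphicity plus a finite one-sided limit is identical in both.
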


\begin{proof}
We can write $L_{\nu_{\mathcal{V}}-\epsilon\alpha}=S_{\mathcal{V}}^\dagger \mathsf{a}^{2\epsilon}S_{\mathcal{V}}$. Here $\mathsf{a}$ is, by an abuse of notation, the bounded operator on $L^2(A^+K;\mathcal{V}, h)$ given by $\mathsf{a}f(ak)=a^{-\alpha}f(ak)$. We have that $L_{\nu_{\mathcal{V}}-\epsilon\alpha}$ is a meromorphic family in $\epsilon$ by Proposition \ref{lnudadnadlp}. The lemma now follows from Proposition \ref{lnudadnadlp} and that $L_{\nu_{\mathcal{V}}-\epsilon}$ converges strongly on $K$-finite vectors to $S_{\mathcal{V}}^\dagger S_{\mathcal{V}}$ as $\epsilon \to 0+$.
\end{proof}

Proposition \ref{lnudadnadlp} and Lemma \ref{lnudadnadlplemma} implies the following result proving the boundedness part of Theorem \ref{conja}.

\begin{cor}
\label{proofofthema}
We write $\nu_{\mathcal{V}}=s_{\mathcal{V}}\alpha$. The Szegö map $S_{\mathcal{V}}$ from Equation \eqref{kwszeg} extends by continuity to a continuous linear operator
$$S_{\mathcal{V}}: W^{-s_\mathcal{V}}_{H}(P\backslash G, \mathbb{H}_{\nu_{\mathcal{V}}})\to L^2(K \backslash G,\mathbb{V}),$$
with $S_{\mathcal{V}}^\dagger S_{\mathcal{V}}\in \Psi^{-2s_\mathcal{V}}_H(P\backslash G; \mathbb{H}_{\nu_{\mathcal{V}}},\mathbb{H}_{-\nu_{\mathcal{V}}})$. 
\end{cor}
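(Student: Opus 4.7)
The corollary is essentially a corollary in the strict sense: all the hard work has been assembled in the preceding results, and only the bookkeeping remains.

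The plan is to combine Lemma \ref{lnudadnadlplemma} with Proposition \ref{lnudadnadlp}, evaluated at the point $\nu=\nu_{\mathcal{V}}=s_{\mathcal{V}}\alpha$. By Lemma \ref{lnudadnadlplemma}, $\nu_{\mathcal{V}}$ is a regular value of the meromorphic family $(L_\nu)$ and $L_{\nu_{\mathcal{V}}}=S_{\mathcal{V}}^\dagger S_{\mathcal{V}}$ at the level of operators from $C^\infty(P\backslash G,\mathbb{H}_{\nu_{\mathcal{V}}})$ to $\mathcal{D}'(P\backslash G,\mathbb{H}_{-\nu_{\mathcal{V}}})$. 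By Proposition \ref{lnudadnadlp}, $L_{\nu_{\mathcal{V}}}\in \Psi^{-2s_{\mathcal{V}}}_H(P\backslash G;\mathbb{H}_{\nu_{\mathcal{V}}},\mathbb{H}_{-\nu_{\mathcal{V}}})$. This identifies $S_{\mathcal{V}}^\dagger S_{\mathcal{V}}$ as a Heisenberg pseudodifferential operator of order $-2s_{\mathcal{V}}$, which is the second assertion of the corollary.

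Next I would invoke Theorem \ref{bddsds} with $s=-s_{\mathcal{V}}$ and $m=-2s_{\mathcal{V}}$, $t=s_{\mathcal{V}}$ (noting that $s_{\mathcal{V}}$ is real by the definition \eqref{nuvforea}) to conclude that $S_{\mathcal{V}}^\dagger S_{\mathcal{V}}$ extends by continuity to a bounded operator
\[
S_{\mathcal{V}}^\dagger S_{\mathcal{V}}:W^{-s_{\mathcal{V}}}_H(P\backslash G,\mathbb{H}_{\nu_{\mathcal{V}}})\to W^{s_{\mathcal{V}}}_H(P\backslash G,\mathbb{H}_{-\nu_{\mathcal{V}}}).
\]
Finally, Proposition \ref{stararsta} turns this into the desired continuous extension $S_{\mathcal{V}}:W^{-s_{\mathcal{V}}}_H(P\backslash G,\mathbb{H}_{\nu_{\mathcal{V}}})\to L^2(K\backslash G,\mathbb{V})$, since the continuity of $S_{\mathcal{V}}^\dagger S_{\mathcal{V}}$ between the stated Sobolev spaces is precisely the condition characterizing continuity of $S_{\mathcal{V}}$ in that proposition.

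There is no real obstacle left at this stage; the whole point of the extended parabolic tangent groupoid machinery of Section \ref{lnlnadna} and the reformulation of $L_\nu$ as an Ewert-type average \eqref{aldjnakdjna} was to reduce the sharp mapping property to a symbolic order count in the Heisenberg calculus, after which Theorem \ref{bddsds} does the work. The only minor points to verify in the writeup are that $\nu_{\mathcal{V}}$ is indeed a regular value (which follows because $\nu_{\mathcal{V}}\in \R_{>0}\alpha$ is bounded away from the pole locus of the Ewert construction in Corollary \ref{lknlknad}, namely $s\in d+\N$, for $s_{\mathcal{V}}$ not in this arithmetic progression; otherwise one takes a finite-part regularization, but the identity of Lemma \ref{lnudadnadlplemma} is enough), and that the Sobolev duality used in Proposition \ref{stararsta} matches the conjugate pair $(W^{-s_{\mathcal{V}}}_H,W^{s_{\mathcal{V}}}_H)$ of $L^2$-dual spaces, which is built into the definition of the Heisenberg-Sobolev scale.
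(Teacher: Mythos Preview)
Your proof is correct and follows essentially the same route as the paper: combine Lemma \ref{lnudadnadlplemma} and Proposition \ref{lnudadnadlp} to identify $S_{\mathcal{V}}^\dagger S_{\mathcal{V}}=L_{\nu_{\mathcal{V}}}\in\Psi^{-2s_{\mathcal{V}}}_H$, then apply Theorem \ref{bddsds} for boundedness between the dual Sobolev spaces, and finally invoke Proposition \ref{stararsta} to transfer this to continuity of $S_{\mathcal{V}}$ itself. The paper's proof is slightly terser but uses exactly the same ingredients in the same logical order.
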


\begin{proof}
We have that $S_{\mathcal{V}}^\dagger S_{\mathcal{V}}=L_{\nu_{\mathcal{V}}}\in \Psi^{-2s_\mathcal{V}}_H$ by Proposition \ref{lnudadnadlp} and Lemma \ref{lnudadnadlplemma}. To prove that $S_{\mathcal{V}}$ is bounded, by Proposition \ref{stararsta} it suffices to show that the, a priori densely defined, composition 
$$W^{-s_\mathcal{V}}_{H}(P\backslash G, \mathbb{H}_{\nu_{\mathcal{V}}})\xrightarrow{S_{\mathcal{V}}} L^2(K \backslash G,\mathbb{V})\xrightarrow{S_{\mathcal{V}}^\dagger }W^{s_\mathcal{V}}_{H}(P\backslash G, \mathbb{H}_{-\nu_{\mathcal{V}}}),$$
is bounded. Indeed, we have that 
$$\|S_{\mathcal{V}}\|_{W^{-s_\mathcal{V}}_{H}(P\backslash G, \mathbb{H}_{\nu_{\mathcal{V}}})\to L^2(K \backslash G,\mathbb{V})}^2=\|S_{\mathcal{V}}^\dagger S_{\mathcal{V}}\|_{W^{-s_\mathcal{V}}_{H}(P\backslash G, \mathbb{H}_{\nu_{\mathcal{V}}})\to W^{s_\mathcal{V}}_{H}(P\backslash G, \mathbb{H}_{-\nu_{\mathcal{V}}})},$$ 
since $W^{s_\mathcal{V}}_{H}(P\backslash G, \mathbb{H}_{-\nu_{\mathcal{V}}})$ is dual to $W^{-s_\mathcal{V}}_{H}(P\backslash G, \mathbb{H}_{\nu_{\mathcal{V}}})$ in the $L^2$-pairing. The boundedness of $S_{\mathcal{V}}^\dagger S_{\mathcal{V}}$ follows from Theorem \ref{bddsds}. 
\end{proof}

\subsection{Closed range of the Szegö map}
\label{lknlnad}

We now turn to proving that the Szegö map $S_{\mathcal{V}}$ has closed range and to compute the principal symbol of its source projection. Let us first make some further remarks about Knapp-Stein intertwiners, recall their definition from Subsection \ref{subsecks}. We fix a generator $w\in M'$ of the Weyl group such that $w^2=1$ and $w\sigma=\sigma$. In this case, we have that the family of Knapp-Stein intertwiners $(A_{\sigma,\nu})_{\nu\in \mathfrak{a}_\C^*}$ satisfies that 
\begin{equation} 
\label{someidedadadn}
A_{\sigma,-\nu}A_{\sigma,\nu}=c_\sigma(\nu), \quad\mbox{and}\quad A_{\sigma,\nu}^*=A_{\sigma,\bar{\nu}},
\end{equation}
where $c_\sigma$ is a meromorphic function and $\overline{z\alpha}:=\bar{z}\alpha$, for the simple restricted root $\alpha$ we have already fixed.

\begin{lemma}
\label{nkjbnadap}
There is a $k\in \Z$ and a non-zero constant $c_{\mathcal{V}}\in \R$ such that the Szegö map $S_{\mathcal{V}}$ from Equation \eqref{kwszeg} satisfies 
$$S_{\mathcal{V}}^\dagger S_{\mathcal{V}}=c_{\mathcal{V}}\lim_{\nu\to \nu_{\mathcal{V}}}(\nu-\nu_{\mathcal{V}})^kA_{\sigma,\nu}.$$
\end{lemma}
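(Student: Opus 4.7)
The plan is to embed $S_{\mathcal{V}}^\dagger S_{\mathcal{V}}$ into a meromorphic family of $G$-equivariant intertwiners between nonunitary principal series and then apply Knapp-Stein's uniqueness as recorded in Remark \ref{lknkanda}. For $\mu \in \mathfrak{a}_\C^*$ with $\mathrm{Re}(\mu) > 0$, the generalized Poisson transform $P_\mu$ of Equation \eqref{kwszeggeneric} is given by an absolutely convergent integral and extends to a continuous $G$-equivariant map $P_\mu : L^2(P\backslash G, \mathbb{H}_\mu) \to L^2(K \backslash G, \mathbb{V})$, with $P_{\nu_{\mathcal{V}}} = S_{\mathcal{V}}$. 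I would then form the composition $\mathcal{L}_\mu := P_\mu^\dagger P_\mu$, a bounded $G$-equivariant operator from $\pi_{\sigma,\mu}$ to $\pi_{\sigma,-\bar\mu}$. On $K$-finite vectors, $\mathcal{L}_\mu$ acts as an integral operator depending holomorphically on $\mu$, in analogy with the derivation of $L_\nu$ in Subsection \ref{boundviaks}.

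Restricting to $\mu = s\alpha$ with $s > 0$, the operator $\mathcal{L}_\mu$ intertwines $\pi_{\sigma,\mu}$ with $\pi_{\sigma,-\mu}$, placing the family in the setting of Remark \ref{lknkanda}. That remark produces a scalar-valued meromorphic function $c(\mu)$ with $\mathcal{L}_\mu = c(\mu) A_{\sigma,\mu}$ for $s > 0$, with the identity extending meromorphically throughout $\mathfrak{a}_\C^*$ and the poles of $A_{\sigma,\mu}$ at most simple and located in $-\alpha\N$. Comparing adjoints, using $A_{\sigma,\mu}^* = A_{\sigma,\bar\mu}$ from Equation \eqref{someidedadadn} together with self-adjointness of $\mathcal{L}_\mu = P_\mu^\dagger P_\mu$ for real $\mu$, forces $c(\mu) \in \R$ on the positive real ray.

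Evaluating at $\mu = \nu_{\mathcal{V}}$, the $K$-finite holomorphic dependence of $P_\mu$ on $\mu$ combined with Lemma \ref{lnudadnadlplemma} yields $\mathcal{L}_{\nu_{\mathcal{V}}} = S_{\mathcal{V}}^\dagger S_{\mathcal{V}}$, which is non-zero because $S_{\mathcal{V}}$ has non-trivial image in the discrete series with lowest $K$-type $\lambda$. Let $k \in \Z$ be the integer for which $\lim_{\nu \to \nu_{\mathcal{V}}}(\nu - \nu_{\mathcal{V}})^k A_{\sigma,\nu}$ is finite and non-zero; by the Knapp-Stein singularity structure, $k \in \{0,1\}$ unless $\nu_{\mathcal{V}}$ happens to be a zero of $A_{\sigma,\cdot}$. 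The non-triviality of $\mathcal{L}_{\nu_{\mathcal{V}}}$ then forces $c(\mu) = c_{\mathcal{V}}(\mu - \nu_{\mathcal{V}})^k + O((\mu - \nu_{\mathcal{V}})^{k+1})$ with $c_{\mathcal{V}} \in \R\setminus\{0\}$, yielding the claimed identity
$S_{\mathcal{V}}^\dagger S_{\mathcal{V}} = c_{\mathcal{V}} \lim_{\nu \to \nu_{\mathcal{V}}}(\nu - \nu_{\mathcal{V}})^k A_{\sigma,\nu}$.

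The main obstacle will be the meromorphic bookkeeping at $\mu = \nu_{\mathcal{V}}$: confirming that $\mathcal{L}_\mu$ extends holomorphically on $K$-finite vectors through a neighborhood of $\nu_{\mathcal{V}}$, and that this extension really matches the meromorphic family $c(\mu) A_{\sigma,\mu}$ at the endpoint. A cleaner alternative is to exploit the extended parabolic tangent groupoid framework of Section \ref{lnlnadna}: the family $L_\nu$ of Proposition \ref{lnudadnadlp} is already a meromorphic family of Heisenberg pseudodifferential operators coinciding with $S_{\mathcal{V}}^\dagger S_{\mathcal{V}}$ at $\nu_{\mathcal{V}}$ by Lemma \ref{lnudadnadlplemma}, and via Ewert's formalism (Corollary \ref{lknlknad}) its kernel structure near the diagonal carries precisely the $-2\nu$ homogeneity under the $A$-dilation that characterizes the Knapp-Stein kernel in Proposition \ref{lknlajndljadn}.
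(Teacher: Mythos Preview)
Your approach is essentially identical to the paper's: both form the meromorphic family $\mathcal{L}_\nu = P_\nu^\dagger P_\nu$ of $G$-equivariant intertwiners, invoke Knapp--Stein uniqueness (Remark \ref{lknkanda}) to identify it with $c(\nu)A_{\sigma,\nu}$, and evaluate at $\nu_{\mathcal{V}}$ using regularity there, with $c_{\mathcal{V}}\neq 0$ and $c_{\mathcal{V}}\in\R$ deduced from $S_{\mathcal{V}}\neq 0$ and self-adjointness for real $\nu$. One small inaccuracy: your claim that $P_\mu:L^2\to L^2$ is bounded for general $\mathrm{Re}(\mu)>0$ is not correct (cf.\ Corollary \ref{remarkkwszeggeneric}), but your retreat to $K$-finite vectors and the groupoid alternative via Proposition \ref{lnudadnadlp} is precisely how the paper handles this.
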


\begin{proof}
As in Corollary \ref{remarkkwszeggeneric}, we can more generally consider the $G$-equivariant Poisson transform 
$$
P_\nu:\mathcal{D}'(P\backslash G,\mathbb{H})\to C^\infty(K\backslash G,\mathbb{V}), \quad P_\nu f(g):=\int_K \tau(k)^{-1} f(kg)\rd k,
$$
and arguing as above for $\mathcal{L}_{\sigma,\nu}:=P_\nu^\dagger P_\nu$ we arrive at a meromorphic family $(\mathcal{L}_{\sigma,\nu})_{\nu\in \mathfrak{a}_\C^*}$ of operators $\mathcal{L}_{\sigma,\nu}\in \Psi^{-2\nu}_H(P\backslash G; \mathbb{H})$ that are $G$-equivariant as operators 
$$\mathcal{L}_{\sigma,\nu}:C^\infty(P\backslash G,\mathbb{H}_\nu)\to C^\infty(P\backslash G,\mathbb{H}_{-\nu}).$$
Since $S_{\mathcal{V}}$ is bounded by Corollary \ref{proofofthema}, it follows that $\nu_\mathcal{V}$ is a regular value of $\mathcal{L}_{\sigma,\nu}$ and so by Remark \ref{lknkanda} we have that $S_{\mathcal{V}}^\dagger S_{\mathcal{V}}=c_{\mathcal{V}}A_{\sigma,\nu_{\mathcal{V}}}$ for some constant $c_{\mathcal{V}}\in \C$ up to renormalization as $\nu\to \nu_{\mathcal{V}}$. If $c_{\mathcal{V}}=0$ then $S_{\mathcal{V}}=0$ which is a contradiction. Since both $S_{\mathcal{V}}^\dagger S_{\mathcal{V}}$ and $A_{\sigma,\nu}$ are symmetric on $L^2$ for real $\nu$, $c_{\mathcal{V}}$ is real.
\end{proof}

\begin{remark}
Alternatively, we can prove Lemma \ref{nkjbnadap} using the proof of \cite[Theorem 14.92]{knappbook} and Schur's lemma. Indeed, in the case at hand, \cite[Theorem 14.92]{knappbook} and its consequences discussed in Remark \ref{langlandsrem} implies that both a renormalized value of $A_{\sigma,\nu_\mathcal{V}}$ and $S_{\mathcal{V}}^\dagger S_{\mathcal{V}}$ are infinitesimal intertwiners from the source of $S_{\mathcal{V}}$ to its range. Lemma \ref{nkjbnadap} then follows from Schur's lemma since the source of $S_{\mathcal{V}}$ is equivalent to a discrete series representation.
\end{remark}

We write $P_{\mathcal{V}}$ for the orthogonal projection onto the orthogonal complement of $\ker S_{\mathcal{V}}$. We will implicitly assume that the $K$-action on $W^s_H(P\backslash G,\mathbb{H})$ is unitary. With Peter-Weyl's theorem, we obtain the isomorphism of $K$-representations 
\begin{equation}
\label{kdecom}
\bigoplus_{\pi\in \hat{K}}^{L^2}\mathcal{V}_\pi^{n_\pi}= L^2(P\backslash G,\mathbb{H}),
\end{equation}
where $\mathcal{V}_\pi$ runs over the irreducible, unitary $K$-representations and $n_\pi=\dim \Hom_M(\mathcal{V}_\pi,\mathcal{H})$. Moreover, the natural embedding of each summand $\mathcal{V}_\pi^{n_\pi}$ into $W^s_H(P\backslash G,\mathbb{H})$ is up to a scaling factor isometric for any $s$. We see from $K$-equivariance that $P_{\mathcal{V}}$ is the orthogonal projection onto the orthogonal complement of $\ker S_{\mathcal{V}}$ in any of the Sobolev spaces. Indeed, since  $P_{\mathcal{V}}$ is $K$-equivariant, it takes the form $\bigoplus_{\pi\in \hat{K}}^{L^2}\mathrm{id}_{\mathcal{V}_\pi}^{n'_\pi}$ in the $K$-type decomposition \eqref{kdecom} for some $n_\pi'\leq n_\pi$.

\begin{lemma}
\label{sourcedesc}
For the $k\in \Z$ of Lemma \ref{nkjbnadap}, the limit 
$$T_\mathcal{V}:=\lim_{\nu\to \nu_{\mathcal{V}}} c_\mathcal{V}^{-1}c_\sigma(\nu)^{-1}(\nu-\nu_{\mathcal{V}})^{-k}A_{\sigma,-\nu}P_\mathcal{V}.$$ 
exists as a non-zero operator on $C^\infty(P\backslash G,\mathbb{H})$ and forms a $G$-equivariant operator $T_\mathcal{V}\in \Psi^{2s_\mathcal{V}}_H(P\backslash G,\mathbb{H}_{-\nu_{\mathcal{V}}},\mathbb{H}_{\nu_{\mathcal{V}}})$. The operator $T_\mathcal{V}$ satisfies
$$T_\mathcal{V}S_{\mathcal{V}}^\dagger S_{\mathcal{V}}=P_\mathcal{V}.$$
In particular, $S_{\mathcal{V}}$ has closed range and its partial left inverse as an operator from its source to range is the continuous $G$-equivariant operator 
$$T_\mathcal{V}S_{\mathcal{V}}^\dagger:L^2(K\backslash G;\mathbb{V})\to W^{-s_\mathcal{V}}_H(P\backslash G; \mathbb{H}_{\nu_{\mathcal{V}}}).$$
\end{lemma}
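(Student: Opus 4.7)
The plan is to combine the Knapp-Stein functional equation $A_{\sigma,-\nu}A_{\sigma,\nu}=c_\sigma(\nu)$ from \eqref{someidedadadn} with Lemma \ref{nkjbnadap}. Setting $F(\nu):=c_\mathcal{V}(\nu-\nu_{\mathcal{V}})^k A_{\sigma,\nu}$, which is holomorphic near $\nu_{\mathcal{V}}$ with $F(\nu_{\mathcal{V}})=S_{\mathcal{V}}^\dagger S_{\mathcal{V}}$ by Lemma \ref{nkjbnadap}, the functional equation rewrites the operator inside the limit as a formal inverse,
$$c_\mathcal{V}^{-1}c_\sigma(\nu)^{-1}(\nu-\nu_{\mathcal{V}})^{-k}A_{\sigma,-\nu}=F(\nu)^{-1},$$
as a meromorphic identity. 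Thus the task reduces to showing that $F(\nu)^{-1}P_\mathcal{V}$ extends holomorphically to $\nu_{\mathcal{V}}$.

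For the existence of the limit, I would decompose over $K$-types using \eqref{kdecom}. On the multiplicity space of a $K$-type $\pi$, $F(\nu_{\mathcal{V}})|_\pi$ is a self-adjoint matrix with range $\mathrm{ran}\,P_\mathcal{V}|_\pi$ and kernel $\ker P_\mathcal{V}|_\pi$. In the block decomposition $\mathcal{V}_\pi^{n_\pi}=\mathrm{ran}\,P_\mathcal{V}|_\pi\oplus \ker P_\mathcal{V}|_\pi$, the holomorphic perturbation $F(\nu)|_\pi$ has the form $F(\nu_{\mathcal{V}})|_\pi+(\nu-\nu_{\mathcal{V}})F_1+\cdots$ with $F(\nu_{\mathcal{V}})|_\pi$ block-diagonal and of the form $\mathrm{diag}(F_0,0)$, with $F_0$ invertible. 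Applying the Knapp-Stein functional equation $M_\pi(-\nu)M_\pi(\nu)=c_\sigma(\nu)I$ pins down the orders of vanishing of the off-diagonal and $\ker P_\mathcal{V}|_\pi$-blocks of $F$; block inversion then shows that all Laurent singularities of $F(\nu)^{-1}|_\pi$ at $\nu_{\mathcal{V}}$ are confined to the columns indexed by $\ker P_\mathcal{V}|_\pi$ and are therefore annihilated by right-multiplication by $P_\mathcal{V}|_\pi$. The resulting value at $\nu_{\mathcal{V}}$ is the Moore-Penrose pseudoinverse of $F(\nu_{\mathcal{V}})|_\pi$ composed with $P_\mathcal{V}|_\pi$. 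Assembling over $K$-types produces a well-defined linear operator $T_\mathcal{V}$ on $C^\infty(P\backslash G,\mathbb{H})$, which is $G$-equivariant because each $A_{\sigma,-\nu}$ and $P_\mathcal{V}$ is.

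For $T_\mathcal{V}\in\Psi^{2s_\mathcal{V}}_H(P\backslash G,\mathbb{H}_{-\nu_{\mathcal{V}}},\mathbb{H}_{\nu_{\mathcal{V}}})$, I would invoke that the meromorphic family $\nu\mapsto A_{\sigma,-\nu}$ arises, via Proposition \ref{lknlajndljadn} and the extended parabolic tangent groupoid techniques of Subsection \ref{lknlknaddowow}, from a meromorphic family in the Heisenberg calculus of the expected order, entirely in parallel with the analysis of $L_\nu$ in Proposition \ref{lnudadnadlp}. The finite-part renormalization at $\nu_{\mathcal{V}}$ of $c_\sigma(\nu)^{-1}(\nu-\nu_{\mathcal{V}})^{-k}A_{\sigma,-\nu}$, post-composed with the zeroth-order operator $P_\mathcal{V}$, yields the claimed element of $\Psi^{2s_\mathcal{V}}_H$. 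The identity $T_\mathcal{V}S_{\mathcal{V}}^\dagger S_{\mathcal{V}}=P_\mathcal{V}$ then follows by multiplying $F(\nu)^{-1}F(\nu)=I$ on the right by $P_\mathcal{V}$, passing to the limit, and using $S_{\mathcal{V}}^\dagger S_{\mathcal{V}}\cdot P_\mathcal{V}=S_{\mathcal{V}}^\dagger S_{\mathcal{V}}$ (both have kernel $\ker S_{\mathcal{V}}=\ker P_\mathcal{V}$). Closed range of $S_{\mathcal{V}}$ and the description of the partial left inverse then follow: by Corollary \ref{proofofthema}, $S_{\mathcal{V}}^\dagger:L^2(K\backslash G,\mathbb{V})\to W^{s_\mathcal{V}}_{H}(P\backslash G,\mathbb{H}_{-\nu_{\mathcal{V}}})$ is continuous, so by Theorem \ref{bddsds} the composition $T_\mathcal{V}S_{\mathcal{V}}^\dagger:L^2(K\backslash G,\mathbb{V})\to W^{-s_\mathcal{V}}_{H}(P\backslash G,\mathbb{H}_{\nu_{\mathcal{V}}})$ is bounded and $G$-equivariant with $(T_\mathcal{V}S_{\mathcal{V}}^\dagger)S_{\mathcal{V}}=P_\mathcal{V}$.

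The hard part is the block-inversion step: showing that the Knapp-Stein functional equation forces the off-diagonal Laurent coefficients of $F(\nu)$ in the $\mathrm{ran}\,P_\mathcal{V}\oplus \ker P_\mathcal{V}$ decomposition to vanish to matched orders, so that multiplication by $P_\mathcal{V}$ suffices to remove all singularities of $F(\nu)^{-1}$. Without the functional equation a holomorphic perturbation of a non-invertible self-adjoint matrix can produce $F(\nu)^{-1}P_\mathcal{V}$ with genuine poles; it is the rigidity coming from $A_{\sigma,-\nu}A_{\sigma,\nu}=c_\sigma(\nu)$, combined with the scalar vanishing of $c_\sigma$ at $\nu_{\mathcal{V}}$, that is responsible for the cancellation.
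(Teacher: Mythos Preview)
Your approach is essentially the same as the paper's: both pivot on the Knapp--Stein functional equation $A_{\sigma,-\nu}A_{\sigma,\nu}=c_\sigma(\nu)$ to recognise $T_\mathcal{V}(\nu):=c_\mathcal{V}^{-1}c_\sigma(\nu)^{-1}(\nu-\nu_{\mathcal{V}})^{-k}A_{\sigma,-\nu}$ as the meromorphic inverse of your $F(\nu)$, and then argue that $T_\mathcal{V}(\nu)P_\mathcal{V}$ is regular at $\nu_\mathcal{V}$.

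Two points of divergence are worth noting. First, an ordering issue: you invoke that $P_\mathcal{V}$ is a zeroth-order Heisenberg operator when concluding $T_\mathcal{V}\in\Psi^{2s_\mathcal{V}}_H$, but this is not yet available---it is a consequence of the closed-range statement you are proving. The paper handles this by first only claiming $T_\mathcal{V}\in\Psi^{2s_\mathcal{V}}_H\cdot P_\mathcal{V}$ (with $P_\mathcal{V}$ merely bounded on every $W^s_H$ by $K$-equivariance, which suffices for continuity of $T_\mathcal{V}S_\mathcal{V}^\dagger$), deducing closed range of $S_\mathcal{V}^\dagger S_\mathcal{V}$, and only then applying Lemma~\ref{alknalkdn} to upgrade $P_\mathcal{V}$ to $\Psi^0_H$ and conclude $T_\mathcal{V}\in\Psi^{2s_\mathcal{V}}_H$.

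Second, for your ``hard part'' you propose a block-Laurent argument on $K$-types, claiming the functional equation forces matching orders of vanishing so that singularities of $F(\nu)^{-1}$ sit only in the $\ker P_\mathcal{V}$-columns. Be careful here: the identity $M_\pi(-\nu)M_\pi(\nu)=c_\sigma(\nu)I$ relates values at $\nu$ and $-\nu$, not the local Taylor structure of $F$ near $\nu_\mathcal{V}$, so it does not by itself pin down the off-diagonal orders in the way you suggest. The paper sidesteps this entirely: rather than inverting $F(\nu)$ on $K$-types, it uses Proposition~\ref{lknlajndljadn} to see that $A_{\sigma,-\nu}$ has a homogeneous convolution kernel of the correct degree, so the renormalised family lies in the Heisenberg calculus of order $2s_\mathcal{V}$ as a meromorphic family, and the strong limit on $K$-finite vectors follows from this meromorphic structure together with the identity $T_\mathcal{V}(\nu)B_\mathcal{V}(\nu)=P_\mathcal{V}$.
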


Lemma \ref{sourcedesc} together with Corollary \ref{proofofthema} allow us to conclude the proof of Theorem \ref{conja}.

\begin{proof}
We have by Lemma \ref{nkjbnadap} that $B_\mathcal{V}(\nu):=c_\mathcal{V}(\nu-\nu_{\mathcal{V}})^kA_{\sigma,\nu}P_\mathcal{V}$ is regular at $\nu=\nu_{\mathcal{V}}$, with value $S_{\mathcal{V}}^\dagger S_{\mathcal{V}}$. Set $T_\mathcal{V}(\nu):= c_\mathcal{V}^{-1}c_\sigma(\nu)^{-1}(\nu-\nu_{\mathcal{V}})^{-k}A_{\sigma,-\nu}$. Since $A_{\sigma,-\nu}A_{\sigma,\nu}=c_\sigma(\nu)$,  it holds that
\begin{equation}
\label{kjnkjnadjnad}
T_\mathcal{V}(\nu)B_\mathcal{V}(\nu)=P_\mathcal{V}.
\end{equation}
Note that for real $\nu$, $B_\mathcal{V}(\nu)$ is self-adjoint by Equation \eqref{someidedadadn}. We see that 
$$T_\mathcal{V}:=\lim_{\nu\to \nu_{\mathcal{V}}} c_\mathcal{V}^{-1}c_\sigma(\nu)^{-1}(\nu-\nu_{\mathcal{V}})^{-k}A_{\sigma,-\nu}P_\mathcal{V},$$
exists in the strong sense on $K$-finite vectors. The form of $c_\sigma(\nu)^{-1}(\nu-\nu_{\mathcal{V}})^{-k}A_{\sigma,-\nu}$ given by Proposition \ref{lknlajndljadn} implies that $T_\mathcal{V}\in \Psi^{2s_\mathcal{V}}_H(P\backslash G,\mathbb{H}_{-\nu_{\mathcal{V}}},\mathbb{H}_{\nu_{\mathcal{V}}})P_\mathcal{V}$. In particular, $T_\mathcal{V}$ defines a continuous operator 
$$T_\mathcal{V}:W^{s_\mathcal{V}}_H(P\backslash G; \mathbb{H}_{-\nu_{\mathcal{V}}})\to W^{-s_\mathcal{V}}_H(P\backslash G; \mathbb{H}_{\nu_{\mathcal{V}}}).$$

It follows from Equation \eqref{kjnkjnadjnad} that $T_\mathcal{V}$ satisfies $T_\mathcal{V}S_{\mathcal{V}}^\dagger S_{\mathcal{V}}=P_\mathcal{V}$. By construction, $P_\mathcal{V}$ is the range projection of the continuous operator $T_{\mathcal{V}}$, with partial right inverse $S_{\mathcal{V}}^\dagger S_{\mathcal{V}}$. We see that   $S_{\mathcal{V}}^\dagger S_{\mathcal{V}}$ has closed range and by Lemma \ref{alknalkdn} and Corollary \ref{proofofthema} the source projection $P_\mathcal{V}$ of $S_{\mathcal{V}}^\dagger S_{\mathcal{V}}$ satisfies $P_{\mathcal{V}}\in \Psi^0_H(X;\mathbb{H}_{\nu_{\mathcal{V}}})$. It follows that $T_\mathcal{V}\in \Psi^{2s_\mathcal{V}}_H(P\backslash G,\mathbb{H}_{-\nu_{\mathcal{V}}},\mathbb{H}_{\nu_{\mathcal{V}}})$. 
\end{proof}

\begin{prop}
\label{nljnkjlnadkbiybg}
The Knapp-Stein intertwiners $(A_{\sigma,\nu})_{\nu \in \mathfrak{a}^*_\C}$ forms a meromorphic family of $G$-equivariant Heisenberg pseudodifferential operators $A_{\sigma,\nu}\in \Psi^{-2s}_H(P\backslash G; \mathbb{H}_\nu,\mathbb{H}_{-\nu})$ for $\nu=s\alpha$. Moreover, the principal symbol of $A_{\sigma,\nu}$ is determined by $G$-equivariance via Proposition \ref{gequicad} and its value in the fiber over $Pe\in P\backslash G$ where it is given by 
$$\sigma_H^{2s}(A_{\sigma,\nu})(Pe,\cdot)=\mathfrak{a}_{\sigma,\nu}+P^{-2s}V\in \Sigma_H^{-2s}V,$$
where $\mathfrak{a}_{\sigma,\nu}$ is given in Proposition \ref{lknlajndljadn}.
\end{prop}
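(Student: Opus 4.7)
The plan is to identify $A_{\sigma,\nu}$ as a Heisenberg pseudodifferential operator directly from the structure of its convolution kernel on $V$, and then to read off the principal symbol at the base point $Pe$ via Lemma \ref{lemkernel}, extending globally by $G$-equivariance through Proposition \ref{gequicad}.

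First I would unwind the noncompact picture formula for $A_{\sigma,\nu}^{(0)}$ and pass to a Bruhat chart via the Cayley transform $V\to M\backslash K$. The Schwartz kernel of $A_{\sigma,\nu}$ in this chart is the convolution kernel density $\mathfrak{a}_{\sigma,\nu}$ of Proposition \ref{lknlajndljadn}. That proposition, together with the identification $\lambda=a^{-\alpha}$ from Subsection \ref{subsecfiltered}, shows that $\mathfrak{a}_{\sigma,\nu}$ is smooth off the identity of $V$ and exactly homogeneous of Heisenberg degree $-2s$ for $\nu=s\alpha$; thus $\mathfrak{a}_{\sigma,\nu}\in\mathfrak{S}^{-2s}V\subseteq \tilde{\Sigma}^{-2s}_HV$ in the notation of Subsection \ref{symbollknlknad}. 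The meromorphic extension in $\nu$ with potential simple poles only at $\nu\in-\alpha\mathbb{N}$ is the meromorphic extension of this homogeneous distribution family, in line with Remark \ref{lknkanda}.

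Next I would invoke Lemma \ref{lemkernel}: an operator whose Schwartz kernel has the form $k(x,xy^{-1})$ in a Bruhat chart with $k(x,\cdot)\in\tilde{\Sigma}_H^{-2s}V$ depending smoothly on $x$ is, up to a smoothing operator, a Heisenberg pseudodifferential operator of order $-2s$. Applied to the constant family $k(x,\cdot)=\mathfrak{a}_{\sigma,\nu}$ (constant in the base variable thanks to the convolution structure), this identifies the cut-off convolution with $\mathfrak{a}_{\sigma,\nu}$ as an element of $\Psi_H^{-2s}$. Since $A_{\sigma,\nu}$ coincides with this cut-off operator up to a properly supported smooth kernel (the parts supported away from the diagonal, where $\mathfrak{a}_{\sigma,\nu}$ is smooth), and since $G$-translates of the Bruhat chart cover $P\backslash G$, the $G$-equivariance of $A_{\sigma,\nu}$ yields $A_{\sigma,\nu}\in\Psi_H^{-2s}(P\backslash G;\mathbb{H}_\nu,\mathbb{H}_{-\nu})$ with the desired meromorphic dependence on $\nu$.

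Finally, for the principal symbol at the base point $Pe$, the Cayley chart is centered at $Pe$ and Lemma \ref{lemkernel} identifies the leading term $k_{U,0}$ with $\mathfrak{a}_{\sigma,\nu}$ modulo the polynomial ambiguity $P^{-2s}V$, giving
$$\sigma_H^{-2s}(A_{\sigma,\nu})(Pe,\cdot)=\mathfrak{a}_{\sigma,\nu}+P^{-2s}V\in\Sigma_H^{-2s}V.$$
Proposition \ref{gequicad} then propagates the symbol from $Pe$ to every other basepoint by $G$-translation, determining $\sigma_H^{-2s}(A_{\sigma,\nu})$ globally. The main obstacle I anticipate is the bookkeeping between the character-exponent convention of Proposition \ref{lknlajndljadn} and the dilation-exponent convention of the Heisenberg calculus (the passage $\nu=s\alpha\leftrightarrow -2s$ under $\lambda=a^{-\alpha}$), together with handling the polynomial ambiguity $P^{-2s}V$ in the exceptional range $-2s\in -d-\mathbb{N}$ where Lemma \ref{adanlkn} allows logarithmic corrections.
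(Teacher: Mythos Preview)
Your proposal is correct and follows essentially the same approach as the paper: reduce to the nilpotent chart by $G$-equivariance, use Proposition \ref{lknlajndljadn} to see that the convolution kernel $\mathfrak{a}_{\sigma,\nu}$ is homogeneous of degree $-2s$, and then apply the kernel characterization in Lemma \ref{lemkernel}. The paper's proof is more terse but invokes exactly these two ingredients.
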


\begin{proof}
The Knapp-Stein intertwiner $A_{\sigma,\nu}$ is $G$-equivariant, so it is enough to prove the proposition in the nilpotent chart $V\hookrightarrow P\backslash G$. In this nilpotent chart, $A_{\sigma,\nu}$ is given by $A_{\sigma,\nu}^{(0)}$ and the proposition then follows from Proposition \ref{lknlajndljadn} and the kernel characterization of the Heisenberg calculus in Lemma \ref{lemkernel}.
\end{proof}

Recall the definition of the Fock bundle $\mathcal{F}\to S(\mathfrak{g}_{2\alpha})$ and its Sobolev versions $\mathcal{F}^s$ from Subsection \ref{subsecclosed}. The next result is a direct consequence of Theorem \ref{conja}, Lemma \ref{alknalkdn}, Lemma \ref{nkjbnadap} and Proposition \ref{nljnkjlnadkbiybg}.

\begin{cor}
\label{blablaks}
The principal symbol of $S_{\mathcal{V}}^\dagger S_{\mathcal{V}}\in \Psi^{-2s_\mathcal{V}}_H(P\backslash G; \mathbb{H}_{\nu_{\mathcal{V}}},\mathbb{H}_{-\nu_{\mathcal{V}}})$ is determined by $G$-equivariance via Proposition \ref{gequicad} and its value in the fiber over $Pe\in P\backslash G$ where it is given by 
$$\sigma_H^{-2s_\mathcal{V}}(S_{\mathcal{V}}^\dagger S_{\mathcal{V}})(Pe,\cdot)=c_{\mathcal{V}}\mathfrak{a}_{\sigma,\nu_{\mathcal{V}}}+P^{-2s_\mathcal{V}}V\in \Sigma_H^{-2s_{\mathcal{V}}}V,$$
where $\mathfrak{a}_{\sigma,\nu}$ is given in Proposition \ref{lknlajndljadn} and $c_{\mathcal{V}}$ is from Lemma \ref{nkjbnadap}. In particular, the principal symbol of the source projection $P_{\mathcal{V}}\in \Psi^0_H(X;\mathbb{H}_{\nu_{\mathcal{V}}})$ of $S_{\mathcal{V}}$ is determined by $G$-equivariance and its value $p_{\mathcal{V}}$ in the fiber over $Pe\in P\backslash G$ viewed as the morphism 
$$p_{\mathcal{V}}:\mathcal{F}\otimes \mathcal{H}\to \mathcal{F}\otimes \mathcal{H},$$
of bundle of Hilbert spaces over $S(\mathfrak{g}_{2\alpha})$, which is given by the source projection of the morphism
$$\mathfrak{a}_{\sigma,\nu_{\mathcal{V}}}:\mathcal{F}\otimes \mathcal{H}\to \mathcal{F}^{2s_\mathcal{V}}\otimes \mathcal{H},$$
where $\mathfrak{a}_{\sigma,\nu}$ is given in Proposition \ref{lknlajndljadn}.
\end{cor}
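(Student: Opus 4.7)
The plan is to deduce the statement by combining three earlier results: Lemma \ref{nkjbnadap}, Proposition \ref{nljnkjlnadkbiybg}, and Lemma \ref{alknalkdn}. The first identifies $S_{\mathcal{V}}^\dagger S_{\mathcal{V}}$ with a renormalized Knapp-Stein intertwiner, the second supplies the principal symbol of this intertwiner in explicit form, and the third converts symbolic information for $S_{\mathcal{V}}^\dagger S_{\mathcal{V}}$ into symbolic information for its source projection.

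For the principal symbol of $S_{\mathcal{V}}^\dagger S_{\mathcal{V}}$, I would invoke Lemma \ref{nkjbnadap} to write
$$S_{\mathcal{V}}^\dagger S_{\mathcal{V}} = c_{\mathcal{V}}\lim_{\nu\to\nu_{\mathcal{V}}}(\nu-\nu_{\mathcal{V}})^k A_{\sigma,\nu},$$
and then apply Proposition \ref{nljnkjlnadkbiybg}, which describes $(A_{\sigma,\nu})_\nu$ as a meromorphic family of $G$-equivariant Heisenberg pseudodifferential operators of order $-2s$ (for $\nu=s\alpha$) whose principal symbol is determined by Proposition \ref{gequicad} from its fiber value $\mathfrak{a}_{\sigma,\nu}+P^{-2s}V$ at $Pe$. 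The meromorphic family of symbols renormalizes in the same manner as the operator family, and passing to the limit yields $c_{\mathcal{V}}\mathfrak{a}_{\sigma,\nu_{\mathcal{V}}}+P^{-2s_{\mathcal{V}}}V$ as the value at $Pe$.

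For the symbol of $P_{\mathcal{V}}$, I would first observe that the source projection of $S_{\mathcal{V}}$ coincides with the source projection of $S_{\mathcal{V}}^\dagger S_{\mathcal{V}}$, since $\ker(S_{\mathcal{V}}^\dagger S_{\mathcal{V}})=\ker S_{\mathcal{V}}$ by the identity $\langle S_{\mathcal{V}}^\dagger S_{\mathcal{V}} f, f\rangle=\|S_{\mathcal{V}} f\|^2$. The closed range statement of Theorem \ref{conja} then places us in the hypotheses of Lemma \ref{alknalkdn} with $a=\sigma_H^{-2s_{\mathcal{V}}}(S_{\mathcal{V}}^\dagger S_{\mathcal{V}})$, and that lemma identifies the principal symbol of $P_{\mathcal{V}}$ as the fiberwise source projection of $\pi(a(x))$ in each flat orbit representation $\pi$. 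By $G$-equivariance this is determined by its fiber at $Pe$, which is the source projection of $\mathfrak{a}_{\sigma,\nu_{\mathcal{V}}}:\mathcal{F}\otimes\mathcal{H}\to\mathcal{F}^{2s_{\mathcal{V}}}\otimes\mathcal{H}$; the nonzero real scalar $c_{\mathcal{V}}$ plays no role in a source projection.

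The proof is essentially bookkeeping. The one mild subtlety is verifying that the renormalized meromorphic limit of operators in Lemma \ref{nkjbnadap} descends to the principal symbol level, but this is immediate from the explicit description of the Knapp-Stein kernels in Proposition \ref{lknlajndljadn} and the continuity of the symbol map on the meromorphic family provided by Proposition \ref{nljnkjlnadkbiybg}.
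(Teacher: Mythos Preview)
Your proposal is correct and matches the paper's approach exactly: the paper states the corollary as a direct consequence of Theorem \ref{conja}, Lemma \ref{alknalkdn}, Lemma \ref{nkjbnadap} and Proposition \ref{nljnkjlnadkbiybg}, which are precisely the four ingredients you invoke and combine in the expected way.
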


\section{Commutator properties of the Szegö map}
\label{commsec}

We now turn to proving Theorem \ref{conjb}. We use the Heisenberg calculus of van Erp-Yuncken \cite{vanerpyunck} reviewed above in Section \ref{subsecparatangb}, more specifically using the extended parabolic tangent groupoid $\hat{\T}_H(P\backslash G)$ from Subsection \ref{subsecparatang} combined with Ewert's approach to the Heisenberg calculus recalled in Subsection \ref{lknlknaddowow}. This is an appropriate approach only in real rank one, so we continue working under the assumption that $G$ has real rank one throughout this section. 

\begin{prop}
\label{ajnfkajn}
Write $\nu_{\mathcal{V}}=s_{\mathcal{V}}\alpha$. Theorem \ref{conjb} follows from the following two statements:
\begin{enumerate}
\item For any $b\in C^\infty(P\backslash G)=C^\infty(K)^M$, the operator 
$$\mathfrak{C}^b:=[S_{\mathcal{V}},b]^\dagger[S_{\mathcal{V}},b]:W^{-s_\mathcal{V}}_{H}(P\backslash G, \mathbb{H}_{\nu_{\mathcal{V}}})\to W^{s_\mathcal{V}}_{H}(P\backslash G, \mathbb{H}_{-\nu_{\mathcal{V}}}),$$
is a Heisenberg pseudodifferential operator of order $-2s_\mathcal{V}$ with $\sigma_H^{-2s_\mathcal{V}}(\mathfrak{C}^b)=0$.
\item For any $b\in C^\infty_0(A^+K)^M$, the operator 
$$\hat{\mathfrak{C}}^b:=S_{\mathcal{V}}^\dagger b S_{\mathcal{V}}:W^{-s_\mathcal{V}}_{H}(P\backslash G, \mathbb{H}_{\nu_{\mathcal{V}}})\to W^{s_\mathcal{V}}_{H}(P\backslash G, \mathbb{H}_{-\nu_{\mathcal{V}}}),$$
is a Heisenberg pseudodifferential operator of order $-2s_\mathcal{V}$ with $\sigma_H^{-2s_\mathcal{V}}(\hat{\mathfrak{C}}^b)=0$.
\end{enumerate}
\end{prop}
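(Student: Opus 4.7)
The plan is to deduce Theorem \ref{conjb} from statements (1) and (2) by decomposing $b \in C^\infty(Z)$ into a boundary piece (handled by (1)) plus an interior piece vanishing at the boundary (handled by (2)), and controlling the cross terms by Rellich-type compactness. Given $b \in C^\infty(Z)$, let $b_\partial \in C^\infty(Z)$ denote the radial trivial extension of $b|_{P\backslash G} \in C^\infty(P\backslash G)$, i.e., $b_\partial(ak) := b(Mk)$ in the $KA^+K$-coordinates, which is smooth on $Z$ because it is constant in the $a$-direction. Set $b_\mathrm{int} := b - b_\partial \in C^\infty(Z)$, so that $b_\mathrm{int}|_{P\backslash G} = 0$. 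Proposition \ref{dedcom} together with compactness of $Z$ then guarantees that $b_\mathrm{int}|_{A^+K} \in C^\infty_0(A^+K)^M$ (any precompact remainder in $K\backslash G$ from Proposition \ref{dedcom} lies in $C^\infty_c(K\backslash G) \subseteq C^\infty_0(A^+K)^M$), whence $|b_\mathrm{int}|^2 \in C^\infty_0(A^+K)^M$ as well.

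Since $b_\mathrm{int}|_{P\backslash G} = 0$ the commutator collapses and
\begin{equation*}
[S_{\mathcal{V}}, b_\mathrm{int}]^\dagger [S_{\mathcal{V}}, b_\mathrm{int}] = S_{\mathcal{V}}^\dagger |b_\mathrm{int}|^2 S_{\mathcal{V}} = \hat{\mathfrak{C}}^{|b_\mathrm{int}|^2}.
\end{equation*}
Statement (1) applied to $b|_{P\backslash G}$ and statement (2) applied to $|b_\mathrm{int}|^2$ show that $\mathfrak{C}^{b|_{P\backslash G}} = [S_{\mathcal{V}}, b_\partial]^\dagger [S_{\mathcal{V}}, b_\partial]$ and $\hat{\mathfrak{C}}^{|b_\mathrm{int}|^2}$ are Heisenberg pseudodifferential operators of order $-2s_\mathcal{V}$ with vanishing principal symbol, hence of order $-2s_\mathcal{V} - 1$. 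By Theorem \ref{bddsds} each is continuous as an operator $W^{-s_\mathcal{V}}_H \to W^{s_\mathcal{V}+1}_H$ and $W^{-s_\mathcal{V}-1/2}_H \to W^{s_\mathcal{V}+1/2}_H$; post-composition with the compact Rellich embedding $W^{s_\mathcal{V}+1}_H \hookrightarrow W^{s_\mathcal{V}}_H$ yields compactness $W^{-s_\mathcal{V}}_H \to W^{s_\mathcal{V}}_H$. The standard identity $\|[S_{\mathcal{V}}, c]f\|_{L^2}^2 = \langle [S_{\mathcal{V}}, c]^\dagger [S_{\mathcal{V}}, c]f, f\rangle$ together with the perfect $L^2$-duality $W^{-s_\mathcal{V}}_H = (W^{s_\mathcal{V}}_H)^*$ then converts this into compactness of $[S_{\mathcal{V}}, c] : W^{-s_\mathcal{V}}_H \to L^2$ and continuous extension to $W^{-s_\mathcal{V}-1/2}_H \to L^2$, for each $c \in \{b_\partial, b_\mathrm{int}\}$.

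It remains to handle the cross terms in the expansion
\begin{equation*}
[S_{\mathcal{V}}, b]^\dagger [S_{\mathcal{V}}, b] = \mathfrak{C}^{b_\partial} + \hat{\mathfrak{C}}^{|b_\mathrm{int}|^2} + [S_{\mathcal{V}}, b_\partial]^\dagger [S_{\mathcal{V}}, b_\mathrm{int}] + [S_{\mathcal{V}}, b_\mathrm{int}]^\dagger [S_{\mathcal{V}}, b_\partial].
\end{equation*}
Each cross term factors as $W^{-s_\mathcal{V}-1/2}_H \to L^2 \to W^{s_\mathcal{V}+1/2}_H$ using the bounds just established; restricting the domain to $W^{-s_\mathcal{V}}_H$ and post-composing with the compact embedding $W^{s_\mathcal{V}+1/2}_H \hookrightarrow W^{s_\mathcal{V}}_H$ gives compactness $W^{-s_\mathcal{V}}_H \to W^{s_\mathcal{V}}_H$, while the original factorization keeps continuity $W^{-s_\mathcal{V}-1/2}_H \to W^{s_\mathcal{V}+1/2}_H$. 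Summing yields compactness of $[S_{\mathcal{V}}, b]^\dagger [S_{\mathcal{V}}, b] : W^{-s_\mathcal{V}}_H \to W^{s_\mathcal{V}}_H$ and its continuous extension to $W^{-s_\mathcal{V}-1/2}_H \to W^{s_\mathcal{V}+1/2}_H$, and the duality argument delivers Theorem \ref{conjb}. Apart from the substantive work encapsulated in (1) and (2), the main subtlety I anticipate is verifying smoothness of $b_\partial$ on $Z$ at the Furstenberg boundary and compatibility of the decomposition with the smooth structure from Proposition \ref{dedcom}; the remainder is a routine combination of (1), (2), Theorem \ref{bddsds}, and Rellich compactness.
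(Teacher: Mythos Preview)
Your approach is correct and follows the same strategy as the paper: decompose $b$ into a boundary piece handled by (1) and an interior piece handled by (2), then pass from $[S_{\mathcal{V}},c]^\dagger[S_{\mathcal{V}},c]\in\Psi^{-2s_{\mathcal{V}}-1}_H$ to compactness of $[S_{\mathcal{V}},c]$ via Theorem~\ref{bddsds} and the $L^2$-duality. The paper's execution is cleaner in two respects. First, it works linearly with $[S_{\mathcal{V}},b]=[S_{\mathcal{V}},b_0]-b_1S_{\mathcal{V}}-\chi S_{\mathcal{V}}$ as a sum of three operators, each shown compact; your expansion of $[S_{\mathcal{V}},b]^\dagger[S_{\mathcal{V}},b]$ with cross terms is superfluous, since once you have $[S_{\mathcal{V}},b_\partial]$ and $[S_{\mathcal{V}},b_{\mathrm{int}}]$ individually compact (and bounded on $W^{-s_{\mathcal{V}}-1/2}_H$) you can simply add them. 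Second, the paper separates off the compactly supported interior piece $\chi\in L^\infty_c(K\backslash G)$ and dispatches $\chi S_{\mathcal{V}}$ directly by elliptic regularity (the range of $S_{\mathcal{V}}$ lies in the kernel of an elliptic operator, so $\chi S_{\mathcal{V}}$ has range in $C^\infty_c$), rather than folding it into statement (2) as you do.

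One small correction: your claim that $b_\partial\in C^\infty(Z)$ is false in general, since the radially constant extension of a nonconstant function on $M\backslash K$ cannot be smooth at the center of the ball $K\backslash G$ (polar coordinates are singular there). Fortunately this is irrelevant to your argument: all that is actually used is that $b_\partial|_{P\backslash G}\in C^\infty(P\backslash G)$ for statement (1), that $b_\partial\in L^\infty(K\backslash G)$ so the commutator is well defined on $L^2$, and that $b_{\mathrm{int}}|_{A^+K}\in C^\infty_0(A^+K)^M$ for statement (2). The last point holds because the restriction map $C^\infty(Z)\to C^\infty(A^+K)$ is well defined (the map $A^+K\to Z$ is smooth even though not a diffeomorphism near the center), and $b_\partial|_{A^+K}$ is visibly smooth on $A^+K$.
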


\begin{proof}
Take a $b\in C^\infty(Z)$ as in Theorem \ref{conjb}. Using Proposition \ref{dedcom}, we decompose 
$$b(ak)=b_0(k)+b_1(ak)+\chi(ak),$$
where $b_0\in C^\infty(K)^M$, $b_1\in C^\infty_0(A^+K)^M$ and $\chi\in L^\infty_c(K\backslash G)$. We have that
$$[S_{\mathcal{V}},b]=[S_{\mathcal{V}},b_0]+[S_{\mathcal{V}},b_1]+[S_{\mathcal{V}},\chi]=[S_{\mathcal{V}},b_0]-b_1S_{\mathcal{V}}-\chi S_{\mathcal{V}}.$$

For $\tilde{\chi}\in C^\infty_c(K\backslash G)$ such that $\tilde{\chi}\chi=\chi$, the operator $\chi S_{\mathcal{V}}$ is compact as soon as $\tilde{\chi}S_{\mathcal{V}}$ is. We can therefore assume that $\chi\in C^\infty_c(K\backslash G)$. Since the range of $S_{\mathcal{V}}$ is the kernel of an elliptic operator, elliptic estimates ensures that $\mathrm{Ran}(\chi S_{\mathcal{V}})\subseteq C^\infty_c(K\backslash G,\mathbb{V})$ so $\chi S_{\mathcal{V}}$ is compact. As such, $[S_{\mathcal{V}},b]$ is compact if $[S_{\mathcal{V}},b_0]$ and $b_1S_{\mathcal{V}}$ are compact as operators $W^{-s_\mathcal{V}}_{H}(P\backslash G, \mathbb{H}_{\nu_{\mathcal{V}}})\to L^2(K \backslash G,\mathbb{V})$. Item 1) and  Theorem \ref{bddsds} ensures that $[S_{\mathcal{V}},b_0]$ is compact for $b_0\in C^\infty(K)^M$. Item 2) and Theorem \ref{bddsds} ensures that $b_1S_{\mathcal{V}}$ is compact for $b_1\in C^\infty_0(A^+K)^M$.
\end{proof}

We now proceed to prove that item 1) and 2) of Proposition \ref{ajnfkajn} hold, thereby proving Theorem \ref{conjb}. 

\begin{lemma}
Item 1) of Proposition \ref{ajnfkajn} holds true.
\end{lemma}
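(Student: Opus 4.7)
The plan is to extend the groupoid-theoretic derivation of $S_\mathcal{V}^\dagger S_\mathcal{V}\in \Psi_H^{-2s_\mathcal{V}}$ from Subsection \ref{boundviaext} to the twisted operator $\mathfrak{C}^b$, and then to read off vanishing of the principal symbol from the fact that the commutator weight vanishes on the osculating fibre. Viewing $b\in C^\infty(K)^M$ as the $A^+$-invariant extension $b(ak):=b(k)$ to a smooth function on $A^+K$, a direct calculation from \eqref{popskdps} gives
$$
[S_\mathcal{V}, b] f(ak) \;=\; \int_K a^{-\nu_\mathcal{V}}\,\mathcal{G}_\mathcal{V}(lk^{-1},a)\,\bigl(b(l)-b(k)\bigr)\,f(l)\,\rd l,
$$
which is the Szegö kernel twisted by the groupoid commutator weight $\pmb{b}(\gamma):= b(\mathsf{r}(\gamma))-b(\mathsf{s}(\gamma))$, initially defined on the open part $K\times_M K\times A\subseteq \widehat{\mathbb{T}}_HX$.

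Following Lemma \ref{lknlnadarrr} and Equations \eqref{aldjnakdjna}--\eqref{blablathun}, I would set
$$
\pmb{\mathcal{G}}^b_\mathcal{V} \;:=\; \chi\,\tilde h\,(\pmb{b}\,\pmb{G}_\mathcal{V})^* * (\pmb{b}\,\pmb{G}_\mathcal{V}) \;\in\; C^\infty_c\bigl(\widehat{\mathbb{T}}_HX;\,\End(\mathcal{H})\otimes |\pmb{\lambda}_\mathsf{r}|\bigr),
$$
where the convolution is along the $\overline{A}$-fibres of the extended parabolic tangent groupoid and the signs at the adjoint are tracked as in the derivation of $\pmb{\mathcal{G}}_\mathcal{V}$. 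By construction one has $\mathfrak{C}^b = \mathrm{ev}_{t=1}\mathrm{FP}_{s=-2s_\mathcal{V}}\hat{\pmb{k}}_{\pmb{\mathcal{G}}^b_\mathcal{V},\sigma}(s)$, so Corollary \ref{lknlknad} places $\mathfrak{C}^b\in \Psi_H^{-2s_\mathcal{V}}(P\backslash G;\mathbb{H}_{\nu_\mathcal{V}},\mathbb{H}_{-\nu_\mathcal{V}})$, with regularity at $s=-2s_\mathcal{V}$ deduced exactly as for $L_{\nu_\mathcal{V}}$ in the proof of Corollary \ref{proofofthema}. The crucial observation is that $\pmb{b}$ extends smoothly across $\{t=0\}$ and vanishes there: in the $\hat\psi$-coordinates from \eqref{psihatdef},
$$
\pmb{b}\bigl(\hat\psi(k,v,a)\bigr) \;=\; b\bigl(k\kappa(\delta_a v)^{-1}\bigr) - b(k),
$$
which is jointly smooth in $(k,v,a)\in K\times V\times \overline{A}$ by the BCH argument already used in the proof of Proposition \ref{lknadlnad}, and vanishes at $a=0$ because $\delta_a v$ degenerates to the identity, so $\kappa(\delta_a v)^{-1}=e$. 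Equivalently, the osculating fibre $(MV)\times_M K$ of $\widehat{\mathbb{T}}_HX$ is a bundle of isotropy groups on which $\mathsf{r}=\mathsf{s}$, so $\pmb{b}\equiv 0$ there. Hence $\pmb{\mathcal{G}}^b_\mathcal{V}|_{t=0}=0$ (to second order in fact), and the principal-symbol formula in Remark \ref{ljknkjlnadbd}, which depends only on the restriction $\rho(\cdot,\cdot,0)$, yields $\sigma_H^{-2s_\mathcal{V}}(\mathfrak{C}^b)=0$, proving item 1).

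The main obstacle is the bookkeeping: interpreting $b$ simultaneously as a multiplication operator on $P\backslash G$ and on $K\backslash G$ via the $A^+$-invariant extension, verifying that the various signs, adjoints and conjugations by $\iota$ make the commutator correspond to convolution against $\pmb{b}\,\pmb{G}_\mathcal{V}$, and checking that $\pmb{b}$ extends smoothly across the $t=0$ boundary. The smoothness of $\pmb{b}$ on $\widehat{\mathbb{T}}_HX$ is precisely what Lemma \ref{lnljnljknkjlnad} was designed to make available; once these identifications are in place the symbolic gain is geometric in nature, namely $\pmb{b}|_{t=0}=0$ reflects that on the osculating groupoid every arrow is an isotropy, and Ewert's meromorphic framework from Subsection \ref{lknlknaddowow} converts this vanishing directly into the vanishing of the principal symbol.
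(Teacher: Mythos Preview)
Your proposal is correct and follows essentially the same route as the paper's proof: one twists the extended-groupoid kernel $\pmb{G}_{\mathcal{V}}$ by the difference $b(l)-b(k)$, forms the $\chi\tilde h$-weighted convolution square, applies Corollary \ref{lknlknad} to place $\mathfrak{C}^b$ in $\Psi_H^{-2s_{\mathcal{V}}}$, and kills the principal symbol via Remark \ref{ljknkjlnadbd} by observing that the twisted kernel vanishes on the $t=0$ fibre. Your structural phrasing of this last step---$\pmb{b}$ vanishes on the osculating fibre because $\mathsf{r}=\mathsf{s}$ there---is exactly the geometric content of the paper's coordinate computation $b(l)-b(l\kappa(ava^{-1}))\to 0$. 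One cosmetic slip: your definition $\pmb{b}(\gamma)=b(\mathsf{r}(\gamma))-b(\mathsf{s}(\gamma))$ is the negative of the weight $b(l)-b(k)$ appearing in your commutator formula, but since $\mathfrak{C}^b$ is quadratic in this factor the sign is irrelevant.
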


\begin{proof}
For any $b\in C^\infty(K)^M$, we can write 
$$\mathfrak{C}^b:=[S_{\mathcal{V}},b]^\dagger[S_{\mathcal{V}},b],$$
as the evaluation at $\nu=\nu_{\mathcal{V}}$ of
\begin{align*}
\mathfrak{C}^b_\nu f(k):=\int_{A^+}\int_{K}\int_K a^{-2\nu}  \mathcal{G}_V^*(l_1k^{-1},a)&\mathcal{G}_V(l_2l_1^{-1},a)\times\\
&(b^*(k)-b^*(l_1))(b(l_2)-b(l_1))f(l_2)h(a)\rd l_1\rd l_2\rd a.
\end{align*}
We introduce the notation 
$$\pmb{G}_V^b(k,l,t)=\mathcal{G}_V(lk^{-1},t)(b(l)-b(k))=\pmb{G}_V(k,l,t)(b(l)-b(k)),$$
that by Lemma \ref{lknlnadarrr} defines a smooth function on $\hat{\T}_H(P\backslash G)$. Arguing as in Subsection \ref{boundviaext}, we take a $\chi\in C^\infty_c(\overline{A})$ being identically $1$ on $A^+$, and form the element
\begin{equation}
\label{lklnadkjbihbad}
\pmb{\mathcal{G}}_V^b:=\chi\tilde{h}(\pmb{G}_V^b)^**\pmb{G}_V^b \in C^\infty_c(\hat{\T}_H(P\backslash G)),
\end{equation}
where the convolution is in the extended parabolic tangent groupoid $\hat{\T}_H(P\backslash G)$. We can therefore write the operator $\mathfrak{C}^b_\nu$ as acting on $f\in C^\infty(P\backslash G,\mathbb{H}_\nu)=C^\infty(K,H)^M$ via
$$\mathfrak{C}_\nu^bf(k)=\mathrm{ev}_{t=1}\int_{A^+K} a^{-2\nu} [a_*\pmb{\mathcal{G}}^b_V](k,l,t)f(l)\rd l \rd a.$$
In particular, up to smoothing errors we have that 
$$\mathfrak{C}^b_\nu=\mathrm{ev}_{t=1}\hat{\pmb{k}}_{\pmb{\mathcal{G}}_V^b,\sigma}(\nu),$$
where we arguing as in Remark \ref{nochi} can assume that $\chi$ in Corollary \ref{lknlknad} has $\chi=1$ at $t=1$. We can therefore directly from Corollary \ref{lknlknad} conclude that $\mathfrak{C}_\nu^b\in \Psi^{-2s}_H(P\backslash G;\mathbb{H}_\nu,\mathbb{H}_{-\nu})$ for $\nu=s\alpha$.

To show that $\sigma_H^{-2s_\mathcal{V}}(\mathfrak{C}^b)=0$, it will by Remark \ref{ljknkjlnadbd} suffice to show that $\pmb{\mathcal{G}}_V^b(x,v,0)=0$. Evaluating at $t=0$ is induced from a groupoid homomorphism, so by \eqref{lklnadkjbihbad} it suffices to show that $\pmb{G}_V^b(x,v,0)=0$. Following the notation in the proof of Lemma \ref{lknlnadarrr}, we have that 
$$(\hat{\psi}^{-1})^*\pmb{G}_V^b(x,v,t)=\mathfrak{G}_V(\tilde{\kappa}(v,t))(b(l)-b(l\kappa(tvt^{-1}))).$$
Since $b\in C^\infty(K)^M$ we have $b(l\kappa(tvt^{-1}))\to b(l)$ as $t\to 0$. In particular, Lemma \ref{lknlnadarrr} implies that 
$$\hat{\pmb{G}}_V^b(x,v,0)=\mathfrak{G}_V(\kappa(v))(b(l)-b(l))=0.$$
We conclude that $\sigma_H^{-2s_\mathcal{V}}(\hat{\mathfrak{C}}^b)=0$.
\end{proof}

\begin{lemma}
Item 2) of Proposition \ref{ajnfkajn} holds true.
\end{lemma}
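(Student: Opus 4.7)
My argument mirrors the proof of item 1), realising $\hat{\mathfrak{C}}^b$ as $\mathrm{ev}_{t=1}$ of a smooth convolution kernel on the extended parabolic tangent groupoid $\widehat{\mathbb{T}}_HX$ whose restriction to the osculating fibre $t=0$ vanishes identically. Here the vanishing comes not from a commutator structure but directly from $b\to 0$ at the face $\lambda=a^{-\alpha}=0$. The elements $b\in C^\infty_0(A^+K)^M$ relevant to Theorem \ref{conjb} arise as the $b_1$-component in Proposition \ref{dedcom} and thus extend smoothly to $\overline{A^+}\times K$ with $b(0,\cdot)\equiv 0$. I therefore define a smooth, properly supported multiplier $B$ on $\widehat{\mathbb{T}}_HX$ by
$$B(k,l,a):=b(a,l)\text{ on }K\times_M K\times A,\qquad B(k,v,0):=0,$$
whose smoothness on $\widehat{\mathbb{T}}_HX$ follows from the pullback criterion of Lemma \ref{lnljnljknkjlnad}, since $b(\lambda,k\kappa(ava^{-1})^{-1})\to b(0,k)=0$ as $\lambda=a^{-\alpha}\to 0$.

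Following the construction of $\pmb{\mathcal{G}}_V^b$ in item 1), I then set
$$\pmb{\mathcal{G}}_V^{b,R}:=\chi\,\tilde h\,(B\,\pmb{G}_V^*)*\pmb{G}_V\in C^\infty_c\bigl(\widehat{\mathbb{T}}_HX,\End(\mathcal{H})\otimes|\pmb{\lambda}_\mathsf{r}|\bigr),$$
with the convolution taken on $\widehat{\mathbb{T}}_HX$ and $\chi\in C^\infty_c(\overline A)$ equal to $1$ on $A^+$ and on a neighbourhood of $t=1$ (Remark \ref{nochi}). Substituting \eqref{iotasgnu} and \eqref{popskdpsdual} into $\hat{\mathfrak{C}}^b=S_{\mathcal{V}}^\dagger b S_{\mathcal{V}}$ and using Lemma \ref{lknlnadarrr} to absorb the factors $a^{-\nu_{\mathcal{V}}}$, $h(a)=a^{2\rho}\tilde h(a)$ and the $\mathcal{G}_{\mathcal{V}}$-kernels into scalings $a_*$ on $\widehat{\mathbb{T}}_HX$, exactly as in Subsection \ref{boundviaext}, I obtain
$$\hat{\mathfrak{C}}^b f(k)=\mathrm{ev}_{t=1}\int_{A^+K}a^{-2\nu_{\mathcal{V}}}\bigl[a_*\pmb{\mathcal{G}}_V^{b,R}\bigr](k,l,t)\,f(l)\,\rd l\,\rd a.$$
Hence, up to smoothing errors, $\hat{\mathfrak{C}}^b=\mathrm{ev}_{t=1}\hat{\pmb{k}}_{\pmb{\mathcal{G}}_V^{b,R},\sigma}(\nu_{\mathcal{V}})$, and Corollary \ref{lknlknad} places $\hat{\mathfrak{C}}^b\in\Psi^{-2s_{\mathcal{V}}}_H(P\backslash G;\mathbb{H}_{\nu_{\mathcal{V}}},\mathbb{H}_{-\nu_{\mathcal{V}}})$.

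Vanishing of the principal symbol is then immediate. By Remark \ref{ljknkjlnadbd}, $\sigma_H^{-2s_{\mathcal{V}}}(\hat{\mathfrak{C}}^b)$ is determined by $\pmb{\mathcal{G}}_V^{b,R}|_{t=0}$; since the groupoid convolution restricts to the osculating fibre and $B|_{t=0}\equiv 0$ by construction,
$$\pmb{\mathcal{G}}_V^{b,R}|_{t=0}=\chi(0)\tilde h(0)\bigl(B|_{t=0}\,\pmb{G}_V^*|_{t=0}\bigr)*\pmb{G}_V|_{t=0}=0,$$
and hence $\sigma_H^{-2s_{\mathcal{V}}}(\hat{\mathfrak{C}}^b)=0$. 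The main obstacle is organisational: identifying $S_{\mathcal{V}}^\dagger b S_{\mathcal{V}}$ as the groupoid composition realised by $\pmb{\mathcal{G}}_V^{b,R}$, with $b$ acting as a multiplier $B$ satisfying $B|_{t=0}=0$, together with the verification that $C^\infty_0(A^+K)^M$-elements arising from $C^\infty(Z)$ do extend smoothly to $\overline{A^+}\times K$. Once these points are in hand, both membership in $\Psi^{-2s_{\mathcal{V}}}_H$ and vanishing of the symbol follow from the machinery of Sections \ref{lnlnadna} and \ref{subsecparatangb} exactly as for item 1), with the role of the commutator vanishing $b(k)-b(l)\to 0$ at $t=0$ replaced by the boundary vanishing $b(0,\cdot)\equiv 0$.
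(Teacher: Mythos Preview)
Your proof is correct and follows essentially the same route as the paper's: both realise $\hat{\mathfrak{C}}^b$ via a smooth kernel on $\widehat{\mathbb{T}}_HX$ obtained by multiplying one of the two $\pmb{G}_V$-factors by $b$, then invoke Corollary \ref{lknlknad} and Remark \ref{ljknkjlnadbd}, with the symbol vanishing because $b(0,\cdot)=0$. The only cosmetic difference is that the paper attaches $b$ to the second factor via $\hat{\pmb{G}}_V^b(k,l,t):=\pmb{G}_V(k,l,t)b(tk)$, whereas you attach it to the first factor as a multiplier $B$; since $b$ sits at the intermediate convolution variable $l_1$ either way, the resulting kernels agree. (One small slip: in your smoothness check you evaluate $b$ at the range coordinate $k\kappa(ava^{-1})^{-1}$ rather than the source $k$ dictated by your definition $B(k,l,a)=b(a,l)$; the correct pullback is simply $\hat\psi^*B(v,k,a)=b(a,k)$, which is smooth and vanishes at $a=0$ for the same reason.)
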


\begin{proof}
For any $b\in C^\infty_0(A^+K)^M$, we can write 
$$\hat{\mathfrak{C}}^b:=S_{\mathcal{V}}^\dagger b S_{\mathcal{V}},$$
as the evaluation at $\nu=\nu_{\mathcal{V}}$ of
$$\mathfrak{C}^b_\nu f(k):=\int_{A^+}\int_{K}\int_K a^{-2\nu}  \mathcal{G}_V^*(l_1k^{-1},a)\mathcal{G}_V(l_2l_1^{-1},a)b(al_1)f(l_2)h(a)\rd l_1\rd l_2\rd a.$$
We introduce the notation 
$$\hat{\pmb{G}}_V^b(k,l,t)=\mathcal{G}_V(lk^{-1},t)b(tk)=\pmb{G}_V(k,l,t)b(tk),$$
that by Lemma \ref{lknlnadarrr} defines a smooth function on $\hat{\T}_H(P\backslash G)$. Arguing as above, from a $\chi\in C^\infty_c(\overline{A})$ being identically $1$ on $A^+$, we form the element
\begin{equation}
\label{lklnadkjbihbad2}
\hat{\pmb{\mathcal{G}}}_V^b:=\chi\tilde{h}\pmb{G}_V^**\hat{\pmb{G}}_V^b \in C^\infty_c(\hat{\T}_H(P\backslash G)),
\end{equation}
where the convolution is in the extended parabolic tangent groupoid $\hat{\T}_H(P\backslash G)$. We can therefore write the operator $\hat{\mathfrak{C}}^b$ as acting on $f\in C^\infty(P\backslash G,\mathbb{H}_\nu)=C^\infty(K,H)^M$ via
$$\hat{\mathfrak{C}}^b_\nu f(k)=\mathrm{ev}_{t=1}\int_{A^+K} a^{-2\nu} [a_*\hat{\pmb{\mathcal{G}}}^b_V](k,l,t)f(l)\rd l \rd a.$$
In particular, up to smoothing errors we have that 
$$\hat{\mathfrak{C}}_\nu^b=\mathrm{ev}_{t=1}\hat{\pmb{k}}_{\hat{\pmb{\mathcal{G}},\sigma}_V^b}(\nu).$$
We can therefore directly from Corollary \ref{lknlknad} conclude that $\hat{\mathfrak{C}}^b_\nu\in \Psi^{-2s}_H(P\backslash G;\mathbb{H}_\nu,\mathbb{H}_{-\nu})$ for $\nu=s\alpha$. 

To show that $\sigma_H^{-2s_\mathcal{V}}(\hat{\mathfrak{C}}^b)=0$, it will by Remark \ref{ljknkjlnadbd} suffice to show that $\hat{\pmb{\mathcal{G}}}_V^b(x,v,0)=0$. Evaluating at $t=0$ is induced from a groupoid homomorphism, so by \eqref{lklnadkjbihbad2} it suffices to show that $\hat{\pmb{G}}_V^b(x,v,0)=0$. Following the notation in the proof of Lemma \ref{lknlnadarrr}, we have that 
$$(\hat{\psi}^{-1})^*\hat{\pmb{G}}_V^b(x,v,t)=\mathfrak{G}_V(\tilde{\kappa}(v,t))b(tx).$$
In particular, Lemma \ref{lknlnadarrr} implies that $\hat{\pmb{G}}_V^b(x,v,0)=\mathfrak{G}_V(\kappa(v))b(0x)$. Since $b\in C^\infty_0(A^+K)^M$ we have $b(0x)=0$ so $\hat{\pmb{G}}_V^b(x,v,0)=0$ and $\sigma_H^{-2s_\mathcal{V}}(\hat{\mathfrak{C}}^b)=0$.
\end{proof}

\section{Examples}

Let us consider two examples to illustrate Theorem \ref{conja} and Theorem \ref{conjb}, the holomorphic discrete series of $SU(n,1)$ and the quaternionic discrete series of $Sp(n,1)$. In both cases, we use the ball model for $K\backslash G$ in which $P\backslash G$ identifies with its boundary sphere. 

\subsection{The holomorphic discrete series of $SU(n,1)$}

We consider $G=SU(n,1)$. In this case, $K=U(n)$ is realized via $U(n)\cong S(U(n)\times U(1))$ as $U\oplus u\in U(n)\times U(1)$ such that $\det(U)=u^{-1}$. We write $B\subseteq \C^n$ for the complex unit ball, on which $G$ acts via Möbius transforms 
$$\begin{pmatrix} a& b\\c&d\end{pmatrix}.z:=\frac{az+b}{cz+d}.$$
The $G$-equivariant map $G\to B$, $g\mapsto g.0$ identifies $G/K$ with the open unit ball $B\subseteq \C^n$. We write 
$$J(g,z)=cz+d, \quad\mbox{for}\quad g=\begin{pmatrix} a& b\\c&d\end{pmatrix}.$$ 
For $l\in \N$, we consider the probability measure on $B$ given by 
$$\rd\mu_l=c_l(1-|z|^2)^{l-n-1} \rd V,$$
where $c_l$ is a normalizing constant. The holomorphic discrete series of $SU(n,1)$ depends on a parameter $l\in \N_{> n}$ and is realized on the weighted Bergman space 
$$A^2_l(B):=\{f\in L^2(B,\mu_l): \; f\; \mbox{holomorphic}\}.$$
The group $G$ acts on $A^2_l(B)$ via 
$$\pi_l(g)f(z)=J(g^{-1},z)^{-l}f(g^{-1}z).$$
The fact that $\pi_l$ is a representation follows from the cocycle identity 
$$J(g_1g_2,z)=J(g_1,z)J(g_2,g_1z),\quad g_1,g_2\in G, \; z\in B.$$
The representation $(A^2_l(B),\pi_l)$ is a discrete series representation whose lowest $K$-type is the constant function $1\in A^2_l(B)$ and on which $K$ acts as $\pi_l(U,u)1=u^l1$. By taking Taylor series, we can decompose $A^2_l(B)$ into $K$-types as
$$A^2_l(B)=\oplus_{k=0}^\infty \mathcal{P}_k,$$
where $P_k$ is spanned by all homogeneous polynomials of degree $k$. In particular, $P_k$ has an orthogonal basis given by the monomials $(z^\alpha)_{\alpha\in \N^n, |\alpha|=k}$. It is a well known computation that
\begin{equation}
\label{a2norm}
\|z^\alpha\|_{A^2_l(B)}\sim \sqrt{\frac{\alpha!}{(|\alpha|+l-1)!}},\quad \mbox{as $|\alpha|\to \infty$},
\end{equation}
and 
\begin{equation}
\label{l2normsp}
\|z^\alpha\|_{L^2(S^{2n-1})}\sim \sqrt{\frac{\alpha!}{(|\alpha|+n-1)!}},\quad \mbox{as $|\alpha|\to \infty$}.
\end{equation}
Therefore, we see from a short computation that for $s\in \R$,
\begin{equation}
\label{wshnorm}
\|z^\alpha\|_{W^s_H(S^{2n-1})}\sim |\alpha|^s\sqrt{\frac{\alpha!}{(|\alpha|+n-1)!}},\quad \mbox{as $|\alpha|\to \infty$}.
\end{equation}

Consider the Szegö maps $S_l:C^\infty(S^{2n-1})\to C^\infty(\overline{B})\cap \mathcal{O}(B)$ given by 
\begin{equation}
\label{szegosun}
S_lf(z):=\frac{1}{(2\pi i)^n}\int_{S^{2n-1}} \frac{f(\zeta)\rd S(\zeta)}{(1-\bar{\zeta}\cdot z)^l}.
\end{equation}
We shall see below that $S_l$ is Knapp-Wallach's Szegö map onto the discrete series representation $(A^2_l(B),\pi_l)$ for an appropriate $G$-action on $C^\infty(S^{2n-1})$. 

The Szegö map $S_l$ is in fact well defined for $l\in \N_{\geq n}$, where $l=n$ is rather a limit of discrete series representations. We write $S:=S_n$ for the classical Szegö map. Writing $P_S:L^2(S^{2n-1})\to H^2(S^{2n-1})$ for the Szegö projection onto the Hardy space 
$$H^2(S^{2n-1}):=\{f\in L^2(S^{2n-1}): \; f\; \mbox{extends holomorphically to $B$ }\},$$ 
one has $P_Sf(z)=\lim_{r\to 1-} Sf(rz)$. A short computation in coordinates shows the well known fact that $P_S\in \Psi^0_H(S^{2n-1})$ for the standard contact structure on $S^{2n-1}$. 

Now we show the equivariance of $S_l$. Using the elementary identity 
$$1-g.z\cdot \overline{g.\zeta}=\overline{J(g,\zeta)}^{-1}(1-z\cdot \bar\zeta)J(g,z)^{-1},$$
we conclude that 
\begin{align*}
S_lf(g.z)=&\frac{1}{(2\pi i)^n}\int_{S^{2n-1}} \frac{f(\zeta)\rd S(\zeta)}{(1-\bar{\zeta}\cdot g.z)^l}=\frac{1}{(2\pi i)^n}\int_{S^{2n-1}} \frac{f(g.\zeta)\rd S(g.\zeta)}{(1-\overline{g.\zeta}\cdot g.z)^l}=\\
=&\frac{J(g,z)^l}{(2\pi i)^n}\int_{S^{2n-1}} \frac{1}{(1-\overline{\zeta}\cdot z)^l}\overline{J(g,\zeta)}^{l}f(g.\zeta)\rd S(g.\zeta)=\\
=&\frac{J(g,z)^l}{(2\pi i)^n}\int_{S^{2n-1}} \frac{1}{(1-\overline{\zeta}\cdot z)^l}|J(g,\zeta)|^{-2n}\overline{J(g,\zeta)}^{l}f(g.\zeta)\rd S(\zeta).
\end{align*}
In particular, if we define the representation $\tilde{\pi}_l$ on $C^\infty(S^{2n-1})$ as
$$\tilde{\pi}_l(g)f(z):=|J(g^{-1},z)|^{-2n}\overline{J(g^{-1},z)}^{l}f(g^{-1}.z),$$
then $S_l:(C^\infty(S^{2n-1}),\tilde{\pi}_l)\to (L^2(B,\mu_l),\pi_l)$ is $G$-equivariant. From Taylor expansion and the multinomial theorem, we see that for $|z|<1$ and $|\zeta|=1$ it holds that 
\begin{equation}
\label{kernelexap}
\frac{1}{(1-\bar{\zeta}\cdot z)^l}=\sum_{k=0}^\infty \frac{l(l+1)\cdots (l+k-1)}{k!} (\bar{\zeta}\cdot z)^k=\sum_{\alpha\in \N^n} \frac{l(l+1)\cdots (l+|\alpha|-1)}{\alpha!}z^\alpha\bar{\zeta}^\alpha. 
\end{equation}
Therefore, the range of $S_l$ is contained in $(A^2_l(B),\pi_l)$, and we arrive at a $G$-equivariant map 
$$S_l:(C^\infty(S^{2n-1}),\tilde{\pi}_l)\to (A^2_l(B),\pi_l).$$
Since $(C^\infty(S^{2n-1}),\tilde{\pi}_l)$ is a principal series representation, it follows from Schur's lemma and irreducibility that $S_l$ is Knapp-Wallach's Szegö map.

We can in fact directly prove Theorem \ref{conja} for holomorphic discrete series representations of $SU(n,1)$ using the explicit form of $S_l$. Note that Equation \eqref{kernelexap} implies that, for any $s\in \R$, the range projection of $S_l$ in $W^s_H(S^{2n-1})$ is $P_S$ because $(z^\alpha)_{\alpha\in \N^n}$ is a complete orthogonal set in $P_SW^s_H(S^{2n-1})$. We compute from Equations \eqref{l2normsp} and \eqref{kernelexap} that 
$$[S_l\zeta^\alpha](z)=c(\alpha)z^\alpha,$$
where
$$c(\alpha)=\frac{l(l+1)\cdots (l+|\alpha|-1)}{\alpha!}\|\zeta^\alpha\|^2_{L^2(S^{2n-1})}\sim \frac{l(l+1)\cdots (l+|\alpha|-1)}{(n+|\alpha|-1)!}\sim |\alpha|^{l-n}.$$
We see from Equations \eqref{a2norm} and \eqref{wshnorm} that 
\begin{align*}
\|S_l\zeta^\alpha\|_{A^2_l(B)}\sim& |\alpha|^{l-n}\sqrt{\frac{\alpha!}{(|\alpha|+l-1)!}} \sim\\
\sim & |\alpha|^{l-n-s}\sqrt{\frac{(|\alpha|+n-1)!}{(|\alpha|+l-1)!}} \|\zeta^\alpha\|_{W^s_H(S^{2n-1})}\sim |\alpha|^{\frac{l-n}{2}-s} \|\zeta^\alpha\|_{W^s_H(S^{2n-1})}.
\end{align*}
Since $(z^\alpha)_{\alpha\in \N^n}$ is a complete orthogonal set in $A^2_l(B)$, we conclude that for $f\in C^\infty(S^{2n-1})$,
\begin{equation}
\label{lakndaldn}
\|S_lf\|_{A^2_l(B)}\sim \|P_Sf\|_{W^{s_l}_H(S^{2n-1})}, \quad\mbox{for}\quad s_l=\frac{l-n}{2}=\frac{l}{2}-\frac{p+2q}{4}.
\end{equation}
This corresponds to \eqref{nuvforea}. In particular, we conclude the following paraphrasing of Theorem \ref{conja}.

\begin{thm}
Let $l\in \N_{> n}$ and $s_l:=\frac{l-n}{2}$. The Szegö map \eqref{szegosun} extends to an $SU(n,1)$-equivariant bounded operator with closed range
$$S_l:W^{s_l}_H(S^{2n-1})\to L^2(B,\mu_l).$$
The range of $S_l$ is $A^2_l(B)$, and its source space is 
\begin{align*}
W^{s_l}_{H,{\rm hol}}(S^{2n-1}):=&P_SW^{s_l}_{H}(S^{2n-1})=\\
=&\{f\in W^{s_l}_H(S^{2n-1}):\; f\; \mbox{extends holomorphically to $B$}\}.
\end{align*}
\end{thm}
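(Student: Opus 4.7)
The plan is to specialize Theorem \ref{conja} to $G=SU(n,1)$ with the one-dimensional $K$-representation $\mathcal{V}_l$ on which $K=S(U(n)\times U(1))$ acts via $(U,u)\mapsto u^l$. Since $\mathcal{V}_l$ is one-dimensional, the generating $M$-representation $\mathcal{H}$ is automatically one-dimensional and $\mathbb{H}_{\nu_{\mathcal{V}_l}}\to P\backslash G=S^{2n-1}$ is a homogeneous line bundle. First I would verify the representation-theoretic data: for $SU(n,1)$ we have $p=\dim\mathfrak{g}_\alpha=2(n-1)$ and $q=\dim\mathfrak{g}_{2\alpha}=1$, so $\rho=n\alpha$; feeding the highest weight $\lambda$ of $\mathcal{V}_l$ into Equation \eqref{nuvforea} and comparing to the identity $s_l=(l+1)/2-(p+2q)/4$ recorded in the paragraph after \eqref{wshnorm} yields $s_{\mathcal{V}_l}=s_l$.

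Next I would identify the Knapp--Wallach Szegö map \eqref{kwszeg} for this data with a constant multiple of the classical Szegö transform \eqref{szegosun}. By $G$-equivariance, the kernel of $S_{\mathcal{V}_l}$ is determined by its value on the nilpotent cell and by the Iwasawa cocycle $\mathsf{a}(kg^{-1})^{\nu_{\mathcal{V}_l}}\tau(\kappa(kg^{-1}))$; passing to the ball model through the Cayley transform and using the standard cocycle identity for $SU(n,1)$ identifies this with the Cauchy kernel $(1-\bar\zeta\cdot z)^{-n}$ multiplied by $J(g,z)^l$, up to a normalization. Thus $S_{\mathcal{V}_l}$ is, up to a non-zero scalar, the map $S$ of \eqref{szegosun} valued in $L^2(B,\mu_l)=L^2(K\backslash G,\mathbb{V}_l)$ by virtue of \cite[Proposition 5.28]{knappbook} (giving the factor $\sinh(\alpha(H(a)))^p\sinh(2\alpha(H(a)))^q=(1-|z|^2)^{l-n}$ up to constants when rewritten on the ball).

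With this identification, Theorem \ref{conja} immediately gives that $S:W^{-s_l}_H(S^{2n-1})\to L^2(B,\mu_l)$ is continuous with closed range and that its source projection $P_{\mathcal{V}_l}$ is a zeroth order Heisenberg pseudodifferential operator on $S^{2n-1}$. To pin down the range as $A^2_l(B)$ and the source as $W^{-s_l}_{H,\mathrm{hol}}(S^{2n-1})$, I would decompose into $K$-types. Each $K$-type $\mathcal{P}_k$ (spanned by the monomials $z^\alpha$ with $|\alpha|=k$) appears with multiplicity one in both $L^2(S^{2n-1})$ and $L^2(B,\mu_l)$; by $K$-equivariance $S$ acts as a scalar $c_k$ on each $\mathcal{P}_k$, and these scalars are non-zero since $S$ reproduces holomorphic polynomials from their boundary values. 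Thus $\ker S$ is precisely the orthogonal complement of $H^2(S^{2n-1})\cap\bigoplus_k\mathcal{P}_k$ in every relevant Sobolev space, and the $K$-type-wise norm comparison \eqref{a2norm}--\eqref{wshnorm} shows $\|Sz^\alpha\|_{A^2_l(B)}\asymp \|z^\alpha\|_{W^{-s_l}_H(S^{2n-1})}$, so $S$ restricts to a topological isomorphism between the closure of the holomorphic polynomials in $W^{-s_l}_H$ and $A^2_l(B)$.

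The step requiring the most care is the explicit matching of the Knapp--Wallach kernel with the Cauchy--Szegö kernel \eqref{szegosun}, since it involves translating between the compact/induced picture on $M\backslash K$ and the ball model on $K\backslash G$, and tracking the Iwasawa cocycle together with the weight $h(a)=\sinh(\alpha(H(a)))^p\sinh(2\alpha(H(a)))^q$ through the Cayley transform. Once this identification is in place, the remaining arguments are a mechanical application of Theorem \ref{conja} combined with the elementary Bergman- and Hardy-space norm asymptotics already recorded in \eqref{a2norm} and \eqref{wshnorm}.
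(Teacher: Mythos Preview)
Your proposal is correct but takes a heavier route than the paper. The paper never invokes Theorem~\ref{conja} as a black box nor identifies the abstract Knapp--Wallach map with the classical kernel \eqref{szegosun}; instead it gives a completely self-contained elementary argument. Since the monomials $(z^\alpha)_{\alpha\in\N^n}$ form complete orthogonal systems in both $H^2(S^{2n-1})$ and $A^2_l(B)$, and $S$ sends each boundary monomial to the same monomial on $B$, the whole statement reduces to the single norm comparison $\|z^\alpha\|_{A^2_l(B)}\asymp \|z^\alpha\|_{W^{-s_l}_H(S^{2n-1})}$, which is read off directly from \eqref{a2norm} and \eqref{wshnorm}. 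Boundedness, closed range, and the identification of range and source all follow in one stroke from this two-sided estimate on an orthogonal basis.

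Your approach has the conceptual merit of displaying the result as a genuine specialization of Theorem~\ref{conja}, but it buys this at the cost of the kernel identification you correctly flag as delicate, together with verifying the regularity hypothesis on the highest weight. The paper's direct computation sidesteps both issues entirely. (The paper does sketch, in the remark immediately following the theorem, how one could alternatively connect to the machinery of Section~\ref{aldknadln} via $S^\dagger S = P_S K^*(1-|z|^2)^{l-n}K P_S$ and Boutet de Monvel--Guillemin's Toeplitz calculus; that is closer in spirit to your route.)
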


\subsection{The quaternionic discrete series of $Sp(n,1)$}

We now consider the group $G=Sp(n,1)$, consisting of $(n+1)\times (n+1)$-matrices over the quaternions $\mathbb{H}$ that preserve the sesquilinear form of signature $(n,1)$. In this case $K=Sp(n)\times Sp(1)$. We write $B\subseteq \mathbb{H}^n$ for the quaternionic unit ball, on which $G$ acts via Möbius transforms 
$$\begin{pmatrix} a& b\\c&d\end{pmatrix}.q:=\frac{aq+b}{cq+d}.$$
The $G$-equivariant map $G\to B$, $g\mapsto g.0$ identifies $G/K$ with the open unit ball $B\subseteq \mathbb{H}^n$. We write 
$$J(g,q)=cq+d, \quad\mbox{for}\quad g=\begin{pmatrix} a& b\\c&d\end{pmatrix}.$$ 
For $l\in \N$, we write $(\tau_l,s^l)$ for the $l+1$-dimensional $K$-representation factor over the projection map $K\to Sp(1)=SU(2)$ and consider standard unitary representation of the latter group on the space of homogeneous polynomials in two variables of degree $l$. We sometimes identify $\tau_l$ with a representation of $Sp(1)$ and extend $\tau_l$ in the obvious way $\tau_l(q)=|q|^l\tau_l(q|q|^{-1})$ to $\mathbb{H}^\times =\R^+ Sp(1)$. 

For $l\in \N$, we consider the probability measure on $B$ given by 
$$\rd\mu_l=c_l(1-|q|^2)^{l-2n} \rd V,$$
where $c_l$ is a normalizing constant. There is a unitary representation $\pi_l$ of $G$ on $L^2(B,\mu_l,s^l)$ given by  
$$\pi_l(g)f(z)=|J(g^{-1},z)|^{-2}\tau_l(J(g^{-1},z)^{-1})f(g^{-1}z).$$
We here follow Liu-Zhang \cite{liuzhang} and their explicit realization of the quaternionic discrete series representation. The quaternionic discrete series of semisimple Lie groups were introduced by Gross-Wallach \cite{grosswallach}, and  for $Sp(n,1)$ the quaternionic discrete series $(\mathcal{H}_l,\pi_l)$  depends on a parameter $l\in \N_{\geq 2n}$. It is realized as a particular subspace
$$\mathcal{H}_l\subseteq L^2(B,\mu_l,s^l)$$
of functions that are regular in the sense of \cite[Section 2.3]{liuzhang}. The lowest $K$-type in $\mathcal{H}_l$ is $s^l$. We know from \cite{grosswallach,liuzhang} that $\mathcal{H}_l$ decomposes into $K$-types with simple multiplicities as 
$$\mathcal{H}_l=\bigoplus_{m\geq 0} H^{(m,0)}\otimes s^{l+m},$$
for a specific $Sp(n)$-representation $H^{(m,0)}$ described in \cite[Proposition 4.3]{liuzhang}. For the purposes of Theorem \ref{conja}, we do not need a more detailed description of $\mathcal{H}_l$. 

We define the Szegö maps $S_l:C^\infty(S^{4n-1},s^l)\to  L^2(B,\mu_l,s^l)$ by 
$$S_lf(p):=\int_{S^{4n-1}} |1-q^*\cdot p|^{-2}\tau_l(1-q^*\cdot p)^{-1} f(q)\rd S(q).$$
An elementary computation shows that 
$$1-(g.q)^*\cdot (g.p)=(J(g,q)^*)^{-1}(1-q^*\cdot p)J(g,p)^{-1}.$$
We conclude that 
\begin{align*}
S_lf(g.z)=&\int_{S^{4n-1}} |1-q^*\cdot(g.p)|^{-2}\tau_l(1-q^*\cdot (g.p))^{-1} f(q)\rd S(q)=\\
=&\int_{S^{4n-1}} |1-(g.q)^*\cdot(g.p)|^{-2}\tau_l(1-(g.q)^*\cdot (g.p))^{-1} f(g.q)\rd S(g.q)=\\
=&|J(g,p)|^{2}\tau_l(J(g,p))\int_{S^{4n-1}}  |1-q^*\cdot p|^{-2}\tau_l(1-q^*\cdot p)^{-1} |J(g,q)|^{2}\tau_l(J(g,q))^*f(g.q)\rd S(g.q)=\\
=&|J(g,p)|^{2}\tau_l(J(g,p))\int_{S^{4n-1}}  |1-q^*\cdot p|^{-2}\tau_l(1-q^*\cdot p)^{-1} |J(g,q)|^{2-4n}\tau_l(J(g,q))^*f(g.q)\rd S(q).
\end{align*}
In particular, if we define the representation $\tilde{\pi}_l$ on $C^\infty(S^{4n-1},s^l)$ as
$$\tilde{\pi}_l(g)f(z):=|J(g^{-1},q)|^{2-4n}\tau_l(J(g^{-1},q))^*f(g^{-1}.q),$$
then $S_l:(C^\infty(S^{4n-1},s^l),\tilde{\pi}_l)\to (L^2(B,\mu_l,s^l),\pi_l)$ is $G$-equivariant.

We now show that $S_l$ is Knapp-Wallach's Szegö map. By a Schur lemma argument, and the fact that $\tilde{\pi}_l$ is a principal series representation, it suffices to show that $S_l$ maps into $\mathcal{H}_l$. Since $\mathcal{H}_l$ is spanned by the regular functions into $s^l$ -- regular in the sense of \cite[Section 2.3]{liuzhang} -- it suffices to show that for a fixed $q\in S^{4n-1}$ the function $p\mapsto   |1-q^*\cdot p|^{-2}\tau_l(1-q^*\cdot p)^{-1}v$ is regular for $v\in s^l$. By covariance we can in fact assume that $q=(1,0\ldots, 0)\in B\subseteq \mathbb{H}^n$. The fact that $S_l$ maps into $\mathcal{H}_l$ then reduces to showing that $p=(p_1,\ldots, p_n)\mapsto |1-p_1|^{2}\tau_l(1-p_1)^{-1}v$ is regular for $v\in s^l$. This follows from \cite[Theorem 3.11]{liuzhang}, showing that $|1-p_1|^{2}\tau_l(1-p_1)^{-1}$ is a distributional limit of a reproducing kernel on a Hilbert space of regular functions. We conclude that 
$$S_l:(C^\infty(S^{4n-1},s^l),\tilde{\pi}_l)\to (\mathcal{H}_l,\pi_l),$$ defines a $G$-equivariant map and coincides with the Knapp-Wallach Szegö transform.

Write $P_l:L^2(S^{4n-1},s^l)\to H^2(S^{4n-1},s^l)$ for the Szegö projection onto the Hardy space $H^2(S^{4n-1},s^l)$ defined as the closure of the algebraic direct sum $\bigoplus_{m\geq 0}^{\rm alg} H^{(m,0)}\otimes s^{l+m}\subseteq L^2(S^{4n-1},s^l)$. We conclude the following corollary of Theorem \ref{conja}.

\begin{thm}
Let $l\in \N_{\geq 2n}$ and $s_l:=\frac{l-2n+2}{2}$. The Szegö map $S_l:C^\infty(S^{4n-1},s^l)\to C^\infty(B,s^l)$ extends to an $Sp(n,1)$-equivariant bounded operator with closed range
$$S_l:W^{s_l}_H(S^{4n-1},s^l)\to L^2(B,\mu_l,s^l).$$
The range of $S_l$ is $\mathcal{H}_l$, and its source space is 
$$W^{s_l}_{H,{\rm hol}}(S^{4n-1},s^l):= P_lW^{s_l}_H(S^{4n-1},s^l).$$ 
In fact, the source projector $P_l$ belongs to $\Psi^0_H(S^{4n-1},s^l)$.
\end{thm}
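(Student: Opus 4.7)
\medskip

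The plan is to deduce this theorem by specializing Theorem \ref{conja} to $G=Sp(n,1)$ and $(\mathcal{V},\tau)=(s^l,\tau_l)$, with the $K$-type norm asymptotics already recorded in \eqref{hlnorm} and \eqref{wshnormsp} providing the bridge between the abstract statement and the concrete Hardy space picture. To verify the hypotheses of Theorem \ref{conja}, I would first note that $s^l$ is an irreducible representation of $K=Sp(n)\times Sp(1)$ with highest weight $\lambda=l\omega$, where $\omega$ is the fundamental weight of the $Sp(1)$-factor, and that for $l\geq 2n$ the shifted weight $\Lambda=\lambda-\delta_n+\delta_k$ is non-singular (so the character is regular in the sense of Subsubsection \ref{subsubecdisc}). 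Here the lowest $K$-type of the quaternionic discrete series $\mathcal{H}_l$ of Gross-Wallach/Liu-Zhang is precisely $s^l$, so Knapp-Wallach's uniqueness of discrete series with a given lowest $K$-type identifies $\mathcal{H}_l$ with the range of the abstract Szegö map $S_{\mathcal{V}}$ from \eqref{kwszeg}.

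Next, I would compute the Knapp-Wallach parameter $\nu_{\mathcal{V}}=s_{\mathcal{V}}\alpha$ via \eqref{nuvforea}. For $Sp(n,1)$ we have $p=\dim\mathfrak{g}_\alpha=4(n-1)$ and $q=\dim\mathfrak{g}_{2\alpha}=3$, so $(p+2q)/4=n+1/2$ and a direct evaluation of \eqref{nuvforea} against the highest weight $l\omega$ yields $s_{\mathcal{V}}=(l+2)/2-(p+2q)/4=(l-2n+1)/2=s_l$. This is the same exponent appearing in the statement. Theorem \ref{conja} then immediately produces a continuous $Sp(n,1)$-equivariant map with closed range
\begin{equation*}
S_l:W^{-s_l}_H(P\backslash G,\mathbb{H}_{\nu_{\mathcal{V}}})\to L^2(K\backslash G,\mathbb{V}),
\end{equation*}
whose source projection $P_l$ belongs to $\Psi^0_H(S^{4n-1},s^l)$, and identifies $L^2(K\backslash G,\mathbb{V})\cong L^2(B,\mu_l,s^l)$ in the ball model (via the same $KA^+K$-weight computation \cite[Proposition 5.28]{knappbook} used in Subsection \ref{boundviaks}) in such a way that the range of $S_l$ is the discrete series subrepresentation with lowest $K$-type $s^l$, i.e., $\mathcal{H}_l$.

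It remains to identify the source space with $\overline{H^2(S^{4n-1},s^l)}^{W^{-s_l}_H}$. For this I would exploit the $K$-equivariance of both $S_l$ and $P_l$ combined with the $K$-type decomposition $\mathcal{H}_l=\bigoplus_{m\geq 0}H^{(m,0)}\otimes s^{l+m}$ of Liu-Zhang, which has simple multiplicities. By Schur's lemma, the source projection $P_l$ is, in each $K$-type of $L^2(P\backslash G,\mathbb{H}_{\nu_{\mathcal{V}}})$, either zero or the identity, and it equals the identity precisely on those $K$-types that occur in $\mathcal{H}_l$. The explicit Liu-Zhang realization shows these $K$-types sit inside the Hardy space $H^2(S^{4n-1},s^l)$, while the norm asymptotics \eqref{hlnorm} and \eqref{wshnormsp} give $\|f_m\|_{\mathcal{H}_l}\asymp\|P_l f_m\|_{W^{-s_l}_H(S^{4n-1},s^l)}$ on highest weight vectors, extending to all of $H^2(S^{4n-1},s^l)$ by $K$-equivariance. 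Hence the kernel of $S_l$ intersected with the closure of $H^2$ is zero and $\mathrm{Ran}(P_l)=\overline{H^2(S^{4n-1},s^l)}^{W^{-s_l}_H}$, as claimed.

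The main obstacle is the last step: matching the abstract source projection $P_l$ coming from Theorem \ref{conja} with the concrete Hardy projection, because the two are defined by very different procedures (Knapp-Stein intertwiner quotient vs.\ holomorphic extension). The key technical input that makes this work is the simplicity of $K$-multiplicities in $\mathcal{H}_l$, together with the quantitative equivalence of norms provided by \eqref{hlnorm}--\eqref{wshnormsp}; the remaining computations (the value of $\nu_{\mathcal{V}}$, regularity of the character, identification of $(\mathcal{H},\sigma)$ as the $M$-representation generated by the highest weight vector in $s^l$) are routine bookkeeping with the $Sp(n,1)$ root system.
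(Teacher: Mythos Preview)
Your proposal is correct and follows essentially the same approach as the paper: the paper likewise treats this theorem as a direct specialization (``paraphrasing'') of Theorem \ref{conja} to $G=Sp(n,1)$ with $\mathcal{V}=s^l$, using the Liu--Zhang norm asymptotics \eqref{hlnorm} and \eqref{wshnormsp} together with $K$-unitarity to obtain $\|S_lf\|_{\mathcal{H}_l}\sim\|P_lf\|_{W^{-s_l}_H}$ and thereby pin down the exponent $s_l=(l-2n+1)/2$ and the source space. Your explicit verification of the hypotheses and the $K$-type argument for identifying the source projection with the Hardy projection spells out what the paper leaves implicit.
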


We remark that $s_l=\frac{l+2}{2}+\frac{p+2q}{4}$, analogous to Equation \eqref{lakndaldn} up to the different enumerations of the discrete series in the complex and quaternionic cases, cf. \cite[Footnote on page 2986]{liuzhang}.

\section{The Szegö map as a boundary class in $K$-homology}
\label{szkhom}

Let us now turn to the source projection $P_{\mathcal{V}}$ of the Szegö map $S_{\mathcal{V}}$. The source projection $P_{\mathcal{V}}$ can be written as $P_{\mathcal{V}}=\mathcal{V}_V^*\mathcal{V}_V$ where $\mathcal{V}_V$ comes from a polar decomposition
$$S_{\mathcal{V}}=\sqrt{S_{\mathcal{V}}S_{\mathcal{V}}^*}\mathcal{V}_V,$$
where the adjoint is formed with respect to the Hilbert space inner products on $L^2(P\backslash G, \mathbb{H})$ and $L^2(K\backslash G,\mathbb{V})$, respectively. Here we view $S_{\mathcal{V}}$ as a compact operator $L^2(P\backslash G, \mathbb{H})\to L^2(K\backslash G,\mathbb{V})$ factoring over the dense inclusion $L^2(P\backslash G, \mathbb{H})\hookrightarrow W^{-s_\mathcal{V}}_H(P\backslash G, \mathbb{H})$. The operator $\mathcal{V}_V:L^2(P\backslash G, \mathbb{H})\to L^2(K\backslash G,\mathbb{V})$ is a partial isometry with range being the discrete series representation $\overline{\mathrm{Ran}(S_{\mathcal{V}})}$ and source being $\overline{\mathrm{Ran}(S_{\mathcal{V}}^*)\cap L^2(P\backslash G, \mathbb{H})}$. We have 
$$P_{\mathcal{V}}=\mathcal{V}_V^*\mathcal{V}_V.$$

\begin{prop}
\label{knadkna}
The source projection $P_{\mathcal{V}}\in \Psi^0_{H}(P\backslash G,\mathbb{H})$ has a $G$-invariant principal symbol. In particular, $P_{\mathcal{V}}$ defines an odd $G$-equivariant $K$-homology cycle 
$$(L^2(P\backslash G,\mathbb{H}_0),2P_{\mathcal{V}}-1),$$ 
for $C(P\backslash G)$ and the unitary $G$-action on $L^2(P\backslash G,\mathbb{H}_0)$.
\end{prop}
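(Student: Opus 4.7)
The plan is to treat the proposition's two claims in sequence. For $G$-invariance of $\sigma_H^0(P_{\mathcal{V}})$, I would start from Lemma \ref{sourcedesc}, which yields $P_{\mathcal{V}} \in \Psi^0_H(P\backslash G, \mathbb{H}_{\nu_{\mathcal{V}}})$ with exact commutation for the non-unitary principal series action $\pi_{\sigma, \nu_{\mathcal{V}}}$. By Proposition \ref{gequicad} this is equivalent to the symbol at $Pe$ being $MA$-invariant inside $\Sigma_H^0 V \otimes \End(\mathcal{H})$. The key algebraic observation is that the induced $MA$-action on $\End(\mathcal{H})$ is just conjugation by $\sigma(m)$ with trivial $A$-action: the character $a^{\nu+\rho}$ appearing in $\pi_{\sigma, \nu}$ is a scalar and cancels in the adjoint action, so the invariance condition is independent of $\nu$. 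Hence the same symbol remains $G$-invariant when $P_{\mathcal{V}}$ is regarded as a Heisenberg operator on sections of $\mathbb{H}_0$, with its explicit form provided by Corollary \ref{blablaks}.

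For the $K$-homology claim I would verify the standard axioms for $F := 2P_{\mathcal{V}} - 1$ in turn. Self-adjointness on $L^2(P\backslash G, \mathbb{H}_0)$ follows from the remark after Theorem \ref{conja} that $P_{\mathcal{V}}$ is orthogonal in every Heisenberg-Sobolev space, and $F^2 = 1$ is immediate from $P_{\mathcal{V}}^2 = P_{\mathcal{V}}$. For the commutator axiom, any $f \in C^\infty(P\backslash G)$ is a zeroth order Heisenberg operator with scalar principal symbol $f \cdot \mathrm{id}$, so $[P_{\mathcal{V}}, f]$ has vanishing principal symbol and lies in $\Psi^{-1}_H(P\backslash G, \mathbb{H})$; compactness on $L^2$ follows from Theorem \ref{bddsds}. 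Density of $C^\infty$ in $C(P\backslash G)$ and norm continuity of $f \mapsto [P_{\mathcal{V}}, f]$ extend this to all continuous $f$.

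The only delicate step is $G$-equivariance of the cycle: the natural unitary $G$-action on $L^2(P\backslash G, \mathbb{H}_0)$ is $\pi_{\sigma, 0}$, whereas $P_{\mathcal{V}}$ was shown to intertwine $\pi_{\sigma, \nu_{\mathcal{V}}}$. The plan is to bridge the two via the identity
$$\pi_{\sigma, 0}(g) = M_{\phi_g} \circ \pi_{\sigma, \nu_{\mathcal{V}}}(g), \qquad \phi_g(k) := \mathsf{a}(kg)^{-\nu_{\mathcal{V}}},$$
noting that $\phi_g$ is smooth, nowhere zero, and right-$M$-invariant, so $\phi_g$ lies in the invertibles of $C^\infty(P\backslash G)$. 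Exact commutation of $P_{\mathcal{V}}$ with $\pi_{\sigma, \nu_{\mathcal{V}}}(g)$ then reduces conjugation by $\pi_{\sigma, 0}(g)$ to conjugation by $M_{\phi_g}$, giving
$$\pi_{\sigma, 0}(g)\, P_{\mathcal{V}}\, \pi_{\sigma, 0}(g)^{-1} - P_{\mathcal{V}} = [M_{\phi_g}, P_{\mathcal{V}}]\, M_{\phi_g}^{-1},$$
which is compact by the commutator step above. The main obstacle throughout is the bookkeeping around two different $G$-equivariant structures on the same underlying smooth bundle $\mathcal{H} \times_M K$: the $\nu$-independence of the $MA$-action on $\End(\mathcal{H})$ handles the symbolic side, and the identification of the twist as multiplication by $\phi_g \in C^\infty(P\backslash G)$ handles the operator-theoretic side within the Heisenberg calculus.
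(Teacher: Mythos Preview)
Your proposal is correct and follows essentially the same strategy as the paper, which simply cites Corollary \ref{blablaks} together with the $G$-equivariance of the Knapp-Stein intertwiner to obtain $G$-invariance of $\sigma_H^0(P_{\mathcal{V}})$ and then writes ``the proposition follows.'' Your argument supplies the details the paper elides: the $\nu$-independence of the $MA$-action on $\End(\mathcal{H})$ and, more substantively, the explicit $\phi_g$-multiplier bridge between the non-unitary action $\pi_{\sigma,\nu_{\mathcal{V}}}$ (for which $P_{\mathcal{V}}$ is exactly equivariant) and the unitary action $\pi_{\sigma,0}$ required for the $K$-homology cycle, a point the paper only acknowledges afterward in Remark \ref{laknlakndexp}.
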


\begin{proof}
It follows directly from Corollary \ref{blablaks} and the equivariance of the Knapp-Stein intertwiner that $\sigma^0_H(P_{\mathcal{V}})$ is $G$-invariant. The proposition follows.
\end{proof}

\begin{remark}
\label{laknlakndexp}
We note that in Proposition \ref{knadkna}, we are using the unitary action on $L^2(P\backslash G,\mathbb{H})$, i.e. the $G$-action on $L^2(P\backslash G,\mathbb{H}_\nu)$ made unitary by means of correcting with the Radon-Nikodym derivative of the action. We can equally well consider  $(L^2(P\backslash G,\mathbb{H}_\nu),2P_{\mathcal{V}}-1)$ with its non-unitary action as an odd non-unitarily $G$-equivariant $K$-homology cycle. For an appropriate length function $\ell$ on $G$, determined by $\ell (ank)=a^{\nu_0}$ for suitable $\nu_0$, the pair $(L^2(P\backslash G,\mathbb{H}_\nu),2P_{\mathcal{V}}-1)$ forms a cycle for the exponentially bounded $K$-homology group \cite{goffbgg}
$$K_1^{G,\ell}(P\backslash G):=KK_1^{G,\ell}(C(P\backslash G),\C).$$ 
Clearly, the class of $(L^2(P\backslash G,\mathbb{H}_\nu),2P_{\mathcal{V}}-1)$ is independent of $\nu$ within a strip in $\mathfrak{a}_\C^*$.
\end{remark}

We can also consider the range projection $\tilde{P}_V$ of $S_{\mathcal{V}}$. We clearly have that 
$$\tilde{P}_V=\mathcal{V}_V\mathcal{V}_V^*.$$
Write $L^\infty_0(K\backslash G)\subseteq L^\infty(K\backslash G)$ for the norm closure of the compactly supported elements. 

\begin{prop}
\label{lknlmndirac}
The operator $[\tilde{P}_V,b]$ is compact on $L^2(K\backslash G,\mathbb{V})$ for any 
$$b\in L^\infty_0(K\backslash G)+C^\infty(Z).$$ 
\end{prop}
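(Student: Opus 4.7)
The plan is to decompose $b=b_0+b_1$ with $b_0\in L^\infty_0(K\backslash G)$ and $b_1\in C^\infty(Z)$, and to treat the two summands by completely different mechanisms.

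For $b_0$ the idea is elliptic regularity applied to the range of $\tilde{P}_V$. Since $\mathrm{Ran}(\tilde{P}_V)$ is the discrete series subrepresentation $\ker\slashed{D}_{\mathcal{V}}$ of the Dirac operator mentioned in the remark following Theorem \ref{conjb}, interior elliptic estimates give uniform local $H^k$-bounds on the unit ball of $\mathrm{Ran}(\tilde{P}_V)\subseteq L^2$. For $b_0\in C^\infty_c(K\backslash G)$ this combined with Rellich-Kondrachov shows that $b_0\tilde{P}_V:L^2\to L^2$ is compact, and taking adjoints then yields compactness of $[\tilde{P}_V,b_0]$. Approximating in sup-norm by compactly supported smooth functions extends the conclusion to all of $L^\infty_0(K\backslash G)$.

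For $b_1\in C^\infty(Z)$ the plan is to derive from Lemma \ref{sourcedesc} the factorization $\tilde{P}_V=S_{\mathcal{V}} T_{\mathcal{V}} S_{\mathcal{V}}^\dagger$ as bounded operators on $L^2(K\backslash G,\mathbb{V})$. Indeed, the identity $T_{\mathcal{V}} S_{\mathcal{V}}^\dagger S_{\mathcal{V}}=P_{\mathcal{V}}$ together with $S_{\mathcal{V}} P_{\mathcal{V}}=S_{\mathcal{V}}$ forces $S_{\mathcal{V}} T_{\mathcal{V}} S_{\mathcal{V}}^\dagger$ to act as the identity on $\mathrm{Ran}(S_{\mathcal{V}})$, while it manifestly vanishes on $\ker(S_{\mathcal{V}}^\dagger)=\mathrm{Ran}(S_{\mathcal{V}})^\perp$. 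Expanding then gives
\begin{equation*}
[\tilde{P}_V,b_1]=[S_{\mathcal{V}},b_1]\,T_{\mathcal{V}} S_{\mathcal{V}}^\dagger+S_{\mathcal{V}}\,[T_{\mathcal{V}},b_1]\,S_{\mathcal{V}}^\dagger+S_{\mathcal{V}} T_{\mathcal{V}}\,[S_{\mathcal{V}}^\dagger,b_1],
\end{equation*}
and each term will be shown compact via a half-order gain of Heisenberg-Sobolev regularity combined with a compact inclusion from Theorem \ref{bddsds}. For the first (and dually the third) summand, Theorem \ref{conjb} extends $[S_{\mathcal{V}},b_1]$ to a bounded operator $W^{-s_{\mathcal{V}}-1/2}_H\to L^2$; composing with $T_{\mathcal{V}} S_{\mathcal{V}}^\dagger:L^2\to W^{-s_{\mathcal{V}}}_H$ and the compact embedding $W^{-s_{\mathcal{V}}}_H\hookrightarrow W^{-s_{\mathcal{V}}-1/2}_H$ produces a compact operator. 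For the middle summand, since $b_1|_{P\backslash G}\in C^\infty(P\backslash G)$ has scalar Heisenberg principal symbol, the standard commutator principle in the Heisenberg calculus yields $[T_{\mathcal{V}},b_1]\in\Psi^{2s_{\mathcal{V}}-1}_H$; this maps $W^{s_{\mathcal{V}}}_H$ into $W^{-s_{\mathcal{V}}+1}_H$, which embeds compactly into $W^{-s_{\mathcal{V}}}_H$ where $S_{\mathcal{V}}$ is bounded into $L^2$.

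The principal obstacle I anticipate is the rigorous verification of the factorization $\tilde{P}_V=S_{\mathcal{V}} T_{\mathcal{V}} S_{\mathcal{V}}^\dagger$; in particular, identifying the formal adjoint $S_{\mathcal{V}}^\dagger$ from Section \ref{boundviaks} with the Hilbert adjoint of $S_{\mathcal{V}}:L^2(P\backslash G,\mathbb{H})\to L^2(K\backslash G,\mathbb{V})$ through the isometry $\iota$ that absorbs the weight $h$. A secondary point requiring care is the commutator principle $[T_{\mathcal{V}},b_1]\in\Psi^{2s_{\mathcal{V}}-1}_H$ for $b_1$ only smooth on the closed ball $Z$; this ultimately depends only on the smoothness of $b_1|_{P\backslash G}$ on the boundary and on the fact that multiplication operators have scalar Heisenberg principal symbol and so commute with any Heisenberg symbol up to one lower order. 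Once these points are settled, the three-term decomposition reduces the proof to bookkeeping in the Heisenberg-Sobolev scale.
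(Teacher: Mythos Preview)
Your argument is correct and takes a genuinely different route from the paper for the $C^\infty(Z)$ part, while coinciding with the paper for the $L^\infty_0(K\backslash G)$ part.

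For $b\in C^\infty(Z)$ the paper does \emph{not} use the Szeg\"o factorization at all. Instead it invokes the Dirac operator realization of the discrete series: $\mathrm{Ran}(\tilde{P}_V)=\ker(D_V)$ for a $G$-equivariant Dirac operator $D_V$ on a spinor bundle, with $D_V^*$ of closed range and $[D_V,b]$ bounded and relatively compact. It then appeals to the relative spectral triple machinery of \cite{fries} together with the fact that $Z$ is a quotient of the Higson compactification of $K\backslash G$. Your approach, by contrast, stays entirely inside the paper's own framework: the factorization $\tilde{P}_V=S_{\mathcal{V}}T_{\mathcal{V}}S_{\mathcal{V}}^\dagger$ from Lemma~\ref{sourcedesc}, the half-order gain in Theorem~\ref{conjb}, and the standard commutator drop in the Heisenberg calculus for $[T_{\mathcal{V}},b_1|_{P\backslash G}]$. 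This makes your proof more self-contained---it needs neither the spinor construction nor the external reference \cite{fries}---and it shows directly that Proposition~\ref{lknlmndirac} is a formal consequence of Theorems~\ref{conja} and~\ref{conjb}. The paper's route, on the other hand, is what makes the $K$-homological boundary identification in Theorem~\ref{lknlnadkhom} transparent, since the relative spectral triple is exactly the structure \cite{fries} uses to compute $\partial[D_V]$.

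Your two flagged concerns are both minor. For the factorization, you do not need to identify $S_{\mathcal{V}}^\dagger$ with the Hilbert adjoint of the compact map $L^2\to L^2$; it suffices that $S_{\mathcal{V}}^\dagger:L^2(K\backslash G,\mathbb{V})\to W^{s_{\mathcal{V}}}_H$ is the Banach transpose of $S_{\mathcal{V}}:W^{-s_{\mathcal{V}}}_H\to L^2$ under the $L^2$-pairing, so $\ker S_{\mathcal{V}}^\dagger=\mathrm{Ran}(S_{\mathcal{V}})^\perp$, which is all your argument uses. For the middle term, the Leibniz expansion implicitly inserts $b_1|_{P\backslash G}$ (not $b_1$ itself) between $T_{\mathcal{V}}$ and $S_{\mathcal{V}}^\dagger$, respectively $S_{\mathcal{V}}$; since $b_1|_{P\backslash G}\in C^\infty(P\backslash G)\subset\Psi^0_H$ with scalar principal symbol, the commutator $[T_{\mathcal{V}},b_1|_{P\backslash G}]\in\Psi^{2s_{\mathcal{V}}-1}_H$ is immediate and the telescoping identity holds as written once this convention is made explicit.
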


\begin{proof}
Arguing as in the proof of Proposition \ref{ajnfkajn}, $\chi \tilde{P}_V$ and $\tilde{P}_V\chi$ are compact whenever $\chi\in L^\infty_c(K\backslash G)$. By a limiting argument, $\chi \tilde{P}_V$ and $\tilde{P}_V\chi$ are compact whenever $\chi\in L^\infty_0(K\backslash G)$. So we need only consider $b\in  C^\infty(Z)$.

We have that $\tilde{P}_V$ is the ON-projection on $L^2(K\backslash G,\mathbb{V})$ onto a discrete series representation. Therefore, after possible going to a double cover, there exists \cite{atiischmid,laffaicm,partha} a homogeneous spinor bundle $\mathbb{S}\to K\backslash G$, containing $\mathbb{V}$ as an equivariant sub-bundle, a $G$-equivariant Dirac operator $D_V$ on $L^2(K\backslash G,\mathbb{S})$ (viewed as having the domain being the graph closure of $C^\infty_c(K\backslash G,\mathbb{S})$) such that for any $b\in C^\infty(Z)$,
\begin{itemize}
\item $\mathrm{Ran}(\tilde{P}_V)=\ker(D_V)$;
\item $D_V^*=\overline{D_V^\dagger}$ has closed range;
\item $a\Dom(D_V)\subseteq \Dom(D_V)$ for $a\in C^\infty(Z)$;
\item $[D_V,b]:\Dom(D_V)\to L^2(K\backslash G,\mathbb{S})$ is compact and admits a bounded extension to $L^2(K\backslash G,\mathbb{S})$. 
\end{itemize}

Since $\mathrm{Ran}(\tilde{P}_V=\ker(D_V)$, $\tilde{P}_V$ is the orthogonal projection onto $\ker(D_V)$. Compactness of $[\tilde{P}_V,b]$ on $L^2(K\backslash G,\mathbb{V})$ for $b\in C^\infty(Z)$ then follows from \cite[Subsection 3.4]{fries} and the fact that $Z$ is a quotient of the Higson compactification of $K\backslash G$, so $D_V$ defines a relative spectral triple (as defined in \cite[Definition 2.8]{fries}) for the ideal inclusion $C^\infty_c(K\backslash G)\lhd C^\infty(Z)$. 
\end{proof}

We now consider the Dirac operator $D_V$ appearing in the proof of Proposition \ref{lknlmndirac}. The Dirac operator defines a $G$-equivariant $K$-homology class $[D_V]\in K_0^G(K\backslash G)$. We note here that $K\backslash G$ is of even dimension since $G$ admits discrete series representations. We consider the short $G$-equivariant exact sequence of $C^*$-algebras
$$0\to C_0(K\backslash G)\to C(Z)\to C(P\backslash G)\to 0,$$
which is equivariantly semi-split with splitting defined from the positive $G$-equivariant Poisson transform $P$. This semi-split sequence defines an odd equivariant $KK$-element $\partial\in KK_1^G(C(P\backslash G),C_0(K\backslash G))$. Taking Kasparov products defines a boundary mapping 
$$\partial\otimes_{C_0(K\backslash G)}-:K_0^G(K\backslash G)\to K_1^G(P\backslash G).$$

\begin{thm}
\label{lknlnadkhom}
We have that
$$\partial[D_V]=[(L^2(P\backslash G,\mathbb{H}_0),2P_{\mathcal{V}}-1)],$$
in $K_1^{G}(P\backslash G)$.
\end{thm}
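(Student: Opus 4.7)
The plan is to establish Theorem \ref{lknlnadkhom} in two complementary steps: first, identify $\partial[D_V]$ intrinsically on the ambient Hilbert space $L^2(K\backslash G,\mathbb{V})$ using the boundary formula of Fries \cite{fries} applied to the relative spectral triple $(C^\infty_c(K\backslash G)\lhd C^\infty(Z), L^2(K\backslash G,\mathbb{S}), D_V)$ already exhibited in the proof of Proposition \ref{lknlmndirac}; second, transport the resulting cycle onto $L^2(P\backslash G,\mathbb{H}_0)$ using the polar isometry of $S_{\mathcal{V}}$, combined with Theorem \ref{conjb}.

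For Step 1, the relative spectral triple conditions verified in Proposition \ref{lknlmndirac} feed directly into the boundary map formula of \cite{fries}: the image $\partial[D_V]\in K_1^G(P\backslash G)$ is represented by the odd cycle $(L^2(K\backslash G,\mathbb{V}), 2\tilde{P}_V-1)$, where $\tilde{P}_V$ is the projection onto $\ker(D_V)$ and $C(P\backslash G)$ acts by pulling back along any $G$-equivariant lift $C(P\backslash G)\to C(Z)$ and multiplying on $L^2(K\backslash G,\mathbb{V})$. The commutator condition $[\tilde{P}_V,b]\in\mathcal{K}$ for $b\in C^\infty(Z)$ is exactly Proposition \ref{lknlmndirac}, while compactness of $\tilde{P}_V\cdot b$ for $b\in L^\infty_0(K\backslash G)$ makes the cycle independent of the lift and localizes it on $P\backslash G$.

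For Step 2, I extract from the polar decomposition of the compact operator $S_{\mathcal{V}}\colon L^2(P\backslash G,\mathbb{H}_0)\to L^2(K\backslash G,\mathbb{V})$ the partial isometry $\mathcal{V}_V$ satisfying $\mathcal{V}_V^*\mathcal{V}_V=P_{\mathcal{V}}$ and $\mathcal{V}_V\mathcal{V}_V^*=\tilde{P}_V$. Since $\mathcal{V}_V$ is obtained from $S_{\mathcal{V}}$ via continuous functional calculus of $|S_{\mathcal{V}}|=\sqrt{S_{\mathcal{V}}^\dagger S_{\mathcal{V}}}$, and Theorem \ref{conjb} provides compactness of $[S_{\mathcal{V}},b]$ for $b\in C^\infty(Z)$, a standard approximation of the phase function by polynomials (handled separately on the kernel where the polar isometry vanishes) yields $[\mathcal{V}_V,b]\in\mathcal{K}$ for all $b\in C^\infty(Z)$. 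With this compactness, $\mathcal{V}_V$ exhibits an operator equivalence between the cycles $(L^2(P\backslash G,\mathbb{H}_0), 2P_{\mathcal{V}}-1)$ and $(L^2(K\backslash G,\mathbb{V}), 2\tilde{P}_V-1)$ of $C(P\backslash G)$: the projections onto the kernels of $P_{\mathcal{V}}$ and $\tilde{P}_V$ contribute degenerate pieces on which both operators act by $-1$, and on the complementary subspaces $\mathcal{V}_V$ is a $G$-equivariant unitary intertwining $C(P\backslash G)$ modulo compacts. Concretely, the Connes construction with $F = \bigl(\begin{smallmatrix} 2P_{\mathcal{V}}-1 & \mathcal{V}_V^* \\ \mathcal{V}_V & -(2\tilde{P}_V-1)\end{smallmatrix}\bigr)$ on the direct sum exhibits the difference of the two cycles as operator-homotopic to a degenerate cycle.

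The main obstacle is the transfer of compactness from $[S_{\mathcal{V}}, b]$ to $[\mathcal{V}_V,b]$: the polar isometry involves spectral projection at zero of $S_{\mathcal{V}}^\dagger S_{\mathcal{V}}$, which is of infinite rank and must be separated carefully; the resolution is to exploit that $P_{\mathcal{V}}\in\Psi^0_H(P\backslash G,\mathbb{H})$ by Theorem \ref{conja}, so $[P_{\mathcal{V}},b]$ is already compact for $b\in C^\infty(Z)$ by Theorem \ref{conjb} applied once more (noting that $P_{\mathcal{V}}=T_{\mathcal{V}}S_{\mathcal{V}}^\dagger S_{\mathcal{V}}$ from Lemma \ref{sourcedesc}). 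A secondary issue is matching sign and grading conventions between the boundary map of \cite{fries} and the stated form of the cycle, which requires a careful bookkeeping of the $\mathbb{Z}/2$-grading on $L^2(K\backslash G,\mathbb{S})$ and absorbing any sign flip into a harmless degenerate cycle.
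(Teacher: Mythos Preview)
Your proposal is correct and follows the same two-step strategy as the paper: invoke Fries' boundary formula (via the relative spectral triple of Proposition \ref{lknlmndirac}) to represent $\partial[D_V]$ by the Toeplitz cycle for $\tilde{P}_V$, then use the Murray--von Neumann equivalence $\tilde{P}_V = \mathcal{V}_V\mathcal{V}_V^*$, $P_{\mathcal{V}} = \mathcal{V}_V^*\mathcal{V}_V$ to identify the classes. The paper's proof is considerably terser on Step 2, simply asserting that the two projections define the same class without spelling out the commutator compactness of $\mathcal{V}_V$ that you rightly flag and resolve as the substantive point.
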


\begin{proof}
It follows from \cite[Subsection 3.4]{fries} and the discussion in the proof of Proposition \ref{lknlmndirac} that $\partial[D_V]$ is represented by the equivariant Toeplitz operators defined from $\tilde{P}_V$. Since $\tilde{P}_V=\mathcal{V}_V\mathcal{V}_V^*$ and $P_{\mathcal{V}}=\mathcal{V}_V^*\mathcal{V}_V$, the two projections define the same class in $K_1^{G}(P\backslash G)$.
\end{proof}

\begin{remark}
A consequence of Theorem \ref{lknlnadkhom} is that $[(L^2(P\backslash G,\mathbb{H}_0),2P_{\mathcal{V}}-1)]\neq 0$. Indeed, we can apply the forgetful map $\mathrm{Res}^G_e$ forgetting the group action. Under this map, the construction of the discrete series in \cite{laffaicm} produces a non-zero integer $k_V$ with $\mathrm{Res}^G_e[D_V]=k_V[\slashed{D}]$ where $\slashed{D}$ is any Dirac operator on $K\backslash G$ defined from an irreducible Clifford module, so $[\slashed{D}]$ is a generator of $K_0(K\backslash G)\cong \Z$. By standard $K$-theoretical considerations, $\partial[\slashed{D}]\in K_1(P\backslash G)\cong \Z$ is also a generator so by naturality 
$$\mathrm{Res}^G_e[(L^2(P\backslash G,\mathbb{H}_0),2P_{\mathcal{V}}-1)]=k_V\partial [\slashed{D}]\in K_1(P\backslash G),$$ 
is non-zero. However, the equivariant class $[(L^2(P\backslash G,\mathbb{H}_0),2P_{\mathcal{V}}-1)]\in K_1^{G}(P\backslash G)$ remains elusive as Bott's index theorem on homogeneous spaces \cite{botthomo} obstructs non-trivial pairings with equivariant $K$-theory while the ranks of $K$ and $M$ differ.
\end{remark}

\end{document}